\begin{document}

%Include the frontmatter
\begin{frontmatter}
\title{Invasion Percolation on Power-Law Branching Processes}
%\title{A sample article title with some additional note\thanksref{t1}}
\runtitle{Invasion Percolation on Power-Law Branching Processes}
%\thankstext{T1}{A sample additional note to the title.}

\begin{aug}
%%%%%%%%%%%%%%%%%%%%%%%%%%%%%%%%%%%%%%%%%%%%%%%
%% Only one address is permitted per author. %%
%% Only division, organization and e-mail is %%
%% included in the address.                  %%
%% Additional information can be included in %%
%% the Acknowledgments section if necessary. %%
%% ORCID can be inserted by command:         %%
%% \orcid{0000-0000-0000-0000}               %%
%%%%%%%%%%%%%%%%%%%%%%%%%%%%%%%%%%%%%%%%%%%%%%%

\author[B]{\fnms{Rowel}~\snm{G\"undlach}\ead[label=e2]{r.c.gundlach@tue.nl}}
\and
\author[B]{\fnms{Remco}~\snm{van der Hofstad }\ead[label=e3]{r.w.v.d.hofstad@tue.nl}}
%%%%%%%%%%%%%%%%%%%%%%%%%%%%%%%%%%%%%%%%%%%%%%
%% Addresses                                %%
%%%%%%%%%%%%%%%%%%%%%%%%%%%%%%%%%%%%%%%%%%%%%%

\address[B]{Mathematics and Computer Science,
Eindhoven University of Technology\printead[presep={,\ }]{e2,e3}}
\end{aug}

\begin{abstract}
We analyse the cluster discovered by invasion percolation on a branching process with a power-law offspring distribution.
Invasion percolation is a paradigm model of {\em self-organised criticality}, where criticality is approach without tuning any parameter. 
By performing invasion percolation for $n$ steps, and letting $n\to\infty$, we find an infinite subtree, called \textit{the invasion percolation cluster} (IPC). A notable feature of the IPC is its geometry that consists of a unique path to infinity (also called the \textit{backbone}) onto which finite forests are attached.
The main theorem shows the volume scaling limit of the \textit{$k$-cut IPC}, which is the cluster containing the root when the edge between the $k$-th and $(k+1)$-st backbone vertices is cut.

We assume a power-law offspring distribution with exponent $\alpha$ and analyse the IPC for different power-law regimes.
In a finite-variance setting $(\alpha>2)$ the results, are a natural extension of previous works on the branching process tree \cite{Michelen2019} and the regular tree \cite{Angel2008}. 
However, for an infinite-variance setting ($\alpha\in(1,2)$) or even an infinite-mean setting ($\alpha\in(0,1)$), results significantly change. This is illustrated by the volume scaling of the $k$-cut IPC, which scales as 
$k^2$ for $\alpha>2$, but as
$k^{\alpha/(\alpha-1)}$ for $\alpha \in (1,2)$ and exponentially for $\alpha \in (0,1)$. 
\end{abstract}

\begin{keyword}[class=MSC]
\kwd[Primary ]{60J80} %Branching processes
\kwd{60K35} %Percolation theory
\kwd[; secondary ]{05C80} %random graphs
\end{keyword}

\begin{keyword}
\kwd{Invasion percolation}
\kwd{Branching process tree}
\kwd{Volume growth}
\kwd{Power-law offspring distribution}
\end{keyword}

\end{frontmatter}

%%%%%%%%%%%%%%%%%%%%%%%%%%%%%%%%%%%%%%%%%%%%%%
%% Please use \tableofcontents for articles %%
%% with 50 pages and more                   %%
%%%%%%%%%%%%%%%%%%%%%%%%%%%%%%%%%%%%%%%%%%%%%%
\tableofcontents

\section{Introduction}
\label{sec-intro}
In this paper, we investigate {\em invasion percolation} \cite{Wilkinson1983} on a \textit{branching process} (BP) or Bienaym\'e-(Galton-Watson) tree (see \cite[Chapter 8]{Karlin1975} for an overview of branching processes). Invasion percolation is a paradigmatic model of {\em self-organised criticality} \cite{Bak1988}, where critical percolation is approximated in a dynamical way, without tuning any parameter. Invasion percolation on a graph is obtained by adding independent and identically distributed (i.i.d.) edge weights to the graph, and sequentially adding the edge of minimal weight incident to the current graph. It turns out that the limsup of the edge weights of accepted edges equals the critical percolation threshold. Thus, it can be expected that invasion percolation shares many features with {\em critical percolation} \cite[Chapters 9-10]{Grimmett1999}, while at the same time producing an {\em infinite structure}, called the \textit{invasion percolation cluster} (IPC), that is usually absent in critical percolation. Therefore, it might be compared to the so-called {\em incipient infinite cluster} (IIC)\cite{Kesten1986}, the infinite percolation cluster that is at the verge of appearing at the critical point. However, the IIC is defined through a limiting argument, whereas the IPC is grown {\em dynamically}.

Invasion percolation can be systematically described by \textit{Prim's algorithm} \cite{Prim1957} that constructs the IPC on the weighted BP tree $\mathcal{T}_w$ as follows: Start at step 0 with the root $T^{(0)}(\clock)=\clock$, and at step $n+1$, $T^{(n+1)}(\clock)$ is constructed by adding the edge (and the vertex attached to it) from $\mathcal{T}$ incident to $T^{(n)}(\clock)$ with the lowest weight. We assume that edge weights are i.i.d. standard uniformly distributed, so that the lowest-weight edge is unique for all $n$ with probability 1. 
Then, the IPC is obtained by sending $n\to\infty$, i.e.
\begin{equation}
    \label{eq:def_IPC}
  T(\clock) 
    =\lim_{n\to\infty} T^{(n)}(\clock).
\end{equation}
We study the invasion percolation cluster on i.i.d. branching processes where we allow power-law offspring distributions. For simplicity, we assume that every vertex has at least one child, so that the branching process tree survives almost surely, and thus the IPC is an infinite sub-tree. The scaling behaviour of the volume of the IPC  depends sensitively on the number of finite  moments of the offspring distribution. This can already be envisioned when we note that the critical percolation threshold is the inverse of the expected offspring. In particular, the critical percolation threshold equals zero when the mean offspring is infinite.

Consider a branching process tree $\mathcal{T}$ with a power-law offspring distribution $X$, and denote the root of the tree by $\clock$.
Throughout the paper we assume that $\prob(X=0)=0$
and that there exists an $\alpha>0$ and $c_{\sss X}$ for which the tail distribution of $X$ satisfies that, as $x\to\infty$,
    \begin{equation}
    \label{eq:powerlaw}
    \prob(X>x)=
    1-F_{\sss X}(x) = c_{\sss X} x^{-\alpha}(1+o(1)).
    \end{equation}
In terms of $\alpha$, different regimes can be distinguished where the geometry of the IPC turns out to be quite different. The first regime considers $\alpha>2$, and contains the family of distributions with finite first and second moment.
This also includes offspring distributions with a thinner tail than a power-law, where we replace \eqref{eq:powerlaw} by $1-F_{\sss X}(x)\leq c_x x^{-\alpha}$ for all $x\geq 1$ and $\alpha>2$. 
See Section \ref{sec-disc-open} for a discussion.
The second regime, corresponding to $\alpha\in (1,2)$, contains families of distributions with a finite first but an infinite second moment. The last regime, for $\alpha\in(0,1)$, contains families of distribution with an infinite first moment. The latter regime is particularly interesting, since the percolation critical value equals $p_c=0$, so that a BP tree restricted to weights below $p_c$ is empty and thus the IIC is ill-defined\footnote{ See the discussion on the IIC for more details. }, while the IPC clearly exists.

Between different regimes for $\alpha$, the IPC has some important common features. It consists of a unique path to infinity, called the \textit{backbone}, to which conditionally finite forests are attached (see Lemma~\ref{lem:Toneended}). We denote this path as a sequence of vertices $(v_k)_{k\geq 0}$, where $v_0=\clock$. 
We define the \textit{future maximum weight} (FMW) process $(W_k)_{k\geq 0}$ by letting $W_k$ equal the largest weight that is added after the $k$-th backbone vertex (cf. \eqref{Wk-process}). Previous work on the regular tree \cite{Angel2008}, Poisson weighted infinite tree \cite{Addario-Berry2012} and the BP tree with finite variance \cite{Michelen2019} also observed the existence of a unique path to infinity. This specific geometry is key to our analysis.

One can view the IPC on a BP tree as random disorder (the edge weights) in a random environment (the BP tree). In such problems, one can either fix the random environment via an almost sure realisation or average out over it.
These regimes are also referred to as the \textit{quenched} and \emph{annealed} regimes, respectively. Distinguishing between regimes is crucial as there are many results known to be different between quenched and annealed settings, see for example \cite{Arous2005} and the references therein.
Our analysis regards the \textit{annealed} regime, that allows us to condition on specific properties of the BP tree, making specific calculations, such as the extinction probability of a percolated BP tree, easier. This in contrast with for example \cite{Michelen2020}, where the quenched survival probability of Bernoulli percolated trees is discussed.

\paragraphi{Main innovations in this paper}
We analyse the IPC from a local perspective and therefore define the $k$-cut IPC, $\M_k$, which is the cluster containing the root when the edge between the $k$-th and $(k+1)$-st backbone vertex is removed (Definition \ref{def:k-cut_IPC}).
Therefore, also  
\begin{equation}
 T(\clock) = \lim_{k\to\infty} \M_k(\clock),
\end{equation}
and $\M_k(\clock)$ can be seen as an alternative construction of the IPC.
In the following, we outline the innovations made to formalise our scaling results for the volume of $\M_k(\clock)$, denoted by
\begin{equation}
\label{eq:def_M_k_volume}
    |\M_k(\clock)| = M_k.
\end{equation}
First, Theorem~\ref{thm:constrT} shows that there exists an alternative regime-free algorithm to construct the IPC via the $k$-cut IPC. Rather than considering Prim's algorithm, one sequentially adds backbone vertices with their respective attached conditionally finite forests. 

This was also discussed in \cite[Proposition 2.1]{Angel2008} for fixed $k$ and the method can be broken down into deriving the evolution of the future maximum weight sequence according to a Markov chain \eqref{eq:def_thm_wk},
the offspring distribution of backbone vertices \eqref{eq:def_thm_DBk} and the degree distribution of the vertices in the finite attached forests \eqref{eq:def_tilde_X}. 

Differences between power-law regimes only become apparent when we let $k\to\infty$. We start with the FMW sequence $(W_k)_{k\geq 0}$. For $\alpha>2$ it has been established that, under the appropriate inverse linear scaling, $(W_k)_{k\geq 0}$  converges to a  lower envelope process, c.f. \cite[Corollary 6.3]{Michelen2019}.
This result is extended to general $\alpha$. For $\alpha\in(1,2)$, the scaling remains the same, but the limiting stochastic process changes to a process that we call the $\alpha$-enhanced lower envelope process (Definition \ref{def:alphaLEP}). For $\alpha\in(0,1)$, the changes are more drastic, in that $M_k$ grows exponentially and the limiting stochastic process is deterministic. These results are formalised in Theorem~\ref{thm-subthmWk}.

Next, we focus on the degree sequence of the backbone vertices, which are special as they are conditioned on having exactly one path to infinity.  We show that $(D_{v_k})_{k\geq 0}$ converges to a size-biased version of $X$ (Definition \ref{def:size-biased}), which exists for $\alpha>2$ and $\alpha\in(1,2)$, while for $\alpha \in(0,1)$ an additional exponential scaling is necessary.
These results are formalised in Theorem~\ref{thm-subthmDBK}.

With the backbone degrees formalised, we consider the volume of the finite forest attached to a specific backbone vertex.
For $\alpha>2$ and $\alpha\in(1,2)$, the main contribution to $M_k$ is from unusually large forests (those occurring with probability $O(k^{-1})$). These are of size $k^2$ and $k^{\alpha/(\alpha-1)}$ respectively.
For $\alpha\in(0,1)$, an arbitrary forest is of exponential size in $k$. These results are formalised in Theorem~\ref{thm:Cs_scaling}.

Finally, we combine these ingredients to formalise the volume scaling of the $k$-cut IPC. 
For $\alpha>2$ and $\alpha\in(1,2)$, we show that only a finite number of exceptionally large forests are found in the $k$-cut IPC. As a result, one would expect this to be a Poisson process, but due to the additional randomness of the future maximum weights, $(M_{\lceil kt\rceil})_{t\geq 0}$ converges to an inhomogenous Cox process under the appropriate scaling. Therefore, $M_k$ is of the same order as that of the exceptionally large forests. For $\alpha\in(0,1)$,
 $M_{k}$  is driven by forests at depth close to $k$, and we investigate $(M_{k-\ell})_{\ell\geq 0}$ instead, which scales exponentially. These results are formalised in Theorem~\ref{thm-main}. 

\paragraphi{Notation} We use the standard notation of $\xrightarrow{d}$, $\xrightarrow{\prob}$ for convergence in distribution and probability respectively. We denote convergence in the $J_1$-topology as $\xrightarrow{\mathcal{D}}$, also called convergence of c\`adl\`ag paths in the Skorokhod topology $D[\varepsilon,\infty)$, \cite[Chapter 3]{Billingsley1999}. We use the Bachmann-Landau notation $\Theta(\cdot)$, $O(\cdot)$, $o(\cdot)$ for asymptotic behaviour.
We use the term \textit{with high probability} (w.h.p.) to denote an event $\mathcal{E}_k$ that occurs with probability converging to one as $k\to\infty$. We finally write  $[n]=\{1,\ldots,n\}$ for an integer $n$.

\paragraphi{Organisation of this section}
The rest of this section is organised as follows. In Section~\ref{sec:MainResult}, we present our main results on the scaling limit of $(M_k)_{k\geq 0}$. In Section~\ref{sec:Overview}, we give an overview of the proof by presenting the four key ingredients in it, stated as formal results that are also of independent interest.
In Section~\ref{sec-disc-open}, we give an overview of the relevant literature and discuss how our results fit in.

\subsection{Main results}
\label{sec:MainResult}
In this section, we state our main result, which concerns the scaling limit of the volume of the $k$-cut IPC, $(M_{k})_{k\geq 0}$, as defined in \eqref{eq:def_M_k_volume}. We formally define the $k$-cut IPC as follows:
\begin{definition}[$k$-cut IPC]
    \label{def:k-cut_IPC}
    Let $T(\clock)$ denote the IPC on a weighted BP tree $\mathcal{T}$ with arbitrary offspring distribution, starting from root vertex $\clock$. Here, the edge-weights are i.i.d. and standard uniformly distributed. 
    Recall that $v_k$ denotes the $k$-th backbone vertex. Then we define the $k$-cut IPC $\M_k(\clock)$ as the sub-tree of $T(\clock)$ that consists of the vertices \begin{equation}
         \{u\in T(\clock):\ \text{path from $\clock\to u$ does not contain $v_{k+1}$}\},
    \end{equation}
    and the corresponding edges of $T(\clock)$ between vertices in $\M_k(\clock)$.
\end{definition}
In words, $\M_k(\clock)$ denotes the connected component that includes the root, after removing the edge between the $k$-th and the $(k+1)$-st backbone vertex. We choose to focus on $\M_{k}(\clock)$, as it has many convenient structural properties, while still having the same limit as $T^{(k)}(\clock)$.
Since there is a unique path to infinity, $M_k=|\TMOB_{k}(\clock)|<\infty$ for fixed $k$, almost surely, and the main results of this paper are devoted to a detailed characterisation of the scaling limit of $M_{k}$ as a stochastic process. For $\alpha>2$ or $\alpha\in(1,2)$, $(M_{\lceil kt\rceil})_{t\geq 0}$ converges in the $J_1$-topology, under the proper scaling, to an inhomogenous Cox process, a generalisation of the inhomogenous Poisson process. 
 %Such a process is used to model a number of observations $N(A)$ in some space $A$. 
 Consider a finite intensity measure $\lambda(\cdot):(0,\infty)^2\to \mathbb{R}$, then, for disjoint sets 
 $A_1,\ldots, A_n \subset (0,\infty)^2$  and integers $(m_j)_{j=1}^n$, we define  the inhomogenous Poisson process by
    \begin{equation}
    \label{eq:ihpp}
        \prob(N(A_i) =m_i, i\in[n] )
        =
        \prod_{i=1}^n 
        \e^{-\lambda(A_i)}
        \frac{ \lambda(A_i)^{m_i}}{m_i!},
    \end{equation}
    where $N(A)$ is the number of points in $A$. 
    See also \cite[Chapter 3]{Last2017}.
The inhomogenous Cox process extends the inhomogenous Poisson process by allowing for a \emph{random intensity measure} $\xi$. In other words, the inhomogenous  Cox process is a \emph{doubly stochastic process}, where randomness comes from two sources: the (random) inhomogenous Poisson process that is also driven by a random intensity measure:

\begin{definition}[Inhomogeneous Cox process]
\label{def:spatialCox}
Consider an a.s. finite random intensity measure $\xi$. Then we define a Cox process $\mathcal{C}$ with intensity measure $\xi$, such that
$\{\mathcal{C} \mid \xi=\lambda\}$ is an inhomogenous Poisson process as defined in \eqref{eq:ihpp} with intensity measure $\lambda$ \cite[Definition 13.5]{Last2017}. 
\end{definition}

For $\alpha\in(0,1)$, we identify the scaling limit of $(M_{k-\ell})_{\ell=0,1,...}$, which converges in the product-topology to a discrete random process.
Our main result is the following theorem:
\begin{theorem}[Volume growth of the $k$-cut IPC]
\label{thm-main}
The stochastic process $(M_{k})_{k\geq 0}$ as defined in \eqref{eq:def_M_k_volume} satisfies the following three scaling limit results in the three different regimes, as $k\rightarrow \infty$:
\begin{description}
\item[Finite-variance offspring.]
Fix $\alpha>2$
Consider the random measure $\lambda_\alpha$ as given in \eqref{eq:def_lambda_PPP} below, and the inhomogenous Cox process with intensity $(\Pi_{\lambda_\alpha}(x,s))_{x\geq 0, s\in(0,t)}$. Then
\begin{equation}
	\label{scaling-limit-fin-var}
	(k^{-2}M_{\lceil kt\rceil})_{t> 0}\convdJ \Bigg(
	\int_{0}^t\int_{0}^\infty x \Pi_{\lambda_\alpha}(\dif x,\dif s)
	\Bigg)_{t> 0}
	;
\end{equation}
	
\item[Finite-mean, infinite-variance offspring.]
Fix $\alpha \in(1,2)$.
Consider the random measure $\lambda_\alpha$ as given in \eqref{eq:def_lambda_PPP} below, and the inhomogenous Cox process with intensity $(\Pi_{\lambda_\alpha}(x,y))_{x\geq 0, s\in(0,t)}$. Then
\begin{equation}
	(k^{-\alpha/(\alpha-1)}M_{\lceil kt\rceil})_{t> 0}\convdJ
	\Bigg(
	\int_{0}^t\int_{0}^\infty x \Pi_{\lambda_\alpha}(\dif x,\dif s)
	\Bigg)_{t> 0};
\end{equation}
\item[Infinite-mean offspring.]
Fix $\alpha\in(0,1)$.
Consider the future maximum weight process $(W_k)_{k\geq 0}$, formalised in \eqref{Wk-process}, and the discrete-time stochastic process $(Z_\ell)_{\ell=0,1,...}$ as defined in \eqref{eq:def:Z_n_alpha_in_(0,1)}. Then
\begin{equation}
	(M_{k-\ell}W_{k}^{\frac{\alpha}{1-\alpha}})_{\ell\geq 0} \xrightarrow{d} 
   (Z_\ell)_{\ell\geq 0}.
\end{equation}
\end{description}
Here, the convergence $\xrightarrow{\mathcal{D}}$ is in the Skorokhod $J_1$-topology for $\alpha>2$ and $\alpha\in(1,2)$, and the convergence $\xrightarrow{d}$ is in distribution (in the product topology), for $\alpha\in(0,1)$.
\end{theorem}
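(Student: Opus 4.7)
The plan is to combine the four ingredient results---Theorem~\ref{thm:constrT}, Theorem~\ref{thm-subthmWk}, Theorem~\ref{thm-subthmDBK} and Theorem~\ref{thm:Cs_scaling}---through a decomposition of $M_k$ along the backbone. By Theorem~\ref{thm:constrT} the $k$-cut IPC can be built by sequentially attaching to each backbone vertex $v_j$ its conditionally finite side-forest of total volume $F_j$, so that
\begin{equation}
    M_k = (k+1) + \sum_{j=0}^{k}F_j,
\end{equation}
where, conditionally on the FMW sequence $(W_j)_{j\geq 0}$ and on the backbone degrees $(D_{v_j})_{j\geq 0}$, the $F_j$ are independent with a law determined by $(W_j,D_{v_j})$ through the explicit construction given in Theorem~\ref{thm:constrT}.

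For $\alpha>2$ and $\alpha\in(1,2)$ I set $a_k=k^2$ and $a_k=k^{\alpha/(\alpha-1)}$ respectively. Theorem~\ref{thm:Cs_scaling} shows that each rescaled $F_j/a_k$ is negligible with high probability, but is of order $1$ with probability $\Theta(k^{-1})$; hence only a $\Theta(1)$ number of ``exceptional'' forests carry the bulk of the sum. I therefore study the marked empirical point process
\begin{equation}
    \Xi_k=\sum_{j=0}^{\lceil kt\rceil}\delta_{(j/k,\,F_j/a_k)},
\end{equation}
and, conditionally on $(W_j)_{j\geq 0}$, apply a Poisson approximation (Stein--Chen, or a direct factorial-moment calculation) with rate given by the conditional tail of $F_j$ from Theorem~\ref{thm:Cs_scaling} and the size-biased backbone-degree limit from Theorem~\ref{thm-subthmDBK}. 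Averaging over the limiting FMW process from Theorem~\ref{thm-subthmWk} converts the conditional Poisson limit into the Cox process with intensity $\lambda_\alpha$. The $J_1$-scaling limit of $a_k^{-1}M_{\lceil kt\rceil}$ then follows by reading off the atoms of the limit of $\Xi_k$, since positive summands of order $o(a_k)$ produce no mass in the scaling limit.

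For $\alpha\in(0,1)$ the situation is structurally different: $p_c=0$, so $W_k$ decays only slowly and every $F_j$ is already of exponential size in $j$; no deterministic normalisation can succeed. Here I instead rescale by the random factor $W_k^{\alpha/(1-\alpha)}$, which by Theorems~\ref{thm-subthmWk} and~\ref{thm:Cs_scaling} captures the typical size of $F_{k-\ell}$ for bounded $\ell$. Looking backwards from $k$, the triple $(W_{k-\ell},D_{v_{k-\ell}},F_{k-\ell})_{\ell\geq 0}$ converges jointly to an explicit time-homogeneous Markov chain, obtained by assembling the limits of Theorems~\ref{thm-subthmWk}--\ref{thm-subthmDBK} with the conditional forest tail. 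Summing these contributions gives the product-topology limit $(M_{k-\ell}W_k^{\alpha/(1-\alpha)})_\ell \convd (Z_\ell)_\ell$.

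I expect the main obstacle to be the $J_1$-tightness in the first two regimes: one must rule out that many ``medium-sized'' forests accumulate into an artificial continuous component of the limit. This reduces, via Theorem~\ref{thm:Cs_scaling}, to a uniform truncated first-moment estimate of the form
\begin{equation}
    \lim_{\vep\downarrow 0}\,\limsup_{k\to\infty}\,a_k^{-1}\,\expec\!\Big[\sum_{j=0}^{\lceil kt\rceil}F_j\,\indic{F_j\leq \vep a_k}\Big]=0,
\end{equation}
which in turn requires control of $(W_j)_j$ uniformly on the full interval $j\leq kt$ rather than merely pointwise.
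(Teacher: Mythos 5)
Your plan is sound and reaches the same Cox-process limit, but via a genuinely different computational route than the paper.  The paper works directly with the (conditional) Laplace transform of $k^{-\gamma}M_{\lceil kt\rceil}$: after discarding small/near-root forests (their Lemma on ``no small forests''), it writes the conditional Laplace transform as a Riemann sum, shows uniform-in-$s$ convergence of the integrand using the forest-tail asymptotics, lifts the conditional limit to an unconditional one via the generalised continuous mapping theorem on $D[\varepsilon,t]$, and then recognises the resulting expression as the Laplace functional of a Cox process.  You instead propose convergence of the marked empirical point process $\Xi_k=\sum_j\delta_{(j/k,\,F_j/a_k)}$ via Stein--Chen/factorial moments conditionally on the FMW, then average over the limiting FMW.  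Both routes work; the paper's Laplace-transform route is arguably more elementary (it sidesteps the question of which topology on point measures one needs and avoids a separate ``sum of a point process converges'' step), while yours makes the Poissonian structure of the exceptional forests more transparent and gives the point-process limit as a by-product.  Two caveats you should keep in mind if you execute this: (i) the conditional independence of the $F_j$ holds given the FMW sequence $(W_j)$ alone --- $D_{v_j}$ is part of the forest realisation rather than an extra conditioning variable, and conditioning on it too would distort the marginals; (ii) the ``averaging over the limiting FMW'' step is not a pointwise averaging but a Skorokhod-space continuous-mapping argument (the Laplace functional as a bounded functional of the FMW path), and the uniform-in-$a$ convergence of the tail function $h_\alpha(\cdot,a)$ from Theorem~\ref{thm:Cs_scaling} is needed precisely to make that functional continuous.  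Your identification of the truncated first-moment estimate as the crux of $J_1$-tightness matches the paper exactly, and the backward-looking argument for $\alpha\in(0,1)$ is also the same in spirit as the paper's.
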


Theorem~\ref{thm-main} gives a highly-detailed scaling limit result for $(M_k)_{k\geq 0}$. This turns out to be technically less challenging than  working with the total mass of vertices in the $k$-neighbourhood of the IPC given by

\begin{equation}
\label{eq:def_Gamma}
    \T_k(\clock) =  \{u\in T(\clock) \mid \text{height of $u$} < k\}.
\end{equation}
However, we conjecture that our result implies that $ |\T_{k}(\clock)|$ has similar scaling properties as $M_{k}$:

\begin{conjecture}[Volume growth of the $k$-neighbourhood of the IPC]
\label{cor-vol-growth}
Under the conditions of Theorem~\ref{thm-main}, $\T_k(\clock)$, as defined in \eqref{eq:def_Gamma}, has the same volume scaling as that of $\M_k(\clock)$. This means there exists a regime-dependent sequence $x^{(\alpha)}_k$ and a non-degenerate limiting random process $(K^{(\alpha)}(t))_{t>0}$, such that, for $k\to\infty$,
\begin{equation}
    \Big(x_{k}^{(\alpha)} \ |\T_{\lceil kt \rceil }(\clock)|\Big)_{t>0} 
    \xrightarrow{\mathcal{D}} \big(K^{(\alpha)}(t)\big)_{t>0},
\end{equation}
where $x^{(\alpha)}_k$ is the scaling of $M_k$ as derived in Theorem~\ref{thm-main}.
\end{conjecture}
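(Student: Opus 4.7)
The plan is to establish the conjecture by sandwiching $|\T_k(\clock)|$ between $|\M_k(\clock)|$ and a random, asymptotically positive fraction of it, exploiting the fact that the dominant forests making up $\M_k$ have depth comparable to $k$ itself, so that the height-$k$ cut removes only a bounded fraction of them. The upper bound is immediate: any $u\in\T_k(\clock)$ has height strictly less than $k$, so the unique path $\clock\to u$ passes only through backbone vertices $v_0,\ldots,v_j$ with $j<k$ and cannot contain $v_{k+1}$, giving $\T_k(\clock)\subseteq\M_k(\clock)$ and thus $|\T_k|\le|\M_k|$.

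For the matching lower bound, first decompose $\M_k(\clock)=\bigsqcup_{j=0}^{k}F_j$ and $\T_k(\clock)=\bigsqcup_{j=0}^{k-1}F_j^{(k-j)}$, where $F_j$ is the conditionally finite forest rooted at $v_j$ (with $v_j\in F_j$) and $F_j^{(d)}$ is the subset of vertices of $F_j$ at graph distance strictly less than $d$ from $v_j$. For the regimes $\alpha>2$ and $\alpha\in(1,2)$, Theorem~\ref{thm-main} identifies the scaling limit of $M_k$ as a Cox integral whose atoms record exceptionally large forests of size $\Theta(k^2)$, respectively $\Theta(k^{\alpha/(\alpha-1)})$, occurring at backbone positions $v_j$ with $j/k$ distributed according to the driving measure $\lambda_\alpha$ on $(0,1)$. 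Conditionally on its unusually large size, each such forest is close in distribution to a critical Bienaymé tree conditioned on its total progeny, and the classical scaling limits due to Aldous ($\alpha>2$, Brownian CRT) and Duquesne--Le Gall ($\alpha\in(1,2)$, $\alpha$-stable tree) imply that its depth is $\Theta(k)$ and that its rescaled depth profile converges to the local time of a Brownian or $\alpha$-stable excursion.

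Plugging these ingredients into the decomposition, for a dominant forest at $j=\lceil ks\rceil$ with $s\in(0,1)$, the ratio $|F_j^{(k-j)}|/|F_j|$ converges to the random, a.s.\ strictly positive fraction of the excursion's mass lying at height fraction below $1-s$. Hence $|\T_k|$ inherits the same scaling sequence $x_k^{(\alpha)}=k^{-2}$ (resp.\ $k^{-\alpha/(\alpha-1)}$) as $|\M_k|$, with a non-degenerate limit obtained from Theorem~\ref{thm-main}'s Cox integral by weighting each atom $(x,s)$ by this independent, a.s.\ positive truncation fraction. For $\alpha\in(0,1)$, where $M_k$ grows exponentially and is dominated by forests at $v_{k-\ell}$ with $\ell=O(1)$, one takes $x_k^{(\alpha)}=W_k^{\alpha/(1-\alpha)}$ and argues analogously that truncating $F_{k-\ell}$ at depth $\ell$ retains a positive proportion of its mass; this should follow directly from the structural description underlying the identification of the limit $(Z_\ell)_{\ell\ge 0}$ in Theorem~\ref{thm-main}.

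The main obstacle is to upgrade Theorem~\ref{thm-main} to a joint functional convergence of the Cox atoms together with the (rescaled) depth profiles of the corresponding dominant forests, viewed as a marked point process. Asymptotic independence across distinct backbone positions follows from Theorem~\ref{thm:constrT}, since the forests are conditionally independent given the FMW sequence, and the conditional law of a large forest is well approximated by a critical Bienaymé tree conditioned on its size. The main technical load is therefore in showing that the contribution of the non-dominant (short) forests to $|\T_k|$ is negligible at scale $1/x_k^{(\alpha)}$; this should reduce to a second-moment estimate of the same flavour as those controlling $\sum_j|F_j|\,\1{|F_j|\le \epsilon k^{2}}$ (and its infinite-variance analogue) in the proof of Theorem~\ref{thm-main}, together with a continuous-mapping argument for the truncation functional $(x,s,e)\mapsto x\cdot\int_0^{1-s}\ell_u(e)\,\dif u/\int_0^\infty\ell_u(e)\,\dif u$ applied to the marked Cox process.
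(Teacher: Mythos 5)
This statement is a \emph{conjecture} in the paper, not a theorem, and the authors offer only a heuristic sketch (the paragraph ``Intuition for Conjecture~\ref{cor-vol-growth}'' in Section~\ref{sec-disc-open}). Your plan tracks that heuristic faithfully: the inclusion $\T_k(\clock)\subseteq\M_k(\clock)$ is correct and gives the upper bound; the decomposition $\T_k(\clock)=\bigsqcup_{j=0}^{k-1}F_j^{(k-j)}$ is the right object; the observation that a dominant forest at backbone fraction $s$ has depth of order $k$, so that truncating at level $(1-s)k$ retains an a.s.\ positive fraction of its mass, is exactly the paper's lower-bound argument; and you correctly flag that the remaining work is a joint functional limit for the Cox atoms together with the depth profiles of the corresponding forests, plus a second-moment bound on truncated sub-dominant forests. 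This is the intended route. One small caveat for $\alpha\in(0,1)$: $F_{k-1}^{(1)}=\{v_{k-1}\}$ alone is negligible at scale $W_k^{-\alpha/(1-\alpha)}$, so the positive fraction is carried by $F_{k-\ell}^{(\ell)}$ for $\ell\geq 2$, which is the paper's ``effective degree of $v_{k-2}$'' observation.

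The step that would break down if taken literally --- and the paper's own heuristic suffers the same imprecision --- is the claim that ``each such forest is close in distribution to a critical Bienaym\'e tree conditioned on its total progeny'', followed by an appeal to the Duquesne--Le~Gall limit. For $\alpha>2$ this is defensible: $\hat D_{v_j}^\star$ is tight (Lemma~\ref{lem:limiting_distr_Dbk_alpha>2}), so the forest is a bounded number of trees of which one carries the bulk of $H_j$, and the CRT depth profile applies after minor modifications. For $\alpha\in(1,2)$, however, the event $\{H_j\sim xk^{\alpha/(\alpha-1)}\}$ is typically realised with $\hat D_{v_j}^\star\sim x^{1/\alpha}k^{1/(\alpha-1)}$ (Lemma~\ref{lem:SizeBiased_distr_BB_infvar} and the proof of Proposition~\ref{prop-size-trees-backbone-(1,2)}), so $F_j$ is a forest with a \emph{diverging} number of near-critical subtrees, not a single macroscopic tree. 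The relevant object is the tree $\{v_j\}\cup F_j$ rooted at $v_j$, whose root degree is already of order $\Theta(m^{1/\alpha})$ when $|F_j|=m$, i.e.\ of the order of the maximum degree of a size-$m$ critical $\alpha$-stable tree; it is therefore not a size-conditioned Bienaym\'e tree, and its Lukasiewicz path is a spectrally positive $\alpha$-stable first-passage bridge from $\hat D_{v_j}^\star$ to $0$ rather than a normalized excursion. The conclusion you need (depth $\Theta(k)$, a.s.\ positive retained mass below any positive level) is plausibly still true --- the diameter of the stable tree is unchanged by re-rooting --- but it must be derived from this bridge description, together with a uniform handling of the near-criticality $\expec[\tilde X^{(j)}]=1-\Theta(1/k)$ over the relevant range of $j$. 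This is where the real technical content of a rigorous proof would lie.
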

We provide some intuition for this conjecture in Section~\ref{sec-disc-open}.

\subsection{Overview of the proof and additional results}
\label{sec:Overview}
The proof of Theorem~\ref{thm-main} is performed in four steps, as outlined in the main innovations.
Below, we formalise our theorems on 
\begin{itemize}
    \item the geometry of the IPC;
    \item the scaling of the future maximum weights;
    \item the scaling of the backbone degrees;
    \item the scaling of the finite forests attached to the backbone.
\end{itemize}
We also introduce the relevant terminology and state our auxiliary results, which are formalised in the subsequent paragraphs. Even though their main function is to prove Theorem~\ref{thm-main}, they can be regarded as results in their own merit.

\paragraphi{Geometry of the $k$-cut IPC}
We start by describing a convenient algorithm that constructs the $k$-cut IPC.
We introduce some key properties for this algorithm with respective proofs. \\
First, the IPC consists of a unique path to infinity consisting of the vertices $(v_k)_{k\geq 0}$, see Lemma~\ref{lem:Toneended}, also referred to as the \textit{backbone}. We call $v_k$ the $k$-th backbone vertex, where $v_0=\clock$, and
we denote the weights along the backbone by $(\beta_k)_{k\geq 1}$. 
Then we define the \textit{future maximum weight} (FMW) process as
	\begin{equation}
	\label{Wk-process}
	W_k =\max_{i>k} \beta_i,
	\end{equation}
that converges to $p_c=\expec[X]^{-1}$ from above a.s., see Lemma~\ref{lem:ConvW_kpc}. Therefore, $W_k>p_c$ for fixed $k$ and a BP tree restricted to weights below $W_k$ has a positive probability of survival. We denote this probability by $\theta(W_k)$ and $\eta(W_k)=1-\theta(W_k)$. 
The $(W_k)_{k\geq 0}$ process is key to our analysis and is frequently conditioned upon below. Therefore, we often abbreviate 
\begin{equation}
\label{eq:def_prob_k}
    \prob_k(\cdot) = \prob(\ \cdot \mid W_k=w_k) \qquad  \expec_k[ \cdot  ] = \expec[\ \cdot  \mid W_k=w_k].
\end{equation}
Lemma~\ref{lem:Toneended} also implies that $\M_k(\phi)$ is almost surely finite for fixed $k$. In the following theorem we argue that $\M_k(\phi)$ can be constructed within a finite number of steps:
\begin{theorem}[Construction algorithm $k$-cut IPC]
\label{thm:constrT}
The $k$-cut IPC consists of a path of $k+1$ backbone vertices, $(v_i)_{i=0}^k$, to which a collection of finite forests are attached.
This can be constructed \textit{algorithmically} based on the following properties: \\
$(W_k)_{k\geq0}$ evolves according to a Markov chain. Define $R(p)$ as 
\begin{equation}
    \label{eq:def_R(p)_intro}
    R(p) = 
    \frac{ \expec[ X(1-p\theta(p))^{X-1}]}{\theta'(p)},
\end{equation}
then the transition kernel for $W_k$ is given by
\begin{equation}
\label{eq:def_thm_wk}
\begin{aligned}
\prob(W_{k+1} = W_k| W_k) &= 1-R(W_k)\theta(W_k),\\
	\prob(W_{k+1} < u \mid W_k) &= R(W_k) \theta(u),  \qquad u\in[0,W_k],
\end{aligned}
\end{equation}
and $\prob(W_0<p) =\theta(p)$.
Then, given $W_k=w_k$, the degree of the $k$-th backbone vertex has distribution
\begin{equation}
\label{eq:def_thm_DBk}
	\prob_k(D_{v_k}=l)=
	\frac{l(1-w_k\theta(w_k))^{l-1}
		\prob(X=l)}{
		\expec_k[ X(1-w_k\theta(w_k))^{X-1}]}.
	\end{equation}
On each of the $D_{v_k}-1$ edges that do not continue the path to infinity, an independent BP tree is attached with offspring distribution $\tilde X^{(k)}$ that, conditioned on $W_k=w_k$, is empty with probability $1-w_k$ and has distribution 
\begin{equation}
\label{eq:def_tilde_X}
    \prob_k(\tilde X^{(k)}  = \ell) = \eta(w_k)^{\ell-1}\sum_{x\geq \ell} \binom{x}{\ell} w_k^\ell (1-w_k)^{x-\ell}\prob(X=x),
\end{equation}
with probability $w_k$.
\end{theorem}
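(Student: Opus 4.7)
The plan is to derive the three claims from a single structural decoupling. The starting point is the observation that, conditional on the IPC explored up to and including $v_k$, the subtree of $\mathcal{T}$ hanging off $v_k$ (i.e.\ rooted at $v_k$ with the edge to $v_{k-1}$ removed) is an independent weighted Bienaym\'e tree with offspring $X$ and i.i.d.\ $\Unif[0,1]$ edge weights. Introduce the \emph{escape weight} at a vertex $v$ by $\zeta(v):=\inf_{\pi}\max_{e\in\pi}w(e)$, where $\pi$ runs over infinite rays from $v$. A preliminary lemma identifies $W_k$ with $\zeta(v_k)$ computed in the subtree beyond $v_k$; equivalently, every edge invaded by Prim's algorithm after the invasion first reaches $v_k$ has weight at most $W_k$, and $W_k$ is attained along the (unique) backbone ray from $v_k$.

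With the escape-weight reformulation in hand, I would localise everything to the vertex $v_k$. Label its children in $\mathcal{T}$ as $u_1,\dots,u_l$ and write $(\omega_i,\zeta_i)$ for the independent pair consisting of the edge weight $\omega_i\sim\Unif[0,1]$ and the escape weight $\zeta_i=\zeta(u_i)$ of the subtree of $\mathcal{T}$ rooted at $u_i$. Because $\zeta(v_k)=\min_i\max(\omega_i,\zeta_i)$ and the argmin is almost surely unique, the event $\{\zeta(v_k)=w_k\}$ forces exactly one child $u_{i^\star}$ (to be identified with $v_{k+1}$) to satisfy $\max(\omega_{i^\star},\zeta_{i^\star})=w_k$, while the other $l-1$ children satisfy $\max(\omega_i,\zeta_i)>w_k$. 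Using $\prob(\max(\omega,\zeta)>p)=1-p\theta(p)$ (which follows from the independence of $\omega$ from the subtree and the definition of $\theta$) and a symmetry factor $l$ for the choice of backbone child, Bayes' rule immediately yields the size-biased law \eqref{eq:def_thm_DBk} for $D_{v_k}$.

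For the Markov kernel, I would further condition on the pair $(\omega_{i^\star},\zeta_{i^\star})$ on the one-dimensional curve $\{\max=w_k\}$ and use that $W_{k+1}=\zeta_{i^\star}$. The curve splits into $\{\omega_{i^\star}=w_k,\,\zeta_{i^\star}<w_k\}$ (producing $W_{k+1}<w_k$ with conditional density proportional to $\theta'(u)$ on $[0,w_k)$) and $\{\omega_{i^\star}<w_k,\,\zeta_{i^\star}=w_k\}$ (producing the atom $W_{k+1}=w_k$). Weighing these against the marginal density $f_{\zeta(v_k)}(w_k)=[\theta(w_k)+w_k\theta'(w_k)]\,\expec[X(1-w_k\theta(w_k))^{X-1}]$, one extracts the atom mass $1-R(w_k)\theta(w_k)$ and the absolutely continuous piece $R(w_k)\theta(u)$ on $[0,w_k]$, with the constant $R(w_k)$ from \eqref{eq:def_R(p)_intro} arising as the natural bookkeeping ratio; a short calculation gives the equivalent representation $R(p)=1/[\theta(p)+p\theta'(p)]$. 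The Markov property itself is automatic, since the subtree beyond $v_{k+1}$ is independent of the past once $W_{k+1}$ and $v_{k+1}$ are given.

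For the non-backbone subtrees, the $l-1$ i.i.d.\ pairs $(\omega_i,\zeta_i)$ conditioned on $\max(\omega_i,\zeta_i)>w_k$ partition into $\{\omega_i>w_k\}$ (the ``empty'' case in which the edge is too heavy to be invaded) and $\{\omega_i\le w_k,\,\zeta_i>w_k\}$ (in which the edge is invaded but the subtree of $\mathcal{T}$ below $u_i$ terminates at level $w_k$). In the latter case the attached IPC subtree is the percolation cluster of that subtree at level $w_k$ conditioned on extinction; its offspring law at each vertex follows from the standard extinction-conditioning identity $\prob(Y=\ell\mid\text{all children's subtrees die})=\prob(Y=\ell)\eta(w_k)^{\ell-1}$ with $Y=\Binom(X,w_k)$, recovering \eqref{eq:def_tilde_X}, and the two weights in the statement fall out by algebraically absorbing the normalising factor $(1-w_k\theta(w_k))^{-1}$ of the conditioning into the size-bias already present in \eqref{eq:def_thm_DBk}. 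The principal technical obstacle throughout is the initial identification $W_k=\zeta(v_k)$: one must prove by a monotonicity argument for Prim's algorithm that after reaching $v_k$ no invaded edge ever has weight above $W_k$, since any such edge would always be dominated on the boundary by the cheaper continuation along the backbone ray; once this structural lemma is in place the remaining steps are routine branching-process bookkeeping.
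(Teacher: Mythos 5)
Your proposal is correct and, at its core, follows the same route as the paper: the Bayes computation behind \eqref{eq:def_thm_DBk} (one child achieving ``exact survival'' at level $w_k$, with density $\hat\theta'(w_k)$, and $l-1$ children with $\max(\omega_i,\zeta_i)>w_k$, each of probability $1-\hat\theta(w_k)$) and the extinction-conditioning identity behind \eqref{eq:def_tilde_X} are precisely the paper's arguments. The genuine difference is one of self-containedness: the paper proves \eqref{eq:def_thm_wk}, the one-endedness, and the identification of the attached forests by citation to \cite{Michelen2019}, whereas you rederive the kernel from the decomposition of the event $\{\max(\omega_{i^\star},\zeta_{i^\star})=w_k\}$ into the atom piece $\{\omega_{i^\star}<w_k,\ \zeta_{i^\star}=w_k\}$ (density $w_k\theta'(w_k)$) and the jump piece $\{\omega_{i^\star}=w_k,\ \zeta_{i^\star}<w_k\}$ (density $\theta(w_k)$); your claimed identity $R(p)=1/(\theta(p)+p\theta'(p))=1/\hat\theta'(p)$ does follow from \eqref{eq:implicit_theta'}, and together with $\prob(\zeta<u)=\theta(u)$ this reproduces \eqref{eq:def_thm_wk} exactly. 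Two caveats. First, the structural input you flag yourself --- that $W_k=\zeta(v_k)$, that the infimum over rays is attained along the (unique) backbone, and that the forest in $\M_k$ attached to $v_k$ is exactly the level-$W_k$ planted percolation cluster of each non-backbone child --- is the only nontrivial part of the theorem; your one-sentence monotonicity sketch is not a proof of it, and this is precisely what the paper imports from \cite{Michelen2019}, so it must be either cited or proved in full. Second, your exact conditioning of a non-backbone child on $\{\max(\omega_i,\zeta_i)>w_k\}$ yields the empty/explored split $(1-w_k)/(1-\hat\theta(w_k))$ versus $w_k\eta(w_k)/(1-\hat\theta(w_k))$; these normalising factors do not literally cancel against the size-biasing in \eqref{eq:def_thm_DBk} to produce the weights $1-w_k$ and $w_k$ appearing in the statement (one can only trade the factor $(1-\hat\theta(w_k))^{-(l-1)}$ against the $(1-\hat\theta(w_k))^{l-1}$ in the degree law at the level of the \emph{joint, unnormalised} measure), so your ``absorbing'' step should be made explicit rather than asserted --- your careful computation in fact exposes a normalisation that the theorem's informal phrasing, and the paper's Algorithm (C3), gloss over.
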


\begin{figure} 
    \centering
   \includegraphics{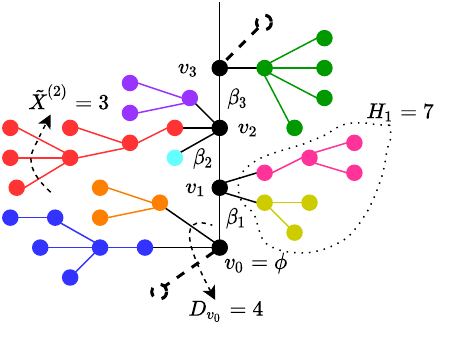}
    \caption{Example of a realisation of the $k$-cut IPC with the relevant definitions of objects discussed in Theorem~\ref{thm:constrT} and \eqref{eq:def_H_k_1}. Different coloured trees represent independent realisations of BP trees with offspring distribution $\tilde X^{(k)}$, as defined in \eqref{eq:def_tilde_X}. $(W_k)_{k \geq 0}$ is not explicitly given in the figure, but can be constructed with \eqref{Wk-process}. For given $k$, in case one of the $D_{v_k}-1$ edges that does not continue the path to infinity is assigned a weight larger than $W_k$, it is not included in the $k$-cut IPC, but it is counted in the degree distribution $D_{v_k}$.}
    \label{fig:kcut}
\end{figure}
In Section~\ref{sec:constructionIPC}, we provide the proof of Theorem~\ref{thm:constrT}. We show \eqref{eq:def_thm_wk} and \eqref{eq:def_thm_DBk} in Section~\ref{sec:Existence_properties_BB} and \eqref{eq:def_tilde_X} in Section~\ref{sec:FiniteForest_properties_BB}. By Theorem~\ref{thm:constrT}, we also provide an insightful construction algorithm for the $k$-cut IPC, in Algorithm \ref{alg:kcutIPC}.
The results are for $k$ fixed. In the following sections we analyse the behaviour for $k\to\infty$.

\paragraphi{Scaling limit of $(W_k)_{k\geq 0}$} 
The future maximum weights are critical to our analysis, as they indicate which weights are  eventually accepted. The behaviour for $W_k$ for finite $k$ is illustrated by \eqref{eq:def_thm_wk}, which we extend here to $k\to\infty$, by formalising the convergence speed of $W_k$ to $p_c$ and the limiting stochastic process of the appropriately scaled $(W_{\lceil kt\rceil})_{k\geq 0}$ process.
We define 
\begin{equation}
    \label{eq:def_halpha}
    \halpha= \min \{ \alpha,2\},
\end{equation}
and introduce the following limiting stochastic process:
\begin{definition}[$\alpha$-Enhanced lower envelope process]
	\label{def:alphaLEP}	
	For $\alpha\in(1,2)$ or $\alpha>2$, we recall $\halpha$ from \eqref{eq:def_halpha}. For a given $\varepsilon>0$, we define the $\alpha$-enhanced lower envelope process ($\alpha$-LEP) as follows: $(L_{\halpha}(t))_{t\geq \varepsilon}$ as a continuous-time left-continuous jump process, with random starting point at time $\varepsilon$ distributed as $\GammaD(1/(\halpha-1),\varepsilon)$.
	For some $t_{0}\geq\varepsilon$, $L_{\halpha}(t_{0})$ remains constant for an $\Exp(L_{\halpha}(t_{0}))$ amount of time after which it jumps down to the random position $\Unif[0,1]^{\halpha-1}L_{\halpha}(t_{0})$. 
\end{definition}

The $\alpha$-LEP is a generalisation of the lower-envelope process, defined by  \cite[Equation (1.19)]{Angel2008}. Indeed for $\halpha=2$, $L_2(t)$ defines the known lower-envelope process.
Moreover, $L_2(t)$ can be defined on a Poisson point process, so that deriving the pointwise distribution follows intuitively. For $\alpha\in(1,2)$, there still is a connection to the Poisson point process (see Remark \ref{rem:alphaLEP_on_SPPP}), but the pointwise distribution does not follow from this observation. Instead we resort to a proof based on a differential equation and find that $(L_{\halpha(t)})_{t\geq\varepsilon}$ is pointwise gamma distributed in Lemma~\ref{lem:RE_alpha<2_gamma}.

\begin{remark}[$\alpha$-LEP on the Poisson point process]
\label{rem:alphaLEP_on_SPPP}
 We can define the $\alpha$-LEP in an alternative way using a Poisson point process $\mathcal{P}$. Consider a realisation of $\mathcal{P}$ on the first quadrant and start $L_{\halpha}(\varepsilon)$ at height $\GammaD(1/(\halpha-1),\varepsilon)$. Then, for $t>\varepsilon$, we define the next lower point by
$ (\tau,x) $, where 
$$\tau = 
\min_{u>t}\{(u,x)\in \mathcal{P}, x<L_{\halpha}(t)\},$$
and $x$ the corresponding height so that $(\tau,x)\in\mathcal{P}$.
 Then for all $t>\varepsilon$,  $L_{\halpha}(t)$ remains constant until time $\tau$
 where it then jumps to $x^{\halpha-1} / L_{\halpha}(t)^{\halpha-2} $. \\
 For $\halpha=2$, this corresponds to jumping to $x$, whereas for $\halpha\in(1,2)$, jumps are smaller.
 We refer to Figure \ref{fig:alpha-LEP} for an illustration of different realisations of sample paths of the $\alpha$-LEP for different values of $\alpha$.
\end{remark}
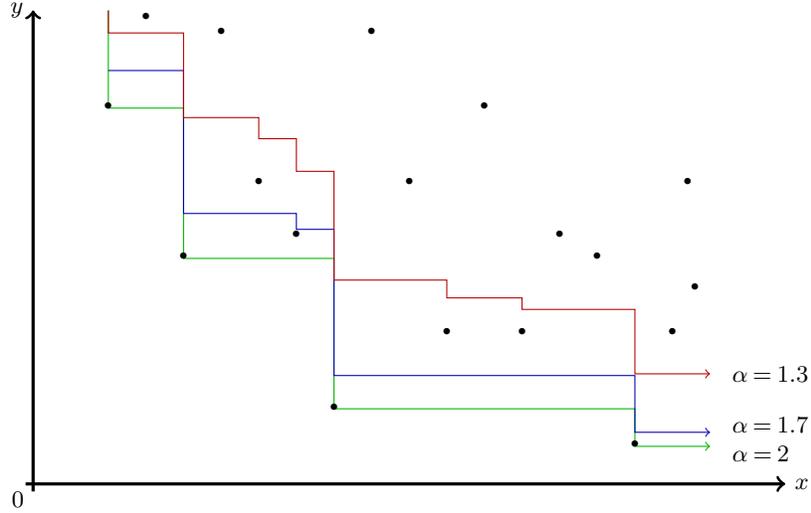
\begin{figure}
    \centering
       
\begin{tikzpicture}
  \draw[->, very thick] (-0.1, 0) -- (10, 0) node[right] {$x$};
  \draw[->,very thick] (0, -0.1) -- (0, 6.3) node[left] {$y$};
  \node at (-0.2,-0.2) {$0$};
  
  \draw[->, black!30!green] (1,6.3) -- (1,5) -- (1,5) -- (2,5) -- (2,5) -- (2,3) -- (2,3) -- (4,3) -- (4,3) -- (4,1) -- (4,1) -- (8,1) -- (8,1) -- (8,0.5) -- (8,0.5) -- (9,0.5) ;
  \node at (9.675,0.4) {$\alpha=2$};
  
  \draw[->, black!30!blue]  ( 1 , 5.5 )--   ( 1.5 , 5.5 ) -- ( 1.5 , 5.5 )--   ( 1.5 , 5.5 ) -- ( 1.5 , 5.5 )  -- ( 2 , 5.5 ) -- ( 2 , 5.5 ) --  ( 2 , 3.598 ) -- ( 2 , 3.598 )  -- ( 2.5 , 3.598 ) -- ( 2.5 , 3.598 )  -- ( 2.5 , 3.598 ) -- ( 2.5 , 3.598 ) --  ( 3 , 3.598 )--  ( 3 , 3.598 ) --  ( 3 , 3.598 ) -- ( 3 , 3.598 ) --  ( 3.5 , 3.598 ) -- ( 3.5 , 3.598 ) --  ( 3.5 , 3.387 ) -- ( 3.5 , 3.387 ) --  ( 4 , 3.387 ) -- ( 4 , 3.387 )  -- ( 4 , 1.442 ) -- ( 4 , 1.442 )  -- ( 4.5 , 1.442 ) -- ( 4.5 , 1.442 ) --  ( 4.5 , 1.442 )--  ( 4.5 , 1.442 ) --  ( 5 , 1.442 ) -- ( 5 , 1.442 ) --  ( 5 , 1.442 ) -- ( 5 , 1.442 )--   ( 5.5 , 1.442 )--  ( 5.5 , 1.442 ) --  ( 5.5 , 1.442 ) -- ( 5.5 , 1.442 )--   ( 6 , 1.442 ) -- ( 6 , 1.442 )  -- ( 6 , 1.442 )--  ( 6 , 1.442 ) --  ( 6.5 , 1.442 ) -- ( 6.5 , 1.442 ) --  ( 6.5 , 1.442 )--  ( 6.5 , 1.442 ) --  ( 7 , 1.442 )--  ( 7 , 1.442 ) --  ( 7 , 1.442 ) -- ( 7 , 1.442 ) --  ( 7.5 , 1.442 )--  ( 7.5 , 1.442 ) --  ( 7.5 , 1.442 ) -- ( 7.5 , 1.442 ) --  ( 8 , 1.442 ) -- ( 8 , 1.442 )--   ( 8 , 0.687 )  --( 8 , 0.687 ) --  ( 8.5 , 0.687 ) -- ( 8.5 , 0.687 ) --  ( 8.5 , 0.687 ) -- ( 8.5 , 0.687 )  -- ( 9 , 0.687 );
    \node at (9.8,0.787) {$\alpha=1.7$};

  \draw[->, black!30!red]
  (1,6.3)--( 1 , 6 )  -- ( 1.5 , 6 )-- ( 1.5 , 6 )   --( 1.5 , 6 ) -- ( 1.5 , 6 ) --  ( 2 , 6 ) -- ( 2 , 6 ) --  ( 2 , 4.874 )--  ( 2 , 4.874 )  -- ( 2.5 , 4.874 ) -- ( 2.5 , 4.874 )--  ( 2.5 , 4.874 ) -- ( 2.5 , 4.874 ) --  ( 3 , 4.874 )--  ( 3 , 4.874 ) --  ( 3 , 4.593 ) -- ( 3 , 4.593 )  -- ( 3.5 , 4.593 ) -- ( 3.5 , 4.593 ) --  ( 3.5 , 4.159 )-- ( 3.5 , 4.159 ) --  ( 4 , 4.159 )  --( 4 , 4.159 ) --  ( 4 , 2.712 )--  ( 4 , 2.712 ) --  ( 4.5 , 2.712 ) -- ( 4.5 , 2.712 )  -- ( 4.5 , 2.712 )  --( 4.5 , 2.712 )  -- ( 5 , 2.712 )  ( 5 , 2.712 ) --  ( 5 , 2.712 ) -- ( 5 , 2.712 )  -- ( 5.5 , 2.712 ) -- ( 5.5 , 2.712 ) --  ( 5.5 , 2.475 ) -- ( 5.5 , 2.475 ) --  ( 6 , 2.475 )--  ( 6 , 2.475 )   --( 6 , 2.475 ) -- ( 6 , 2.475 )  -- ( 6.5 , 2.475 ) -- ( 6.5 , 2.475 ) --  ( 6.5 , 2.322 ) -- ( 6.5 , 2.322 ) --  ( 7 , 2.322 ) -- ( 7 , 2.322 )  -- ( 7 , 2.322 )--  ( 7 , 2.322 )  -- ( 7.5 , 2.322 ) -- ( 7.5 , 2.322 )  -- ( 7.5 , 2.322 ) -- ( 7.5 , 2.322 )--   ( 8 , 2.322 ) -- ( 8 , 2.322 )  -- ( 8 , 1.465 ) -- ( 8 , 1.465 )  -- ( 8.5 , 1.465 ) -- ( 8.5 , 1.465 )  -- ( 8.5 , 1.465 )--  ( 8.5 , 1.465 ) --  ( 9 , 1.465 );
  \node at (9.8,1.465) {$\alpha=1.3$};
  
   \foreach \Point in {(1,5),(1.5,6.2), (2,3), (2.5,6), (3,4), (3.5,3.3), (4,1), (4.5,6), (5,4), (5.5,2), (6,5), (6.5,2), (7,3.3), (7.5,3), (8,0.5), (8.5,2), (8.8,2.6), (8.7,4)}{
    \node at \Point {\textbullet};
  }
  
\end{tikzpicture}
    \caption{Realisation of a Poisson point process, with the corresponding $\alpha$-LEP processes for different values of $\alpha$.  }
    \label{fig:alpha-LEP}
\end{figure}

For $\alpha>2$ and $\alpha\in(1,2)$ and under proper scaling, the future maximum weights converge to the $\alpha$-enhanced lower envelope process in the $J_1$-topology. For $\alpha\in(0,1)$, instead, the future maximum weights converge to a deterministic process in the $J_1$-topology. This is formalised in the following theorem:

\begin{theorem}[Scaling behaviour of the $W_k$ process]
	\label{thm-subthmWk}
The future maximum weight process $(W_{k})_{k\geq 0}$ has the following scaling limits in the three different regimes as $k\to\infty$ and for $\varepsilon>0$:
\begin{description}
	\item[\qquad Finite-variance offspring.]
	Fix $\alpha>2$.
	Let $(L_2(t))_{t\geq \varepsilon}$ denote the classical lower-envelope process, as defined in Definition \ref{def:alphaLEP}. Then
	\begin{equation}
	\label{eq:main_thm_alpha>2}
	\Bigg(\frac{k (W_{\lceil kt \rceil} -p_c)}{p_c}\Bigg)_{t\geq\varepsilon}
	\convdJ \big(L_2(t)\big)_{t\geq \varepsilon};
	\end{equation}
	\item[\qquad Finite-mean, infinite-variance offspring.]
	 Fix $\alpha\in(1,2)$. Let $(L_{\halpha}(t))_{t\geq \varepsilon}$ denote the $\alpha$-enhanced lower envelope process as described in  
	Definition \ref{def:alphaLEP}. Then
	\begin{equation}
		\label{eq:main_thm_1<alpha<2}
	\Bigg(\frac{k (W_{\lceil kt \rceil}-p_c)}{p_c}(\alpha-1) \Bigg)_{t\geq \varepsilon}
    \convdJ (L_{\halpha}(t))_{t\geq \varepsilon};
	\end{equation}
	\item[\qquad Infinite-mean offspring.]
	Fix $\alpha\in(0,1)$. Then 
	\begin{equation}
		\label{eq:main_thm_alpha<1}
	\Bigg(\Big( W_{\lceil kt\rceil}\Big)^{1/k} \Bigg)_{t\geq \varepsilon}
	\convdJ \big(\e^{-t(1-\alpha)^2/\alpha}\big)_{t\geq \varepsilon}.
	\end{equation}
\end{description}
Here, we recall the convergence $\convdJ$ in the $J_1$-topology as introduced in Theorem~\ref{thm-main}.
\end{theorem}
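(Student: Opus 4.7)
The plan is to analyse the Markov transition kernel \eqref{eq:def_thm_wk} of $(W_k)_{k\geq 0}$ in each regime, beginning with a unified generating-function analysis of $\theta(p)$ and $R(p)$. From the fixed-point equation $1-\theta(p)=\expec[(1-p\theta(p))^X]$ one extracts --- by Taylor expansion for $\alpha>2$, by matching the power-law singularity of the probability generating function for $\alpha\in(1,2)$, and by a Tauberian argument applied to $1-G(1-y)\sim\Gamma(1-\alpha)c_{\sss X}y^{\alpha}$ for $\alpha\in(0,1)$ --- the asymptotics
\[
\theta(p)\sim\begin{cases}C_1(p-p_c),&\alpha>2,\\ C_2(p-p_c)^{1/(\alpha-1)},&\alpha\in(1,2),\\ A\,p^{\alpha/(1-\alpha)},&\alpha\in(0,1).\end{cases}
\]
A direct computation with $R(p)=G'(1-p\theta(p))/\theta'(p)$ then gives $R(p)\theta(p)=(\halpha-1)(p-p_c)/p_c+o(p-p_c)$ for $\alpha>1$ and $R(p)\theta(p)\to 1-\alpha$ for $\alpha\in(0,1)$. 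Moreover, the conditional jump law $\prob_k(W_{k+1}<u\mid\text{jump})=\theta(u)/\theta(W_k)$ simplifies to $W_{k+1}-p_c=U^{\halpha-1}(W_k-p_c)$ in the first two regimes and $W_{k+1}=U^{(1-\alpha)/\alpha}W_k$ in the third, with $U\sim\Unif[0,1]$.

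For $\alpha>2$ and $\alpha\in(1,2)$, introduce $Y_k^{(\halpha)}=(\halpha-1)k(W_k-p_c)/p_c$. The translated transitions say: $Y^{(\halpha)}$ stays constant with probability $1-Y_k^{(\halpha)}/k+o(1/k)$ and otherwise jumps to $U^{\halpha-1}Y_k^{(\halpha)}$. This is precisely the step-size-$1/k$ discrete-time analogue of the continuous-time pure-jump process with generator
\[
\mathcal{L}f(y)=y\int_0^1\bigl[f(u^{\halpha-1}y)-f(y)\bigr]\,\dif u,
\]
whose sample paths coincide with those of the $\halpha$-LEP of Definition~\ref{def:alphaLEP}. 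I would then establish functional convergence on $D[\varepsilon,\infty)$ in the $J_1$-topology by a generator-convergence (Ethier--Kurtz type) argument, with tightness secured by the monotonicity of $(W_k)_{k\geq 0}$ and stochastic domination by the initial state. The remaining point is the initial marginal at time $\varepsilon$: I would derive a forward ODE for the density of $L_{\halpha}(t)$ and verify directly that the gamma density with shape $1/(\halpha-1)$ and rate $\varepsilon$ solves it, so that $Y_{\lceil k\varepsilon\rceil}^{(\halpha)}\xrightarrow{d}\GammaD(1/(\halpha-1),\varepsilon)$, as referenced in Lemma~\ref{lem:RE_alpha<2_gamma}.

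For the infinite-mean regime, $\log W_k$ is a Markov chain whose one-step increment equals $0$ with probability $\alpha+O(W_k^{\delta})$ and $\tfrac{1-\alpha}{\alpha}\log U$ with probability $1-\alpha+O(W_k^{\delta})$ for some $\delta>0$, the latter conditional distribution being independent of $W_k$. Since $W_k\to 0$ at geometric rate, the $O(W_k^{\delta})$ corrections are summable along a typical path, and a Markov-chain strong law of large numbers gives
\[
\frac{1}{k}\log W_{\lceil kt\rceil}\convas -t\,\frac{(1-\alpha)^2}{\alpha}
\]
uniformly on compact subsets of $[\varepsilon,\infty)$. Exponentiating, and using that uniform convergence to a continuous deterministic limit upgrades automatically to $J_1$-convergence, yields \eqref{eq:main_thm_alpha<1}.

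The principal obstacle is the $\alpha\in(1,2)$ case: unlike the classical LEP, the $\halpha$-LEP admits no completely clean spatial-Poisson representation (the jumps in Remark~\ref{rem:alphaLEP_on_SPPP} are shrunk relative to the ambient Poisson marks), and the pointwise marginal becomes gamma rather than exponential --- so both the functional convergence and the pointwise identification rely on the ODE analysis to which Lemma~\ref{lem:RE_alpha<2_gamma} is devoted. A secondary technical difficulty is ensuring that the asymptotic identities for $\theta$, $\theta'$ and $R\theta$ hold with the claimed $o$-errors uniformly over the range of $W_k$ visited before time $\lceil kt\rceil$, which is what closes the generator-convergence estimate at the required $o(1/k)$ level.
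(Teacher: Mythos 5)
Your overall decomposition --- asymptotics of $\theta$, $\theta'$ and $R\theta$ from the fixed-point equation and Tauberian theorems, reduction of the jump chain to ``stay w.p.\ $1-Y/k$, otherwise multiply by $U^{\halpha-1}$'', identification of the limit for $\alpha>1$ as the $\halpha$-LEP, and a law-of-large-numbers argument for $\alpha\in(0,1)$ --- matches the paper's. Where you genuinely diverge is in the functional-convergence machinery: the paper does not use generator convergence. Instead it sandwiches $(V_k)$ between two explicit perturbed chains $(\tilde V_k(\pm\eta))$ (Lemma~\ref{lem:A-Jumpchain_(1,2)}), Poissonises these to obtain the $\alpha$-GLEPs $L_{\pm\eta,\halpha}$ (Lemma~\ref{lem-lim-reduced-env-process}), computes the marginal of the GLEP exactly by Laplace transforms (Lemma~\ref{lem:RE_alpha<2_gamma}), and then lets $\eta,\xi\to0$. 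The coupling route buys uniform-in-$t$ two-sided bounds for free, which is precisely what makes the finite-dimensional convergence and tightness steps short; your Ethier--Kurtz route would instead have to control the unbounded jump rate $y$ and the non-uniform $o(1)$ errors in $R(W)\theta(W)\sim V_k$ by a localisation argument --- the ``secondary difficulty'' you flag at the end is exactly the work the $\pm\eta$ sandwich does in the paper.

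There is, however, one genuine gap: the identification of the marginal at time $\varepsilon$. Verifying that the gamma density with shape $1/(\halpha-1)$ and rate $\varepsilon$ solves the forward equation of the \emph{limit} process only shows that the gamma family is preserved by the limiting dynamics; it does not show that $Y^{(\halpha)}_{\lceil k\varepsilon\rceil}$ converges to that gamma. The prelimit chain starts at $V_0=O(1)$, i.e.\ $Y_0=kV_0=O(k)$, so the rescaled initial condition diverges, and what must be proved is that the chain, run for $k\varepsilon$ steps from this divergent starting point, forgets its initial value and enters the gamma family --- an entrance-law-from-infinity statement. In the paper this is exactly where the self-similarity $kL_{\eta,\halpha}(kt)\overset{d}{=}L_{\eta,\halpha}(t)$ (from \eqref{eq:gamma_scaling}) combines with the Poissonised coupling and the a.s.\ finite calibration times $n_1,\tau_1,\tau_2$ of \eqref{eq:def_n1_stoptime} and \eqref{eq:def_tau_stoptime}, which wash out in the limit. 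Without some version of this step, your generator-convergence argument has no convergent initial distribution to start from, and the conclusion \eqref{eq:main_thm_1<alpha<2} does not follow. The $\alpha\in(0,1)$ part of your proposal is sound and essentially coincides with the paper's (the paper phrases the LLN via Laplace transforms of the Poissonised chain, but the content is the same).
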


The results in \eqref{eq:main_thm_alpha>2} and \eqref{eq:main_thm_1<alpha<2} are proved in Section~\ref{sec:Scaling_W_k_alpha>1}, where the proof is derived from the aforementioned literature for $\alpha>2$.
The result in \eqref{eq:main_thm_alpha<1} is proved in Section~\ref{sec:Scaling_W_k_alpha_in_0,1}. 

\paragraphi{Degree of backbone vertices} The degree distribution of $v_k$, denoted by $D_{v_k}$, deviates from the distribution of $X$ as backbone vertices are conditioned on having \emph{exactly} one offspring in the unique path to infinity. 
For finite $k$, this distribution is explicit and regime free (see \eqref{eq:def_thm_DBk} for details). We continue to study its asymptotics as $k\to\infty$: 

\begin{theorem}[Scaling behaviour of the degrees along the backbone]
	\label{thm-subthmDBK}
	Let $\theta(p)$ denote the survival probability of the percolated BP tree with probability $p$, offspring distribution $X$ satisfying \eqref{eq:powerlaw} and define $\hat{\theta}(p)=p\theta(p)$ as in \eqref{eq:def_hattheta}.
	Then $D_{v_k}$ and its size-biased distribution $D^\star_{v_k}$ have the following scaling as $k\to\infty$:
	\begin{description}
		\item[\qquad Finite-variance offspring.]
		Fix $\alpha>2$. By Definition \ref{def:size-biased}, let $X^\star$ and $X^{\star\star}$ be the size-biased and doubly size-biased distributions of $X$ respectively. Then
		\begin{equation}
		D_{v_k} \xrightarrow{d} X^\star,
		\qquad \text{ and } 
		\qquad 
		D^\star_{v_k} \xrightarrow{d} X^{\star\star},
		\end{equation}
	 where $\expec[X^\star]<\infty$. 
		\item[\qquad Finite-mean, infinite-variance offspring.]
		Fix $\alpha\in(1,2)$. Let $X^\star$ be the size-biased distribution of $X$. Then
		\begin{equation}
		D_{v_k}\xrightarrow{d}  X^\star,
		\qquad
		\text{and}
		\qquad 
		\hat\theta(w_k) D^\star_{v_k} 
		\xrightarrow{d}
		\GammaD(2-\alpha,1),
		\end{equation}
		where $\expec[ X^\star]=\infty$.
		\item[\qquad Infinite-mean offspring.]
		Fix $\alpha\in(0,1)$. Then
			\begin{equation}
	\hat{\theta}(W_k) D_{v_k} \xrightarrow{d} \texttt{Gamma}(1-\alpha,1).
		\end{equation}
	\end{description}
\end{theorem}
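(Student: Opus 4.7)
The plan is to work directly from the explicit conditional pmf in \eqref{eq:def_thm_DBk}. Conditional on $W_k=w_k$, write $\hat\theta:=\hat\theta(w_k)=w_k\theta(w_k)$, so that
\[\prob_k(D_{v_k}=\ell)=\frac{\ell(1-\hat\theta)^{\ell-1}\prob(X=\ell)}{\sum_{m\geq 1}m(1-\hat\theta)^{m-1}\prob(X=m)},\]
and $\prob_k(D^\star_{v_k}=\ell)$ carries an additional factor $\ell$ in the numerator. By Lemma~\ref{lem:ConvW_kpc}, $W_k\to p_c$ a.s., so $\hat\theta\downarrow 0$ in every regime: $\theta(p_c)=0$ when $\alpha>1$, and $w_k\to p_c=0$ when $\alpha<1$. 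Since each limiting law is deterministic, conditional convergence as $\hat\theta\downarrow 0$ transfers to unconditional convergence by bounded convergence on characteristic functions, combined with Theorem~\ref{thm-subthmWk}.

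The unscaled statements for $D_{v_k}$ only require $\expec[X]<\infty$: dominated convergence with majorant $m\prob(X=m)$ sends the denominator to $\expec[X]$ while the numerator converges pointwise to $\ell\prob(X=\ell)$, yielding $\prob(X^\star=\ell)$ in the limit for both $\alpha>2$ and $\alpha\in(1,2)$; Scheff\'e's lemma promotes this to convergence in distribution. When $\alpha>2$, $\expec[X^2]<\infty$, so the identical argument with majorant $m^2\prob(X=m)$ applied to the size-biased pmf gives $D^\star_{v_k}\xrightarrow{d}X^{\star\star}$.

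The two heavy-tail statements follow from a Laplace/Tauberian analysis of sums of the form
\[S_p(\hat\theta):=\sum_{\ell\geq 1}\ell^{p-\alpha-1}(1-\hat\theta)^{\ell-1},\qquad p>\alpha,\]
which, under the power-law tail \eqref{eq:powerlaw} giving $\prob(X=\ell)\sim c_X\alpha\,\ell^{-\alpha-1}$, controls $\expec_k[X^p(1-\hat\theta)^{X-1}]$. The substitution $u=\ell\hat\theta$ turns $S_p(\hat\theta)$ into a Riemann sum for $\int_0^\infty u^{p-\alpha-1}\e^{-u}\dif u=\Gamma(p-\alpha)$, so $S_p(\hat\theta)\sim\hat\theta^{\alpha-p}\Gamma(p-\alpha)$. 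Taking $p=2$ for $\alpha\in(1,2)$, inserting into the size-biased pmf, substituting $\ell=x/\hat\theta$, and dividing by $\hat\theta$ to pass to a density in $x$, yields $x^{1-\alpha}\e^{-x}/\Gamma(2-\alpha)$, the $\GammaD(2-\alpha,1)$ density. Taking $p=1$ for $\alpha\in(0,1)$, where $\expec[X]=\infty$ so that the normalising denominator itself diverges like $\hat\theta^{\alpha-1}$, the same change of variables converts $\prob_k(D_{v_k}=\ell)/\hat\theta$ into $x^{-\alpha}\e^{-x}/\Gamma(1-\alpha)$, the $\GammaD(1-\alpha,1)$ density.

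The main technical hurdle is the uniform control of the $(1+o(1))$ error in \eqref{eq:powerlaw} inside the Laplace approximation: one must split $S_p(\hat\theta)$ at a cutoff $\ell_0$ growing slower than $1/\hat\theta$, bound the low-$\ell$ contribution as $o(\hat\theta^{\alpha-p})$ by elementary estimates, and compare the high-$\ell$ contribution with the explicit Laplace integral using the power-law asymptotics uniformly in $\ell\geq\ell_0$. A secondary subtlety, relevant to the $\alpha\in(0,1)$ case, is that the scaling factor $\hat\theta(W_k)$ is itself random; this is resolved by combining the conditional convergence with $\hat\theta(W_k)\convp 0$ inherited from Theorem~\ref{thm-subthmWk}, so that Slutsky's theorem upgrades the conditional limit to the stated unconditional convergence in distribution.
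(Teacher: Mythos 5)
Your overall strategy matches the paper's: start from the explicit conditional pmf \eqref{eq:def_thm_DBk}, pass to the limit $\hat\theta(w_k)\downarrow 0$, and for the heavy-tail regimes scale by $\hat\theta$ and run a Laplace-type approximation of the relevant weighted sums. The unscaled statements via dominated convergence plus Scheff\'e are correct and essentially what the paper does in Lemmas~\ref{lem:Backbone_Degree_to_Size_Biased}, \ref{lem:DegreeBB_expec_alpha>2} and \ref{lem:limiting_distr_Dbk_alpha>2}, and your remark about absorbing the randomness of $\hat\theta(W_k)$ via Slutsky is sound.

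There is, however, a genuine gap in the heavy-tail part. You write that \eqref{eq:powerlaw} ``gives $\prob(X=\ell)\sim c_X\alpha\,\ell^{-\alpha-1}$''; this does not follow. The assumption \eqref{eq:powerlaw} controls only the tail $1-F_X(x)=c_X x^{-\alpha}(1+o(1))$ and imposes no local regularity on the point masses, which may oscillate (e.g.\ vanish on all even integers while the tail remains regularly varying). Consequently, the identification of $\sum_\ell \ell^p(1-\hat\theta)^{\ell-1}\prob(X=\ell)$ with a Riemann sum for $\int_0^\infty u^{p-\alpha-1}\e^{-u}\dif u$ is not justified as written. The final asymptotic $S_p(\hat\theta)\sim\hat\theta^{\alpha-p}\Gamma(p-\alpha)c_X\alpha$ is nevertheless correct, but for a different reason: it is a Tauberian/Abelian statement about the derivatives of the generating function under a regularly-varying tail, not a consequence of pmf regular variation. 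To close the gap you should either (a) use Abel summation to convert $\sum_\ell f(\ell)\prob(X=\ell)$ into a sum over $\prob(X>\ell)$ before applying the Laplace/Riemann-sum argument, as the paper does in \apref{C.1}{sec:app_mass_tail} via Lemma~\ref{lem:app_mass_tail_lem1}, or (b) quote a Tauberian theorem for power series directly, as the paper does through \cite[Equation~8.1.12]{Bingham1987} in Lemma~\ref{lem:rare_degree_Vertices_infinite_Var} and Proposition~\ref{prop-degrees-BB-(0,1)}. With that repair, your computation of the limiting $\GammaD(2-\alpha,1)$ and $\GammaD(1-\alpha,1)$ densities goes through.
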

If the edge between $v_k$ and the root of an attached tree $\tilde T^{(k)}$ has a weight larger than $W_k$, then the tree is not included in the IPC. Therefore the \textit{effective degree} of $v_k$, given by $\hat D_{v_k}=\Binom(D_{v_k},w_k)$, is also relevant for the volume scaling.

\paragraphi{Volume of conditionally finite forests}
We denote $H_k$ for the size of the finite forest attached to $v_k$, i.e.,
\begin{equation}
\label{eq:def_H_k_1}
    H_k = M_k - M_{k-1} -1.
\end{equation}
As the $k$-cut IPC discovers $k$ finite forests that consist of $\hat D_{v_k}$ trees each, we are specifically interested in the events where $H_k$ is exceptionally large. For $\alpha>2$ or $\alpha\in(1,2)$, the size of these forests is such that they occur with probability $O(k^{-1})$:

\begin{theorem}[Forest size scaling]
\label{thm:Cs_scaling}
Let $H_k$ be defined in \eqref{eq:def_H_k_1} and recall the FMW sequence $(W_k)_{k\geq 0}$ as defined in \eqref{Wk-process}. Then the following scaling limit results hold:
\begin{description}
    \item[\qquad  Finite-variance offspring.]
    Fix $\alpha>2$ and $h_\alpha(y,a)$ as given in \eqref{eq:def_h_alpha>2}. Then, for any $a>0$,
    \begin{equation}
    \label{eq:TP_scaling_finite_Variance}
   \lim_{k\rightarrow \infty} k\prob(H_k>k^2 x\mid W_k = p_c(1+a/k) )=\int_x^{\infty} h_\alpha(y,a)\dif y;
    \end{equation}
    \item[\qquad Finite-mean, infinite-variance offspring.]
   Fix $\alpha\in(1,2)$ and  $h_\alpha(y,a)$ as given in \eqref{eq:def_h_alpha(1,2)}. Then, for any $a>0$,
    \begin{equation}
     \label{eq:TP_scaling_infinite_Variance}
            \lim_{k\rightarrow \infty} k\prob(H_k>k^{\alpha/(\alpha-1)} x\mid  W_k = p_c(1+a/k))=\int_x^{\infty} h_\alpha(y,a)\dif y;
    \end{equation}
    \item[\qquad  Infinite-mean offspring.]
    Fix $\alpha\in(0,1)$ and  $h_\alpha(y)$ as given in \eqref{eq:def_h_alpha(0,1)}. Then
    \begin{equation}
    \label{eq:TP_scaling_infinite_mean}
        \lim_{k\rightarrow \infty} 
        w_k^{-\alpha/(1-\alpha)}
        \prob(H_k>w_k^{-\alpha/(1-\alpha)} x\mid W_k=w_k )=\int_x^{\infty}  h_\alpha(y)\dif y.
    \end{equation}
\end{description}
\end{theorem}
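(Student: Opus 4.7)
The plan is to leverage Theorem~\ref{thm:constrT}: conditional on $W_k = w_k$, the forest attached to $v_k$ decomposes as $H_k = \sum_{i=1}^{D_{v_k}-1} |T_i|$, where each $T_i$ is independent and is either empty (with probability $1 - w_k$) or a Bienaym\'e tree with offspring distribution $\tilde X^{(k)}$ from \eqref{eq:def_tilde_X}. A short generating-function computation yields $m(w_k) := \expec[\tilde X^{(k)}] = w_k G'_X(1 - w_k \theta(w_k))$, so the effective branching process is subcritical with gap $1 - m(w_k)$ controlled by the distance of $W_k$ to $p_c$ quantified in Theorem~\ref{thm-subthmWk}. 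The central device will be the forest Dwass--Otter identity,
\begin{equation*}
    \prob\Bigl(\sum_{i=1}^d |T_i| = n \Bigr) = \frac{d}{n}\,\prob(S_n = -d),
\end{equation*}
where $(S_n)_{n \geq 0}$ is the random walk with increments $\tilde X^{(k)} - 1$; this reduces each regime to a random-walk fluctuation problem at the correct scale, to be combined with control of $D_{v_k}$ from Theorem~\ref{thm-subthmDBK}.

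For \textbf{finite-variance offspring} ($\alpha > 2$), $\tilde X^{(k)}$ has finite variance $\sigma_k^2 \to \sigma^2$ and mean gap $1 - m(w_k) = c_\alpha a/k + o(1/k)$. A Gaussian local limit theorem with a Cram\'er tilt then gives, at $n = k^2 y$,
\begin{equation*}
    \prob(S_n = -1) = \frac{1+o(1)}{\sigma\sqrt{2\pi n}}\,\exp\!\Bigl(-\frac{c_\alpha^2 a^2 y}{2\sigma^2}\Bigr),
\end{equation*}
so $\prob(|T| > k^2 x) = k^{-1} \int_x^\infty \phi(y,a)\,\dif y + o(k^{-1})$ for an explicit density $\phi$. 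Since the number of non-empty subtrees is tight (Theorem~\ref{thm-subthmDBK}), a second-moment / Poisson-approximation argument identifies the limit in \eqref{eq:TP_scaling_finite_Variance} as $\expec[\hat D_\infty]\int_x^\infty \phi(y,a)\,\dif y$, pinning down $h_\alpha(y,a)$.

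The \textbf{infinite-variance regime} ($\alpha \in (1,2)$) is the main obstacle, because two mechanisms both contribute at order $1/k$: either a single subtree reaches atypical size $\asymp k^{\alpha/(\alpha-1)}$, or $D_{v_k}$ itself spikes to $\asymp k^{1/(\alpha-1)}$ (by the power-law tail underlying Theorem~\ref{thm-subthmDBK}), in which case $\hat D_{v_k}$ typical subtrees accumulate to the right total. I plan to treat both mechanisms uniformly via the forest Dwass--Otter identity together with a stable local limit theorem of the form $\prob(S_n = -d) \asymp n^{-1/\alpha}\, f_\alpha\bigl((d + n \tilde b/k)/n^{1/\alpha}\bigr)$, valid uniformly on $d = O(n^{1/\alpha})$. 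Integrating the resulting joint density over $(d, n)$ at the scales $d \asymp k^{1/(\alpha-1)}$ and $n \asymp k^{\alpha/(\alpha-1)}$ should produce the density $h_\alpha(y,a)$ in \eqref{eq:TP_scaling_infinite_Variance}. The hardest step will be obtaining a uniform stable LCLT with the appropriate small-deviation correction and showing that simultaneous atypicality of $D_{v_k}$ and of the fluctuations of $(S_n)$ does not contribute double-counted mass at the same order.

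For \textbf{infinite-mean offspring} ($\alpha \in (0,1)$), Theorem~\ref{thm-subthmWk} shows that $W_k$ is exponentially small, so the effective BP is \emph{strongly} subcritical and the classical single-big-jump principle for subexponential offspring applies: $\prob(|T| > n) \sim \expec[|T|]\,\prob(\tilde X^{(k)} > n)/(1 - m(w_k))$. A direct computation from \eqref{eq:def_tilde_X} gives $\prob(\tilde X^{(k)} > n) \sim C\,w_k^\alpha\, n^{-\alpha}$, and $n = w_k^{-\alpha/(1-\alpha)} x$ is precisely the scale at which this tail balances against $1 - m(w_k) \asymp w_k$ to yield a non-degenerate limit. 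Combining this per-subtree tail with the gamma limit of $\hat D_{v_k}$ from Theorem~\ref{thm-subthmDBK} then recovers \eqref{eq:TP_scaling_infinite_mean}.
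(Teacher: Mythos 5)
Your setup (the forest decomposition from Theorem~\ref{thm:constrT} plus the hitting-time/Dwass--Otter identity) is exactly the paper's starting point, \eqref{tilde-Tik-def}. For $\alpha>2$ your single-big-subtree route (per-tree LCLT with increment variance $\var(\tilde X^{(k)})\to 1-p_c+p_c^2\sigma^2$, then a union bound over a tight number of subtrees) is a legitimate and somewhat more transparent variant of the paper's argument, which instead keeps the forest aggregated and applies a binomial LCLT to $\Binom(Q_n+D^\star_{v_k}-1,w_k)$. For $\alpha\in(1,2)$ your plan coincides with the paper's: the answer is an integral of a stable density against the $\GammaD(2-\alpha,1)$ limit of $\hat\theta(w_k)D^\star_{v_k}$, and the uniform tilted stable LCLT you flag as the hardest step is indeed where the work lies (the paper sidesteps the $k$-dependent tilt by converting $\tilde X^{(k)}$ back to the untilted binomial at the cost of the factor $\eta(w_k)^{-\hat D_{v_k}}$, which it then has to control separately).

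The genuine gap is in the infinite-mean regime. First, a concrete error: you assert $1-m(w_k)\asymp w_k$, but by \eqref{eq:Bingham_Taylor_theta'_alpha(0,1)} one has $m(w_k)=w_k\expec[X(1-\hat\theta(w_k))^{X-1}]\to\alpha$, so $1-m(w_k)\to 1-\alpha$ is a positive \emph{constant}; the attached trees stay strictly subcritical even as $k\to\infty$ (this is \eqref{eq:Conditional_final_clusters_mean_a<1} and is one of the paper's main structural points for this regime). Consequently your proposed balance $w_k^{\alpha}n^{-\alpha}\asymp 1-m(w_k)$ does not produce the correct scale $n=w_k^{-\alpha/(1-\alpha)}$; that scale comes instead from $\theta(w_k)^{-1}$, i.e.\ from the exponential cutoff $\eta(w_k)^{\ell}$ in \eqref{eq:def_tilde_X} and, equivalently, from the gamma limit $\hat\theta(W_k)D_{v_k}\xrightarrow{d}\GammaD(1-\alpha,1)$. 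Second, and more fundamentally, the event $\{H_k>w_k^{-\alpha/(1-\alpha)}x\}$ is not a rare event driven by one large subtree: there are $\Theta(n)$ non-empty subtrees, each of expected size $1/(1-\alpha)=\Theta(1)$, so $H_k/n$ has a nondegenerate distributional limit of order one. The limiting density \eqref{eq:def_h_alpha(0,1)} is a convolution of three effects — the gamma-distributed rescaled degree $u=\hat D_{v_k}/n$, the law-of-large-numbers contribution $u\cdot\expec|T|$, and the one-sided $\alpha$-stable fluctuation of the aggregated offspring sums (the $\psi_\alpha((1-u)/x^{1/\alpha-1})$ factor). A computation of the form ``number of subtrees times per-subtree tail'' cannot produce this; you would need to condition on $\hat D_{v_k}/n$ and prove a stable limit for the resulting triangular array of subtree sizes, which is what the paper does via the binomial and stable local limit theorems in Section~\ref{sec-size-trees-off-backbone-alpha(0,1)}.
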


We formalise
\eqref{eq:TP_scaling_finite_Variance} in Proposition~\ref{prop-size-trees-backbone->2}, 
\eqref{eq:TP_scaling_infinite_Variance} in Proposition~\ref{prop-size-trees-backbone-(1,2)} and
\eqref{eq:TP_scaling_infinite_mean} in Proposition~\ref{prop-size-trees-backbone-(0,1)}.

\paragraphi{Completion of the proof of Theorem~\ref{thm-main}} 
The above ingredients are combined to find the volume growth of the $k$-cut IPC, which can now be expressed by the $k$ backbone vertices plus the sum of the finite forests that are attached to the first $k$ backbone vertices, i.e.,
\begin{equation}
   \label{eq:def_M_k}
    M_k = \sum_{i=0}^k (1+H_i). 
\end{equation}
The scaling behaviour of $M_k$ is given in Theorem \ref{thm-main} and we now provide some intuition on its result.
By Theorem~\ref{thm:Cs_scaling}, we know that there exist forests of size at least $k^2$ for $\alpha>2$ and $k^{\alpha/(\alpha-1)}$ for $\alpha\in (1,2)$ with probability $\Theta(1/k)$. Hence, at depth $k$, we expect to see a binomial number of forests of this exceptional size, which converges to a Poisson number in the limit. Note, however, that this result is conditional on  $(W_k)_{k\geq 0}$, which also converges in distribution as $k\to\infty$.
Hence, in the limit, the Poisson number of large forests is moderated by $(L_{\halpha}(x))_{x>0}$, which explains the doubly stochastic Poisson process, also known as a Cox process (Definition \ref{def:spatialCox}). There is no notion of rare exceptionally-large forests for $\alpha\in(0,1)$, where $M_k$ is dominated by the few forests close to $v_k$. We therefore only consider the forests at a finite distance from $k$, that all contribute a significant proportion. In the limit, this translates to a countable sum of random variables.

\subsection{Literature overview, discussion and open problems}
\label{sec-disc-open}
In this section, we give a literature overview of percolation and invasion percolation.
We illustrate our results with simulations and an example for a specific choice for the distribution of $X$. We conclude with the structure of the rest of the paper. 

\paragraphi{Percolation and the IIC}
On a conceptual level, percolation theory describes the movement of a fluid through a porous medium. Early work as \cite{Broadbent1957,Hammersley}, laid the mathematical foundation by considering percolation on lattices. An infinite number of vertices (or pores) are connected by a set of  edges (or links, bonds, paths or throats), where every edge is either open (or occupied or active) with probability $p$ or closed (or vacant or passive) with probability $1-p$.
The critical probability $p_c$ is the smallest value for $p$ for which an infinite number of vertices is connected by open edges, and was analysed further in \cite{Harris1960,Fisher1961, Kesten1980}. A summary of results on  percolation models from the sixties and seventies can be found in \cite{Book_Kesten}. 
These percolation models are now commonly referred to as \textit{static or Bernoulli percolation} models. For a more up-to-date overview of the field of static percolation, we refer to the books  \cite{Bollobas2012,Grimmett1999}.
Another structure of interest is the infinite cluster in a percolated graph at criticality, i.e. $p=p_c$. This was first formalised in \cite{Kesten1986}, where the author defined this cluster on $\mathbb{Z}^2$ in two limiting procedures. Given a fixed vertex $w$ and $p>p_c$, it turns out that the law conditioned on the event that there exists an open path from $w$ to infinity taking, $p\searrow p_c$, and the law conditioned on the event that the open cluster that contains $w$ leaves the ball of radius $n$ around $w$, taking $n\to\infty$, are the same. The cluster under this law is named the incipient infinite cluster (IIC) and has been studied in more detail and on different graphs in for example \cite{Heydenreich2014, CamesvanBatenburg2015}.

\paragraphi{Invasion percolation}
In the eighties, a new, but closely related, form of percolation arose from a physics application of the capillary displacement of fluids in porous media \cite{Chandler1982} and was shortly after redefined from a mathematical perspective in \cite{Wilkinson1983}. 
In \cite{Wilkinson1983}, the main value of interest is $p'_c$, which denotes the maximum value such that eventually no weights larger than $p'_c$ are accepted and, under weak conditions, equals $p_c$ from static percolation. The alternative interpretation, also noted in \cite{Barabasi1996}, is that this simplifies to performing Prim's algorithm \cite{Prim1957} on an infinite lattice. Invasion percolation has also drawn the attention of theoretical mathematicians, to quantify and exploit the similarities and differences of the static to the dynamical percolation model, as is done in for example \cite{Wilkinson1983, Chayes1985, Chayes1987} on $\mathbb{Z}^d$ and \cite{Nickel1983} on the Cayley tree.
Many practical and theoretical applications also arose from the invasion percolation model, see \cite{Laidlaw1993,Norris2014,Ebrahimi2010,Gabrielli2007,Gabrielli2009,Shao2009,Michelen2019} .  

As it was established that different laws to create an infinite cluster at criticality existed in Bernoulli percolation \cite{Kesten1986}, it was conjectured that the law of the invasion percolation cluster would converge to the same law as the IIC \cite{VanDerHofstad2004}. In the beginning of the new millennium, the relation of the invasion percolation cluster and incipient infinite cluster was investigated in more detail. Whereas in the aforementioned articles the link between the IIC and IPC has been established, more rigorous similarities and conjectures are derived in \cite{Jarai2003} on  $\mathbb{Z}^2$, in \cite{VanDerHofstad2004} on
 $\mathbb{Z}^d$ for $d>6$, in \cite{Addario-Berry2012} for the Poisson weighted infinite tree  and in \cite{Angel2008} for the regular tree. In the latter paper, they showed that the cluster sizes have the \textit{same} scaling, but to \textit{different} limiting objects. This difference is made more concrete by showing that their respective measures are mutually singular and that the IIC stochastically dominates the IPC. These results are shortly after extended to $\mathbb{Z}^2$, where \cite{Damron2009} shows that the measures of the IIC and IPC are also mutually singular, but according to \cite{Sapozhnikov2011} there is no stochastic domination. The IIC has been considered for BP trees as well \cite{Michelen2019_02}.
 However, it is argued in \cite{Michelen2019} that the distribution of the unique infinite paths in the IPC and IIC differ. This implies that on BP trees, there is no general domination of the IIC over the IPC.

\paragraphi{Our contribution and comparison to current literature}
The IPC on the BP tree has been analysed before in \cite{Michelen2019}, for a finite-variance offspring distribution. It was found that the backbone construction from \cite{Angel2008} naturally carries over to this case, but volume scaling has not been discussed. Our work extends \cite[Corollary 6.3]{Michelen2019}, to $\alpha\in(1,2)$ and $\alpha\in(0,1)$ in Theorem~\ref{thm-subthmWk}. Moreover, we derive structural properties of the geometry of the IPC, such as the backbone degrees and the size of the conditionally finite forests attached to the backbone. These results are mostly trivial for the regular tree, but need careful consideration when the degrees are random. Lastly, we extend the volume scaling from \cite[Theorem 1.6]{Angel2008} to BP trees, only here based on the $k$-cut IPC instead of the $k$-neighbourhood.

This work also supplies a basis for the local behaviour of the \textit{minimal spanning tree} (MST) on random graphs. It is well-known that a large class of random graphs show locally tree-like behaviour. For more detail on this, we refer to \cite{Aldous2004} or \cite{VanderHofstad2021}.
In \cite{Addario-Berry2017} the link between the local limit of the MST on the complete graph of $n$ vertices and the IPC was established. The authors first consider Prim's algorithm up to $k<n$ steps. In order to find the local limit of the MST, one can proceed in two ways. First by taking the limit of $n\to\infty$ and then $k\to \infty$, or $k$ and $n$ grow to infinity together with $k<n$. In the former case, the problem simplifies to understanding the behaviour of Prim's algorithm on the respective local limits (which coincides with invasion percolation on the limiting graph), whereas for the latter \cite{Addario-Berry2013solo} showed that this object is a combination of the IPC (first sweep) plus an additional cluster (second sweep).
The MST on the configuration model has been considered in \cite{Addario-Berry2021} when all degree equal to $3$.
It is also known from the literature that the \textit{configuration model} \cite{Bollobas2001} behaves as a BP tree in the local limit \cite[Chapter 4]{VanderHofstad2021}. A first exploration of the local limit of the MST on the configuration model has been performed in \cite{Gundlach2019}, but for the volume scaling, not much is currently known. Therefore, having an understanding of the IPC on BP trees is a significant step towards understanding the MST on the configuration model with power-law degrees.

\paragraphi{Discussion of the results}
We discuss the interpretation of the results, intuition and some take-away messages. 
The $k$-cut IPC is defined such that it at least contains a path of size $k$, on which finite forests are attached. By Theorem~\ref{thm:Cs_scaling}, we expect that for $\alpha>2$ and $\alpha\in(1,2)$, there are a finite (asymptotically Poisson) number of forests of size $k^{\halpha/(\halpha-1)}$ that dominate the volume scaling, where we recall $\halpha= \alpha \wedge 2$. For $\alpha\in(0,1)$, every forest is expected to be of an exponential size in $k$. Therefore, only the forests at a finite distance from $k$ dominate the volume scaling. We illustrate this key message in Figure \ref{fig:IPC_sketch}, where we sketch an informal picture of the $k$-cut IPC for different regimes.

\begin{figure}
    \centering
    \input{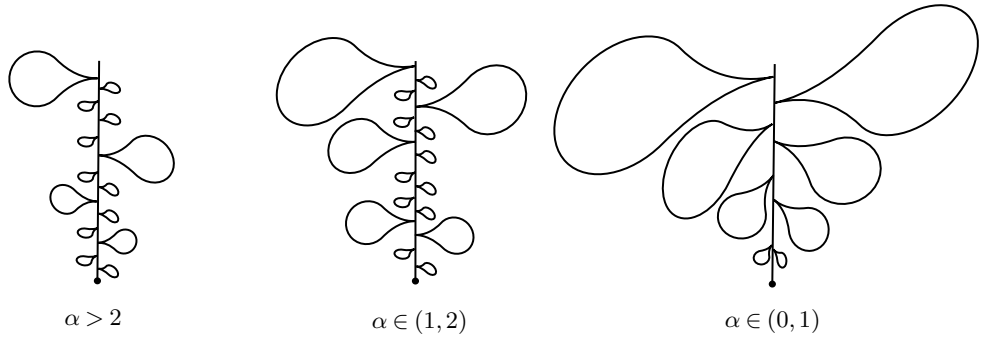}
    \caption{Rough sketch of the $k$-cut IPC for finite $k$, for all different regimes as indicated below each figure. Here, the root of the tree is at the bottom and the size of an attached forest in the figure represents the forest size. }
    \label{fig:IPC_sketch}
\end{figure}

In addition, we provide a simulation study and approximate the IPC by performing Prim's algorithm up to a finite number of steps. After $i$ steps, this cluster is denoted by $T^{(i)}(\clock)$. 
We first present the simulation results of $T^{(i)}(\clock)$. As the backbone is an asymptotic property we identify the vertex farthest away from the root and consider the path between it and the root as the (most likely) backbone. We refer to Figure \ref{fig:sim_ipc} for $T^{(i)}(\clock)$, for different values of $\alpha$.

\begin{figure}
    \centering
    \includegraphics[width=\textwidth, height=3cm]{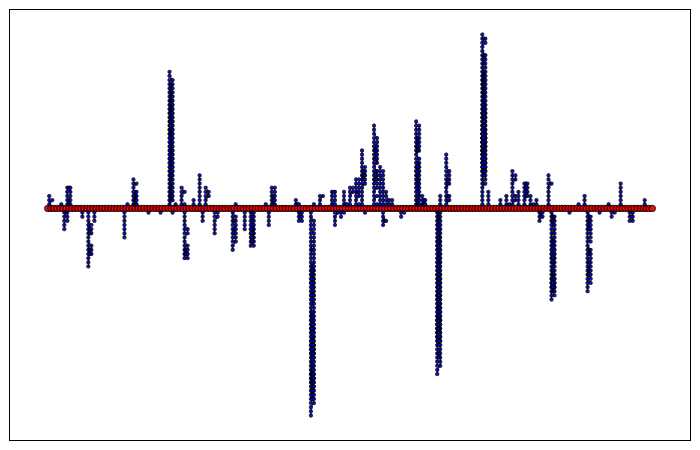}
    \includegraphics[width=0.49\textwidth, height=3cm]{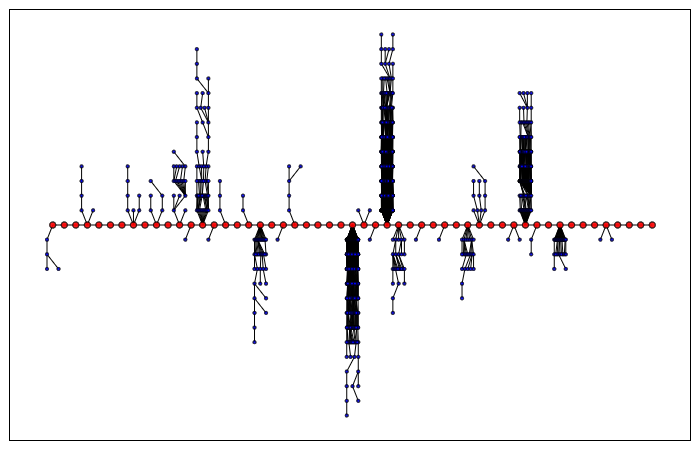}
    \includegraphics[width=0.49\textwidth, height=3cm]{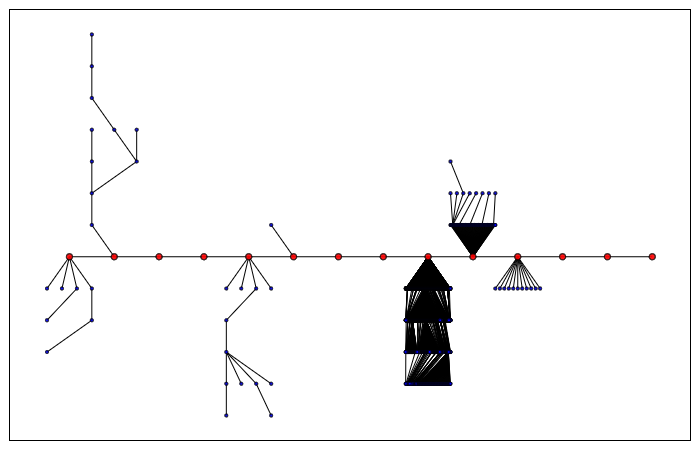}
    \caption{Layout of $T^{(i)}(\clock)$ where the root ($\clock$) is the left most vertex and $i=2000$. The offspring distributions are power laws with $\alpha=2.3$ (top figure), $\alpha=1.3$ (bottom-left figure) and $\alpha=0.7$ (bottom-right figure). In all the IPCs, the longest path from the root is marked red and presumed the most likely backbone.}
    \label{fig:sim_ipc}
\end{figure}

When we consider Figure \ref{fig:sim_ipc}, we see that, for $\alpha>2$ and $\alpha\in(1,2)$, along the backbone, some of the attached forests contribute a significant amount whereas the majority of the attached forests contributes a negligible amount. Figure \ref{fig:sim_ipc} for $\alpha\in(0,1)$ does not yet show asymptotic properties, as only certain large forests are present.
This may be due to the specific choice for $i$ not being large enough. 
While simulating $T^{(i)}(\clock)$ we keep track of the weight we add in each time step. By Lemma~\ref{lem:ConvW_kpc}, we know that the future maximum weight converges to $p_c$. However, the future maximum weight is an asymptotic property and can therefore not be simulated unambiguously. Instead, we plot the weights of each added edge. We refer to Figure \ref{fig:sim_weights} for the results.
\begin{figure}
    \centering
    \includegraphics[width=0.32\textwidth]{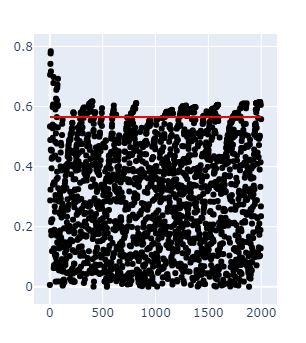}
    \includegraphics[width=0.32\textwidth]{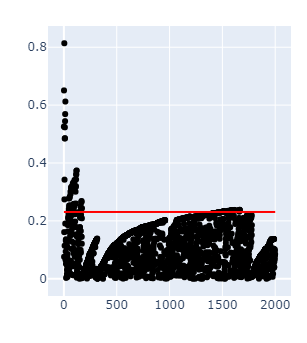}
    \includegraphics[width=0.32\textwidth]{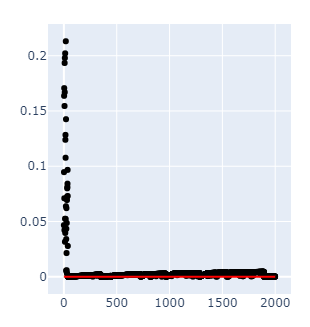}
    \caption{Scatter plot of the weight ($y$-axis) accepted at step $i$ ($x$-axis). The offspring distributions are of power law with $\alpha=2.3$ (left-hand figure), $\alpha=1.3$ (middle figure) and $\alpha=0.7$ (right-hand figure). In all figures $p_c$ is drawn with a red line.}
    \label{fig:sim_weights}
\end{figure}
We observe a wave-like behaviour of the weights, which is in line with the theoretical results. The upwards drift crosses $p_c$ as the tree restricted to weights below $p_c$ is finite a.s., and thus at some point, one is forced to accept larger weights. However, when a large-degree vertex is discovered, there are suddenly many small weights that can be added, causing the collapse of the wave. 
Finally, we note that a simulation study has its limitations. Firstly, the simulation is based on Prim's algorithm, which is only able to return the exact $k$-cut IPC after an infinite number of steps. One can opt for a simulation based on the distributions in Theorem~\ref{thm:constrT}, but this is challenging as well, as one needs the inverse of these distributions.

\paragraphi{Discussion of $\M_{k}(\clock)$ versus the IIC} As illustrated in the literature overview, the comparison of the IPC and the IIC is of great interest. 
Intuitively, the IIC can be seen as a unique path to infinity, on which critical forests are attached. In the IPC, the attached forests are barely sub-critical. In the case of the regular tree, where, for both the IPC and IIC, this path to infinity is uniformly chosen over the vertices, it follows that the IIC stochastically dominates the IPC \cite[Theorem 1.1]{Angel2008}. For the BP tree, this is not clear. For $\alpha>2$ and  $\alpha\in(1,2)$, the notion of the IIC naturally translates into a unique path to infinity on which \textit{critical} forests are attached, where the offspring distribution can have infinite variance. Also in this regime it remains unclear whether a notion of stochastic domination exists. However, and most interestingly, the IIC on a BP tree with power-law offspring distribution with exponent $\alpha\in(0,1)$ is not well-defined. In a sense, one can still take $p>p_c$ and perform percolation on $\mathcal{T}$ conditional on survival, and take a limit for $p\to p_c=0$. Then, likely, no clusters are left. Clearly, one can define the IPC on the BP tree in the power-law regime with exponent $\alpha\in(0,1)$ and by Theorem~\ref{thm-main}, this is not a trivial object. This makes the results for $\alpha\in(0,1)$ particularly interesting.

\paragraphi{Illustrating example for \mtitle{$\alpha\in(0,1)$}}
By, among others, Theorem~\ref{thm-subthmWk}, the $\alpha\in(0,1)$ regime significantly deviates from other power-law regimes. We 
illustrate this with a simple example how the exponential scaling to a deterministic limit arises in for the $(W_k)_{k\geq0}$ process. For $\alpha\in(0,1)$, we take
\begin{equation}
    G_{\sss X}(s) = \expec[ s^X ] = 1-(1-s)^\alpha.
\end{equation}
One can easily verify that this is a proper generating function that corresponds to a distribution with infinite expectation. Using the implicit expression for $\theta(p)$ \cite[Chapter 8.3]{Karlin1975}, we can solve
$ 1- \theta(p) = G_{\sss X}(1-p\theta(p))$, that implies $\theta(p)=p^{\alpha/(1-\alpha)}$. This is sufficient to describe the jump-chain of $W_k$ for fixed $k$. By \eqref{eq:def_thm_wk}, it follows that for any $p>p_c$, $R(p)\theta(p)=1-\alpha$ and for $p_c<u<p$, $R(p)\theta(u) = (1-\alpha) (u/p)^{\alpha/(1-\alpha)}$. Therefore one can write  $W_k=W_0\prod_{i=1}^{k}P_i$, where $(P_i)_{i\geq 0}$ is an i.i.d.\
sequence, with $P_i= 1$ w.p. $\alpha$ and $P_i=\Unif[0,1]^{(1-\alpha)/\alpha}$ w.p. $1-\alpha$. In this case $\log(W_k)$ can be written as an i.i.d. sum of finite expectation random variables. This implies that $\log(W_k)/k$ converges to a degenerate limit by the law of large numbers and by the continuous mapping theorem, so would $W_k^{1/k}$. These results are \textit{exact} for the specific choice of $G_{\sss X}(s)$, but offspring distributions in the same domain of attraction provide similar behaviour in the limit.

\paragraphi{Intuition for Conjecture \ref{cor-vol-growth}}
Conjecture \ref{cor-vol-growth} states that the scaling results for the $k$-cut IPC also translate to the \textit{volume} of the $k$-neighbourhood $\T_k(\clock)$
of $T(\clock)$ as defined in \eqref{eq:def_Gamma}. Naturally, it follows that $|\T_k(\clock)|\leq M_k$, but a lower bound is not evident. For $\alpha>2$ and $\alpha\in(1,2)$, Theorem~\ref{thm-main} argues that the volume of the $k$-cut IPC is driven by exceptionally large forests of size $k^2$ and $k^{\alpha/(\alpha-1)}$ respectively. Moreover, it is likely that such a forest is already found at depth $qk$, where $q\in(0,1)$.
If we take $k$ large enough, then these trees in the attached forests are close to critical in this regime, by \eqref{eq:Conditional_final_clusters_mean_a>1}.
Then, by 
\cite{Kortchemski2017}, the critical trees conditioned on having a specific mass $m$, have width $(\mathcal{W})$ and depth $(\mathcal{D})$ roughly given by 
\begin{equation}
    (\mathcal{W},\mathcal{D}) = 
     (O(m^{1/\halpha}), O(m^{(\halpha-1)/\halpha})),
\end{equation}
where we recall that $\halpha=\alpha\wedge 2$.
Suppose that we consider an exceptionally large BP tree of size $k^{\halpha/(\halpha-1)}$. Then the height of such a tree is $O(k)$, and the width is  $O(k^{1/(\halpha-1)})$. This means that at depth $qk$, with positive probability, we find a forest of size $O(k^{1/(\halpha-1)})$ that is explored up to depth $(1-q)k$ in $\T_{k}(\clock)$. 
This tree is expected to encounter  $(1-q)k\cdot O(k^{1/(\halpha-1)})= O(k^{\halpha/(\halpha-1)})$ vertices. Therefore, with positive probability there also exists   a tree of size $O(k^{\halpha/(\halpha-1)})$ in $\T_k(\clock)$ for $\alpha>2$ and $\alpha\in(1,2)$ respectively. This provides an intuition on the lower bound of $|\T_k(\clock)|$ for these regimes. \\
The $\alpha\in(0,1)$ regime is different, as here, according to \eqref{eq:Conditional_final_clusters_mean_a<1}, attached trees do not approach criticallity but remain finite a.s. The dominant contribution of the volume scaling arises from the effective degrees of the backbone vertices and scale as $	O(\theta(W_k))=O(w_k^{\alpha/(1-\alpha)})$ according to Lemma \ref{lem:convergence_hatDBK_alpha(0,1)}. This means that the tree that consists only of the backbone plus the effective degrees of the $(k-1)$-th backbone is already of size $O(w_k^{\alpha/(1-\alpha)})$, which provides a rigorous and matching lower bound for $\T_k(\clock)$ for $\alpha\in(0,1)$. 

\paragraphi{Intuition for other offspring distributions}
We consider distributions $X$ as defined in \eqref{eq:powerlaw} that are is restricted to distributions with a power-law tail with exponent $\alpha$ for $\alpha>2$, $\alpha\in(1,2)$ and $\alpha\in(0,1)$. We do not consider slowly varying fluctuations (svf), denoted by $\ell(x)$. Therefore, many of our results are tractable, making them insightful and interpertable as we illustrated before. However, allowing for svf-s will not affect the results for $\alpha>2$. Indeed, in this paper we observe that in this regime the value of $\alpha$ does not influence the scaling and only the fact that $\expec[X^2]<\infty$ is of relevance. By using Potter's bounds for $\alpha>2$ \cite[Section 1.4]{Kulik2020}, for large $x$ and $\varepsilon>0$, one can always bound $x^{-\alpha-\varepsilon}<x^{-\alpha}\ell(x)<x^{-\alpha+\varepsilon}$ and take $\varepsilon$ small enough to find similar scaling results. If $\alpha\in(1,2) $ or $\alpha\in(0,1)$, the scaling exponents are a function of $\alpha$ (c.f. Theorems \ref{thm-main} and \ref{thm-subthmWk} amongst others), and therefore Potter's bounds cannot be used directly anymore. We therefore expect that the svf-s return in the scaling results, but do not drastically impact the outcomes of this paper.

However, when one consider the boundary cases $\alpha\in\{1,2\}$, these svf-s play a key role in the scaling results. Indeed, the specific choice 
for $\ell(x)$ can cause $X$ to either have a finite or infinite second moment for $\alpha =2$ or a finite or infinite first moment for $\alpha=1$. Consider for example the random variable $X_\beta$ with distribution function $F(x) = 1-x^{-1}\log(x)^{-\beta}$. It is straightforward to verify that $1-F(x)=x^{-1}\ell(x)$ and $\expec[X_\beta] = \infty$ if $\beta\leq 1$ and $\expec[X_\beta]<\infty$ if $\beta>1$. This elementary example already opens the door for future research on the scaling results of $\alpha\in\{1,2\}$ for $\beta<1$ or $\beta>1$, and is beyond the scope of this paper.

\paragraphi{Organisation of the paper}
In Section~\ref{sec:constructionIPC}, we outline some regime-free properties of the IPC, which are then used to describe an algorithm to construct the $k$-cut IPC for finite $k$, as described in Theorem~\ref{thm:constrT}. In 
Section~\ref{sec:Scaling_W_k} we analyse the asymptotic properties of future maximum weights, as described in Theorem~\ref{thm-subthmWk}. In Section~\ref{sec:DBK} we analyse the degree distribution of backbone vertices, as described in Theorem~\ref{thm-subthmDBK}. Section~\ref{sec:TP_scale} considers how the attached finite forests on the backbone scale, as described in Theorem~\ref{thm:Cs_scaling}. These four properties are then combined in Section~\ref{sec:Combine} to prove the main result in Theorem~\ref{thm-main}.

\section{Construction of the  \mtitle{$k$}-cut IPC}
\label{sec:constructionIPC}
We have defined the IPC as the cluster discovered by Prim's algorithm after $n$ steps, where we take $n\to\infty$, as in \eqref{eq:def_IPC}. In this section we instead consider the $k$-cut IPC (Definition \ref{def:k-cut_IPC}) for finite $k$. 
In order to do so, we need some key properties that are outlined in Theorem~\ref{thm:constrT}. These properties are used for a construction algorithm that generates the $k$-cut IPC in finite time. 

In Section~\ref{sec:Existence_properties_BB} we first show that the IPC satisfies the \textit{one-ended} property (Lemma~\ref{lem:Toneended}), which means that there exists a unique path to infinity (or backbone) within the IPC. We discuss the future maximum weights $(W_k)_{k\geq 0}$ as defined in \eqref{Wk-process} and formalise its evolution as a Markov chain, thereby proving \eqref{eq:def_thm_wk}  of Theorem~\ref{thm:constrT}. 
Next we see that, conditioned on $(W_k)_{k\geq 0}$, we can derive the degree distribution of backbone vertices, thereby proving \eqref{eq:def_thm_DBk} of Theorem~\ref{thm:constrT}.

Then, in Section~\ref{sec:FiniteForest_properties_BB} we  consider the finite attached forests to the backbone. These are again BP trees, but conditioned on extinction and we derive their offspring distribution, thereby proving \eqref{eq:def_tilde_X} of Theorem~\ref{thm:constrT}. 
Finally, in Section \ref{sec:Constr_alg_subsec}, these results are combined to define a finite-time construction algorithm for the $k$-cut IPC, thereby concluding the proof of Theorem~\ref{thm:constrT}. 

We denote $\mathcal{T}(p)$ for the subtree of $\mathcal{T}$ obtained by taking the connected component that includes the root, when all edges with weight larger than $p\in(0,1)$ are omitted, i.e. percolation on $\T$ with percolation value $p$. We introduce the survival probability of this subtree by 
\begin{equation}
\label{eq:deftheta}
\theta(p) = \prob( |\mathcal{T}(p)|=\infty),
\end{equation}
and the extinction probability by $\eta(p)=1-\theta(p)$.
When $p$ is taken small enough, it is likely that $\mathcal{T}(p)$ becomes a finite tree. We denote the (percolation) critical value $p_c$ by
    \begin{equation}
    \label{eq:defp_c}
    p_c = \text{max}\{p: \theta(p)=0\}=\text{argmin}\{p: \theta(p)>0\}.
    \end{equation}    
As $\T(p)$ is a branching process with offspring distribution $\Binom( X, p)$, it follows that $p_c=1/\expec[X]$. 
Based on classical branching theory, we know that the survival probability $\theta(p)$ can be expressed in a fixed point equation (see for example \cite[Chapter 8.3]{Karlin1975}) as
\begin{equation}
\label{eq:fixed_point_theta_and_binom}
\begin{aligned}
    1-\theta(p) &= \expec[ (1-\theta(p))^{\Binom(X,p)}]
    =\expec[ (1-p\theta(p))^X].
\end{aligned}    
\end{equation}
In the following, we
also consider the survival probability of a BP tree with an additional edge attached to the root. These are also called \textit{attached} or \textit{planted} trees. We write the survival probability of an attached tree by 
\begin{equation}
    \label{eq:def_hattheta}
    \hat{\theta}(p)=p\theta(p).
\end{equation}

\subsection{Existence and properties of the backbone}
\label{sec:Existence_properties_BB}
To get a better view of the geometry of the IPC, we show that a fully realised IPC is \textit{one-ended}
\cite{Halin1964}, meaning that for every pair of paths to infinity, there exists a third (non-backtracking) path, that has an infinite intersection with both paths. 
For a tree, it means that there is a \textit{unique} infinite path.
The one-ended property and other properties in this section are regime-free, and have mostly been discussed in the literature.   

\begin{lemma}[One-ended IPC]
\label{lem:Toneended}
Consider $X$ as a power-law distribution as defined in \eqref{eq:powerlaw}. Then the IPC on a BP tree is one ended.  
\end{lemma}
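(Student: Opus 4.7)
The strategy is to first establish that $T(\clock)$ contains at least one infinite ray from $\clock$, and then to show this ray is unique.

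\textbf{Existence of an infinite ray.} The assumption $\prob(X=0)=0$ implies that the BP tree $\mathcal{T}$ survives almost surely and is locally finite. Since Prim's algorithm adds exactly one new vertex per step, $T(\clock) = \lim_n T^{(n)}(\clock)$ is an infinite subtree of the locally finite tree $\mathcal{T}$. Applying K\"onig's lemma to this locally finite infinite rooted tree yields at least one infinite ray from $\clock$.

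\textbf{Uniqueness.} I argue by contradiction. Suppose $T(\clock)$ contains two distinct infinite rays from $\clock$. Since $T(\clock)$ is a tree, they share a common initial segment and first diverge at some vertex $v$: there are two distinct children $c_1 \neq c_2$ of $v$ in $T(\clock)$, each lying on an infinite ray. In particular, Prim's algorithm adds infinitely many edges in each of the disjoint subtrees $\mathcal{T}_{c_1}$ and $\mathcal{T}_{c_2}$.

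The central tool is the recursive structure of IP on a tree: once Prim's algorithm first enters a subtree $\mathcal{T}_u$, every edge it subsequently adds within $\mathcal{T}_u$ forms a \emph{capped} IP on $\mathcal{T}_u$, where an edge is added precisely when its weight is smaller than the current minimum-weight boundary edge outside $\mathcal{T}_u$. For such a capped IP to add infinitely many edges in $\mathcal{T}_u$, the cap must permit infinite exploration; in particular, when $\alpha>1$ (so that $p_c>0$), a standard coupling with Bernoulli percolation shows that the $\limsup$ of the running cap must be at least $p_c$, since a BP tree percolated strictly below $p_c$ has an a.s.\ finite root cluster. I would then use the independence of $\mathcal{T}_{c_1}$ and $\mathcal{T}_{c_2}$ as BP sub-trees (with the same offspring distribution as $\mathcal{T}$) to reach a contradiction with both subtrees being explored infinitely often.

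The hard part will be the coupling between the two caps: the cap governing exploration in $\mathcal{T}_{c_1}$ equals the minimum-weight boundary edge outside $\mathcal{T}_{c_1}$, which includes boundary edges inside $\mathcal{T}_{c_2}$ once Prim's algorithm has entered it, and symmetrically for $\mathcal{T}_{c_2}$. Disentangling these interlocked dynamics to show they cannot simultaneously sustain infinite exploration is the crux of the proof. Additionally, the infinite-mean regime $\alpha\in(0,1)$, where $p_c=0$, requires a separate adaptation, since then any strictly positive cap is supercritical and the above subcriticality argument fails; here I would instead rely on an almost-sure finiteness argument for the subtrees that lie outside the (yet-to-be-defined) unique infinite ray, using the heavy-tail structure of $X$.
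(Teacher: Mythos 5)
Your existence step is fine: $\prob(X=0)=0$ gives an a.s.\ infinite, locally finite $\mathcal{T}$, Prim's algorithm produces an infinite subtree, and K\"onig's lemma yields a ray. The problem is the uniqueness step, which is the entire content of the lemma and which you explicitly leave open (``Disentangling these interlocked dynamics \dots is the crux of the proof''). As written, this is a plan with the key idea missing, not a proof. For reference, the paper itself does not reprove the statement; it cites \cite[Corollary 2.3]{Michelen2019} and observes that the argument there uses no moment assumptions on $X$.

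Beyond being incomplete, the route you sketch does not obviously lead to a contradiction. Showing that the running cap in each of $\mathcal{T}_{c_1}$ and $\mathcal{T}_{c_2}$ must have $\limsup\geq p_c$ is consistent with what actually happens in the IPC --- the future maximum weights converge to $p_c$ from above along the subtree that is invaded forever --- so no contradiction with subcriticality is available, and ``independence of the two subtrees'' by itself does not forbid both caps from behaving this way. The standard argument (used in \cite{Angel2008} and \cite{Michelen2019}) is essentially deterministic and avoids the interlocked-cap issue entirely: suppose both subtrees are invaded infinitely often after some time at which both have been entered, and let $M_i$ be the maximum weight ever accepted inside $\mathcal{T}_{c_i}$ from that time on (one first checks these maxima are a.s.\ attained and, by continuity of the weights, distinct, say $M_1>M_2$). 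At the step where the edge of weight $M_1$ is accepted, it is the minimal boundary edge of the current cluster, so every boundary edge lying in $\mathcal{T}_{c_2}$ at that moment has weight exceeding $M_1$; since any later invasion of $\mathcal{T}_{c_2}$ must pass through one of those boundary edges, the next weight accepted in $\mathcal{T}_{c_2}$ exceeds $M_1>M_2$, contradicting the definition of $M_2$. Because this comparison uses only the continuity of the weight distribution and the tree structure, it is moment-free and covers $\alpha\in(0,1)$ with no separate adaptation --- your felt need to treat that regime differently is a symptom of leaning on $p_c>0$, which the correct argument does not do.
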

\begin{proof}
    See e.g. \cite[Corollary 2.3]{Michelen2019}, for a proof on BP trees. Inspection of the proof shows that it does not depend on the moments of $X$.
\end{proof}

A key notion induced by the one-ended property is the existence of a unique path to infinity. We next analyse the future maximum weights along this path as introduced in \eqref{Wk-process}. The literature suggests that this evolves according to a Markov chain in \cite[Proposition 3.1]{Angel2008} for the Cayley tree, and adapted in \cite[Lemma 6.1]{Michelen2019} to finite-variance BP trees with $\alpha>2$. The extension to $\alpha\in(1,2)$ and $\alpha\in(0,1)$ is shown next:

\begin{proof}[Proof of Theorem~\ref{thm:constrT}, \eqref{eq:def_thm_wk}]
\cite[Lemma 6.1]{Michelen2019}. Inspection of the proof shows that it does not depend on the moments of $X$.
\end{proof}

We continue by showing that the Markov chain $(W_k)_{k\geq 0}$ converges almost surely to $p_c$. This means that, for every $\vep>0$, eventually no edge with weight larger than $p_c+\vep$ is accepted in the IPC:

\begin{lemma}[Convergence characteristics of $(W_k)_{k\geq 0}$]
	\label{lem:ConvW_kpc}
  $(W_k)_{k\geq 0}$ is a non-increasing process such that
 \begin{equation}
     W_k\xrightarrow{a.s.} p_c.
 \end{equation}
\end{lemma}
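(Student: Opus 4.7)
Monotonicity $W_{k+1}\le W_k$ is immediate from the definition $W_k=\max_{i>k}\beta_i$, so $(W_k)_{k\ge 0}$ converges almost surely to a limit $W_\infty\in[0,1]$; the remaining task is to identify $W_\infty=p_c$ almost surely. I plan to prove this by establishing matching upper and lower bounds, each exploiting a different structural feature of the model.

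For the upper bound $W_\infty\le p_c$, my strategy is to exploit the Markov chain description of $(W_k)_{k\ge 0}$ in \eqref{eq:def_thm_wk}. The key observation is that, for any $p>p_c$ and any $w\in(p,1)$,
$$\prob(W_{k+1}<p\mid W_k=w)=R(w)\theta(p)>0,$$
so on the event $\{W_\infty>p\}$, where $W_k>p$ for every $k$, these conditional probabilities are bounded below by a strictly positive constant and their sum diverges. L\'evy's conditional Borel--Cantelli lemma then forces $\{W_{k+1}<p\}$ to occur infinitely often on this event, directly contradicting $W_k>p$ for all $k$. Hence $\prob(W_\infty>p)=0$ for every $p>p_c$, and sending $p\downarrow p_c$ along a countable sequence yields $W_\infty\le p_c$ a.s.

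For the lower bound $W_\infty\ge p_c$, the regime $\alpha\in(0,1)$ is immediate since $p_c=0$. For $\alpha>1$ I would fix $p<p_c$ and argue via the branching structure of $\mathcal T$: for every vertex $u\in\mathcal T$, the subtree of descendants of $u$ restricted to edges of weight $\le p$ is a Bienaym\'e tree with offspring distribution $\Binom(X,p)$ of mean $p\expec[X]<1$, and is therefore finite almost surely. A union bound over the countable vertex set of $\mathcal T$ gives that, almost surely, every such descendant subtree is simultaneously finite. Consequently, for each fixed $K$, the backbone past $v_K$ is an infinite path in the descendants of $v_K$ and must include at least one edge of weight exceeding $p$; letting $K$ range produces infinitely many backbone weights above $p$, whence $W_k>p$ for all $k$ and $W_\infty\ge p$. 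Sending $p\uparrow p_c$ along a countable sequence then concludes.

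The main technical obstacle I foresee is making the lower bound $R(w)\theta(p)\ge c_p>0$ in the first step uniform on the relevant range of $w$. This reduces to continuity and strict positivity of $\theta'$ and of $G_X'(1-w\theta(w))$ on $(p_c,1)$, which is standard for $\alpha>2$ but needs separate verification in the infinite-variance ($\alpha\in(1,2)$) and infinite-mean ($\alpha\in(0,1)$) regimes; a fallback is to apply the conditional Borel--Cantelli step after first conditioning on $W_0\le 1-\delta$ for arbitrary $\delta>0$, which suffices since $\prob(W_0<1)=1$.
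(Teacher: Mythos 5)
Your proposal is correct, but it takes a genuinely different route from the paper: the paper does not prove this lemma at all, it simply cites \cite[Proposition 2.1]{Michelen2019} (reformulating the claim as the a.s.\ existence, for every $\varepsilon>0$, of a finite $K$ and a path from $v_K$ to infinity with weights below $p_c+\varepsilon$) and remarks that the cited proof does not use moment assumptions on $X$. You instead give a self-contained argument: monotonicity from the definition of $W_k$ as a tail maximum, the upper bound $W_\infty\le p_c$ via conditional Borel--Cantelli applied to the Markov transition kernel \eqref{eq:def_thm_wk}, and the lower bound $W_\infty\ge p_c$ via a.s.\ simultaneous finiteness of all $p$-percolated descendant subtrees for $p<p_c$ (the union bound over the countable vertex set is exactly what is needed to handle the fact that $v_K$ is random). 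Both halves are sound, and your flagged obstacle is resolvable as you anticipate: $R(w)\theta(w)=1-wG_X'(1-w\theta(w))$ is the standard quantity ``one minus the derivative of the offspring generating function at the extinction probability,'' which is strictly positive for every supercritical branching process in all three regimes, and it is continuous in $w$ on $(p_c,1)$ (note $G_X'(s)<\infty$ for $s<1$ even when $\expec[X]=\infty$); restricting to $\{W_0\le 1-\delta\}$ then gives the uniform lower bound on compacts. What your approach buys is independence from the external reference and an argument that works verbatim in the infinite-mean regime; what it costs is reliance on the Markov chain description \eqref{eq:def_thm_wk} of Theorem~\ref{thm:constrT}, which is itself only proved in the paper by citation to the same source --- so your proof is self-contained only modulo that ingredient.
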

\begin{proof}
The claim is equivalent to the existence of an a.s. finite $K$, such that there exists a path from $v_K$ to infinity with weights smaller than $p_c+\varepsilon$, for all $\varepsilon>0$. This claim has been proven in \cite[Proposition 2.1]{Michelen2019}.
Inspection of the proof shows that it does not depend on the moments of $X$.
\end{proof}

We lastly formalise the degrees along the backbone, which are fully determined by $(W_k)_{k\geq 0}$. Intuitively, the IPC follows high-degree vertices, as more degrees imply more opportunities for a path to infinity with relatively low weights:

\begin{proof}[Proof of Theorem~\ref{thm:constrT},
\eqref{eq:def_thm_DBk}]
	Fix $W_k=w_k$.
	In order for a backbone vertex to have degree $l$, it needs to have one forest that survives on weights below $w_k$ and $l-1$ that die out on weights below $w_k$.
	Therefore,
	\begin{equation}
	\prob_k(D_{v_k}=l)=
	\frac{  \prob(W_k\in dw_k\mid X=l)\prob(X=l)}{ \prob(W_k\in dw_k)}.
	\end{equation}
	The first expression in the numerator denotes exact survival on $w_k$ given the degree. Exact survival on $w$ is approximated by $\hat \theta'(w)dw$. For the expression in the denominator, it is necessary to condition on the degree distribution first, as without this event, it is not clear how many trees must die out. With this, the probability simplifies to
	\begin{equation}
	\begin{aligned}
	\prob_k(D_{v_k}=l)&=
	\dfrac{\binom{l}{1}(1-\hat{\theta}(w_k))^{l-1}\hat{\theta}'(w_k)dw_k
		\prob(X=l)}{
		\sum_{y\geq 1}\binom{y}{1}(1-\hat{\theta}(w_k))^{y-1}\hat{\theta}'(w_k)dw_k\prob(X=y)}\\
	&=\frac{l(1-\hat{\theta}(w_k))^{l-1}
		\prob(X=l)}{
		\sum_{y\geq 1}y(1-\hat{\theta}(w_k))^{y-1}\prob(X=y)}.
	\end{aligned}
	\end{equation}
\end{proof}

\subsection{Properties of the finite attached forests}
\label{sec:FiniteForest_properties_BB}
Now that the distribution of the degree of the $k$-th backbone vertex is established, we recall that exactly one continues the path to infinity. This implies that $D_{v_k}-1$ additional finite trees are attached to the $k$-th backbone vertex.
These \textit{attached} trees are restricted on $W_k$ and conditionally finite and therefore, conditionally on $W_k$, are again independent sub-critical BP trees.
We also note that these trees are attached to the backbone via an edge, so that the entire attached sub-tree can be unexplored, in case this edge weight is larger than $W_k$. Given that the attached tree is explored, we find that, for fixed $k$, the offspring distribution is given by 
\begin{equation}
\label{eq:def_tildeX2}
 \tilde X^{\sss(k)}:=\Binom(X,w_k)\mid \text{ Extinction } \T(w_k).
\end{equation}
Next, we explicitly derive the offspring distribution of such an attached sub-tree and discuss some consequences this result brings:
\begin{proof}[Proof of Theorem~\ref{thm:constrT}, \eqref{eq:def_tilde_X}] 
By conditioning on $X$, we write the conditional probability as
    \begin{equation}
        \begin{aligned}
        \prob_k(\tilde X^{\sss(k)}=\ell)
        &= \frac{  \prob_k(\Binom(X,w_k)=\ell,\ \text{Extinction } \T(w_k))}{\prob_k(\text{ Extinction } \T(w_k))}
        \\&=
        \sum_{x\geq\ell }
        \frac{   \prob_k(\Binom(X,w_k)=\ell, \text{ Extinction } \T(w_k)\mid  X=x)
        \prob(X=x)
        }{\eta(w_k)}
        \\&=
        \frac{\sum_{x\geq \ell} \binom{x}{\ell} (\eta(w_k)w_k)^\ell (1-w_k)^{x-\ell}\prob(X=x) }{\eta(w_k)}.
        \end{aligned}
    \end{equation}
\end{proof}
For fixed $k$, $ \expec_k[\tilde X^{\sss(k)}]< 1$, but as $W_k\to p_c$, we expect that the attached trees become critical. However, for $\alpha\in(0,1)$, it is not directly clear what happens as $p_c=0$.
We find 
\begin{equation}
\begin{aligned}
    \expec_k[\tilde X^{\sss(k)}]
   & =
    \frac{\sum_{y\geq0}\sum_{x\geq y} y\binom{x}{y} (\eta(w_k)w_k)^y (1-w_k)^{x-y}\prob(X=x) }{\eta(w_k)}
    \\&=
    \frac{1}{\eta(w_k)}
    \sum_{x\geq 0}w_k\eta(w_k) (1-w_k+w_k\eta(w_k))^{x-1}x\prob(X=x)
    \\&=w_k\expec_k[X(1-w_k+w_k\eta(w_k))^{X-1} ].
    \end{aligned}
\end{equation}
Here, the second equality follows from Newton's binomium. 
Taking a limit for $k\to\infty$ we find, for $\alpha>2$ or $\alpha\in(1,2)$,
\begin{equation}
\label{eq:Conditional_final_clusters_mean_a>1}
    \lim_{w_k\searrow p_c}
    \expec_k[ \tilde X^{\sss(k)}] = p_c\expec[X]=1,
\end{equation}
and, for $\alpha\in(0,1)$, for which $p_c=0$, it follows by \eqref{eq:Bingham_Taylor_theta'_alpha(0,1)} below, that
\begin{equation}
\label{eq:Conditional_final_clusters_mean_a<1}
     \lim_{w_k\searrow 0}
    \expec_k[ \tilde X^{\sss(k)}] =\lim_{w_k\searrow 0} w_k \alpha\frac{1}{w_k} +o(w_k) =\alpha<1,
\end{equation}
The result of \eqref{eq:Conditional_final_clusters_mean_a>1} is as we would expect: when $W_k$ nears $p_c$ the attached sub-trees approach criticality. The more interesting result follows from \eqref{eq:Conditional_final_clusters_mean_a<1}, which implies that for $\alpha\in(0,1)$ the attached sub-trees remain sub-critical, even for $k\to\infty$.

\subsection{Construction algorithm}
\label{sec:Constr_alg_subsec}
We finally formulate the algorithmic construction in Theorem~\ref{thm:constrT} by \eqref{eq:def_thm_wk}-\eqref{eq:def_tilde_X}.
For finite $k$, the $k$-cut IPC can be reconstructed according to the following algorithm:
\begin{alg}[Construction of the $k$-cut IPC]
\label{alg:kcutIPC}
\normalfont
	$T(\clock)$ starts with the root and is from there constructed in an iterative way. In every step we add 
	the next vertex $v_k$ onto the backbone, the outgoing edge $\{v_k,v_{k+1}\}$ and the attached forest to $v_k$.
	Let $\tilde{T}^{\sss(k)}_{i}$ denote the $i$-th finite tree attached to backbone vertex $k$, that has offspring distribution as given in \eqref{eq:def_tilde_X}.
	Then we iterate the following steps:
	
	\begin{itemize}
		\item[(C0)] Consider $k\in\mathbb{N}$, and let $W_k=w_k$ be given;
		\item[(C1)] Given $w_k$, find the degree of $v_k$, denoted by $D_{v_k}$, as described in \eqref{eq:def_thm_DBk};
		\item[(C2)] Pick the next vertex backbone vertex $v_{k+1}$ uniformly over all the $D_{v_k}$ offspring of $v_k$;
		\item[(C3)] For $i=1,..., D_{v_k}-1$: given $w_k$,
		with probability $w_k$
		attach an
		i.i.d. $\tilde{T}^{\sss(k)}_{i}$ tree on $v_k$, as described in \eqref{eq:def_tilde_X},  and 
		with probability $1-w_k$
		add nothing;
		\item[(C4)] Determine $W_{k+1}$ as described in \eqref{eq:def_thm_wk};
		\item[(C5)] Repeat, with $k+1$ instead of $k$ and $W_{k+1}=w_{k+1}$ instead of $W_{k}$.
	\end{itemize}
	\end{alg}
	
\begin{proof}[Proof of Theorem~\ref{thm:constrT}: Algorithm \ref{alg:kcutIPC} creates the IPC]
	Suppose we have added the $k$-th vertex of the backbone and $W_k=w_k>p_c$ is given. 
	As seen from 
	$v_k$, $D_{v_k}$ i.i.d.\ BP trees emerge. As the path to infinity is one-ended
    (Lemma~\ref{lem:Toneended}), and due to the fact that all possible paths from $v_k$ are i.i.d., $v_{k+1}$ is a uniform child of $v_k$, which argues (C2).
	
	Again, by the one-ended property of $T(\clock)$, all other $D_{v_k}-1$ trees must satisfy two constraints: they are finite when they are restricted to weights below $w_k>p_c$. Such a tree starts with a  single edge, incident to $v_k$ and the root of a $\tilde{T}_{i}^{\sss(k)}$ tree, which satisfies both constraints. The edge between $v_k$ and the root of  $\tilde{T}_{i}^{\sss(k)}$ can have a weight larger than $w_k$,  in which case the  $\tilde{T}_{i}^{\sss(k)}$ is not explored in the $k$-cut IPC, and nothing is added.   This argues (C3).\\
	Finally, it is possible that the edge between $v_k$ and $v_{k+1}$ has exactly the same weight as $W_k$, in which case $W_{k+1}$ can be determined by \eqref{eq:def_thm_wk}, and the process continues at $v_{k+1}$. Otherwise, the process continues with $W_{k+1}=w_k$ at $v_{k+1}$. This argues (C4).
\end{proof}

\section{The scaling limit of future maximum backbone weights \mtitle{$(W_k)_{k\geq 0}$}}
 \label{sec:Scaling_W_k}
In this section we identify the scaling limits of $(W_k)_{k\geq0}$ as stated in Theorem~\ref{thm-subthmWk}. Theorem~\ref{thm:constrT} establishes that $(W_k)_{k\geq 0}$ is a Markov process, with an explicit transition kernel. In this section we show that $(W_k)_{k\geq 0}$ behaves asymptotically as a simple Markov jump process, whose law depends on $\alpha$. 
The scaling limit is already known for $\alpha>2$ \cite[Section 6]{Michelen2019},
and is here extended to $\alpha\in(1,2)$ and $\alpha\in(0,1)$.

This section is organised as follows.
In Section~\ref{sec-scaling-surv-prob}, we analyse the scaling of the percolation survival probability when $p$ is close to $p_c$.
In Section~\ref{sec:Scaling_W_k_alpha>1} we extend the known scaling results for the $(W_k)_{k\geq 0}$ process to $\alpha\in(1,2)$. Finally, Section~\ref{sec:Scaling_W_k_alpha_in_0,1} extends these result to $\alpha\in(0,1)$.

\subsection{Scaling of the survival probability}
\label{sec-scaling-surv-prob}

In the following, we see that the scaling of $(W_k)_{k\geq 0}$ differs for $\alpha>2$, $\alpha\in(1,2)$ and $\alpha\in(0,1)$. An indication of this is the different scaling behaviour of $\theta(p)$ for $p$ close to $p_c$ in the respective regimes. This is formalised in the following lemma:

\begin{lemma}[Asymptotics of the survival probability]
\label{lem:theta_scaling_>1}
\label{lem:theta_scaling_<1}
\label{lem:deriv_theta}
Consider $X$ a power-law offspring distribution in the domain of attraction of a stable law with exponent $\alpha$ as defined in \eqref{eq:powerlaw}. Let $\theta(p)$ denote the survival probability of this branching process, restricted to weights lower than $p>p_c$, as described in \eqref{eq:deftheta}. 
Define 
\begin{equation}
\label{eq:def_nu}
    \nu_\alpha = 
    \begin{cases}
     1 &  \text{ if } \alpha>2,\\
     \frac{1}{\alpha-1} & \text{ if } \alpha\in(1,2), \\
     \frac{\alpha}{1-\alpha} 
     & \text{ if }
     \alpha\in(0,1).
    \end{cases}
\end{equation}
Then, as $p\searrow p_c$,
  \begin{equation}
  \label{eq:theta_Scaling_ptopc}
      \theta(p) = C_\theta(p-p_c)^{\nu_\alpha}(1+o(1)), 
  \end{equation}
where
\begin{equation}
\label{eq:def_C_theta}
   C_\theta=  \begin{cases}
     \frac{2\expec[X]^3}{\expec[X(X-1)]} & \text{ if $\alpha>2$},\\
    \Big( \frac{\expec[X]^{\alpha+1}}{c_{\sss X} (-\Gamma(1-\alpha))}\Big)^{\frac{1}{\alpha-1}}& \text{ if $\alpha\in(1,2)$},\\
    (\Gamma(1-\alpha)c_{\sss X})^{\frac{1}{1-\alpha}}& \text{ if $\alpha\in(0,1)$}.\\
   \end{cases}
   \end{equation}
Additionally, 
\begin{equation}
\label{eq:implicit_theta'}
    \theta'(p) = \frac{ \theta(p)\expec[X(1-p\theta(p))^{X-1}]}{1-p\expec[X(1-p\theta(p))^{X-1}]}.
\end{equation}
   
\end{lemma}

\begin{proof}
     We refer to \cite[Proposition 2.2]{Michelen2020} for $\alpha>2$. For $\alpha\in(1,2)$, we use the asymptotic expansion of the generating function that states for $p\searrow p_c$
	\begin{equation}
\begin{aligned}	\label{eq:Bingham_taylor_alphain(1,2)}
	    \expec[(1-p\theta(p))^X] =&
	   \Big(1-\expec[X](-\log(1-p\theta(p)))\\&\quad+ c_{\sss X}(-\Gamma(1-\alpha))
	    (-\log(1-p\theta(p)))^\alpha\Big)(1+o(1)),
     \end{aligned}
	\end{equation}
	see \cite[Theorem 8.1.6]{Bingham1987}, for the special case $n=1$. Using the Taylor expansion of $-\log(1-p\theta(p))$ for $p\searrow p_c$ in combination with \eqref{eq:fixed_point_theta_and_binom}, we find \begin{equation}
	    \theta(p) =\Big(\expec[X]p\theta(p) - c_{\sss X}(-\Gamma(1-\alpha))(p\theta(p))^\alpha\Big)(1+o(1)),
	\end{equation}
	where one can use the continuation of the gamma function, using  $\Gamma(x)=1/x\Gamma(x+1)$, for $\alpha\in(1,2)$ \cite[Chapter 1]{Bonnar2017}, to find
   \begin{equation}
       -\Gamma(1-\alpha)=
       \frac{1}{\alpha-1}\Gamma(2-\alpha).
   \end{equation}
	We can isolate $\theta(p)$ to find
	\begin{equation}
	   \theta(p) =\Bigg( \frac{\expec[X]^{\alpha+1}}{c_{\sss X}(-\Gamma(1-\alpha))} (p-p_c)\Bigg)^{\frac{1}{\alpha-1}}(1+o(1)).
	\end{equation}
	Similarly, we can derive the scaling result for $\alpha\in(0,1)$ via \cite[Corollary 8.1.7]{Bingham1987} that, for $p\searrow 0$,
	\begin{equation}
	\label{eq:Bingham_Taylor_theta_alpha(0,10}
	    \theta(p) = 1-\expec[(1-p\theta(p))^X] 
	   = c_{\sss X}\Gamma(1-\alpha)(-\log(1-p\theta(p)))^{\alpha}(1+o(1)).
	\end{equation}
    By again using the expansion for the logarithm for $p\searrow 0$ we find
	\begin{equation}
	    \theta(p) = c_{\sss X}\Gamma(1-\alpha)(p\theta(p))^{\alpha}(1+o(1)).
	\end{equation}
	Isolating $\theta(p)$ gives that
	for $p\searrow 0$, $\theta(p)= C_\theta p^{\frac{\alpha}{1-\alpha}}(1+o(1))$, with $C_\theta = (\Gamma(1-\alpha)c_{\sss X})^{\frac{1}{1-\alpha}}$.
	
Finally, we derive $\theta'(p)$ for $p\searrow p_c$. Its existence follows from the implicit function theorem. We define
\begin{equation}
    F(p,\theta(p)) = 1-\theta(p)-\expec[(1-p\theta(p))^X],
\end{equation}
then for all $p>p_c$ the derivative of $F$ with respect to $\theta(p)$ exists and is non-zero as 

\begin{equation}
    \frac{\partial}{\partial\theta(p)} F(p,\theta(p)) = -1+p\expec[X(1-p\theta(p))^{X-1}],
\end{equation}
which behaves as $-(\alpha-1)\expec[X](p-p_c)<0$ for $\alpha\in(1,2)$, by \eqref{eq:Rpthetap_alpha(1,2)} and  as $\alpha-1<0$ for $\alpha\in(0,1)$,  by \eqref{eq:Rpthetap_alpha(0,1)}. Therefore, from the implicit function theorem \cite[Theorem 1.3.1]{Krantz2003}, it follows that the derivative of $\theta$ exists.
We can differentiate on both sides and isolate $\theta'(p)$ to find the result in \eqref{eq:implicit_theta'}.
\end{proof}

\subsection{Scaling limit of  \mtitle{$(W_k)_{k\geq 0}$} for \mtitle{$\alpha>2$} and \mtitle{$\alpha\in(1,2)$}}
\label{sec:Scaling_W_k_alpha>1}
In this section, we discuss the scaling limit of $(W_k)_{k\geq 0}$ for $\alpha>2$ and $\alpha\in(1,2)$.
The proof structure shares many similarities  with that of \cite[Corollary 6.3]{Michelen2019}, where $\alpha>2$ is discussed. The proof for $\alpha\in(1,2)$ as presented below, also extends to $\alpha>2$ and these proofs are therefore discussed together. 

\begin{proof}[Proof of Theorem~\ref{thm-subthmWk} for $\alpha\in(1,2)$]
The proof is split into four parts. Recall $\halpha= \alpha\wedge 2$ from \eqref{eq:def_halpha} and define the scaled jump process by
\begin{equation}
\label{eq:def_V_k}
V_k=\frac{W_k-p_c}{p_c}(\hat\alpha-1).
\end{equation}
Then the proof structure is as follows:

\begin{enumerate}
    \item \hyperref[par:3.2-part1]{\textbf{Bounds on the jump chain.}} We   approximate $(V_k)_{k\geq 0}$ for $k$ large with a simpler process, $(\tilde V_k(\eta))_{k\geq 0}$, as defined in \eqref{eq:def_V_tilde}.
    In Lemma~\ref{lem:A-Jumpchain_(1,2)} we show in particular that  $(\tilde V_k(\eta))_{k\geq 0}$ gives rise to upper and lower bounds on $(V_k)_{k\geq 0}$, for $k$ large enough. 
    \item \hyperref[par:3.2-part2]{\textbf{Continuous-time extension.}}
    We find a continuous-time equivalent for the upper and lower bounds, 
    by considering $(\tilde V_{N(t)}(\eta))_{t\geq 0}$ with $(N(t))_{t\geq 0}$ a Poisson process, for different values of $\eta$.
    We define this continuous time process by $(L_{\eta,\halpha}(t))_{t\geq 0}$
    that is a generalisation of  $(L_{\halpha}(t))_{t\geq 0}$, as defined in Definition \ref{def:alphaLEP}. 
    Lemma~\ref{lem-lim-reduced-env-process} shows that this Poissonisation also gives rise to 
    bounds on $(k\tilde V_{\lceil kt \rceil}(\eta) )_{t\geq 0}$
    for $k$ large enough, that are close to $(kL_{\eta,\halpha}(kt))_{t\geq 0}$.
    \item \hyperref[par:3.2-part3]{\textbf{Asymptotic point-wise convergence.}}
    The previous part implies that we need to find the scaling behaviour of $(kL_{\eta,\hat\alpha}(k t))_{t\geq 0}$.
    We show that $L_{\eta,\hat\alpha}(t)\overset{d}{=} c(\eta) \GammaD((1+\eta)/(\halpha-1),t)$ for fixed $t$, as is described in Lemma~\ref{lem:RE_alpha<2_gamma} and $c(\eta)$ given in \eqref{eq:ceta}.
    Due to the scaling property of the gamma distribution in \eqref{eq:gamma_scaling}, it also follows that $kL_{\eta,\hat\alpha}(kt)\overset{d}{=} c(\eta) \GammaD((1+\eta)/(\hat\alpha-1),t)$.
    \item \hyperref[par:3.2-part4]{\textbf{Extension to $J_1$-convergence.}} Finally, we extend the one-dimensional convergence results to convergence in the $J_1$-topology by showing convergence of the finite-dimensional distribution and tightness. This concludes the proof.
\end{enumerate}

In the following, we present the details of this proof.  However, first, we make a remark on calibrating the process at time 0 properly:
\begin{remark}[Calibrating the starting positions]
\label{rem:start_pos}
In the proof overview, we speak of multiple processes that are linked via couplings. We make sure that the couplings are proper by aligning the starting position via a.s.\ finite stopping times, 
$n_1$ defined in \eqref{eq:def_n1_stoptime} and $\tau_1$ and $\tau_2$ defined in \eqref{eq:def_tau_stoptime}. In the limit, these stopping times disappear, but they are necessary to formalise the proof. We refer to Figure \ref{fig:doublefigsstarttimes}  for an illustrating figure.
 \end{remark}

\begin{figure}[H]
     \centering
    \begin{subfigure}[t]{.48\linewidth}
         \centering
         \includegraphics[width=\linewidth]{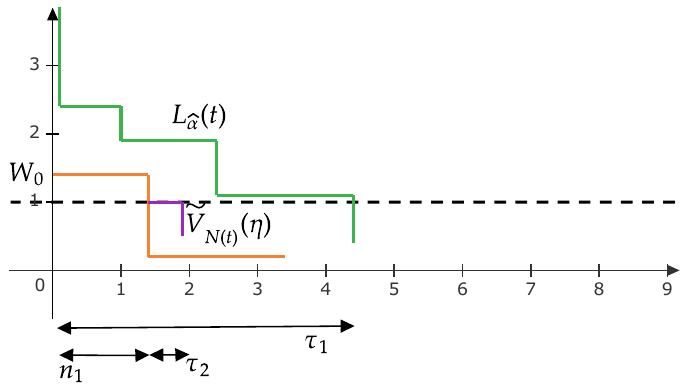}
         \caption{$V_0\leq1$ and therefore $n_0=0$.}
         \label{fig:starttimes_1}
     \end{subfigure}
     \hfil
     \begin{subfigure}[t]{.48\linewidth}
         \centering
         \includegraphics[width=\linewidth]{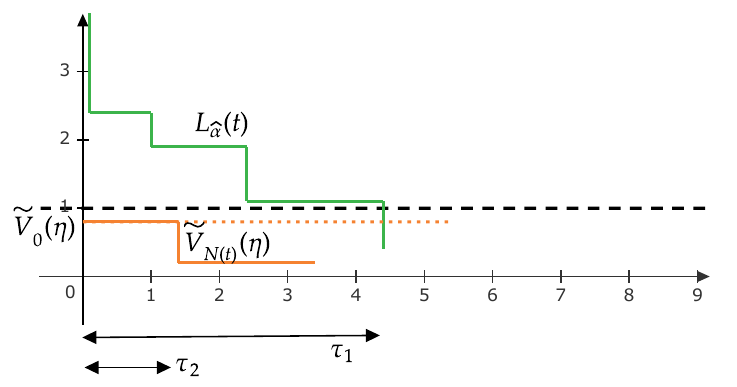}
         \caption{$V_0>1$ and therefore $n_0>0$.}
         \label{fig:starttimes_2}
     \end{subfigure}
        \caption{Realisations of $(\tilde V_{N(t)}(\eta))_{t\geq 0}$ and $(L_{\halpha}(t))_{t>0}$ with the respective stopping times $\tau_1$ and $\tau_2$. We notice that by \eqref{eq:uniform_jump_property}, the distribution of $\tilde V_{N(\tau_2)+n_1}(\eta)$ equals $L_{\halpha}(\tau_1)$, and thus, shifted by those times, as in \eqref{eq:VN(t)=R(t)_2}, the two processes are calibrated properly.}
        \label{fig:doublefigsstarttimes}
\end{figure}
\paragraphi{Part 1 - Bounds on the jump chain}
\label{par:3.2-part1}
We first define an auxiliary non-increasing Markov jump-process that is key in the following analysis.
For $U_k\overset{d}{=} \Unif[0,1]$ i.i.d, we define for $0<\varepsilon\ll1 $ and $\eta\in(-\varepsilon,\varepsilon)$,
\begin{equation}
\label{eq:def_V_tilde}
 \tilde V_{k+1}(\eta) = \begin{cases}
     \tilde V_k(\eta) & \text{w.p. } 1-(1-\eta) \tilde V_k(\eta),\\
    U_k^{(\hat{\alpha}-1)/(1+\eta)} \tilde V_k(\eta) & \text{w.p. } (1-\eta) \tilde V_k(\eta) ,\\
    \end{cases}
\end{equation}
under the condition that  $\tilde V_0(\eta)$ is distributed as $\min\{1/(1-\eta),V_0\}$.

Under the appropriate coupling, we show that for different values of $\eta$, $(\tilde V_{k}(\eta))_{k\geq 0}$, forms upper and lower bounds for $(V_k)_{k\geq 0}$. This is formalised as follows:
\begin{lemma}[Bounds on the jump chain for $\alpha\in(1,2)$]
\label{lem:A-Jumpchain_(1,2)}
Consider $\alpha\in(1,2)$ and the future maximum added weight sequence $(W_k)_{k\geq 0}$.
Define the a.s. finite stopping time $n_1\geq 0$ as 
\begin{equation}
\label{eq:def_n1_stoptime}
   n_1 = \min 
    \{n\geq 0\colon \
     V_n<1
    \}.
\end{equation}
For given $\eta\in[0,\varepsilon)$, there exists a coupling such that with probability 1 there exists some finite $K(\eta)$, such that, for $k>K(\eta)$,
\begin{equation}
\tilde V_{k-n_1}(-\eta) < V_{k} < \tilde V_{k-n_1}(\eta),    
\end{equation}
\end{lemma}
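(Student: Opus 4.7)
My strategy is to compare the exact transition kernel of $V_k$ to the simpler kernels of $\tilde V_k(\pm\eta)$ and show that the latter sandwich the former via a monotone coupling, once $V_k$ becomes small enough that the asymptotic form of the kernel is accurate to within tolerance $\eta$.

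\textbf{Step 1 -- Asymptotics of the transition kernel.} Combining the definition of $R(p)$ in \eqref{eq:def_R(p)_intro} with the expression for $\theta'(p)$ in \eqref{eq:implicit_theta'} yields the identity $R(p)\theta(p)=1-p\expec[X(1-p\theta(p))^{X-1}]$. Differentiating the Bingham expansion \eqref{eq:Bingham_taylor_alphain(1,2)} gives $G_X'(1-y)\sim \expec[X]-\alpha c_{\sss X}(-\Gamma(1-\alpha))y^{\alpha-1}$. Substituting $y=p\theta(p)$ and using the scaling of $\theta(p)^{\alpha-1}$ from Lemma~\ref{lem:theta_scaling}, the two terms of order $(p-p_c)$ combine to give $R(p)\theta(p)\sim(\alpha-1)\expec[X](p-p_c)$, which in $V$-variables reads $R(W_k)\theta(W_k)=V_k(1+o(1))$. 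For the conditional jump law, \eqref{eq:def_thm_wk} gives $W_{k+1}=\theta^{-1}(U\theta(W_k))$ given a jump, with $U\sim\Unif[0,1]$; regular variation of $\theta$ then yields $V_{k+1}=V_k U^{\alpha-1}(1+o(1))$, with error uniform in $U$ on compact sub-intervals of $(0,1]$. Thus the exact kernel of $V$ matches the kernel of $\tilde V(0)$ to leading order, and the role of $\eta>0$ will be to absorb the sub-leading error.

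\textbf{Step 2 -- Monotone coupling.} The exact transition CDF of $V$ is $F_V(v\mid v_k)=R(W_k)\theta(p_c+p_cv/(\alpha-1))$ for $v\in(0,v_k)$ and $1$ thereafter, while $F_{\tilde V(\eta)}(v\mid \tilde v_k)=(1-\eta)\tilde v_k^{1-(1+\eta)/(\alpha-1)} v^{(1+\eta)/(\alpha-1)}$ on $(0,\tilde v_k)$, with an analogous formula for $\tilde V(-\eta)$. Both kernels are stochastically monotone in the starting state: for $\tilde V(\pm\eta)$ this is immediate from the explicit form, and for $V$ it follows from the asymptotics of Step~1, which imply that $R(p)$ is decreasing in $p$ near $p_c$. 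Draw i.i.d.\ uniforms $(B_k)_{k\geq 1}$ and define all three processes simultaneously by the inverse-CDF rule $V_{k+1}=F_V^{-1}(B_k)$, $\tilde V_{k+1}(\pm\eta)=F_{\tilde V(\pm\eta)}^{-1}(B_k)$. Initialize $\tilde V_0(\pm\eta)=V_{n_1}$; this is admissible because $V_{n_1}<1\leq 1/(1-\eta)$, and $n_1$ is a.s.\ finite by Lemma~\ref{lem:ConvW_kpc}. Under the inductive hypothesis $\tilde V_{k-n_1}(-\eta)\leq V_k\leq \tilde V_{k-n_1}(\eta)$, the kernel sandwich
\begin{equation*}
F_{\tilde V(\eta)}(v\mid \tilde V_{k-n_1}(\eta))\ \leq\ F_V(v\mid V_k)\ \leq\ F_{\tilde V(-\eta)}(v\mid \tilde V_{k-n_1}(-\eta))\qquad \forall v\in(0,V_k),
\end{equation*}
combined with stochastic monotonicity of each kernel in its starting state, propagates the three-way ordering to step $k+1$.

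\textbf{Step 3 -- Main obstacle.} The crux is verifying the kernel sandwich \emph{uniformly} in $v\in(0,V_k)$ rather than only pointwise or at leading order. For $v$ small relative to $V_k$ the different exponents $(1\pm\eta)/(\alpha-1)$ versus $1/(\alpha-1)$ make the sandwich strict, but near $v\approx V_k$ the comparison is delicate and requires quantitative control on the sub-leading corrections in $\theta$. The standard tool is the Potter bound for regularly varying functions \cite[Theorem~1.5.6]{Bingham1987}: for any $\eta>0$ there exists $p^*>p_c$ such that, for all $p_c<u\leq w\leq p^*$,
\begin{equation*}
\Big(\tfrac{u-p_c}{w-p_c}\Big)^{(1+\eta)/(\alpha-1)}\ \leq\ \tfrac{\theta(u)}{\theta(w)}\ \leq\ \Big(\tfrac{u-p_c}{w-p_c}\Big)^{(1-\eta)/(\alpha-1)}.
\end{equation*}
Combined with a quantitative version of $R(W)\theta(W)=V(1+o(1))$, these bounds yield the kernel sandwich uniformly in $v$ whenever $W_k\leq p^*$ and $V_k\leq v^*(\eta)$ for some threshold $v^*(\eta)>0$. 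The stopping index $K(\eta)$ is then defined as the first $k$ after which both conditions hold for all subsequent steps, and is a.s.\ finite because $W_k\to p_c$ a.s.\ by Lemma~\ref{lem:ConvW_kpc}.
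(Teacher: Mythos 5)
Your Step 1 reproduces the paper's computation: the identity $R(p)\theta(p)=1-p\expec[X(1-p\theta(p))^{X-1}]$, the Tauberian expansion giving $R(p)\theta(p)\sim(\alpha-1)\expec[X](p-p_c)$, and the regular-variation bound on the jump law are exactly \eqref{eq:Rpthetap_alpha(1,2)}--\eqref{eq:bounds_jumpsize_alpha(1,2)}; your Potter bound on $\theta$ is the mirror image of the bound on $\theta^{-1}$ that the paper proves in Appendix~\ref{sec:app:boundjumpsize}. Where you genuinely diverge is the coupling mechanism: the paper drives all three chains with \emph{two} shared uniform sequences (one deciding whether a jump occurs, one deciding its size) and compares jump probabilities and jump sizes separately, whereas you use a single quantile (inverse-CDF) coupling and reduce everything to a pointwise ordering of the one-step transition CDFs plus stochastic monotonicity in the starting state. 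Your route is cleaner conceptually, and it buys you a single sandwich inequality to verify; the paper's route avoids having to prove monotonicity of the exact kernel at all.

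That monotonicity claim is where your argument has a gap. You assert that the $V$-kernel is stochastically monotone because ``$R(p)$ is decreasing in $p$ near $p_c$,'' but all that Step 1 gives you is $R(p)\sim C(p-p_c)^{(\alpha-2)/(\alpha-1)}$, and an asymptotic equivalence does not imply eventual monotonicity of $R$. Without monotonicity of $F_V$ in its starting state, propagating the three-way ordering from the kernel sandwich at a \emph{common} state fails; you must instead compare $F_V(\cdot\mid V_k)$ directly with $F_{\tilde V(\pm\eta)}(\cdot\mid\tilde V_{k-n_1}(\pm\eta))$ at the (different) ordered states, using the inductive hypothesis. Doing so exposes a second unstated condition: after applying the Potter bound, the prefactor comparison reduces to comparing $V_k^{\,1-(1\mp\eta)/(\halpha-1)}$ with $\tilde V_{k-n_1}(\mp\eta)^{\,1-(1\mp\eta)/(\halpha-1)}$, and the inequality goes the right way only when the exponent $1-(1-\eta)/(\halpha-1)$ is negative, i.e.\ only for $\eta<2-\alpha$. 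This is harmless since the lemma takes $\eta\in[0,\varepsilon)$ with $\varepsilon$ small, but it must be imposed explicitly or the lower-bound kernel is not even stochastically monotone. (A minor further discrepancy: initialising $\tilde V_0(\pm\eta)=V_{n_1}$ gives $\tilde V$ a different initial law than the $\min\{1/(1-\eta),V_0\}$ prescribed in \eqref{eq:def_V_tilde}; this does not affect the bound itself but matters for the calibration via $\tau_1$ in Lemma~\ref{lem-lim-reduced-env-process}.)
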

\begin{proof}
Recall the transition kernel for $(W_k)_{k\geq 0}$, from \eqref{eq:def_thm_wk}, which we rewrite to $V_k$ as introduced in \eqref{eq:def_V_k}. Then,
\begin{equation}
\label{eq:jumpchain_vk}
    V_{k+1}=\begin{cases}
     V_k & \text{ w.p. } 1-R(W_k)\theta(W_k),\\
     \frac{\theta^{-1}(U_k\theta(W_k))
     -p_c}{p_c}(\halpha-1)
     & \text{ w.p. } R(W_k)\theta(W_k),\\
    \end{cases}
\end{equation}
where $\theta^{-1}(x)$ is the inverse of $\theta(x)$. Note that $\theta^{-1}(p)$ exists for $p\in(p_c,1)$ since $p\mapsto \theta(p)$ is a bijective function on the domain $[p_c,1]$ that maps to $[0,1]$.
Furthermore, note that the distribution of $V_{k+1}$ at a jump follows from the distribution in \eqref{eq:def_thm_wk}, as for $x<V_k$,
\begin{equation}
\begin{aligned}
    \prob(V_{k+1}<x\mid V_{k+1}<V_{k})
   & =
    \frac{ R(W_k)\theta\big( p_c+\frac{p_cx}{(\halpha-1)}\big)}{R(W_k)\theta(W_k)}\\&=
    \prob\Big(\frac{\theta^{-1}(U_k\theta(W_k))
     -p_c}{p_c}(\halpha-1)<x\Big).
     \end{aligned}
\end{equation}
The jumps of $(V_k)_{k\geq 0}$ are driven by two sequences of i.i.d.\ standard uniformly distributed random variables, denoted by $U^{(j)}_k$ and $U^{(s)}_k$, where the former determines that a jump occurs in the $k$-th time step precisely when $\{U^{(j)}_k < R(W_k)\theta(W_k)\}$, and the latter determines the jump sizes. \\
Let us now find proper bounds for  $R(W_k)\theta(W_k)$. 

For $\alpha>2$, based on the definition of $R(p)$ in \eqref{eq:def_R(p)_intro},
one can use the approximations on $\theta(p)$ and $\theta'(p)$ directly from \cite[Corollary 3.2]{Michelen2019}, to find for $p\searrow p_c$
\begin{equation}
\label{eq:Rpthetap_alpha>2}
R(p)\theta(p)= \expec[X](p-p_c)(1+o(1)).
\end{equation}
For $\alpha\in(1,2)$, we first recall $\theta'(p)$ in \eqref{eq:implicit_theta'}, to find for $p\in(p_c,1)$
\begin{equation}
    R(p)\theta(p) = 
    1-p\expec[X(1-p\theta(p))^{X-1}].
\end{equation}
 We use the approximation from \cite[Theorem 8.1.6]{Bingham1987}, that states, for $\alpha\in(1,2)$ and $p\searrow p_c$,
\begin{equation}
\label{eq:Bingham_taylor_Derive_alpha(1,2)}
    \expec[X(1-p\theta(p))^{X-1}]
   =
   \Big( \expec[X] -\alpha c_{\sss X}(-\Gamma(1-\alpha)) (p\theta(p))^{\alpha-1}\Big)(1+o(1)).
\end{equation}
By Lemma~\ref{lem:theta_scaling_>1}, we 
then find, for $p\searrow p_c$, that
\begin{equation}
\label{eq:Rpthetap_alpha(1,2)}
    \begin{aligned}
    R(p)\theta(p) &=
    \Big(1-p(\expec[X] - \alpha c_{\sss X}(-\Gamma(1-\alpha))p^{\alpha-1}\theta(p)^{\alpha-1})\Big)(1+o(1))\\
    &=
    \Big(-\expec[X](p-p_c) +\alpha(p-p_c)\expec[X](p\expec[X])^{\alpha}\Big)(1+o(1))
    \\&=
    (\alpha-1)\expec[X](p-p_c)(1+o(1)).
    \end{aligned}
\end{equation}
The second statement follows from $\theta(p)= C_\theta(p-p_c)^{1/(\alpha-1)}(1+o(1))$ and $C_\theta$ in \eqref{eq:def_C_theta}.

By Lemma~\ref{lem:ConvW_kpc}, $W_k\searrow p_c$ almost surely and monotonically, meaning that for some $\eta>0$ there exists almost surely a finite random variable $K(\eta)$, such that, for any $k>K(\eta)$, and with probability 1, $|W_k-p_c|<\eta$.
Then, by \eqref{eq:Rpthetap_alpha>2} and \eqref{eq:Rpthetap_alpha(1,2)},
for any $\eta_1$, with probability one there exists a large  $K_1:=K_1(\eta_1)$ such that, for all $k>K_1$,
there exists a coupling such that
\begin{equation}
    (1-\eta_1)
    V_k < R(W_k)\theta(W_k)
    <(1+\eta_1)
    V_k,
\end{equation}
thereby bounding the jump probability of $V_k$ between that of $\tilde V_k(\eta)$ and $\tilde V_k(-\eta)$, as given in \eqref{eq:def_V_tilde} respectively.
We continue with bounds on the jump size distribution of \eqref{eq:jumpchain_vk}.
According to \aprefb{2}{sec:app:boundjumpsize}, there exists a coupling such that for any given $\eta_2>0$, there exists a $K_2:=K_2(\eta_2)$, such that for any $k>K_2$, with probability 1,
\begin{equation}
\label{eq:bounds_jumpsize_alpha(1,2)}
U_k^{(\halpha-1)/(1 -\eta_2)} V_k\leq 
    \frac{\theta^{-1}(U_k\theta(W_k))
     -p_c}{p_c}(\halpha-1)\leq  U_k^{(\halpha-1)/(1+\eta_2)} V_k,
\end{equation}
thereby also bounding the jump size between that of $\tilde V_k(-\eta)$ and $\tilde V_k(\eta)$ respectively. 
It remains to show that the $(V_k)_{k\geq 0 }$ process and the respective upper and lower bound processes are properly aligned at $k=0$. 

In case $V_0<1$ (i.e. $n_1=0$), the processes are properly coupled. Otherwise, we perform a time shift of $n_1$.
As $(V_k)_{k\geq 0}$ converges to 0 a.s. by Lemma~\ref{lem:ConvW_kpc}, $n_1$ is an almost surely finite random variable uniformly in $\eta\in(-\varepsilon,\varepsilon)$ and we find
\begin{equation}
   \tilde V_{k}(-\eta) \leq V_{k+n_1} \leq 
    \tilde V_k(\eta).
\end{equation}
We refer to Remark \ref{rem:start_pos} for an illustration on the effect of $n_1$.
Next, w.l.o.g. fix $0<\eta\ll 1$, then  for all $k>\max\{n_1,K_1(\eta),K_2(\eta) \}$ we find that there exists a coupling such that
\begin{equation}
    \tilde V_{k-n_1}(-\eta) \leq \frac{W_k-p_c}{p_c}(\hat\alpha-1) \leq
    \tilde V_{k-n_1}(\eta).
\end{equation}
Here the coupling is constructed such that it relies on the \textit{same} uniform random variables $(U^{(j)}_k)_{k\geq 1}$ to determine whether a jump down occurs and the \textit{same} uniform random variables $(U^{(s)}_k)_{k\geq 1}$ that determine the jump size.
\end{proof}
In the following, we find continuous-time equivalents for both the upper and lower bound processes.

\paragraphi{Part 2 - Continuous-time extension} 
\label{par:3.2-part2}
Let us first define the following non-increasing continuous-time process:
\begin{definition}[$\alpha$-Enhanced generalised lower envelope processes]
\label{def:GELEP}
Let $\eta \in(-\varepsilon,\varepsilon)$, $\varepsilon>0$ and 
\begin{equation}
    \label{eq:ceta}
c(\eta) = \frac{1}{1-\eta}.
\end{equation}
Then we define the $\alpha$-enhanced generalised lower envelope process ($\alpha$-GLEP) as follows:
For $t=\varepsilon$, let $L_{\eta,\halpha}(t)\overset{d}{=} c(\eta)\GammaD((1+\eta)/(\halpha-1),\varepsilon)$.
Then, the process
$ (L_{\eta,\halpha}(t))_{t\geq \varepsilon}$ evolves according to the following procedure: given $ L_{\eta,\halpha}(t_0)=l$ for some $t_0>\varepsilon$, and $E_{t_0} \overset{d}{=} \Exp( (1-\eta)l )$ then $L_{\eta,\halpha}(t)=l$ for $t\in[t_0,t_0+E_{t_0})$, after which it jumps to $L_{\eta,\halpha}(t+E_{t_0}) = U^{(\hat{\alpha}-1)/(1+\eta)}l$, where $U\overset{d}{=}\Unif[0,1]$.
\end{definition}
Note that the $\alpha$-LEP of Definition \ref{def:alphaLEP}
is a special case of the $\alpha$-GLEP with $\eta=0$ and we write $L_{0,\halpha}(t) = L_{\halpha}(t)$. In the following we show that $L_{\eta,\halpha}(t)$ is the natural continuous-time extension of $\tilde V_k(\eta)$.
\begin{lemma}[Relation of jump-process to the $\alpha$-GLEP]
\label{lem-lim-reduced-env-process}
Fix $t$ and $\xi>0$. Recall $(L_{\eta,\halpha}(t))_{t>0}$ from Definition \ref{def:GELEP}, for $\eta\in(-\varepsilon,\varepsilon)$.
Let $(N(t))_{t\geq 0}$ be a  Poisson process with rate $1-\eta$ and define the following a.s. finite stopping times as 
    \begin{equation}
    \label{eq:def_tau_stoptime}
    \tau_1 = \inf\{t\geq 0\colon\ L_{\eta,\halpha}(t) < \tilde V_0(\eta) \}, \quad \text{ and }\quad
    \tau_2 = \inf\{t\geq 0\colon\ N(t)=1\}.
    \end{equation}
Then, with probability 1, there exists a coupling and $K(\xi)<\infty$ such that, for any $k>K(\xi)$ and $t\geq\varepsilon$,
\begin{equation}
\label{eq:in_lemma:krkt_boundsonKV}
     k L_{\eta,\halpha}(kt(1+\xi) +\tau_1) \leq   k\tilde V_{\lceil kt +\tau_2\rceil+ n_1}(\eta) \leq k L_{\eta,\halpha}(kt(1-\xi) +\tau_1) 
\end{equation}

\end{lemma}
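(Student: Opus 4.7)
The plan is to couple the discrete chain $(\tilde V_k(\eta))_{k\geq 0}$ to the continuous-time $\alpha$-GLEP $(L_{\eta,\halpha}(t))_{t\geq \varepsilon}$ via Poissonization of the discrete index, and then to convert the resulting continuous-time comparison into a discrete-index one using the law of large numbers for the Poisson arrival times. First, I would embed the discrete chain into continuous time: let $M(s)$ be an independent Poisson process of unit rate, and set $\tilde V^c_s := \tilde V_{M(s)}(\eta)$. Between arrivals of $M$ the process $\tilde V^c$ is constant, while at an arrival, conditional on state $v$, it jumps to $U^{(\halpha-1)/(1+\eta)}v$ with probability $(1-\eta)v$ and otherwise stays. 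Hence $\tilde V^c$ has jump rate $(1-\eta)v$ at state $v$ with jump distribution $U^{(\halpha-1)/(1+\eta)}v$, exactly matching the generator specified in Definition \ref{def:GELEP}. Therefore, from initial value $\tilde V_0(\eta)$, the process $\tilde V^c$ is itself an $\alpha$-GLEP, differing from the canonical $L_{\eta,\halpha}$ only through the law of the starting position.

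Next, I would synchronize the two $\alpha$-GLEPs using the stopping times $\tau_1$ and $\tau_2$. Employing the spatial Poisson point process representation of Remark \ref{rem:alphaLEP_on_SPPP}, both $\tilde V^c$ and $L_{\eta,\halpha}$ can be realised on the same probability space, driven by a common spatial Poisson process on $(0,\infty)^2$. Because $L_{\eta,\halpha}$ decays to $0$ almost surely, $\tau_1<\infty$ a.s.; by the strong Markov property at $\tau_1$, the shifted process $(L_{\eta,\halpha}(\tau_1+s))_{s\geq 0}$ is an $\alpha$-GLEP from state $L_{\eta,\halpha}(\tau_1) \leq \tilde V_0(\eta)$. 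The monotone coupling via the common Poisson points, where each process uses only those points below its current level and shares jump magnitudes, then yields $L_{\eta,\halpha}(\tau_1+s) \leq \tilde V^c_s$ for all $s\geq 0$. A symmetric coupling, in which $\tilde V^c$ is shifted by the first Poisson arrival $\tau_2$ of an auxiliary rate-$(1-\eta)$ clock, produces the matching upper bound. The main obstacle lies in this monotone coupling: because the two processes have different state-dependent jump rates, a naive common-clock coupling fails, and the common-spatial-Poisson-process construction is essential to preserve the order through the matched jumps.

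Finally, I would translate the continuous-time sandwich into the discrete-index bound. By the strong law of large numbers for the unit-rate Poisson process $M$, the inverse $M^{-1}(\lceil kt \rceil)$ satisfies $M^{-1}(\lceil kt \rceil)/(kt) \to 1$ almost surely, uniformly on $[\varepsilon, T]$ for any $T<\infty$. Hence for any $\xi>0$ there is a.s. some $K(\xi)<\infty$ such that $M^{-1}(\lceil kt \rceil) \in ((1-\xi)kt, (1+\xi)kt)$ for all $t\geq \varepsilon$ and $k>K(\xi)$. Combining this with the non-increasing property of $L_{\eta,\halpha}$ and the identity $\tilde V^c_{M^{-1}(\lceil kt \rceil)} = \tilde V_{\lceil kt \rceil}(\eta)$ yields \eqref{eq:in_lemma:krkt_boundsonKV}, where the fixed bounded shifts by $\tau_2$ and $n_1$ are absorbed into the $\xi$-distortion by enlarging $K(\xi)$ if necessary.
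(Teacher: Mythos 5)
Your opening Poissonization step is fine (and in fact sharper than the paper's own text, which misstates the rate of $N$ as $1-\eta$ when unit rate is what makes the Poissonized jump rate $(1-\eta)v$ match Definition~\ref{def:GELEP}). But the crucial middle step goes astray. The paper does \emph{not} argue via a monotone/domination coupling; it establishes an exact \emph{distributional equality of paths} after the shifts: both $L_{\eta,\halpha}(\tau_1)$ and $\tilde V_{N(\tau_2)}(\eta)$ have the identical law $\tilde V_0(\eta)\,\Unif[0,1]^{(\halpha-1)/(1+\eta)}$ — the first by the conditional-uniform jump identity \eqref{eq:uniform_jump_property}, the second by construction of $\tilde V$ and $\tau_2$ — and since both are $\alpha$-GLEPs with the same generator, one obtains a coupling under which $(\tilde V_{N(t+\tau_2)}(\eta))_{t\geq\varepsilon}$ and $(L_{\eta,\halpha}(t+\tau_1))_{t\geq\varepsilon}$ are path-by-path \emph{equal}. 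Only then does the LLN $N(kt)\approx kt(1\pm\xi)$, together with monotonicity of $L_{\eta,\halpha}$, produce the two-sided sandwich \eqref{eq:in_lemma:krkt_boundsonKV} in which $\tau_1$ appears in \emph{both} bounds and $\tau_2$ in the index of $\tilde V$.

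Your monotone coupling can deliver the lower bound, since $L_{\eta,\halpha}(\tau_1)<\tilde V_0(\eta)$ by definition of $\tau_1$, and the order is indeed preserved under the common-SPPP coupling (both the "same triggering point" and "$L_2$ jumps first" cases preserve $L_1\leq L_2$ for $\halpha\in(1,2]$). But the "symmetric coupling" for the upper bound is a gap. Shifting $\tilde V^c$ by $\tau_2$ starts the chain at $\tilde V^c_{\tau_2}$, and there is no reason for $\tilde V^c_{\tau_2}\leq L_{\eta,\halpha}(0)$ or $L_{\eta,\halpha}(\tau_1)$ or any other quantity you have available — these are two independent random levels of unrelated magnitude. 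Even if you could establish order, the resulting upper bound would be of the form $\tilde V^c_{\tau_2+s}\leq L_{\eta,\halpha}(s)$, which carries \emph{no} $\tau_1$ shift, whereas the lemma requires $L_{\eta,\halpha}(kt(1-\xi)+\tau_1)$ on the upper side. A domination coupling cannot produce the same process $L_{\eta,\halpha}$ evaluated at nearby times on both sides; only the equidistribution of starting positions at $\tau_1$ and $\tau_2$ (followed by equal dynamics) does that. To repair your proof, replace the monotone-coupling step with the observation that $L_{\eta,\halpha}(\tau_1)\overset{d}{=}\tilde V^c_{\tau_2}\overset{d}{=}\tilde V_0(\eta)\Unif[0,1]^{(\halpha-1)/(1+\eta)}$ and couple the two GLEPs to agree from those stopping times onward; then the LLN for $M^{-1}$ closes the argument exactly as you describe.
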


\begin{proof}
We first observe that, for $t_0>\varepsilon$ and conditionally on $\tilde V_{N(t_0)}= L_{\eta,\halpha}(t_0)$, there exists a coupling such that $(\tilde V_{N(t)}(\eta))_{t>t_0}=(L_{\eta,\halpha}(t))_{t>t_0}$, as both are jump processes with the same inter-jump and jump-size distributions. 
It therefore remains to show that there exists a time where the positions of both processes have the same distribution. 
We next show that based on the stopping times $\tau_1$ and $\tau_2$, the two processes are also calibrated properly. 

At $t=\varepsilon>0$, for $\varepsilon$ small enough, $ L_{\eta, \halpha}(\varepsilon)>\tilde V_0(\eta)$, with high probability. As $( L_{\eta, \halpha}(t))_{t\geq \varepsilon}$ converges to 0, there exists some a.s. finite time $\tau_1$, where $ L_{\eta, \halpha}$ jumps from some level $\ell$ above $\tilde V_0(\eta)$, to below $\tilde V_0(\eta)$.
This level is distributed as $\Unif[0,1]^{(\hat{\alpha}-1)/(1+\eta)}\ell$, conditioned on being below $ \tilde V_0(\eta)$. Note that for $b\leq \ell$ and $a>0$,
\begin{equation}
\label{eq:uniform_jump_property}
\{\ell\Unif[0,1]^{a}\mid \ell\Unif[0,1]^{a} < b\} \overset{d}{=}
b\Unif[0,1]^{a}.
\end{equation}
Thus, $L_{\eta, \halpha}(\tau_1)\overset{d}{=} \tilde V_0(\eta) \Unif[0,1]^{(\hat{\alpha}-1)/(1+\eta)}$.
Furthermore, we note that, from the definition of $\tilde V_k(\eta)$
in \eqref{eq:def_V_tilde} and $\tau_2$ in \eqref{eq:def_tau_stoptime}
that $\tilde V_{N(\tau_2)}(\eta)\overset{d}{=}\tilde V_0(\eta)  \Unif[0,1]^{(\hat{\alpha}-1)/(1+\eta)}$. Thus, the starting positions align after the respective stopping times.

This means that under this coupling
\begin{equation}
\label{eq:VN(t)=R(t)_2}
    (\tilde V_{N(t+\tau_2)}(\eta))_{t\geq \varepsilon} \overset{d}{=} ( L_{\eta,\halpha}(t+\tau_1))_{t\geq \varepsilon}.
\end{equation}
For a proof of \eqref{eq:uniform_jump_property} and for an illustrative figure of the effect of the stopping times we refer to Remark \ref{rem:start_pos}. 

Next, by the strong law of large numbers, it follows that for any $\xi>0$, there almost surely exists a finite $K$, such that,
for all $K>k$,
\begin{equation}
\frac{kt}{1+\xi}< N(kt)   < \frac{kt}{1-\xi},
\end{equation}
and by the non-increasing nature of $t\mapsto L_{\eta,\halpha}(t)$, the result follows.
\end{proof}

In the following part, we derive the distribution of $L_{\eta,\halpha}(t)$ for fixed $t$, which is used to simplify the bounds in \eqref{eq:in_lemma:krkt_boundsonKV}.

\paragraphi{Part 3 - Asymptotic point-wise distribution}
\label{par:3.2-part3}
We show that the process $(L_{\eta,\halpha}(t))_{t\geq \varepsilon}$ is pointwise gamma distributed. This is the final property necessary to show pointwise convergence of $kV_{\lceil kt \rceil }$.

\begin{lemma}[The $\alpha$-enhanced generalised lower envelope height is gamma distributed]
    \label{lem:RE_alpha<2_gamma}
Recall $\halpha$ from \eqref{eq:def_halpha}.  Then for $\halpha\in(1,2]$, consider the $\alpha$-GLEP $L_{\eta,\halpha}(t)$ for $t\geq\varepsilon$ fixed, as defined in Definition \ref{def:GELEP}.
Take $\eta\in(-\varepsilon,\varepsilon)$ and recall $c(\eta)$ from \eqref{eq:ceta}.
Then, for a fixed $t\geq\varepsilon$,
\begin{equation}
\label{eq:in_lemma_R(t)simGamma}
    L_{\eta,\halpha}(t) \stackrel{d}{=} c(\eta)\GammaD\Big( 
    \frac{1+\eta}{\hat\alpha-1},t \Big).
\end{equation}    
\end{lemma}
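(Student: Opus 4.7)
The plan is to derive the Kolmogorov forward equation for the density $f_t$ of $L_{\eta,\halpha}(t)$, verify that the density of $c(\eta)\GammaD((1+\eta)/(\halpha-1),t)$ satisfies it, and conclude via uniqueness of the Markov semigroup together with the fact that the initial distribution at $t=\varepsilon$ matches by Definition~\ref{def:GELEP}.

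First, I would read off the infinitesimal dynamics. At level $\ell$ the process waits for an $\Exp((1-\eta)\ell)$ time and then jumps to $U^{(\halpha-1)/(1+\eta)}\ell$ with $U\sim\Unif[0,1]$. Writing $\lambda:=1-\eta$ and $\beta:=(1+\eta)/(\halpha-1)$, the change of variables $v=u^{1/\beta}\ell$ shows that the post-jump density given pre-jump level $\ell$ is $v\mapsto \beta v^{\beta-1}/\ell^\beta$ on $(0,\ell)$. Consequently, the forward equation reads
\begin{equation*}
\partial_t f_t(x) \;=\; -\lambda x\, f_t(x) \;+\; \lambda\beta\, x^{\beta-1}\int_x^\infty \ell^{1-\beta}\, f_t(\ell)\, d\ell,\qquad t\geq\varepsilon,\ x>0,
\end{equation*}
with initial condition $f_\varepsilon(x)=(\lambda\varepsilon)^\beta x^{\beta-1} e^{-\lambda\varepsilon x}/\Gamma(\beta)$.

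Next, I would substitute the ansatz $f_t(x)=(\lambda t)^\beta x^{\beta-1} e^{-\lambda tx}/\Gamma(\beta)$, which is precisely the density of $c(\eta)\GammaD(\beta,t)$ since $c(\eta)=1/\lambda$, and verify by direct computation. The key cancellation is that $\ell^{1-\beta}\cdot\ell^{\beta-1}e^{-\lambda t\ell}=e^{-\lambda t\ell}$ inside the integral, so the integral collapses to $e^{-\lambda tx}/(\lambda t)$, and both sides of the forward equation reduce to $\lambda^\beta t^{\beta-1}x^{\beta-1}e^{-\lambda tx}(\beta-\lambda tx)/\Gamma(\beta)$. In other words, the size-biased jump rate $\lambda\ell$ combined with the power-law jump distribution is exactly what makes the gamma family invariant under the dynamics.

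The main obstacle is the careful derivation of the forward equation, in particular computing the density of the post-jump location and interchanging the order of integration against $f_t$ to obtain the right form of the incoming-rate term; after that, the verification is purely algebraic. Uniqueness of the solution is automatic, since $L_{\eta,\halpha}$ is a well-defined Markov jump process whose marginal distribution at any $t\geq\varepsilon$ is determined by the initial distribution via the associated semigroup, so the gamma density verified above must be the true one.
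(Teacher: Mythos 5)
Your derivation of the forward equation and the verification that the gamma family solves it are both correct, and in fact your equation is exactly the integrated form of the paper's PDE \eqref{eq:Differential_equation_g}--\eqref{eq:differential_eq_in_g}: writing $\lambda=1-\eta$, $\beta=(1+\eta)/(\halpha-1)$ and $g(x,t)=\prob(L_{\eta,\halpha}(t)>x)$, integrating your forward equation over $(x,\infty)$ and swapping the order of integration gives $c\,\partial_t g(x,t)=-x^{\beta}\int_x^\infty \ell^{1-\beta}f_t(\ell)\,\dif\ell$, whose $x$-derivative is precisely \eqref{eq:differential_eq_in_g}. So the analytic core of your argument coincides with the paper's. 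The difference is in how the equation is exploited: the paper \emph{solves} it (Laplace transform in $t$, integrating factor in $x$, and then pins down the one remaining free constant $C(s)$ by the normalisation $\int f_t=1$), which constructively identifies the unique admissible solution and then matches it to the gamma transform; you instead \emph{guess and verify}.

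The gap is in your final uniqueness step. The statement ``the marginal of a well-defined Markov jump process is determined by the initial distribution via the semigroup'' is true but does not give what you need: you have shown that the gamma density is \emph{a} solution of the forward equation with the correct initial condition, and you must additionally know that the true marginal is the \emph{only} such solution. Well-posedness of the process and uniqueness of solutions to the forward (master) equation are different assertions, and for pure jump processes with unbounded jump rates (here the rate $\lambda\ell$ is unbounded over the state space) the forward equation can in general admit spurious solutions. The claim is salvageable here — e.g.\ by conditioning on $L_{\eta,\halpha}(\varepsilon)=\ell_0$ so that the non-increasing path lives in $(0,\ell_0]$ where the rates are bounded by $\lambda\ell_0$ and the bounded-generator theory applies, then integrating over $\ell_0$; or by running the paper's Laplace-transform computation, which shows that the transform of \emph{any} probability-density solution with the right decay is forced — but as written the uniqueness assertion is not justified, and this is precisely the step the paper's longer computation is there to handle.
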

\begin{proof}
Let $t>\varepsilon>0$ be given. 
In the following we write $c(\eta):=c$ for notational convenience.
We first define a time-shifted process that is defined for $t\geq 0$ by
\begin{equation}
\label{eq:def_timeshift_L}
   L^{(\varepsilon)}_{\eta,\halpha}(t) :=L_{\eta,\halpha}(t+\varepsilon).
\end{equation}
By definition of the generalised $\alpha$-enhanced lower envelope process, this time-shifted process starts at time $t=0$ in location
$c\GammaD((1+\eta)/(\halpha-1),\varepsilon)$. Note that the claim in \eqref{eq:in_lemma_R(t)simGamma} follows by showing that
\begin{equation}
\label{eq:timeshift_L_goal}
    L^{(\varepsilon)}_{\eta,\halpha}(t)
   \overset{d}{=} c
    \GammaD\Big( 
    \frac{1+\eta}{\hat\alpha-1},t+\varepsilon \Big).
\end{equation}
This proof is organised as follows. For $x>0$,
we derive a partial differential equation for the tail distribution  $\prob(L^{(\varepsilon)}_{\eta,\halpha}(t)>x)$, by considering the expected change in $[t,t+\delta]$. Letting $\delta\searrow 0$ results in a partial differential equation in $x$ and $t$, that we solve via  Laplace transforms. Finally, we show that the resulting transform equals the transform of a gamma tail distribution with the proper parameters.

Let $L^{(\varepsilon)}_{\eta,\halpha}(t)$ be given as in  \eqref{eq:def_timeshift_L}.
Recall that $L^{(\varepsilon)}_{\eta, \halpha}(t)$ stays in the same location for an exponential amount of time with rate $(1-\eta)$ times its current height times and then jumps to height $U^{(\hat{\alpha}-1)/(1+\eta)}$ times its current height, where $U\overset{d}{=}\Unif[0,1]$ is independent of the other random variables.
     We consider the tail distribution of $L^{(\varepsilon)}_{\eta,\halpha}(t+\delta)$ conditionally on $L^{(\varepsilon)}_{\eta,\halpha}(t)$, for fixed $t>\varepsilon$. As $L^{(\varepsilon)}_{\eta,\halpha}(t)$ is a non-increasing process, we find
    \begin{equation}
        \prob(L^{(\varepsilon)}_{\eta,\halpha}(t+\delta)>x) = 
        \int_x^\infty 
        \prob(L^{(\varepsilon)}_{\eta,\halpha}(t+\delta)>x\mid L^{(\varepsilon)}_{\eta,\halpha}(t)=y)
        f_{L^{(\varepsilon)}_{\eta,\halpha}(t)}(y)\dif y,
    \end{equation}
    where $f_{L^{(\varepsilon)}_{\eta,\halpha}(t)}$ is the density of $L^{(\varepsilon)}_{\eta,\halpha}(t)$.
    Given that the process is at height $y$ at time $t$, 
    the number of jumps in 
  $[t,t+\delta]$ is Poisson distributed with rate $y(1-\eta)$. We can condition on this number to find
    \begin{equation}
    \begin{aligned}
         &\int_x^\infty 
        \prob(L^{(\varepsilon)}_{\eta,\halpha}(t+\delta)>x\mid L^{(\varepsilon)}_{\eta,\halpha}(t)=y)
        f_{L^{(\varepsilon)}_{\eta,\halpha}(t)}(y)\dif y\\
        &\quad =\int_x^\infty \Big(
        \e^{-\delta y(1-\eta)} 
        +\prob(U^{(\hat{\alpha}-1)/(1+\eta)}L^{(\varepsilon)}_{\eta,\halpha}(t)>x\mid L^{(\varepsilon)}_{\eta,\halpha}(t)=y)\\&\qquad \times\delta y(1-\eta)\e^{-\delta y(1-\eta)} +o(\delta ) \Big) f_{L^{(\varepsilon)}_{\eta,\halpha}(t)}(y)\dif y.
        \end{aligned}
    \end{equation}
    Furthermore, the probability of two or more arrivals within $[t,t+\delta]$ is $O(\delta^2)$. 
    Taylor expanding the exponentials, we find
    \begin{equation}
    \begin{aligned}
        \prob(L^{(\varepsilon)}_{\eta,\halpha}&(t+\delta)>x) = 
         \prob(L^{(\varepsilon)}_{\eta,\halpha}(t)>x) -\delta(1-\eta) \int_x^\infty 
        y f_{L^{(\varepsilon)}_{\eta,\halpha}(t)}(y) \dif y
       \\&+\delta(1-\eta)\int_x^\infty 
        \Big(1-(x/y)^{(1+\eta)/(\hat{\alpha}-1)}\Big) y f_{L^{(\varepsilon)}_{\eta,\halpha}(t)}(y)\dif y+o(\delta)\\
        &= \prob(L^{(\varepsilon)}_{\eta,\halpha}(t)>x)
        -\delta(1-\eta) \int_x^\infty 
        \frac{x^{(1+\eta)/(\hat{\alpha}-1)}}{y^{(1+\eta)/(\hat{\alpha}-1)}-1}f_{L^{(\varepsilon)}_{\eta,\halpha}(t)}(y)\dif y
        +o(\delta)
        .
        \end{aligned}
        \end{equation}
    Subtracting $\prob(L^{(\varepsilon)}_{\eta,\halpha}(t)>x)$ on both sides, dividing by $\delta$ and
    taking a limit for $\delta\searrow0$ gives a partial differential equation.
    We write $g^{(\varepsilon)}(x,t)=\prob(L^{(\varepsilon)}_{\eta,\halpha}(t)>x)$.
    To avoid confusion, we note that $f_{L^{(\varepsilon)}_{\eta,\halpha}(t)}(x)= \partial/(\partial x) (1-g^{(\varepsilon)}(x,t)) $, and write it as such. Recall $c=1/(1-\eta)$, then we  find
    \begin{equation}
    \label{eq:Differential_equation_g}
    \begin{aligned}
        c\frac{\partial}{\partial t} g^{(\varepsilon)}(x,t) &= 
        -\int_x^\infty 
        \frac{x^{(1+\eta)/(\hat{\alpha}-1)}}{y^{(1+\eta)/(\hat{\alpha}-1)}-1}
        \frac{\partial}{\partial y } (1-g^{(\varepsilon)}(y,t))
        \dif y.\\
        \end{aligned}
    \end{equation}
Via a simple substitution we find that  
\begin{equation}
\label{eq:diff_eq_sol}
    g^{(\varepsilon)}(x,t) 
    =\int_{x/c}^\infty 
    \frac{(t+\varepsilon)^{(1+\eta)/(\halpha-1)}
    z^{(1+\eta)/(\halpha-1)-1}
    }{\Gamma((1+\eta)/(\halpha-1))}\e^{-z(t+\varepsilon)}\dif z
\end{equation}
is a solution to \eqref{eq:Differential_equation_g} and corresponds to the tail distribution of a $c\GammaD((1+\eta)/(\halpha-1),t+\varepsilon)$ random variable. In Appendix \aprefa{B4}{app:difeq}, we provide details of the proof that \eqref{eq:diff_eq_sol} is also the unique solution to \eqref{eq:Differential_equation_g}.
\end{proof}

In particular, the result of Lemma \ref{lem:RE_alpha<2_gamma} implies that, in the special case $\eta=0$, 
\begin{equation}
\label{eq:GLEP=0}
   L_{0,\halpha}(t):= L_{\halpha}(t)\overset{d}{=} \GammaD(1/(\halpha-1),t).
\end{equation}
To conclude the pointwise convergence, we recall that from Lemmas~\ref{lem:A-Jumpchain_(1,2)} and \ref{lem-lim-reduced-env-process} that, for $k$ large enough,
there exists a coupling such that, uniformly in $t\geq \varepsilon$,
\begin{equation}
\label{eq:lep_gen_bounds}
    k L_{-\eta,\halpha}(kt(1+\xi)+\tau_1-\tau_2)
    \leq
    kV_{\lceil k(t+n_1/k)\rceil}
    \leq 
    k L_{\eta,\halpha}(kt(1-\xi)+\tau_1-\tau_2).
\end{equation}
Moreover, Lemma~\ref{lem:RE_alpha<2_gamma} shows that $L_{\eta,\hat\alpha}(t)$ is gamma distributed for fixed $t$. We use the scaling properties of \eqref{eq:gamma_scaling} to obtain, for $t\geq \varepsilon$,
\begin{equation}
\label{eq:GE-L_scaling}
     k L_{\eta,\halpha}(kt(1+\xi)+\tau_1-\tau_2)
     =
     L_{\eta,\halpha}( t(1+\xi) +(\tau_1-\tau_2)/k).
\end{equation}

Substituting \eqref{eq:GE-L_scaling} in \eqref{eq:lep_gen_bounds} and taking a limit for $k\to\infty$, we find for $t\geq \varepsilon$

\begin{equation}
\label{eq:final_coupling_alpha(1,2)}
     L_{-\eta,\halpha}(t(1+\xi)) \leq 
    \lim_{k\to\infty} kV_{\lceil kt\rceil}\leq  
    L_{\eta,\halpha}(t(1-\xi)).
\end{equation}
Letting $\xi\to 0$ and $\eta\to 0$ proves, according to \eqref{eq:GLEP=0}, the pointwise convergence of $kV_{\lceil kt\rceil}$ to $L_{\halpha}(t)$. 

\paragraphi{Part 4 - Extension to $J_1$-convergence}
\label{par:3.2-part4}
According to \cite[Theorem 13.1]{Billingsley1999}, it is sufficient to show convergence of the finite-dimensional distribution and tightness for  $(kV_{\lceil kt\rceil})_{t\geq 0}$.
As the coupling in \eqref{eq:final_coupling_alpha(1,2)} is uniform in $t$, the upper and lower bound in the coupling hold for any $t$ \textit{simultaneously}. Based on its construction from Lemma~\ref{lem:A-Jumpchain_(1,2)}, we find, for $t_1<\cdots<t_n$, and $\eta\in[0,\varepsilon)$,
\begin{equation}
\label{eq:findimdistr_wk_alphain(1,2)}
 \big(c(-\eta) L_{\eta,\halpha}(t_i(1-\xi))\big) _{i=1}^n\leq 
\big(kV_{\lceil kt_i\rceil}\big) _{i=1}^n\leq
\big(c(\eta)
L_{\eta,\halpha}(t_i(1+\xi) )\big) _{i=1}^n,
\end{equation}
where the inequality is componentwise.
This can be extended to show that the multidimensional Laplace transform is properly bounded, from which the result follows by taking $\xi\to 0$ and $\eta\to0$.
This implies convergence of the finite-dimensional distributions.
Tightness is proved in Appendix \aprefa{B.1}{app:3-1}.
\end{proof}
In the following analysis, we also work with the time-scaled $\alpha$-enhanced lower-envelope process, $(tL_{\halpha}(t))_{t>0}$. This scaling factor ensures stationarity, which is formalised in the following remark:
\begin{remark}[Stationarity of the $(tL_{\halpha}(t))_{t>0}$ process]
\label{rem:stationarity_tLalpha(t)}
Take $\varepsilon>0$. Consider $L_{\halpha}(t)$ from Definition \ref{def:alphaLEP} for non-negative $t$. 
Then by \eqref{eq:GLEP=0}, $L_{\halpha}(t)$ is gamma distributed and, by the scaling property of the gamma distribution in \eqref{eq:gamma_scaling},
\begin{equation}
    \label{eq:statioraity_L(t)}
    tL_{\halpha}(t) \overset{d}{=}
    \GammaD(1/(\halpha-1),1),
\end{equation}
from which stationarity follows.
In particular \eqref{eq:statioraity_L(t)} is also true for $t=0$.
\end{remark}

\subsection{Scaling limit of \mtitle{$(W_k)_{k\geq 0}$} for \mtitle{$\alpha\in(0,1)$}{}}
\label{sec:Scaling_W_k_alpha_in_0,1}
We show the convergence of $(W_k)_{k\geq 0}$ for $\alpha\in(0,1)$ according to the same roadmap as used in Section~\ref{sec:Scaling_W_k_alpha>1}. 
The difference to Section~\ref{sec:Scaling_W_k_alpha>1} becomes clear in the upper and lower bounds on $(W_k)_{k\geq 0}$, see \eqref{eq:def_V_tilde_alpha<1}, as the jump probability is asymptotically \emph{independent} of the height of the process. This induces the exponential decay of $(W_k)_{k\geq 0}$, contrary to the linear decay of $(W_k)_{k\geq 0}$ for $\alpha\in(1,2)$ and $\alpha>2$. 
\begin{proof}[Proof of Theorem~\ref{thm-subthmWk} for $\alpha\in(0,1)$]
The proof is split into four parts and the proof structure is as follows:

\begin{enumerate}
    \item \hyperref[par:3.3-part1]{\textbf{Bounds on the jump chain.}} We approximate $
    W_k$ for $k$ large with a simpler process $(\tilde V_k(\eta))_{k\geq0}$, for some $\eta$ close to 0. In Lemma~\ref{lem:A-Jumpchain_(0,1)}, we show in particular that this process gives rise upper and lower bound processes on $(W_k)_{k\geq0}$ given by  $ (\tilde V_k(\eta))_{k\geq0}$ and $ (\tilde V_k(-\eta))_{k\geq0}$,  respectively.   
    \item \hyperref[par:3.3-part2]{\textbf{Continuous-time extension.}}
    We find a continuous-time equivalent for the upper and lower bounds by considering $(\tilde\V_{N(t)}(\eta))_{t\geq 0}$ with $(N(t))_{t\geq 0}$ a unit-rate Poisson process. It then follows  that $(\tilde\V_{N(t)}(\eta))_{t\geq 0}$ has the same distribution as the exponential Poisson decay process $(\L_{\eta,\alpha}(t))_{t\geq 0}$, as defined in Definition \ref{def:ExpPoidecayprocess}. Lemma~\ref{lem-lim-W(t)} shows that this Poissonisation again gives rise to upper and lower bounds on $((\tilde \V_{\lceil kt \rceil}(\eta))^{1/k})_{t\geq 0}$ that are close to $((\L_{\eta,\halpha}(kt))^{1/k})_{t\geq0}$.

    \item \hyperref[par:3.3-part3]{\textbf{Asymptotic point-wise convergence.}}
    The previous part implies that we need to find the scaling behaviour of $((\L_{\eta,\halpha}(kt))^{1/k})_{t\geq0}$. Contrary to  $\alpha\in(1,2)$,   Lemma~\ref{lem:scaling_EPDP} shows that the limiting process of $((\L_{\eta,\halpha}(kt))^{1/k})_{t\geq0}$ is a \emph{deterministic} process $A_\eta(t)$.
    
    \item \hyperref[par:3.3-part4]{\textbf{Extension to $J_1$-convergence.}} Finally, we extend one-dimensional convergence to convergence in the $J_1$-topology by showing convergence of the finite-dimensional distribution and tightness. This concludes the proof.
\end{enumerate}

\paragraphi{Part 1 - Bounds on the  jump chain}
\label{par:3.3-part1}
We start by finding bounds on $(W_k)_{k\geq 0}$, to simplify the limiting object. Let $\varepsilon\ll 1-\alpha$ and $\eta\in(-\varepsilon,\varepsilon) $, then we define the following non-increasing Markov jump-chain
$\tilde\V_{k}(\eta)$ that evolves as
\begin{equation}
\label{eq:def_V_tilde_alpha<1}
    \tilde\V_{k+1}(\eta) = \begin{cases}
     \V_k(\eta) & \text{w.p. } \alpha(1+
     \eta),\\
     U_k^{\frac{1-\alpha}{\alpha(1+\eta)}}\V_k(\eta) & \text{w.p. }1-\alpha(1+\eta)  ,\\
    \end{cases}
\end{equation}
under the condition that $\tilde\V_0(\eta) $ are distributed as $ W_0$.
Then similarly to the proof in  Lemma~\ref{lem:A-Jumpchain_(1,2)}, $\tilde\V_{k}(\eta)$ form upper and lower bounds for $(W_k)_{k\geq 0}$.

\begin{lemma}[Bounds on the jump chain for $\alpha\in(0,1)$]
\label{lem:A-Jumpchain_(0,1)}
Fix $\alpha\in(0,1)$.
Then for given $\eta\in[0,\varepsilon)$, there exists a coupling such that with probability 1 there exists some finite $K(\eta)$, such that, for all $k>K(\eta)$,
\begin{equation}
    \tilde\V_k(-\eta) < W_k < \tilde\V_k(\eta).
\end{equation}
\end{lemma}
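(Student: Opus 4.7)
The plan is to mirror the proof of Lemma~\ref{lem:A-Jumpchain_(1,2)}, substituting the asymptotics appropriate to $\alpha\in(0,1)$, where $p_c=0$ and, in contrast to the $\alpha\in(1,2)$ regime, the jump probability $R(W_k)\theta(W_k)$ converges to a \emph{positive constant} $1-\alpha$ rather than vanishing linearly with $W_k-p_c$. First I would derive the two asymptotic ingredients that feed the coupling. Combining $R(p)\theta(p)=1-p\,\expec[X(1-p\theta(p))^{X-1}]$, which is immediate from \eqref{eq:def_R(p)_intro} together with \eqref{eq:implicit_theta'}, with the Bingham--Goldie--Teugels expansion of $\expec[X(1-s)^{X-1}]$ as $s\searrow 0$ (obtained via Karamata's monotone density theorem applied to the expansion of $G_{\sss X}$ in \cite[Theorem~8.1.6]{Bingham1987}) yields
\begin{equation*}
\expec[X(1-p\theta(p))^{X-1}]\sim \alpha\, c_{\sss X}\,\Gamma(1-\alpha)\,(p\theta(p))^{\alpha-1}.
\end{equation*}
Inserting $\theta(p)\sim C_\theta\, p^{\alpha/(1-\alpha)}$ from Lemma~\ref{lem:theta_scaling}, and using the explicit constant $C_\theta=(\Gamma(1-\alpha)c_{\sss X})^{1/(1-\alpha)}$ from \eqref{eq:def_C_theta}, collapses this to $p\,\expec[X(1-p\theta(p))^{X-1}]\to\alpha$, hence $R(p)\theta(p)\to 1-\alpha$ as $p\searrow 0$. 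Inverting the same power law gives $\theta^{-1}(x)\sim (x/C_\theta)^{(1-\alpha)/\alpha}$, so that at a jump
\begin{equation*}
\theta^{-1}\bigl(U\,\theta(W_k)\bigr)=U^{(1-\alpha)/\alpha}\,W_k\,(1+o(1))\qquad\text{as }W_k\searrow 0.
\end{equation*}

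I would then drive the chain by two i.i.d.\ standard uniform sequences $(U_k^{(j)})_{k\geq 1}$ and $(U_k^{(s)})_{k\geq 1}$, with $W_{k+1}\neq W_k$ iff $U_k^{(j)}<R(W_k)\theta(W_k)$, and conditional on a jump $W_{k+1}=\theta^{-1}(U_k^{(s)}\,\theta(W_k))$. The auxiliary processes $\tilde\V_k(\pm\eta)$ from \eqref{eq:def_V_tilde_alpha<1} are driven by the same pair of sequences, with matched initial condition $\tilde\V_0(\pm\eta)=W_0$. Since $W_k\searrow 0$ a.s.\ by Lemma~\ref{lem:ConvW_kpc}, for any $\eta\in(0,\varepsilon)$ there almost surely exists a finite $K(\eta)$ such that, for every $k>K(\eta)$,
\begin{equation*}
1-\alpha(1+\eta)\,<\,R(W_k)\theta(W_k)\,<\,1-\alpha(1-\eta),
\end{equation*}
and simultaneously, uniformly in $u\in(0,1)$,
\begin{equation*}
u^{(1-\alpha)/(\alpha(1-\eta))}\,W_k\,<\,\theta^{-1}(u\,\theta(W_k))\,<\,u^{(1-\alpha)/(\alpha(1+\eta))}\,W_k.
\end{equation*}
A short case analysis on whether each of the three chains jumps at step $k$ then propagates the ordering $\tilde\V_k(-\eta)\leq W_k\leq\tilde\V_k(\eta)$ inductively: whenever $\tilde\V_k(\eta)$ jumps, so do $W_k$ and $\tilde\V_k(-\eta)$, whenever $W_k$ jumps so does $\tilde\V_k(-\eta)$, and when they jump together the multiplicative decreases are monotone in $\eta$. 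Because $\tilde\V_0(\pm\eta)=W_0$ at step $0$, no stopping-time shift analogous to $n_1$ in Lemma~\ref{lem:A-Jumpchain_(1,2)} is needed.

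The main technical obstacle is the uniform-in-$u$ control of the multiplicative jump ratio $\theta^{-1}(u\,\theta(W_k))/W_k$ near both $u=0$ and $u=1$; the pointwise asymptotic $\theta^{-1}(x)\sim(x/C_\theta)^{(1-\alpha)/\alpha}$ has to be upgraded to a Potter-type uniform bound for the regularly varying function $\theta$, paralleling the derivation of \eqref{eq:bounds_jumpsize_alpha(1,2)} and the book-keeping of Appendix~\ref{sec:app:boundjumpsize}. Once this uniform bound is in place, the remainder of the argument is a direct translation of the $\alpha\in(1,2)$ proof.
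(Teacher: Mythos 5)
Your proposal is correct and follows essentially the same route as the paper: the same asymptotics $R(p)\theta(p)\to 1-\alpha$ and $\theta^{-1}(u\theta(W_k))\sim u^{(1-\alpha)/\alpha}W_k$ derived from \eqref{eq:implicit_theta'} and the Bingham expansion, the same coupling via shared uniform sequences, and the same deferral of the uniform-in-$u$ jump-size bound to Appendix~\ref{sec:app:boundjumpsize}. Your observation that no $n_1$-type time shift is needed here (since $\tilde\V_0(\pm\eta)\overset{d}{=}W_0$) also matches the paper.
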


\begin{proof}
By \eqref{eq:def_thm_wk},
\begin{equation}
    W_{k+1} = \begin{cases}
    W_k &\text{w.p. } 1-R(W_k)\theta(W_k),\\
    \theta^{-1}( U_k\theta(W_k)) &\text{w.p. } R(W_k)\theta(W_k).\\
    \end{cases}
\end{equation}
Recall, by \eqref{eq:def_R(p)_intro} and \eqref{eq:implicit_theta'}, that, for $p>p_c=0$,
\begin{equation}
\label{eq:rep-R(W)}
   R(p)\theta(p) = \expec[X(1-p\theta(p))^{X-1}] \frac{\theta(p)}{\theta'(p)}
   =1-p\expec[X(1-p\theta(p))^{X-1}]
   .
\end{equation}
We use a Tauberian theorem based on
\cite[Equation 8.1.12]{Bingham1987}  and the result of Lemma \ref{lem:theta_scaling_<1} to find for $p\searrow 0$
\begin{equation}
\begin{aligned}
    \label{eq:Bingham_Taylor_theta'_alpha(0,1)}
    \expec[X &(1-p\theta(p))^{X-1}]=
    \Big[\alpha \Gamma(1-\alpha) c_{\sss X}
    (-\log(1-p\theta(p)))^{\alpha-1}\Big](1+o(1))\\&
   % =\Big[\alpha \Gamma(1-\alpha) c_{\sss X}(C_\theta p^{1+\alpha/(1-\alpha)}(1+\o{1/p}{})+O(p^{2(1+\alpha/(1-\alpha))}))^{\alpha-1} \Big](1+\o{1/p}{})\\&
    =\Big[\alpha \Gamma(1-\alpha) c_{\sss X}
    C_{\theta}^{\alpha-1}p^{-1}(1+\o{1/p}{})\Big](1+\o{1/p}{})=
    \alpha p^{-1}(1+o(1)).
\end{aligned}
\end{equation}
Therefore, for $\alpha\in(0,1)$ we find,
\begin{equation}
\label{eq:Rpthetap_alpha(0,1)}
    \begin{aligned}
    R(p)\theta(p)=(1-\alpha)(1+o(1)).
    \end{aligned}
\end{equation}
Thus, asymptotically, $\prob(W_{k+1} = W_k\mid W_k )$ is close to $\alpha$.
We use that $W_k$ is a monotonically non-increasing sequence that converges a.s. to $p_c=0$. Using the result from \eqref{eq:Rpthetap_alpha(0,1)}, it therefore follows that, for fixed $\eta_1$, there exists a large $K_1:=K_1(\eta)$ such that, for all $k>K_1(\eta_1)$,
\begin{equation}
\label{eq:couplingbound_alpha<1_1}
   1-\alpha -\eta_1<R(W_k)\theta(W_k) < 1-\alpha +\eta_1.
\end{equation}
thereby bounding the jump distribution of $W_k$ between that of $\tilde\V_k(\eta)$ and $\tilde\V_k(-\eta)$ respectively.
Furthermore, according to the same argument, there exists a coupling such that for any given $\eta_2>0$, with probability 1 there exists a $K_2:=K_2(\eta_2)$, such that, for any $k>K_2(\eta_2)$,
\begin{equation}
\label{eq:couplingbound_alpha<1_2}
U_k^{\frac{1-\alpha}{\alpha(1-\eta_2)}} W_k <
    \theta^{-1}(U_k \theta(W_k) )< U_k^{\frac{1-\alpha}{\alpha(1 +\eta_2)}} W_k,
\end{equation}
where we recall that $U_k\overset{d}{=}\Unif[0,1]$ i.i.d.\ and thereby bounding also the jump size between that of $\tilde\V_k(-\eta)$ and
$\tilde\V_k(\eta)$, respectively.
These bounds are non-trivial and are proven in full detail in \aprefb
{2}{sec:app:boundjumpsize}.
Then, for $\eta\in(- \varepsilon,\varepsilon)$ fixed and conditionally on $\tilde\V_0(\eta) \overset{d}{=} W_0$, we find that there exists a coupling such that, for all $k>\max\{K_1(\eta),K_2(\eta)\}$,
\begin{equation}
    \tilde\V_{k}(-\eta) < W_k<
    \tilde\V_k(\eta).
\end{equation}
\end{proof}

\paragraphi{Part 2 - Continuous-time extension}
\label{par:3.3-part2}
We Poissonise $\tilde V_k(\eta)$ from \eqref{eq:def_V_tilde_alpha<1} and show the existence of an equivalent continuous-time stochastic process that behaves as the following process:

\begin{definition}[Exponential Poisson decay process]
\label{def:ExpPoidecayprocess}
Fix $\alpha\in(0,1)$ and $\eta\in(-\varepsilon,\varepsilon)$. Let $(N_\eta(t))_{t\geq 0}$ be a Poisson process with rate $1-(\alpha+\eta)$. Then we define the exponential Poisson decay process (EPDP) by
\begin{equation}
   \L_{\eta,\alpha}(t) 
    = 
  \prod_{i=1}^{N_\eta(t)}
  Y_i
  ,
\end{equation}
where $(Y_i)_{i\geq 1}$ are i.i.d.\
such that $Y_i \overset{d}{=} \Unif[0,1]^{(1-\alpha)/(\alpha(1+\eta))}$. 
\end{definition}
For $N(t)$ a unit-rate Poisson process, we show that $\tilde\V_{N(t)}(\eta)$  closely resembles $\L_{\eta,\alpha}(k)$, which we formalise as follows:
\begin{lemma}[Continuous-time equivalent of $\tilde\V_k(\eta)$]
\label{lem-lim-W(t)}
Let $\L_{\eta,\alpha}$ denote the exponential Poisson decay process as defined in Definition \ref{def:ExpPoidecayprocess} and $N(t)$ a unit-rate Poisson process.
 Define two a.s. finite stopping times as 
\begin{equation}
\label{eq:def_tau_stoptime_alpha01}
\tau_3 = \inf\{t\geq0:\L_{\eta,\alpha}(t) < \tilde V_0(\eta)\},
\qquad \text{ and } \qquad 
\tau_4 = \inf\{t\geq 0: N(t)=1\}.
\end{equation}
Then, for any $\xi>0$, there exists a coupling and random variable $K(\xi)<\infty$, such that, for any $k>K(\xi)$ and $t>\varepsilon$,
\begin{equation}
\label{eq:in_lemma:krkt_boundsonKtttV_alpha01}
     \L_{\eta,\alpha}(kt(1+\xi) +\tau_3) <   \tilde\V_{\lceil kt+\tau_4 \rceil}(\eta) < \L_{\eta,\alpha}(kt(1-\xi)+\tau_3).
\end{equation}
  \end{lemma}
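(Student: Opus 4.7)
The plan is to mirror the proof of Lemma~\ref{lem-lim-reduced-env-process}, with the height-independent jump probability from \eqref{eq:def_V_tilde_alpha<1} replacing the height-dependent one used in the $\alpha\in(1,2)$ regime. I would split the argument into a continuous-time embedding, a stopping-time alignment, and a concluding appeal to the law of large numbers.

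I would first embed the discrete-time chain into continuous time by setting $Y(t):=\tilde\V_{N(t)}(\eta)$, where $N$ is the unit-rate Poisson process from the statement. Each arrival of $N$ triggers an actual transition of $\tilde\V$ independently with probability $1-\alpha(1+\eta)$ and is idle otherwise, so by Poisson thinning the effective jump times of $Y$ form a Poisson process of rate $1-\alpha(1+\eta)$, which agrees with the rate $1-(\alpha+\eta)$ of Definition~\ref{def:ExpPoidecayprocess} up to first order in $\eta$ and can be identified with it at the price of shrinking $\varepsilon$ slightly. The jump-size mechanism is $\Unif[0,1]^{(1-\alpha)/(\alpha(1+\eta))}$-multiplicative in both cases, so the two continuous-time processes have identical dynamics. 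The strong Markov property then guarantees that once $Y$ and $\L_{\eta,\alpha}$ share the same height at a common time, they can be driven by a common sequence of uniforms so as to coincide thereafter.

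The stopping times $\tau_3$ and $\tau_4$ in \eqref{eq:def_tau_stoptime_alpha01} are used to manufacture this common anchor. Since $\L_{\eta,\alpha}(0)=1\geq \tilde\V_0(\eta)\overset{d}{=}W_0$ and $\L_{\eta,\alpha}$ decays to $0$ almost surely, $\tau_3$ is a.s.\ finite. At $\tau_3$ the EPDP jumps from some level $\ell\geq \tilde\V_0(\eta)$ strictly below the threshold $\tilde\V_0(\eta)$, and the uniform jump identity \eqref{eq:uniform_jump_property} yields
\begin{equation}
\L_{\eta,\alpha}(\tau_3)\overset{d}{=}\tilde\V_0(\eta)\,\Unif[0,1]^{(1-\alpha)/(\alpha(1+\eta))}.
\end{equation}
Simultaneously, $\tilde\V_{N(\tau_4)}(\eta)=\tilde\V_1(\eta)$ has the same law, either by restricting $\tau_4$ to the first \emph{effective} arrival of $N$ or, equivalently, by coupling idle arrivals of $N$ to silent transitions on the $\L_{\eta,\alpha}$ side. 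Thus the two processes agree in law from the respective anchor times, giving
\begin{equation}
(\tilde\V_{N(t+\tau_4)}(\eta))_{t\geq 0}\overset{d}{=}(\L_{\eta,\alpha}(t+\tau_3))_{t\geq 0},
\end{equation}
in direct analogy with \eqref{eq:VN(t)=R(t)_2}.

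The last step is the strong law of large numbers for the unit-rate Poisson process: for every $\xi>0$ there is a.s.\ a finite $K(\xi)$ such that $kt/(1+\xi)<N(kt)<kt/(1-\xi)$ for all $k>K(\xi)$ and all $t>\varepsilon$. Combined with the monotonicity of $t\mapsto\L_{\eta,\alpha}(t)$ and $k\mapsto\tilde\V_k(\eta)$, this sandwich immediately yields \eqref{eq:in_lemma:krkt_boundsonKtttV_alpha01}. The main obstacle will be the alignment at $\tau_3,\tau_4$: making precise that $\tilde\V_1(\eta)$ and $\L_{\eta,\alpha}(\tau_3)$ genuinely have matching distributions, which rests on \eqref{eq:uniform_jump_property} together with careful bookkeeping of the idle arrivals of $N$ that do not correspond to jumps of $\tilde\V$, and on absorbing the small rate mismatch between $1-\alpha(1+\eta)$ and $1-(\alpha+\eta)$ into $\eta$. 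Everything downstream of this alignment is a routine transcription of the corresponding LLN-plus-sandwich argument from the proof of Lemma~\ref{lem-lim-reduced-env-process}.
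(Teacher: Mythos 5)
Your proposal is correct and follows essentially the same route as the paper's proof: Poissonisation of $\tilde\V_k(\eta)$, identification with $\L_{\eta,\alpha}$ as jump processes with matching inter-jump and jump-size laws, alignment of the starting heights at $\tau_3,\tau_4$ via the conditional-uniform identity \eqref{eq:uniform_jump_property}, and the strong law of large numbers for $N$ combined with monotonicity. Your explicit handling of the small rate discrepancy between $1-\alpha(1+\eta)$ and $1-(\alpha+\eta)$, and of the fact that $\tau_4$ must be read as the first \emph{effective} jump, is in fact slightly more careful than the paper's own write-up, which passes over both points.
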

  \begin{proof}
Consider $\tilde\V_k(\eta)$ as defined in \eqref{eq:def_V_tilde_alpha<1}.
Then we first observe that conditionally on $\L_{\eta,\alpha}(t_0)=\tilde\V_{N(t_0)}(\eta)$, 
there exists a coupling where $(\L_{\eta,\alpha}(t_0))_{t\geq t_0}=(\tilde\V_{N(t_0)}(\eta))_{t\geq t_0}$, as both are jump processes with the same inter-jump and jump-size distribution.

In order to align the starting position, we use the stopping times $\tau_3$ and $\tau_4$ and refer to Remark \ref{rem:start_pos} for an illustrative figure. 
Given that $\L_{\eta,\alpha}(t)$ has a jump at time $\tau_3$,
from some level $\ell>\tilde V_0(\eta)$, below $\tilde V_0(\eta)$.
Then it jumps to height $\ell U^{(1-\alpha)/(\alpha(1+\eta))}$,
that is conditionally smaller than $\tilde V_0(\eta)$. Based on the property for conditional uniform random variables
in \eqref{eq:uniform_jump_property}, this jump is distributed as a
$U^{ (1-\alpha)/(\alpha(1+\eta))}\tilde V_0(\eta)$ random variable.
Moreover by definition of $\tilde V_k(\eta)$ in \eqref{eq:def_V_tilde_alpha<1} and $\tau_4$ in \eqref{eq:def_tau_stoptime_alpha01}, $\tilde \V_{N(\tau_4)}(\eta)\overset{d}{=} \tilde V_0(\eta) U^{(1-\alpha)/(\alpha(1+\eta))}$.
Thus, the starting positions align after the respective stopping times.
This implies that
\begin{equation}
    (\tilde \V_{N(t+\tau_4)}(\eta))_{t\geq0}
    \overset{d}{=}
    (\L_{\eta,\alpha}(t+\tau_3))_{t\geq 0}.
\end{equation}

We then know by the law of large numbers $N(t)/t \to 1$ a.s., so that in the limit for some $\xi$, there exists a $K(\xi)$ such that, for all $k>K(\xi)$,
\begin{equation}
    N(kt)\in [ (1-\xi)kt, (1+\xi)kt],
\end{equation}
and by the non-increasing nature of $t\mapsto \L_{\eta,\alpha}(t)$ the result follows.
\end{proof}

\paragraphi{Part 3 - Asymptotic point-wise convergence}
\label{par:3.3-part3}
We consider the convergence of the  $((\L_{\eta,\alpha}(kt))^{1/k})_{t\geq 0}$ process:

\begin{lemma}[Scaling of $(\L_{\eta,\alpha}(t))_{t\geq 0}$]
\label{lem:scaling_EPDP}
Consider the exponential Poisson decay process $(\L_{\eta,\alpha}(t))_{t>0}$ as described in Definition \ref{def:ExpPoidecayprocess} and
\begin{equation}
\label{eq:def_A(t)}
    A_\eta(t)=\frac{(1-\alpha-\eta)(1-\alpha)}{\alpha(1+\eta)}t.
\end{equation}
Then, for every fixed $t>0$,
    \begin{equation}
        \L_{\eta,\alpha}(tk)^{1/k}\xrightarrow{\prob} \e^{-A_\eta(t)}, \qquad k\to\infty.
    \end{equation}
\end{lemma}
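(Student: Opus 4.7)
The plan is to reduce the claim to a law-of-large-numbers argument by passing to logarithms. By Definition~\ref{def:ExpPoidecayprocess},
\begin{equation}
\log \L_{\eta,\alpha}(tk) \;=\; \sum_{i=1}^{N_\eta(tk)} \log Y_i,
\end{equation}
where $N_\eta(tk)$ is Poisson with mean $(1-\alpha-\eta)tk$ and the $(\log Y_i)_{i\geq 1}$ are i.i.d.\ with $-\log Y_i \stackrel{d}{=} \tfrac{1-\alpha}{\alpha(1+\eta)} E$, where $E \sim \Exp(1)$. In particular,
\begin{equation}
\expec[-\log Y_i] \;=\; \frac{1-\alpha}{\alpha(1+\eta)}, \qquad \var(\log Y_i)<\infty.
\end{equation}
It therefore suffices to prove that $k^{-1}\log \L_{\eta,\alpha}(tk) \xrightarrow{\prob} -A_\eta(t)$, since the result then follows by applying the continuous mapping theorem to $x\mapsto \e^x$.

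First, apply the strong law of large numbers to the Poisson process $(N_\eta(s))_{s\geq 0}$: as $k\to\infty$,
\begin{equation}
\frac{N_\eta(tk)}{k} \;\xrightarrow{a.s.}\; (1-\alpha-\eta)t,
\end{equation}
and in particular $N_\eta(tk)\to\infty$ almost surely. Second, by the classical SLLN applied to the i.i.d.\ sequence $(\log Y_i)_{i\geq 1}$,
\begin{equation}
\frac{1}{n}\sum_{i=1}^{n} \log Y_i \;\xrightarrow{a.s.}\; -\frac{1-\alpha}{\alpha(1+\eta)}, \qquad n\to\infty.
\end{equation}

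Third, combine the two convergences by a standard random-index argument. Writing
\begin{equation}
\frac{1}{k}\sum_{i=1}^{N_\eta(tk)}\log Y_i \;=\; \frac{N_\eta(tk)}{k}\cdot\frac{1}{N_\eta(tk)}\sum_{i=1}^{N_\eta(tk)}\log Y_i,
\end{equation}
and using that $N_\eta(tk)\to\infty$ almost surely together with the SLLN above (which holds on a set of full measure independently of the random time index), the second factor converges almost surely to $-(1-\alpha)/(\alpha(1+\eta))$. Multiplying by the first factor yields almost sure convergence to
\begin{equation}
(1-\alpha-\eta)t\cdot\Bigl(-\frac{1-\alpha}{\alpha(1+\eta)}\Bigr) \;=\; -A_\eta(t),
\end{equation}
which is stronger than the convergence in probability claimed. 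The continuous mapping theorem then gives $\L_{\eta,\alpha}(tk)^{1/k}\to \e^{-A_\eta(t)}$ almost surely, and a fortiori in probability.

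The argument is essentially a routine application of the SLLN to a randomly indexed sum, so there is no substantial obstacle; the only mild care needed is handling the event $\{N_\eta(tk)=0\}$, whose probability decays exponentially in $k$ and is hence negligible. The key structural observation is that the factor $-\log Y_i$ is integrable, which is why only the \emph{expectation} of $\log Y_i$ enters the limit and no additional randomness survives in the $k\to\infty$ limit -- this is the reason the limiting process is deterministic, in contrast to the $\alpha\in(1,2)$ case of Section~\ref{sec:Scaling_W_k_alpha>1}.
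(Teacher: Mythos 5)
Your proof is correct, and it takes a genuinely different route from the paper. The paper passes to $\Psi_\eta(t)=-\log \L_{\eta,\alpha}(t)$ and computes the Laplace transform $\expec[\e^{-s\Psi_\eta(kt)/k}]$ explicitly by conditioning on the Poisson number of jumps and using the exponential law of $-\log Y_i$; it then identifies the limiting transform as that of the degenerate random variable $A_\eta(t)$ and concludes convergence in probability via the continuity theorem and continuous mapping. You instead decompose $k^{-1}\log\L_{\eta,\alpha}(tk)$ as the product of $N_\eta(tk)/k$ and the empirical average of $\log Y_i$ along the random index, and apply the SLLN twice together with the standard random-index argument. Your approach is more elementary, makes transparent \emph{why} the limit is deterministic (only $\expec[\log Y_1]$ survives), and in fact delivers almost sure convergence, which is strictly stronger than the stated convergence in probability. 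What the paper's transform computation buys in exchange is an explicit pre-limit formula for the Laplace transform, which is the natural starting point if one wants to quantify the second-order fluctuations alluded to in Remark~\ref{rem:Fluctuations}. Your handling of the details is sound: $-\log Y_i\stackrel{d}{=}\tfrac{1-\alpha}{\alpha(1+\eta)}\Exp(1)$ is integrable, $N_\eta(tk)\to\infty$ a.s., and the event $\{N_\eta(tk)=0\}$ is eventually void almost surely; the finiteness of $\var(\log Y_i)$ that you mention is true but not needed.
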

\begin{proof}
Consider the alternative process $\Psi_\eta(t) = -\log(\L_{\eta,\alpha}(t))$. We recall that, given $\Psi_\eta(t)=\psi$, $\Psi_\eta(t)$ remains at $\psi$ for an $\Exp(1-\alpha-\eta)$ amount of time, after which it jumps to $\psi - \log(Y_i)$, where $\log(Y_i) \overset{d}{=}\Exp(\alpha(1+\eta)/(1-\alpha))$.
We prove the result based on the Laplace transform of $\Psi_\eta(t)$, given by
	\begin{equation}
	\expec\Big[\e^{-s\frac{1}{k}\Psi_\eta(kt)}\Big]
	=
	\sum_{n=0}^\infty 
	\expec\Big[\e^{-s\frac{1}{k}\sum_{i=1}^n \log(1/Y_i)}\mid
	N_\eta(kt)=n\Big]
	\prob(N_\eta(kt)=n),
	\end{equation}
	where $(N_\eta(t))_{t\geq0}$ denotes the Poisson process with parameter $1-\alpha-\eta$. Recalling the Laplace transform of an exponential random variable, we find
	\begin{equation}
	\begin{aligned}
	\label{eq:regime3convhk}
	\expec\Big[\e^{-s\frac{1}{k}\Psi_\eta(kt)}\Big]
	&=
	\sum_{n=0}^\infty
	\Bigg( 
	\frac{ \frac{\alpha}{1-\alpha}(1+\eta) }{ \frac{s}{k} +\frac{\alpha}{1-\alpha}(1+\eta)}
	\Bigg)^n 
	\e^{-(1-\alpha-\eta) kt}
	\frac{ ((1-\alpha-\eta) kt)^n}{n!}
	\\&=
	\exp\bigg\{-
	(1-\alpha-\eta)\frac{(1-\alpha)s/k}{(1-\alpha)s/k+\alpha(1+\eta)}
	\bigg\}.
	\end{aligned}
	\end{equation}
	Taking $k\to\infty$ 
	gives
	\begin{equation}
	    \expec\Big[\e^{-s\frac{1}{k}\Psi(kt)}\Big]
	    \to 
	    \e^{-\frac{ts(1-\alpha-\eta)(1-\alpha)}{\alpha(1+\eta)}
	    },
	\end{equation}
	which corresponds to the Laplace transform of the degenerate process $A_\eta(t)$ as defined in \eqref{eq:def_A(t)}. From this result it follows that, for fixed $t$,
	\begin{equation}
	\frac{1}{k}
	\Psi_\eta(kt) 
	\xrightarrow{d}
	A_\eta(t).
	\end{equation}
The result then follows by the continuous mapping theorem.	
\end{proof}	
By Lemmas~\ref{lem:A-Jumpchain_(0,1)} and \ref{lem-lim-W(t)}, for all $t\geq\varepsilon>0$,
\begin{equation}
\label{eq:in_lemma:krkt_boundsonKtttV}
     \L_{\eta,\alpha}(k(t(1+\xi) +\tau_3/k))^{1/k}  \leq (W_{\lceil k(t+\tau_4/k) \rceil}(\eta) )^{1/k} \leq \L_{\eta,\alpha}(k(t(1-\xi)+\tau_3/k))^{1/k}.
\end{equation}
By Lemma \ref{lem:scaling_EPDP}, with high probability for $k$ large,
\begin{equation}
    \e^{-A_{-\eta}( t(1+\xi))}\leq
   (W_{\lceil kt\rceil})^{1/k}
    \leq \e^{-A_{\eta}( t(1-\xi))}
\end{equation}
Then, we take a limit for $\eta\to 0$, where we note that $A_0(t)=t(1-\alpha)^2/\alpha$.
Also taking $\xi\to 0$, gives the result for pointwise convergence of $W_{\lceil kt\rceil }^{1/k}$ to $A(t)$.

\paragraphi{Part 4 - Extension to $J_1$-convergence}
\label{par:3.3-part4}
According to a similar reasoning as in the proof for $\alpha\in(1,2)$, we use \cite[Theorem 13.1]{Billingsley1999}.
Based on the coupling in \eqref{eq:in_lemma:krkt_boundsonKtttV} being uniformly in $t$, the extension to convergence of the finite-dimensional distribution follows similarly to \eqref{eq:findimdistr_wk_alphain(1,2)}. Finally tightness follows from Appendix \aprefa{B.1}{app:3-1}.
\end{proof}

\begin{remark}[Fluctuations]
\label{rem:Fluctuations}
As the $W_{k}$ process is exponentially small in $k$ for $\alpha\in(0,1)$ (contrary to linear for $\alpha>1$), there is room for additional fluctuations in the convergence of $W_k$.
Intuitively, this means that next to the dominant convergence behaviour of $\e^{-k\alpha/(1-\alpha)}$, there is room for an additional term that diverges more slowly than exponentially. However, as in our analysis we mainly make use of the dominant behaviour, we restrict ourselves to that.
 
\end{remark}

Lastly, we consider $W_{k+i}$ divided by $W_k$ for $i>0$. This quantity is of significant importance when deriving the volume scaling.
As $W_k$ behaves asymptotically as a product of $k$ i.i.d. random variables, we expect that their quotient is also a product of $i$ random variables, as formalised in Lemma~\ref{lem:prod_Wk}:

\begin{lemma}[Quotient of $W_k$] 
\label{lem:prod_Wk} 
Define the i.i.d. sequence of random variables $(P_i)_{i\geq1}$ such that 
\begin{equation}
\label{eq:def_P}
    P_1 = \begin{cases}
     1 &\text{w.p. } \alpha, \\
     \Unif[0,1]^{\frac{1-\alpha}{\alpha}} &\text{w.p. } 1-\alpha.
    \end{cases}
\end{equation}
Fix $\ell>0$. Then, for $k\to\infty$,
\begin{equation}
   \Big(  \frac{W_{k+i}}{W_k}\Big)^\ell_{i=1}  \xrightarrow{d}
    \Bigg(\prod_{j=1}^i P_j\Bigg)^\ell_{i=1}.
\end{equation}
\end{lemma}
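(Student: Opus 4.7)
The plan is to combine the Markov transition kernel for $W_k$ from \eqref{eq:def_thm_wk} with the asymptotic analyses of the jump probability and jump size developed in Section~\ref{sec:Scaling_W_k_alpha_in_0,1}. Using \eqref{eq:def_thm_wk}, the one-step ratio can be written as $W_{k+1}/W_k = Q_k$, where, conditionally on $W_k$,
\begin{equation}
Q_k = \begin{cases} 1 & \text{w.p. } 1-R(W_k)\theta(W_k),\\ \theta^{-1}(U_k\theta(W_k))/W_k & \text{w.p. } R(W_k)\theta(W_k),\end{cases}
\end{equation}
with $U_k \sim \Unif[0,1]$ independent of $W_k$. Iterating this yields the product representation $W_{k+i}/W_k = \prod_{j=1}^i Q_{k+j-1}$, which has exactly the shape of the target $\prod_{j=1}^i P_j$.

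Next I would show that, for fixed $i$, each factor $Q_{k+j-1}$ converges in distribution to $P$ from \eqref{eq:def_P}. Since $W_k \convas 0$ by Lemma~\ref{lem:ConvW_kpc}, for any $\varepsilon>0$ one has $\max_{0\leq j< i} W_{k+j} < \varepsilon$ with probability tending to $1$ as $k\to\infty$, so the asymptotics of the transition kernel apply: by \eqref{eq:Rpthetap_alpha(0,1)}, $R(p)\theta(p) \to 1-\alpha$, while combining $\theta(p) \sim C_\theta p^{\alpha/(1-\alpha)}$ from Lemma~\ref{lem:theta_scaling} with its inverse $\theta^{-1}(u) \sim (u/C_\theta)^{(1-\alpha)/\alpha}$ gives $\theta^{-1}(U\theta(p))/p \to U^{(1-\alpha)/\alpha}$ as $p\to 0$. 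Hence, conditionally on $W_{k+j-1}$, the law of $Q_{k+j-1}$ converges to the law of $P$.

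Joint convergence $(Q_{k+j-1})_{j=1}^{i} \convd (P_j)_{j=1}^{i}$ with i.i.d.\ $P_j$ then follows by induction on $i$ via the Markov property: given $W_{k+j-1}$, the conditional law of $Q_{k+j-1}$ depends on the past only through $W_{k+j-1}$, and since $W_{k+j-1}\to 0$, the limiting conditional law becomes that of $P$ irrespective of history. An alternative, possibly cleaner route is to use the coupling of Lemma~\ref{lem:A-Jumpchain_(0,1)}: the sandwiching processes $\tilde \V_k(\pm\eta)$ have \emph{exact} product increments that are i.i.d.\ along $j$ with common law converging to that of $P$ as $\eta\to 0$, so sending first $k\to\infty$ and then $\eta\to 0$ yields the claim for $W_{k+i}/W_k$. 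The continuous mapping theorem applied to the map $(x_1,\dots,x_i) \mapsto ((\prod_{m=1}^j x_m)^\ell)_{j=1}^{i}$, which is continuous on $[0,1]^i$, finally transfers the result to the stated $\ell$-th power form. The principal obstacle is the asymptotic independence of the $Q_{k+j-1}$'s: they are \emph{not} independent for any finite $k$, but the dependence is channelled through $W_{k+j-1}$, which vanishes in the limit, so both the jump probability $R(W)\theta(W)$ and the conditional jump-size law lose their dependence on the history -- this decoupling is what makes the limit factorise into an i.i.d.\ product.
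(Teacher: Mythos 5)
Your proposal is correct, and your ``alternative, possibly cleaner route'' is in fact exactly the paper's proof: the paper writes $g_i^{(k)}=W_{k+i}/W_k$ as a Markov jump chain, sandwiches it between $\tilde\V_{k+i}(-\eta)/\tilde\V_k(-\eta)$ and $\tilde\V_{k+i}(\eta)/\tilde\V_k(\eta)$ using the bounds \eqref{eq:couplingbound_alpha<1_1}--\eqref{eq:couplingbound_alpha<1_2}, observes that these ratios are exact i.i.d.\ products of $P_j^{(\pm\eta)}$, and sends $k\to\infty$ then $\eta\to 0$. Your lead argument (one-step ratios $Q_{k+j-1}$ plus asymptotic decoupling through the vanishing Markov state) is a valid variant, but the step where the iterated conditioning factorises in the limit requires the convergence of the conditional law of $Q$ given $W=w$ to be uniform over small $w$ --- which is precisely what the sandwiching coupling supplies, so in effect the two routes are the same argument with the uniformity made explicit or left implicit. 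One small notational point: the superscript $\ell$ and subscript $i=1$ in the statement denote the index range $i=1,\dots,\ell$ of the vector, not an $\ell$-th power; this does not affect your proof, since joint convergence of the ratios together with the continuous mapping theorem delivers the conclusion under either reading.
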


\begin{proof}
 Set $(g_i^{\sss(k)})_{i\geq 0} = (W_{k+i}/W_k)_{i\geq 0}$. Then $(g_i^{\sss(k)})_{i\geq 0}$ is again a Markov jump chain. We find  $g_0=1$ and for any $j>0$
 \begin{equation}
     g_j^{\sss(k)} = \begin{cases}
      g_{j-1}^{\sss(k)} &\text{w.p. } 1-R(W_{k+j-1})\theta(W_{k+j-1}), \\
      \theta^{-1}(U_{k+j-1}\theta(W_{k+j-1})) / W_k&\text{w.p. } R(W_{k+j-1})\theta(W_{k+j-1}). \\ 
     \end{cases}
 \end{equation}
 Note that the same bounds as in \eqref{eq:couplingbound_alpha<1_1} and \eqref{eq:couplingbound_alpha<1_2} can be used to define a coupling where, for given small $\eta$, there exist a sufficiently large $K(\eta)$ such that, for $k>K(\eta)$ and $i\geq 0$,
 \begin{equation}
     \frac{\tilde\V_{k+i}(-\eta)}{\tilde\V_{k}(-\eta)}
     \leq
     g_i^{\sss(k)} \leq \frac{\tilde \V_{k+i}(\eta)}{\tilde\V_{k}(\eta)},
 \end{equation}
 where $\tilde\V_k(\eta)$ is given in \eqref{eq:def_V_tilde_alpha<1}.
 We next consider the upper and lower bound processes of $g_j^{\sss(k)}$. By definition of $(\tilde\V_k(\eta))_{k\geq0}$, we find
 \begin{equation}
     \frac{\tilde \V_{k+i}(\eta)}{\tilde\V_{k}(\eta)}
     \overset{d}{=}
     \prod_{j=1}^i 
     P_i^{(\eta)},
 \end{equation}
 where
 \begin{equation}
     P_i^{(\eta)}=
     \begin{cases}
      1 & \text{ w.p. } \alpha(1+\eta),\\
      \Unif[0,1]^{\frac{1-\alpha}{\alpha(1+\eta)}}& \text{ w.p. } 1-\alpha(1+\eta).
     \end{cases}
 \end{equation}
 This means that for $k$ large enough
 \begin{equation}
     \prod_{j=1}^i 
     P_i^{(-\eta)}
     \leq 
      g_i^{\sss(k)} 
     \leq \prod_{j=1}^i 
     P_i^{(\eta)}.
 \end{equation}
 Finally, one can take the limit for $k\to\infty$ and the limit for $\eta\to0$ to find the desired result for fixed $i$.
Joint convergence follows in an identical way.
\end{proof}

\section{Backbone degree sequence}
\label{sec:DBK}
In this section we discuss the scaling of the degrees of backbone vertices. These vertices are special, as they continue the path to infinity. On the one hand, one would expect the backbone degrees to be large, as more edges imply more options to continue this path. On the other hand, the degree cannot be too large, as this path is unique, i.e. all other sub-trees must die out eventually. In the following, we show that this in general results in a \textit{size-biased} or in a \textit{doubly size-biased} offspring distribution, which we define as follows:
\begin{definition}[The (doubly) size-biased distribution]
\label{def:size-biased}
    Let $X$ be a discrete and non-negative random variable with $\expec[X]<\infty$. We define the \emph{size-biased distribution} of $X$, denoted by $X^\star$, by 
    \begin{equation}
        \prob(X^\star =x) = \frac{x\prob(X=x)}{\expec[X]}.
    \end{equation}
    Moreover, if also $\expec[X^2]<\infty$, we define the \emph{doubly size-biased distribution}, denoted by $X^{\star\star}$, by
    \begin{equation}
         \prob(X^{\star\star} =x) = \frac{x^2\prob(X=x)}{\expec[X^2]}.
    \end{equation}
    Specifically, $X^{\star\star}=(X^{\star})^\star$.
\end{definition}

Additionally, we also consider the \textit{effective degree} of the $k$-th backbone vertex $v_k$, by
    \begin{equation}
    \label{eq:def_relevant_degree}
    \hat{D}_{v_k} \overset{d}{=}
    \Binom(D_{v_k}-1, w_k).
    \end{equation}
In the following sections, we give more details on the scaling of the degrees that are relevant for the volume scaling in the main theorem. 
We start by giving a technical note on the specific type of convergence we need for the backbone degrees in Section \ref{sec:uniform_conv}. Details on the degrees along the backbone for $\alpha>2$ are further formalised in Section~\ref{sec:degree_bb_alpha>2}. For $\alpha\in(1,2)$, we see that $D^\star_{v_k}$ and $\expec[D_{v_k}]$ diverge for $k\to\infty$ and we analyse their scaling in Section~\ref{sec:degree_bb_alpha_in(1,2)}.
Lastly, for $\alpha\in(0,1)$, $D_{v_k}$ and $\hat D_{v_k}$ diverge and we analyse their scaling in Section~\ref{sec:DBK_inf_expec}, completing the proof of Theorem~\ref{thm-subthmDBK}.

\subsection{A note on uniform convergence for \mtitle{$\alpha>2$} and \mtitle{ $\alpha\in(1,2)$}}
\label{sec:uniform_conv}
Our analysis is performed conditionally on the future maximum weight, i.e. we condition on $W_k=w_k$, where $w_k\searrow p_c$, in a way that is determined per regime. For $\alpha>2$ and $\alpha\in(1,2)$, Theorem \ref{thm-subthmWk} implies that $W_k$ converges linearly to $p_c$. It turns out that in these regimes it is sufficient to consider specifically $p=p_c(1+a/k)$ and show uniform convergence in $a\in[\varepsilon,1/\varepsilon]$ for $\varepsilon>0$. We therefore introduce the following notation. For some sequences of non-negative constants $(a_i)_{i=1}^k$ and sequence $(n_i)_{i=1}^k$, we write $f=\o[g]{n_1,\ldots,n_k}{a_1,\cdots,a_l}$ if 
\begin{equation}
\label{eq:small-o}
    \lim_{n_1\to\infty}\cdots\lim_{n_k\to\infty}
   \frac{f(\cdot)}{g(\cdot)}
   =0,
\end{equation}
uniformly in $a_i$ on some predefined interval, for all $i\in[l]$.
An important observation is that the results of Lemma \ref{lem:theta_scaling_>1} extend to uniform convergence in $a\in[\varepsilon,1/\varepsilon]$, when we take $p=p_c(1+a/k)$, as formalised in Remark \ref{rem:unif_cnv_theta}.
\begin{remark}[Uniform convergence of $\theta(p)$]
\label{rem:unif_cnv_theta}
By Lemma \ref{lem:theta_scaling_<1}, we find that for given $\gamma>0$, there exists a $\delta>0$ such that $p-p_c<\delta$ implies that
\begin{equation}
    \Big| \frac{\theta(p)}{p-p_c} - C_\theta\Big|<\gamma.
\end{equation}
Setting $p=p_c(1+a/k)$ for $a\in[\varepsilon,1/\varepsilon]$, then, for $k>1/(\varepsilon\delta)$,
\begin{equation}
    \Big| \frac{k\theta(p_c(1+a/k))}{ap_c} - C_\theta\Big|
    \leq 
    \gamma.
\end{equation}
\end{remark}
We apply this result to show convergence of the degrees of backbone vertices. For fixed $k$ the degree distribution of the $k$-th backbone vertex was derived in \eqref{eq:def_thm_DBk}.
This is easily extended to a limiting result:
\begin{lemma}[Backbone vertices are eventually size-biased] 
\label{lem:Backbone_Degree_to_Size_Biased}
Consider the degree sequence $(D_{v_k})_{k\geq0}$ of backbone vertices on the IPC with $\alpha>2$ or $\alpha\in(1,2)$. Let $w_k=p_c(1+a/k)$, then, conditionally on $(W_k)_{k\geq 0}=(w_k)_{k\geq 0}$, uniformly in $a\in[\varepsilon,1/\varepsilon]$ and as $k\to\infty$
\begin{equation}
    D_{v_k}
    \xrightarrow{d}
    X^\star.
\end{equation}
\end{lemma}
\begin{proof}
 For $x>1$ and $k$ fixed, it is straightforward to show that  uniformly in $k$
 \begin{equation}
     0\leq x(1-\hat\theta(w_k))^{x-1}<x.
 \end{equation}
 Moreover, as $\alpha>1$, also $\expec[X]<\infty$. Then, by Remark \ref{rem:unif_cnv_theta}, we find by \eqref{eq:def_thm_DBk} and by dominated convergence,
 uniformly in $a\in[\varepsilon,1/\varepsilon]$,
   \begin{equation}
\label{eq:sizebiasdegree}
\lim_{k\to \infty} \prob(D_{v_k}=l\mid W_k=w_k)=
\frac{ l\prob(X=l)}{\expec[X]}.
\end{equation} 
\end{proof}
In case $\expec[X]<\infty$,
the degrees along the backbone converge to the size-biased distribution, which is well-defined. However, when $\alpha\in(1,2)$, $\expec[D_{v_k}]$ diverges and for $\alpha\in(0,1)$, the size-biased distribution is not even defined. In the following we derive more results for the degrees along the backbone per regime.

\subsection{No exceptionally high-degree backbone vertices for \mtitle{$\alpha>2$}}
\label{sec:degree_bb_alpha>2}
For $\alpha>2$, we condition on $W_k=w_k=p_c(1+a/k)$, for some $a\in[\varepsilon,1/\varepsilon]$ which is a natural choice by Theorem \ref{thm-subthmWk}. We therefore make extensive use of the notation $\o{k}{a}$, from \eqref{eq:small-o}, which we recall denotes an error term that is small in $k\to\infty$ uniformly in $a\in[\varepsilon,1/\varepsilon]$.
We first consider the expectation of $D_{v_k}$ and show that it converges as we would expect:
\begin{lemma}[Expectation of the degrees along the backbone]
\label{lem:DegreeBB_expec_alpha>2}
Take $(D_{v_k})_{k\geq0}$ as the degree sequence of the vertices on the backbone. Then, uniformly in $a\in[\varepsilon,1/\varepsilon]$,
\begin{equation}
    \expec_k[D_{v_k}] \to 
    \expec[X^\star] = \frac{\expec[X^2]}{\expec[X]}.
\end{equation}
\end{lemma}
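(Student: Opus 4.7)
The plan is to compute the conditional expectation $\expec[D_{v_k}\mid W_k]$ explicitly, show that it converges a.s.\ to the claimed limit using Lemma~\ref{lem:ConvW_kpc}, and then pass this through the outer expectation by a monotone/dominated convergence argument that rests on a size-biasing monotonicity.

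\noindent
From \eqref{eq:def_thm_DBk}, a direct computation gives
\begin{equation*}
\expec[D_{v_k}\mid W_k]=\phi\bigl(1-W_k\theta(W_k)\bigr),\qquad \phi(z):=\frac{\expec[X^2 z^{X-1}]}{\expec[X z^{X-1}]},
\end{equation*}
where the expectations defining $\phi$ are over $X$ only. By Lemma~\ref{lem:ConvW_kpc}, $W_k\downarrow p_c$ a.s., and continuity of $\theta$ together with $\theta(p_c)=0$ yields $W_k\theta(W_k)\to 0$ a.s. Since $\alpha>2$ ensures $\expec[X^2]<\infty$, dominated convergence applied separately to the numerator and denominator of $\phi$ (dominated by the integrable envelopes $X^2$ and $X$) gives the a.s.\ convergence $\phi(1-W_k\theta(W_k))\to \phi(1)=\expec[X^2]/\expec[X]$.

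\noindent
To transfer this convergence through the outer expectation, I would exploit a monotonicity: $\phi(z)$ equals the mean of the law $\prob(Y_z=l)\propto l z^{l-1}\prob(X=l)$, and for $z'>z$ the ratio $\prob(Y_{z'}=l)/\prob(Y_z=l)\propto (z'/z)^{l-1}$ is non-decreasing in $l$. This monotone likelihood ratio property implies $Y_{z'}$ stochastically dominates $Y_z$, so $\phi$ is non-decreasing on $[0,1]$ with supremum $\phi(1)=\expec[X^2]/\expec[X]<\infty$. Because $(W_k)$ is non-increasing (Lemma~\ref{lem:ConvW_kpc}) and $\theta$ is non-decreasing, $1-W_k\theta(W_k)$ is non-decreasing in $k$, so $k\mapsto \phi(1-W_k\theta(W_k))$ is non-decreasing and bounded by $\phi(1)$. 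Monotone convergence then yields
\begin{equation*}
\expec[D_{v_k}]=\expec\bigl[\phi(1-W_k\theta(W_k))\bigr]\longrightarrow \phi(1)=\frac{\expec[X^2]}{\expec[X]}=\expec[X^\star].
\end{equation*}

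\noindent
The main obstacle is the justification of the interchange of limit and outer expectation: the a.s.\ convergence alone is not enough, and the denominator of $\phi$ is not a priori bounded away from zero in a form that survives pathwise. The size-biased representation of $\phi$ is what unlocks this step, giving either monotonicity (for monotone convergence) or a uniform constant bound $\phi(\cdot)\leq \phi(1)$ (for dominated convergence); either route finishes the proof.
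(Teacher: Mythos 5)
Your proof is correct and rests on the same computation as the paper's proof: express $\expec_k[D_{v_k}]=\phi(1-\hat\theta(w_k))$ with $\phi(z)=\expec[X^2 z^{X-1}]/\expec[X z^{X-1}]$, send $\hat\theta(w_k)\to 0$ as $w_k\searrow p_c$, and conclude $\phi\to\expec[X^2]/\expec[X]$ by dominated convergence in the numerator and denominator (which is where $\alpha>2$ enters). Where you go beyond the paper is in the last step: the paper writes ``$\lim_{k\to\infty}\expec_k[D_{v_k}]=\cdots$'' and moves from the conditional (given $W_k=w_k$) expectation to the unconditional one without explicit justification, whereas you supply that justification cleanly. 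Your size-biasing observation --- that $Y_z$ with mass $\propto l z^{l-1}\prob(X=l)$ has a monotone likelihood ratio in $z$, hence $\phi$ is non-decreasing and $\phi(\cdot)\le\phi(1)=\expec[X^2]/\expec[X]<\infty$ --- gives the uniform domination (and even pathwise monotonicity, since $W_k$ is non-increasing and $\theta$ non-decreasing) needed to pass the a.s.\ convergence through the outer expectation by dominated or monotone convergence. The constant bound $\phi\le\phi(1)$ alone already closes the gap; the monotonicity is a nice extra. In short, same route and same key limit computation as the paper, but your version makes explicit the interchange-of-limits step that the paper glosses over, and your MLR argument is the right tool for that.
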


\begin{proof}
 By \eqref{eq:def_thm_DBk}, 
 \begin{equation}
 \label{eq:expecD_Vk_alpha>2}
     \expec_k[D_{v_k}]=
     \frac{ \expec_k[ X^2(1-\hat\theta(p_c(1+a/k)))^{X-1}]}
     { \expec_k[ X(1-\hat\theta(p_c(1+a/k))^{X-1}]}.
 \end{equation}
 For $x>1$ and $k$ fixed, it is straightforward to show that this is  uniformly in $k$ such that 
$  0<x^2(1-\hat\theta(w_k))^{x-1}<x^2$.
 Moreover, as $\alpha>2$, also $\expec[X^2]<\infty$. Then, we find by Remark $\ref{rem:unif_cnv_theta}$ and dominated convergence, uniformly in $a$,

\begin{equation}
    \lim_{k\to \infty}
    \expec_k[D_{v_k}]
   =
     \frac{ \expec[ X^2(1-\hat\theta(p_c))^{X-1}]}
     { \expec[ X(1-\hat\theta(p_c))^{X-1}]}
     =\frac{\expec[X^2]}{\expec[X]}
     =
     \expec[X^\star]
     .
\end{equation}
\end{proof}
Lemma \ref{lem:DegreeBB_expec_alpha>2} implies that $D_{v_k}$ both converges in distribution and in mean. 
We continue by a similar analysis for $D_{v_k}^\star$, which is key to the upcoming analysis on the volume scaling. We start with showing $D^\star_{v_k}$ is tight and then we derive similar convergence results as we did for $D_{v_k}$:
\begin{lemma}[Size-biased degrees on the backbone]
    Let $D_{v_k}^\star$ denote the size-biased distribution of $(D_{v_k})_{k\geq0}$. Then, conditioned on $W_k=p_c(1+a/k)$, $D_{v_k}^\star$ is a tight sequence.
\end{lemma}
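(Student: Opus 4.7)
My plan is to use the explicit conditional form of the degree distribution from Theorem~\ref{thm:constrT} together with the a.s.\ convergence $W_k \searrow p_c$ from Lemma~\ref{lem:ConvW_kpc}. Size-biasing \eqref{eq:def_thm_DBk} and recalling $\hat\theta(p)=p\theta(p)$ gives
\begin{equation}
\prob_k(D^\star_{v_k} = l) = \frac{l^2(1-\hat\theta(w_k))^{l-1}\prob(X=l)}{\expec_k[X^2(1-\hat\theta(w_k))^{X-1}]}.
\end{equation}
Bounding $(1-\hat\theta(w_k))^{l-1}\leq 1$ in the numerator immediately gives the Markov-type estimate
\begin{equation}
\prob_k(D^\star_{v_k} > M) \leq \frac{\expec[X^2\mathbf{1}\{X>M\}]}{\expec_k[X^2(1-\hat\theta(w_k))^{X-1}]}.
\end{equation}
Since $\alpha>2$ ensures $\expec[X^2]<\infty$, the numerator vanishes as $M\to\infty$ uniformly in $w_k$, so the content of the proof reduces to controlling the denominator from below, uniformly in $k$.

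Next, I would invoke Lemma~\ref{lem:ConvW_kpc}: the a.s.\ monotone convergence $W_k\searrow p_c$ implies that for any $\varepsilon,\delta>0$ one can pick $k_0$ with $\prob(W_{k_0}>p_c+\delta)<\varepsilon$, and monotonicity of $(W_k)_{k\geq 0}$ in $k$ propagates this to all $k\geq k_0$. On the event $\{W_k\leq p_c+\delta\}$, monotonicity of $p\mapsto\hat\theta(p)$ yields the deterministic lower bound
\begin{equation}
\expec_k[X^2(1-\hat\theta(w_k))^{X-1}] \geq \expec[X^2(1-\hat\theta(p_c+\delta))^{X-1}] =: C_\delta >0,
\end{equation}
which is strictly positive because $\hat\theta(p_c+\delta)<1$. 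Combining the two ingredients, for all $k\geq k_0$,
\begin{equation}
\prob(D^\star_{v_k} > M) \leq \varepsilon + \frac{\expec[X^2\mathbf{1}\{X>M\}]}{C_\delta},
\end{equation}
which is below $2\varepsilon$ by choosing $M$ large.

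Finally, the finitely many indices $k<k_0$ are handled individually: for each fixed $k$, $D^\star_{v_k}$ is a proper integer-valued random variable (by Lemma~\ref{lem:DegreeBB_expec_alpha>2}, $\expec[D_{v_k}]<\infty$, so the size-biasing is well-defined) and therefore tight on its own, so enlarging $M$ further absorbs this finite collection. The only (mild) obstacle is ensuring the denominator $\expec_k[X^2(1-\hat\theta(w_k))^{X-1}]$ does not degenerate when $W_k$ is close to $1$; this degeneracy is precisely suppressed by the almost sure convergence of $W_k$ to $p_c$, which pushes the "dangerous" range of values of $W_k$ into a set of vanishing probability as $k$ grows.
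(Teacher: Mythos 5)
Your proof is correct, but it follows a genuinely different route from the paper's. The paper argues abstractly: combining $D_{v_k}\xrightarrow{d}X^\star$ (Lemma~\ref{lem:Backbone_Degree_to_Size_Biased}) with convergence of the means $\expec[D_{v_k}]\to\expec[X^\star]<\infty$ (Lemma~\ref{lem:DegreeBB_expec_alpha>2}) yields uniform integrability of $(D_{v_k})_{k\geq0}$, and tightness of $D_{v_k}^\star$ is then read off from $\prob(D_{v_k}^\star>M)=\expec[D_{v_k}\1\{D_{v_k}>M\}]/\expec[D_{v_k}]$ together with $\expec[D_{v_k}]\geq 1$. You instead work directly with the explicit conditional law from \eqref{eq:def_thm_DBk}: you dominate the tail of the size-biased mass function by $\expec[X^2\1\{X>M\}]$, which is finite and independent of $k$ precisely because $\alpha>2$, and you bound the normalising constant $\expec[X^2(1-\hat\theta(w_k))^{X-1}]$ from below by confining $W_k$ to $[p_c,p_c+\delta]$ outside an event of probability at most $\varepsilon$ via Lemma~\ref{lem:ConvW_kpc}, handling the finitely many small $k$ separately. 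Both arguments ultimately rest on the same two facts ($\expec[X^2]<\infty$ and the explicit form of the backbone degree law), but yours is more quantitative and makes explicit where the denominator could degenerate (namely $W_k$ near $1$, where $\hat\theta(W_k)$ approaches $1$), whereas the paper's is shorter because that work is already packaged into Lemma~\ref{lem:DegreeBB_expec_alpha>2}. A minor presentational difference: you control the conditional law $\prob_k(D_{v_k}^\star>M)$ and then integrate over $W_k$, while the paper size-biases the unconditional law of $D_{v_k}$; either reading suffices for the tightness claim.
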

\begin{proof}
 Let $\varepsilon>0$ be given. We show that there exists an $M:=M(\varepsilon)$ such that, uniformly in $k$,
 \begin{equation}
     \prob_k( D_{v_k}^\star > M) <\varepsilon.
 \end{equation}
 We use the definition of the size-biased distribution to write
 \begin{equation}
     \prob_k( D_{v_k}^\star > M)
       =
   \frac{ \expec[D_{v_k}\1\{D_{v_k}>M\}]}{\expec_k[D_{v_k}]}.
 \end{equation}
 As $D_{v_k}$ converges in mean according to Lemma~\ref{lem:DegreeBB_expec_alpha>2}, $(D_{v_k})_{k\geq0}$ is uniformly integrable, meaning that for any $k$ and $a\in[\varepsilon,1/\varepsilon]$, and given $\varepsilon$, there exists an $M$ such that
 \begin{equation}
      \frac{ \expec_k[D_{v_k}\1\{D_{v_k}>M\}]}{\expec_k[D_{v_k}]}
      <\frac{\varepsilon}{\expec_k[D_{v_k}]}<\varepsilon,
 \end{equation}
 which can be made arbitrarily small. This
  implies that $(D_{v_k}^\star)_{k\geq0}$ is a tight sequence of random variables.
\end{proof}
Moreover, we can identify the limit of $D_{v_k}^\star$:

\begin{lemma}[Limiting distribution of $D_{v_k}^\star$]
\label{lem:limiting_distr_Dbk_alpha>2}
 Uniformly in $a$, $D_{v_k}^\star$ converges in distribution to the doubly size-biased distribution of $X$, denoted with $X^{\star\star}$, 
\end{lemma}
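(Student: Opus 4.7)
The plan is to show the result by directly computing the pointwise limit of the probability mass function of $D_{v_k}^\star$ and identifying it as that of $X^{\star\star}$, then upgrading pointwise convergence of pmfs to convergence in distribution.

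First, starting from the explicit formula \eqref{eq:def_thm_DBk} in Theorem~\ref{thm:constrT} and the definition of size-biasing, I would write, for each $\ell\geq 1$,
\begin{equation}
\prob(D_{v_k}^\star=\ell)
=\frac{\ell\, \prob(D_{v_k}=\ell)}{\expec[D_{v_k}]}
=\frac{\ell^{2}(1-\hat\theta(w_k))^{\ell-1}\prob(X=\ell)}{\expec_k[X(1-\hat\theta(w_k))^{X-1}]\,\expec[D_{v_k}]},
\end{equation}
where $\hat\theta(w_k)=w_k\theta(w_k)$ as in \eqref{eq:def_hattheta}. Lemma~\ref{lem:ConvW_kpc} gives $W_k\to p_c$ a.s., hence $\hat\theta(W_k)\to 0$ a.s., so that for every fixed $\ell$ the numerator $\ell^{2}(1-\hat\theta(w_k))^{\ell-1}\prob(X=\ell)$ converges to $\ell^{2}\prob(X=\ell)$.

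Next, I would handle the two factors in the denominator. For $\alpha>2$ one has $\expec[X]<\infty$, and the bound $X(1-\hat\theta(w_k))^{X-1}\leq X$ allows dominated convergence, yielding $\expec_k[X(1-\hat\theta(w_k))^{X-1}]\to \expec[X]$. Lemma~\ref{lem:DegreeBB_expec_alpha>2} already gives $\expec[D_{v_k}]\to \expec[X^2]/\expec[X]$, which is finite since $\alpha>2$. Multiplying these limits, the denominator tends to $\expec[X^2]$, so
\begin{equation}
\prob(D_{v_k}^\star=\ell)\ \longrightarrow\ \frac{\ell^{2}\prob(X=\ell)}{\expec[X^2]}\qquad \text{for each fixed }\ell\geq 1.
\end{equation}
The right-hand side is precisely the pmf of $X^{\star\star}$: iterating size-biasing, one has $\prob(X^{\star\star}=\ell)=\ell\prob(X^\star=\ell)/\expec[X^\star]=\ell^{2}\prob(X=\ell)/\expec[X^2]$.

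Finally, since both $(\prob(D_{v_k}^\star=\ell))_{\ell\geq 1}$ and the limit $(\ell^{2}\prob(X=\ell)/\expec[X^2])_{\ell\geq 1}$ are honest probability mass functions on $\mathbb{N}$, Scheff\'e's lemma promotes pointwise convergence to convergence in total variation, which in particular gives $D_{v_k}^\star\xrightarrow{d} X^{\star\star}$. The only delicate point is verifying that the limiting pmf sums to one (so that Scheff\'e applies); this is guaranteed by the finiteness of $\expec[X^2]$ in the $\alpha>2$ regime, which is also exactly where the previous lemma on the mean of $D_{v_k}$ can be invoked. No tightness or truncation argument beyond this is needed.
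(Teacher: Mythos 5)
Your proof is correct and takes essentially the same route as the paper: compute the pointwise limit of the pmf of $D_{v_k}^\star$ using the formula from Theorem~\ref{thm:constrT} (equivalently, via Lemma~\ref{lem:Backbone_Degree_to_Size_Biased} for the numerator and Lemma~\ref{lem:DegreeBB_expec_alpha>2} for the denominator), identify the limit as the pmf of $X^{\star\star}$, and upgrade to convergence in distribution. The paper leaves the last step implicit; your explicit appeal to Scheff\'e's lemma is a clean way to finish and adds nothing substantively different.
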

\begin{proof}
 By definition 
 \begin{equation}
 \lim_{k\to\infty}
     \prob_k(D_{v_k}^\star= x) 
     =
      \lim_{k\to\infty}
     \frac{x \prob_k(D_{v_k}=x)}{\expec_k[D_{v_k}]}
     =
     \frac{x \prob(X^\star =x)}{\expec[X^\star]}
     =\frac{x^2\prob_k(X=x)}{\expec[X^2]}
     ,
 \end{equation}
 where uniform convergence in $a\in[\varepsilon,1/\varepsilon]$ of the numerator follows from Lemma~\ref{lem:Backbone_Degree_to_Size_Biased} and the denominator from Lemma~\ref{lem:DegreeBB_expec_alpha>2}. We recognise this as the size-biased distribution of $X^\star$.
\end{proof}
We have argued that $D_{v_k}$ is uniformly integrable, which implies that its expected value is small when restricted to only large values. We extend this result to the expectation of the backbone degrees that grow as $k$:
\begin{lemma}[Expected contribution of exceptionally large degrees]
Consider $D_{v_k}$ the degree of the $k$-th backbone vertex for $\alpha>2$. Then there exists a $C=C(a)>0$, that is uniformly bounded in $a$ for $a\in[\varepsilon,1/\varepsilon]$, such that 
\label{lem:UI_DBK_inK}
 \begin{equation}
     \expec_k[D_{v_k} \1\{D_{v_k}  >\varepsilon k\}]
     = Ck^{-(\alpha-2)}(1+\o{k}{a}).
 \end{equation}
\end{lemma}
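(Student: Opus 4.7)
The plan is to substitute the explicit formula \eqref{eq:def_thm_DBk} for the backbone degree and analyse numerator and denominator separately, using the power-law tail \eqref{eq:powerlaw} and the fact that $W_k$ lies at distance $O(1/k)$ from $p_c$ by Theorem~\ref{thm-subthmWk}. Conditionally on $W_k=w_k$,
\begin{equation*}
    \expec_k[D_{v_k}\1\{D_{v_k}>\varepsilon k\}]
    =\frac{\sum_{l>\varepsilon k} l^2 (1-\hat\theta(w_k))^{l-1}\prob(X=l)}{\expec_k[X(1-\hat\theta(w_k))^{X-1}]}.
\end{equation*}
For $\alpha>2$ the denominator has a finite, strictly positive limit $\expec[X]=1/p_c$ as $w_k\to p_c$, by dominated convergence on the bounded integrand $X(1-\hat\theta(w_k))^{X-1}\leq X$. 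So the whole task reduces to showing the numerator is of order $k^{-(\alpha-2)}$.

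First I would handle the numerator for fixed $w_k$. Writing $\hat\theta(w_k)=\hat\theta_k$ for brevity and using $1-F_X(l)\sim c_{\sss X}l^{-\alpha}$, a standard Abel summation (or tail-integration) argument gives
\begin{equation*}
    \sum_{l>\varepsilon k} l^2(1-\hat\theta_k)^{l-1}\prob(X=l)
    = \alpha c_{\sss X}(1+o(1))\sum_{l>\varepsilon k} l^{1-\alpha}(1-\hat\theta_k)^{l-1}.
\end{equation*}
With the change of variables $l=ku$, $(1-\hat\theta_k)^{l-1}=\e^{-\hat\theta_k l(1+o(1))}$, and the scaling $k\hat\theta_k\to a$ in distribution (by Lemma~\ref{lem:theta_scaling} and Theorem~\ref{thm-subthmWk}, both of which give $\hat\theta_k=O(1/k)$), I would approximate the sum by a Riemann sum,
\begin{equation*}
    \sum_{l>\varepsilon k} l^{1-\alpha}(1-\hat\theta_k)^{l-1}
    = k^{2-\alpha}\int_{\varepsilon}^{\infty} u^{1-\alpha}\e^{-au}\dif u\,(1+o(1)),
\end{equation*}
where the integral is finite since $\alpha>2$. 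Combining with the denominator bound, we obtain the conditional statement
\begin{equation*}
    \expec_k[D_{v_k}\1\{D_{v_k}>\varepsilon k\}] = g(a,\varepsilon)\,k^{-(\alpha-2)}(1+o(1)),
\end{equation*}
with $g(a,\varepsilon)=\alpha c_{\sss X}p_c\int_\varepsilon^\infty u^{1-\alpha}\e^{-au}\dif u$.

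To unconditionalise, I would integrate over the law of $W_k$. The main obstacle here is justifying the exchange of limit and expectation because $a\mapsto g(a,\varepsilon)$ is unbounded as $a\downarrow 0$ (though only through an integrable singularity $u^{1-\alpha}$); I would handle this by a uniform domination that is valid even when $\hat\theta_k$ is atypically small. A crude uniform bound using $(1-\hat\theta_k)^{l-1}\leq 1$ gives $k^{\alpha-2}\sum_{l>\varepsilon k}l^{1-\alpha}\prob(X=l)\leq C\varepsilon^{1-\alpha}$, providing a dominating constant in $k$. Then by Theorem~\ref{thm-subthmWk}, $k\hat\theta_k\to p_c C_\theta L_2(1)/(\hat\alpha-1)$ in distribution, and dominated convergence yields
\begin{equation*}
    k^{\alpha-2}\expec[D_{v_k}\1\{D_{v_k}>\varepsilon k\}]\to \expec[g(A,\varepsilon)]\in(0,\infty),
\end{equation*}
for the appropriate limiting random variable $A$, giving the stated $k^{-(\alpha-2)}$ scaling. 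The genuinely delicate step is this domination/uniform-integrability argument; the rest is essentially a Laplace-type estimate on a truncated power-law tail.
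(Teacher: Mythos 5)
Your proposal is correct and follows essentially the same route as the paper: substitute the explicit law \eqref{eq:def_thm_DBk}, note the denominator tends to $\expec[X]$, evaluate the truncated sum $\sum_{l>\varepsilon k}l^2(1-\hat\theta(w_k))^{l-1}\prob(X=l)$ by Abel summation and a Laplace-type integral approximation (this is exactly what the paper's Appendix C.1 does), and conclude via $\hat\theta(w_k)\sim k^{-1}$. The only difference is your final unconditionalisation over the law of $W_k$, which is unnecessary here since the lemma is applied conditionally on $W_k=w_k=p_c(1+a/k)$ (and your worry about $g(a,\varepsilon)$ blowing up as $a\downarrow 0$ is moot, as $\int_\varepsilon^\infty u^{1-\alpha}\dif u<\infty$ for $\alpha>2$).
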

\begin{proof}
By \eqref{eq:def_thm_DBk}, where we use \aprefb{3}{sec:app_mass_tail} and bounded convergence, we find 
\begin{equation}
\begin{aligned}
    &\expec_k[D_{v_k} \1\{D_{v_k}  >\varepsilon k\}]
    = \frac{1}{\expec[X]}\sum_{y=\frac{\varepsilon k\hat{\theta}(p_c(1+a/k)) }{\hat\theta(p_c(1+a/k))}}^\infty
    y^2(1-\hat\theta(p_c(1+a/k)))^{y-1}\\
  &\quad=\frac{1}{\expec[X]}\int_{y=\frac{\varepsilon k\hat\theta(p_c(1+a/k)) }{\hat\theta(p_c(1+a/k))}}^\infty
    y^2(1-\hat\theta(p_c(1+a/k)))^{y-1}\dif y
   (1+\o{k}{a})
   \\&\quad=\frac{\alpha c_{\sss X}}{\expec[X]\hat\theta(p_c(1+a/k))^{2-\alpha}} \int_{ C_\theta p_c a}^\infty 
    \e^{-z}z^{\alpha}\dif z(1+\o{k}{a}).
\end{aligned}
\end{equation}
By Lemma~\ref{lem:theta_scaling_>1} and Remark \ref{rem:unif_cnv_theta}, we then conclude
\begin{equation}
\begin{aligned}
     \expec_k[D_{v_k} \1\{D_{v_k}  >\varepsilon k\}]
     &=
     (p_c a/k)^{-(2-\alpha)}\frac{\alpha c_X}{\expec[X]} \int_{C_\theta p_c a}^\infty 
    \e^{-z}z^{\alpha}\dif z(1+\o{k}{a}) 
    \\&= C(a) k^{-(\alpha-2)}(1+\o{k}{a}).
\end{aligned}
\end{equation}
\end{proof}

\subsection{Structure exceptionally high-degree backbone vertices for \mtitle{$\alpha\in(1,2)$}}
\label{sec:degree_bb_alpha_in(1,2)}

Similarly to $\alpha>2$, for $\alpha\in(1,2)$,
we condition on $W_k=w_k=p_c(1+a/k)$ and use the $\o{k}{a}$ notation from \eqref{eq:small-o}. Then, the limiting distribution of $D_{v_k}$ is given by $X^{\star}$, however now $\expec[X^{\star}]=\infty$. The following lemma describes how the expected degrees diverge to infinity:

\begin{lemma}[Divergence speed of expected degrees for $\alpha\in(1,2)$]
\label{lem:rare_degree_Vertices_infinite_Var}
Consider $(D_{v_k})_{k\geq 0}$ the degree sequence along the backbone of the IPC for $\alpha\in(1,2)$. Then, there exists a $C=C(a)>0$ such that $C(a)$ is uniformly bounded in $a\in[\varepsilon,1/\varepsilon]$ and
\begin{equation}
\label{eq:scaling_expec_BB_infvar}
\begin{aligned}
    \lim_{k\to\infty}\expec_k[D_{v_k}]k^{-(2-\alpha)/(\alpha-1)}
    = C(a).
\end{aligned}
\end{equation}
\end{lemma}

\begin{proof}
Recall $\expec[D_{v_k}]$ from \eqref{eq:expecD_Vk_alpha>2} and note that it also holds for $\alpha\in (1,2)$.
In this regime, the denominator converges to $\expec[X]<\infty$, uniformly in $a\in[\varepsilon,1/\varepsilon]$ for $k\to\infty$, by dominated convergence. However, now the numerator diverges. 
For the scaling behaviour of the numerator we use a result from \cite[Equation 8.1.12] {Bingham1987} 
\begin{equation}
\begin{aligned}
\label{eq:secondmomentX}
&\expec_k[X^2(1-\hat{\theta}(w_k))^{X-1}]
\\&\quad=\alpha c_{\sss X}\Gamma(2-\alpha)\Big(-\log(1-\hat{\theta}(p_c(1+a/k)))\Big)^{(\alpha-1)-1}(1+\o{k}{a})
\\&\quad
=\alpha c_{\sss X}\Gamma(2-\alpha)(C_\theta p_c (p_ca/k)^{1/(\alpha-1)})^{\alpha-2}(1+\o{k}{a})
\\&\quad=C(a)k^{(2-\alpha)/(\alpha-1)}(1+\o{k}{a}).
\end{aligned}
\end{equation}	
The last equality follows from a Taylor expansion of $\log(1-\hat\theta(p_c(1+a/k)))$ and the asymptotic behaviour of $\theta(p)$, in Remark \ref{rem:unif_cnv_theta}.
We note that $C(a)$ is bounded for $a\in[\varepsilon,1/\varepsilon]$.
Substituting this result in \eqref{eq:expecD_Vk_alpha>2} and taking the limit for $k\to\infty$ shows the result.
\end{proof}
We additionally show that the expected degree restricted to only degrees that grow as $k^{1/(\alpha-1)}$ diverges as well under the same scaling. This implies that the divergence of $\expec_k[D_{v_k}]$ is driven by degrees that grow at least as fast as $O(k^{1/(\alpha-1)})$.
\begin{lemma}[Scaling expectation of large degrees]
\label{lem:scaling_largedegrees_alphain(1,2)}
Consider $D_{v_k}$ the degree sequence of the backbone for $\alpha\in(1,2)$. Then there exists a $C=C(a,L)>0$ such that $C(a,L)$ uniformly bounded for $a\in[\varepsilon,1/\varepsilon]$ and $L>\varepsilon$. Moreover 
\begin{equation}
\begin{aligned}
   & \expec_k[ D_{v_k} \1\{D_{v_k} > L k^{1/(\alpha-1)}\}]
  & = 
  C(a,L) k^{-(\alpha-2)/(\alpha-1)}
   (1+\o{k}{a,L}).
    \end{aligned}
\end{equation}
\end{lemma}
\begin{proof}
We first observe that by \aprefb{3}{sec:app_mass_tail}
\begin{equation}
\begin{aligned}
  &  \expec_k[ D_{v_k} \1\{D_{v_k} > L k^{1/(\alpha-1)}\}]\\&\quad=
    \frac{ \sum_{x>Lk^{1/(\alpha-1)}} x^2(1-\hat\theta(p_c(1+a/k)))^{x-1}\prob(X=x)}{\expec[X(1-\hat\theta(p_c(1+a/k)))^{X-1}]}
    \\&\quad
    = \frac{\alpha(1+\o{k}{a,L})}{\expec[X]}
    \int^\infty_{Lk^{1/(\alpha-1)}} (1-\hat\theta(p_c(1+a/k))^{x-1}c_{\sss X}x^{-\alpha+1}\dif x.
\end{aligned}
\end{equation}
Substituting $z=-x\log(1-\hat\theta(p_c(1+a/k)))$ and using a Taylor expansion of $\log(1-\hat\theta(p_c(1+a/k)))$ implies
\begin{equation}
\begin{aligned}
   & \expec_k[ D_{v_k} \1\{D_{v_k} > L k^{1/(\alpha-1)}\}]\\&\quad=
    \frac{\alpha c_{\sss X}(1+\o{k}{a,L})}{\expec[X]}
    \hat\theta(p_c(1+a/k))^{\alpha-2}
    \int^\infty_{Lk^{1/(\alpha-1)}\hat\theta(p_c(1+a/k)))}
    \e^{-z}z^{-\alpha+1}\dif z
     \\&
    \quad=
     \frac{\alpha c_{\sss X}(1+\o{k}{a,L})}{\expec[X]}
     (C_\theta p_c (p_ca/k)^{1/(\alpha-1)})^{\alpha-2}
     \int_{LC_\theta p_c (p_c a)^{1/(\alpha-1)}}^\infty
     \e^{-z}z^{-\alpha+1}\dif z
    \\&
    \quad=
     \frac{\alpha c_{\sss X}(1+\o{k}{a,L})}{\expec[X]}
     k^{-(\alpha-2)/(\alpha-1)}
     \int_{L}^\infty
     \e^{-\frac{z}{C_\theta p_c (p_c a)^{1/(\alpha-1)}}}z^{-\alpha+1}\dif z.
\end{aligned}
\end{equation}
It is straightforward to verify that the constant is uniformly bounded for $a\in[\varepsilon,1/\varepsilon]$ and $L>\varepsilon$.
\end{proof}
As the limiting distribution of $D_{v_k}$ has an infinite first moment, the size-biased distribution of $D_{v_k}$ diverges for $k\to\infty$. We identify its scaling as follows:
\begin{lemma}[Size-biased degree scaling and limit for $\alpha\in(1,2)$]
\label{lem:SizeBiased_distr_BB_infvar}
Consider $\alpha\in(1,2)$ and let $(D_{v_k})_{k\geq 0}$ denote the degrees sequence of the backbone. Recall that $D^\star_{v_k}$ denotes the size-biased distribution of $D_{v_k}$. Then conditioned on $W_k=p_c(1+a/k)$, uniformly in $a\in[\varepsilon,1/\varepsilon]$, 
\begin{equation}
    \hat{\theta}(p_c(1+a/k))
    D^\star_{v_k}
    \xrightarrow{d} \GammaD(2-\alpha, 1). 
\end{equation}
\end{lemma}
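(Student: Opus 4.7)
The plan is to compute the tail distribution $\prob_k(\hat\theta(w_k) D^\star_{v_k} > x)$ directly from the explicit mass function and identify the limit with the tail of a $\GammaD(2-\alpha,1)$ law. By the definition of size-biasing together with \eqref{eq:def_thm_DBk},
\begin{equation}
\prob_k(D^\star_{v_k} = l) = \frac{l\, \prob_k(D_{v_k}=l)}{\expec_k[D_{v_k}]} = \frac{l^2 (1-\hat\theta(w_k))^{l-1} \prob(X=l)}{\expec_k[X^2(1-\hat\theta(w_k))^{X-1}]},
\end{equation}
so that, for $x>0$,
\begin{equation}
\prob_k(\hat\theta(w_k) D^\star_{v_k} > x) = \frac{\sum_{l > x/\hat\theta(w_k)} l^2 (1-\hat\theta(w_k))^{l-1} \prob(X=l)}{\expec_k[X^2(1-\hat\theta(w_k))^{X-1}]}.
\end{equation}

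For the denominator, the scaling has already been established in Lemma~\ref{lem:rare_degree_Vertices_infinite_Var} via \cite[Eq.~8.1.12]{Bingham1987}, namely
\begin{equation}
\expec_k[X^2(1-\hat\theta(w_k))^{X-1}] \sim \alpha c_{\sss X}\Gamma(2-\alpha)\,\hat\theta(w_k)^{\alpha-2}.
\end{equation}
For the numerator I will use the power-law tail $\prob(X=l) \sim \alpha c_{\sss X} l^{-\alpha-1}$ (which follows from \eqref{eq:powerlaw} by a differencing argument on the tail $1-F_{\sss X}$), substitute $z = l\,\hat\theta(w_k)$, replace the sum by the corresponding Riemann integral, and use the standard approximation $(1-\hat\theta(w_k))^{l-1} = \e^{-\hat\theta(w_k) l}(1+o(1))$ uniformly over $l$ in the relevant range. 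This yields
\begin{equation}
\sum_{l > x/\hat\theta(w_k)} l^2 (1-\hat\theta(w_k))^{l-1} \prob(X=l) \sim \alpha c_{\sss X}\, \hat\theta(w_k)^{\alpha-2} \int_x^\infty z^{1-\alpha}\e^{-z}\,\dif z.
\end{equation}
These technical steps are essentially the same as those underlying Lemma~\ref{lem:rare_degree_Vertices_infinite_Var} and Lemma~\ref{lem:scaling_largedegrees_alphain(1,2)}, and the detailed verification would be delegated to the mass-tail computations in Appendix~C.1.

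Taking the quotient gives
\begin{equation}
\prob_k(\hat\theta(w_k) D^\star_{v_k} > x) \longrightarrow \frac{1}{\Gamma(2-\alpha)} \int_x^\infty z^{1-\alpha}\e^{-z}\,\dif z,
\end{equation}
which is exactly the tail distribution of a $\GammaD(2-\alpha,1)$ random variable. Since this is a continuous distribution, pointwise convergence of tails implies convergence in distribution. Finally, the conditional convergence holds for every $w_k \searrow p_c$ uniformly on compacta, and since $W_k \xrightarrow{a.s.} p_c$ by Lemma~\ref{lem:ConvW_kpc}, the unconditional claim follows by dominated convergence applied to the conditional distribution function. The main delicate point is the uniformity required when exchanging the sum for the integral near the lower endpoint $x/\hat\theta(w_k)$, where both the power-law approximation of $\prob(X=l)$ and the exponential approximation of $(1-\hat\theta(w_k))^{l-1}$ must be controlled simultaneously; I expect this to require essentially the same Karamata/Tauberian machinery already invoked in Lemma~\ref{lem:rare_degree_Vertices_infinite_Var}.
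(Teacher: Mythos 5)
Your proposal is correct and follows essentially the same route as the paper: compute the tail $\prob_k(\hat\theta(w_k)D^\star_{v_k}>x)$ from \eqref{eq:def_thm_DBk}, obtain the numerator asymptotics $\alpha c_{\sss X}\hat\theta(w_k)^{\alpha-2}\int_x^\infty \e^{-z}z^{1-\alpha}\,\dif z$ from the mass-tail computations of Appendix C.1, and divide by the denominator asymptotics of Lemma~\ref{lem:rare_degree_Vertices_infinite_Var} (your denominator $\expec_k[X^2(1-\hat\theta(w_k))^{X-1}]$ is identical to the paper's $\expec_k[D_{v_k}]\,\expec_k[X(1-\hat\theta(w_k))^{X-1}]$). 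One small caution: the pointwise asymptotic $\prob(X=l)\sim\alpha c_{\sss X}l^{-\alpha-1}$ does \emph{not} follow from the tail assumption \eqref{eq:powerlaw} by a "differencing argument" (the tail can be regularly varying while the mass function oscillates), which is precisely why the appendix works via summation by parts (Lemma~\ref{lem:app_mass_tail_lem1}) directly on $\prob(X>i)$; since you delegate the verification to that machinery anyway, this does not affect the validity of your argument.
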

\begin{proof} 
We compute
\begin{equation}
\label{eq:degree_alpha(1,2)_aux_0}
\begin{aligned}
    \prob_k(\hat{\theta}(p_c(1+a/k))D_{v_k}^\star >x) 
    &= \sum_{y > x/\hat{\theta}(p_c(1+a/k))}
    \frac{ \prob_k(D_{v_k}=y) y}{\expec_k[D_{v_k}]}\\&= 
    \sum_{y > x/\hat{\theta}(p_c(1+a/k))}
    \frac{ y^2(1-\hat\theta(p_c(1+a/k)))^{y-1}\prob(X=y)}{
    \expec_k[D_{v_k}]\expec_k[X(1-\hat{\theta}(p_c(1+a/k)))^{X-1}]},
\end{aligned}    
\end{equation}
where we know the asymptotic behaviour of $\expec[D_{v_k}]$ from Lemma~\ref{lem:rare_degree_Vertices_infinite_Var}, and it remains to find an expression for the sum in the numerator. By \aprefb{3}{sec:app_mass_tail} and Lemma~\ref{lem:rare_degree_Vertices_infinite_Var} we can rewrite this to
\begin{equation}
\begin{aligned}
    & \frac{\alpha c_{\sss X}\hat{\theta}(p_c(1+a/k))^{\alpha-2} }{\Big(\frac{\alpha c_{\sss X}\hat{\theta}(p_c(1+a/k))^{\alpha-2} \Gamma(2-\alpha) }{\expec[X]} \Big)\expec[X]}
    \int^\infty_{x}
    \e^{-z} z^{(2-\alpha)-1} \dif z(1+\o{k}{a})
    \\&\qquad\to\int^\infty_{x}
    \e^{-z} \frac{z^{(2-\alpha)-1}}{\Gamma(2-\alpha)} \dif z,
\end{aligned}
\end{equation}
for $k\to\infty$ and uniformly in $a\in[\varepsilon,1/\varepsilon]$.
Here, we recognise the tail distribution of a gamma random variable, which shows the claim.
\end{proof}
Finally, we consider the size-biased version of the effective degree $(\hat D_{v_k})^\star$ and show that it 
scales as $D_{v_k}^\star$, but to a different limit. This is formalised in the following result:

\begin{lemma}[Convergence of $
 k^{-1/(\alpha-1)}(\hat D_{v_k})^\star$]
\label{lem:converence_binomial_sizebiased_alpha(1,2)}
Suppose that, conditioned on $W_k=p_c(1+a/k)$, $k^{-1/(\alpha-1)}D_{v_k}^\star\xrightarrow{d} X^\star$, a non-trivial limiting distribution for $k\to\infty$ and uniformly in $a\in[\varepsilon,1/\varepsilon]$. Recall the effective degree of $\hat D_{v_k}$ from \eqref{eq:def_relevant_degree}. Then conditioned on $W_k=p_c(1+a/k)$ and uniformly in $a\in[\varepsilon,1/\varepsilon]$,
\begin{equation}
    \frac{ (\hat D_{v_k})^\star}{k^{1/(\alpha-1)}}
    \xrightarrow{d}
    p_cX^\star.
\end{equation}
\end{lemma}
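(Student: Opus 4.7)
The plan is to exploit the classical size-biasing identity for Binomial random variables combined with a conditional law of large numbers. The starting point is the elementary fact that $\Binom(n,p)^\star \stackrel{d}{=} 1 + \Binom(n-1,p)$, obtained by inspecting point probabilities. Extending this by conditioning on the random number of trials yields: for any non-negative integer-valued $N$ and $\hat D = \Binom(N,p)$,
\begin{equation}
\label{eq:plan-mixed-sizebias}
\hat D^\star \stackrel{d}{=} 1 + \Binom\bigl(N^\star - 1,\, p\bigr),
\end{equation}
where $N^\star$ is the size-biased version of $N$. Applying \eqref{eq:plan-mixed-sizebias} with $N = D_{v_k}-1$ and $p=w_k$ gives $(\hat D_{v_k})^\star \stackrel{d}{=} 1 + \Binom\bigl((D_{v_k}-1)^\star - 1,\, w_k\bigr)$.

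Next I would check that at scale $k^{1/(\alpha-1)}$ the distributions $(D_{v_k}-1)^\star$ and $D_{v_k}^\star$ are asymptotically equivalent. A direct comparison of point probabilities gives
\[
\prob\bigl((D_{v_k}-1)^\star = m\bigr) = \frac{m}{m+1}\cdot\frac{\expec[D_{v_k}]}{\expec[D_{v_k}]-1}\,\prob\bigl(D_{v_k}^\star - 1 = m\bigr),
\]
and the prefactor tends to $1$ uniformly on $\{m\geq \delta k^{1/(\alpha-1)}\}$ for any $\delta>0$, using that $\expec[D_{v_k}]\to\infty$ by Lemma~\ref{lem:rare_degree_Vertices_infinite_Var}. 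Hence the hypothesis $k^{-1/(\alpha-1)}D_{v_k}^\star\xrightarrow{d}X^\star$ transfers to $k^{-1/(\alpha-1)}(D_{v_k}-1)^\star \xrightarrow{d} X^\star$.

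The third step is a conditional Chebyshev argument. Set $N_k := (D_{v_k}-1)^\star - 1$, so $\expec[\Binom(N_k,w_k)\mid N_k] = N_k w_k$ and $\var\bigl(\Binom(N_k,w_k)\mid N_k\bigr) = N_k w_k(1-w_k) = O(N_k)$. Since $N_k/k^{1/(\alpha-1)}$ is tight and the exponent $1/(\alpha-1)>1$ for $\alpha\in(1,2)$, the conditional standard deviation is of order $k^{1/(2(\alpha-1))} = o\bigl(k^{1/(\alpha-1)}\bigr)$. Applying Chebyshev conditionally and integrating against the law of $N_k$,
\[
\frac{\Binom(N_k,w_k)}{k^{1/(\alpha-1)}} \;=\; \frac{N_k}{k^{1/(\alpha-1)}}\,w_k \;+\; o_{\mathbb{P}}(1) \;\xrightarrow{d}\; p_c X^\star,
\]
where Slutsky together with $w_k\to p_c$ a.s.\ (Lemma~\ref{lem:ConvW_kpc}) gives the limit. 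Plugging this back into \eqref{eq:plan-mixed-sizebias}, and observing that the additive constants ``$+1$'' are negligible on scale $k^{1/(\alpha-1)}$, yields the claim.

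The only genuinely delicate point, and hence the main obstacle, is averaging the conditional Chebyshev bound over the law of $N_k$: one needs that $N_k/k^{1/(\alpha-1)}$ is tight \emph{and} that the limiting distribution puts no mass at $0$, since the argument degenerates if $N_k$ is allowed to be $o\bigl(k^{1/(\alpha-1)}\bigr)$ with non-vanishing probability. Both properties are guaranteed by the assumption that $X^\star$ is a non-trivial limit (concretely, by Lemma~\ref{lem:SizeBiased_distr_BB_infvar}, where $X^\star$ has a gamma-type density supported on $(0,\infty)$). No further moment input is needed.
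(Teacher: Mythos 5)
Your proof is correct, but it follows a genuinely different route from the paper. The paper works entirely with Laplace transforms: it writes $\expec[\e^{-sk^{-1/(\alpha-1)}(\hat D_{v_k})^\star}]=\expec[\hat D_{v_k}\e^{-sk^{-1/(\alpha-1)}\hat D_{v_k}}]/\expec[\hat D_{v_k}]$, computes the numerator by differentiating the binomial generating function $(1-w_k+w_k\e^{-\hat s})^{D_{v_k}}$, and observes that after substituting $\hat s=sk^{-1/(\alpha-1)}$ the whole expression collapses to $(1+o(1))\expec[\e^{-sp_ck^{-1/(\alpha-1)}D_{v_k}^\star}]$, whence the claim follows from the hypothesis and continuity of Laplace transforms. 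You instead make the same interaction between size-biasing and binomial thinning explicit at the level of distributions, via $\Binom(N,p)^\star\stackrel{d}{=}1+\Binom(N^\star-1,p)$, and then replace $\Binom(N_k,w_k)$ by its conditional mean $N_kw_k$ through a conditional Chebyshev bound; tightness of $N_k/k^{1/(\alpha-1)}$ (which follows from the hypothesis) is exactly what is needed to integrate that bound, and Slutsky finishes the argument. Your approach is more structural and arguably more transparent about \emph{why} the factor $p_c$ appears; the paper's transform computation is shorter and sidesteps the bookkeeping with $(D_{v_k}-1)^\star$ versus $D_{v_k}^\star$, which you handle correctly via the explicit ratio of point masses together with $\expec[D_{v_k}]\to\infty$ from Lemma~\ref{lem:rare_degree_Vertices_infinite_Var}. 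One small remark: your closing worry that the Chebyshev step ``degenerates'' if $N_k=o(k^{1/(\alpha-1)})$ with non-vanishing probability is unfounded — on that event the conditional variance $N_kw_k(1-w_k)$ is even smaller, so the bound only improves; tightness alone suffices, and the absence of an atom at $0$ in the limit law is not actually needed for this step.
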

\begin{proof}
 The proof is based on the convergence of the Laplace transforms. For the size-biased distribution we find
 \begin{equation}
 \label{eq:converence_binomial_sizebiased_alpha(1,2)_1}
     \expec[
     \e^{-sk^{-1/(\alpha-1)}(\hat D_{v_k})^\star }
     ] =
     \sum_{i=0}^\infty 
     \e^{-sk^{-1/(\alpha-1)}i} \frac{ \prob(\hat D_{v_k} = i)  i}{\expec[\hat D_{v_k}]}
     =
     \frac{\expec[\hat D_{v_k}\e^{-s k^{-1/(\alpha-1)}\hat D_{v_k}}] }{\expec[\hat D_{v_k}]}.
 \end{equation}
We simplify the expectation in the numerator first. We find 
\begin{equation}
    \expec[\hat D_{v_k}\e^{-s k^{-1/(\alpha-1)} \hat D_{v_k}}]
    =
    -\frac{\partial}{\partial \hat s}
    \expec[\e^{-\hat s \hat D_{v_k}}]_{\hat s = sk^{-1/(\alpha-1)}}.
\end{equation}
Write $w_k=p_c(1+a/k)$.
Recalling that $\hat D_{v_k}$ is a binomial distribution, we can substitute the known Laplace transform 
\begin{equation}
\begin{aligned}
  \expec\Big[\hat D_{v_k}\e^{-s k^{-1/(\alpha-1)}\hat D_{v_k}}\Big]
    &=
   - \frac{\partial}{\partial \hat s}
    \expec\Big[
    (1-w_k+w_k\e^{-\hat s})^{D_{v_k}}
    \Big]\\&=w_k\e^{-\hat s}
    \expec\Big[ D_{v_k} (1-w_k(1-\e^{-\hat s}))^{D_{v_k}}\Big]\\&
    =w_k\e^{-\hat s} \expec\Big[D_{v_k}\e^{-\hat sw_k D_{v_k}}\Big](1+\o{k}{a}).
\end{aligned}
\end{equation}
Substituting $\hat s = sk^{-1/(\alpha-1)}$ so that $\e^{-\hat s}=1+\o{k}{a}$, implies
 \begin{equation}
 \begin{aligned}
     \expec\Big[
     \e^{-s(k^{-1/(\alpha-1)}\hat D_{v_k})^\star}
    \Big]& =
     \frac{
       w_k\expec\Big[D_{v_k}\e^{-s w_k k^{-1/(\alpha-1)}D_{v_k}}
       \Big]}{w_k\expec[ D_{v_k}]}(1+\o{k}{a})\\&=
\expec\Big[ \e^{-sp_c k^{-1/(\alpha-1)}D_{v_k}^\star }\Big](1+\o{k}{a}),
\end{aligned}
 \end{equation}
 which converges to $\expec[\e^{-sp_c X^\star}]$ uniformly in $a\in[\varepsilon,1/\varepsilon]$ by the conditions of the lemma and where
 the last equality follows from bounded convergence.
 This concludes the proof.
\end{proof}

\subsection{Structure degrees backbone vertices for \mtitle{$\alpha\in(0,1)$}}
\label{sec:DBK_inf_expec}
We next investigate the scaling of the degrees along the backbone in the infinite-mean setting. Here, it suffices to condition on $W_k=w_k$, where $w_k$ is some sequence such that $w_k\searrow p_c$ holds. We therefore recall the notation $\o{k}{}$ from \eqref{eq:small-o} for an error small in $k\to\infty$. 
We start with the scaling of $D_{v_k}$:

\begin{proposition}[Degrees along the backbone for infinite-mean setting]
\label{prop-degrees-BB-(0,1)}
Fix $\alpha\in(0,1)$ and recall $\hat\theta(\cdot)$ from \eqref{eq:def_hattheta}.  Then,
conditioned on $W_k=w_k$,
    \begin{equation}
    \hat{\theta}(w_k) D_{v_k}
    \xrightarrow{d}
    \emph{\texttt{Gamma}}(1-\alpha,1).
    \end{equation}
\end{proposition}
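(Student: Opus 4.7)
The plan is to mirror the argument used for the finite-mean infinite-variance case (Lemma~\ref{lem:SizeBiased_distr_BB_infvar}), adapted to the fact that for $\alpha\in(0,1)$ we work with $D_{v_k}$ itself rather than its size-biased version, and to the different asymptotics of the normalising quantities. I will condition on $W_k=w_k$, compute the tail probability $\prob_k(\hat\theta(w_k)D_{v_k}>x)$ directly from the explicit form of the backbone degree law in \eqref{eq:def_thm_DBk}, and show that this tail converges to the tail of $\GammaD(1-\alpha,1)$. Since the limit does not depend on $w_k$, unconditioning is immediate from Lemma~\ref{lem:ConvW_kpc}.

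First, from \eqref{eq:def_thm_DBk} I write
\begin{equation*}
\prob_k(\hat\theta(w_k)D_{v_k}>x)
=\frac{\sum_{y>x/\hat\theta(w_k)} y\,(1-\hat\theta(w_k))^{y-1}\prob(X=y)}{\expec[X(1-\hat\theta(w_k))^{X-1}]}.
\end{equation*}
For the denominator I invoke \eqref{eq:Bingham_Taylor_theta'_alpha(0,1)}, which gives
$\expec[X(1-\hat\theta(w_k))^{X-1}]\sim \alpha\,\Gamma(1-\alpha)\,c_{\sss X}\,\hat\theta(w_k)^{\alpha-1}$
as $w_k\searrow 0$. For the numerator I use the power-law assumption \eqref{eq:powerlaw}, so that $\prob(X=y)\sim \alpha c_{\sss X} y^{-\alpha-1}$, together with the Tauberian-type Riemann-sum approximation developed in Appendix~C.1 (the same one invoked in the proofs of Lemmas~\ref{lem:UI_DBK_inK} and~\ref{lem:scaling_largedegrees_alphain(1,2)}). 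Substituting $z=y\hat\theta(w_k)$, the discrete sum becomes
\begin{equation*}
\sum_{y>x/\hat\theta(w_k)} y\,(1-\hat\theta(w_k))^{y-1}\alpha c_{\sss X} y^{-\alpha-1}(1+o(1))
\sim \alpha c_{\sss X}\,\hat\theta(w_k)^{\alpha-1}\int_x^\infty z^{-\alpha}\e^{-z}\dif z,
\end{equation*}
where the factor $(1-\hat\theta(w_k))^{y-1}$ produces the exponential $\e^{-z}$ via $\hat\theta(w_k)\to 0$ and $y\hat\theta(w_k)=z$.

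Dividing numerator by denominator, the factor $\alpha c_{\sss X}\hat\theta(w_k)^{\alpha-1}$ cancels and I am left with
\begin{equation*}
\prob_k(\hat\theta(w_k)D_{v_k}>x)
\longrightarrow \frac{1}{\Gamma(1-\alpha)}\int_x^\infty z^{-\alpha}\e^{-z}\dif z
=\prob(\GammaD(1-\alpha,1)>x),
\end{equation*}
which is the desired tail. The main technical obstacle is making the Riemann-sum step rigorous uniformly in $x$ on compact sets: one has to show that the sum and the corresponding integral differ by $o(\hat\theta(w_k)^{\alpha-1})$, which is exactly what Appendix~C.1 provides in the analogous computations. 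Finally, since the limiting tail is continuous in $x$ and independent of the conditioning value $w_k$, and since $W_k\xrightarrow{a.s.}0$ by Lemma~\ref{lem:ConvW_kpc}, dominated convergence on the conditional probabilities upgrades the conditional convergence to unconditional convergence in distribution, completing the proof.
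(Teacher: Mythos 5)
Your proposal is correct and follows essentially the same route as the paper: the same numerator/denominator split of $\prob_k(\hat\theta(w_k)D_{v_k}>x)$ from \eqref{eq:def_thm_DBk}, the same Tauberian estimate \eqref{eq:Bingham_Taylor_theta'_alpha(0,1)} for the denominator, and the same Appendix C.1 sum-to-integral computation for the numerator, yielding the $\GammaD(1-\alpha,1)$ tail after cancellation of $\alpha c_{\sss X}\hat\theta(w_k)^{\alpha-1}$. One small caution: the intermediate claim $\prob(X=y)\sim\alpha c_{\sss X}y^{-\alpha-1}$ does not follow from the tail-only assumption \eqref{eq:powerlaw}, but this is harmless since the Appendix C.1 lemmas you invoke obtain the numerator asymptotics directly from the tail via summation by parts rather than from a local mass-function estimate.
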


\begin{proof}
We use the expression of the mass function of $D_{v_k}$ in \eqref{eq:def_thm_DBk} to find
\begin{equation}
\label{eq:Dbk_alpha<1_auxeq_0}
    \prob_k( \hat{\theta}(w_k) D_{v_k} > x)
    =
    \sum_{y>x/\hat{\theta}(w_k)} \frac{ y(1-\hat{\theta}(w_k))^{y-1} 
    \prob(X=y)}{\expec_k[X(1-\hat{\theta}(w_k))^{X-1}]},
\end{equation}
where we consider the numerator and denominator separately. The expectation in the denominator diverges for $p\searrow p_c$. We consider its asymptotic behaviour based on \cite[Equation 8.1.12]{Bingham1987} to find
\begin{equation}
\label{eq:Asymptotic_behaviour_E[1-ptheta]_alpha<1}
    \expec_k[X(1-\hat\theta(w_k))^{X-1}]
    = c_{\sss X}\alpha\Gamma(1-\alpha) \hat \theta(w_k)^{-(1-\alpha)}(1+\o{k}{}).
\end{equation}
By \aprefb{3}{sec:app_mass_tail}, we find an expression for the numerator as
\begin{equation}
\begin{aligned}
\label{eq:Dbk_alpha<1_auxeq_0-1}
    &\sum_{y>x/\hat{\theta}(w_k)} y(1-\hat{\theta}(w_k))^{y-1} 
    \prob(X=y)\\&\quad = (1+\o{k}{})\alpha c_{\sss X}\hat\theta(w_k)^{-(1-\alpha)}\int_x^\infty \e^{-z} z^{-\alpha} \dif z .
    \end{aligned}
\end{equation}
By substitution of \eqref{eq:Asymptotic_behaviour_E[1-ptheta]_alpha<1} and \eqref{eq:Dbk_alpha<1_auxeq_0-1} in \eqref{eq:Dbk_alpha<1_auxeq_0} gives
\begin{equation}
\begin{aligned}
     &\prob_k( \hat{\theta}(W_k) D_{v_k} > x)
     =
     (1+\o{k}{})\int_x^\infty \frac{\e^{-z}}{\Gamma(1-\alpha)} z^{-\alpha} \dif z ,
     \end{aligned}
\end{equation}
which is exactly the tail distribution of a gamma distribution with shape parameter $1-\alpha$ as $k\to\infty$.
\end{proof}

We conclude with the scaling of the effective degree $\hat D_{v_k}$, as we recall from \eqref{eq:def_relevant_degree}, in distribution and in expectation. 
For $\alpha>2$ or $\alpha\in(1,2)$ the scaling is the same as $D_{v_k}$ but as 
$w_k\to0$ for $\alpha\in(0,1)$, we expect that $\hat D_{v_k}$ scales differently, which is formalised as follows:

\begin{lemma}[Convergence of the effective degree sequence $\hat D_{v_k}$]
\label{lem:convergence_hatDBK_alpha(0,1)}
\label{lem:conv_expecdegree_(0,1)}
Fix $\alpha\in(0,1)$. Then, conditioned on $W_k=w_k$,
\begin{equation}
    \theta(w_k)
    \Binom( D_{v_k},w_k) \xrightarrow{d}
   \GammaD(1-\alpha,1)
\end{equation}
and
\begin{equation}
    \expec[\theta(w_k)\hat D_{v_k}]
    \to  1-\alpha.
\end{equation}
\end{lemma}
\begin{proof}
 We prove this using the Laplace transform, and use the Laplace transform of a binomial random variable, as follows: 
 \begin{equation}
 \begin{aligned}
     \expec_k[\e^{-s \theta(w_k)   \Binom( D_{v_k},w_k)} ] &
     =\expec_k[ (1-w_k(1-\e^{-s \theta(w_k)}) )^{D_{v_k}}]
    \\& =
     \expec_k[ \e^{D_{v_k} \log(1- w_k(1-\e^{-s \theta(w_k)}))}]
     \\&=
     \expec_k[ \e^{-sw_k\theta(w_k) D_{v_k}(1+o(1)) } ]
     \to   \expec_k[ \e^{-s\GammaD(1-\alpha,1) }],
    \end{aligned}
 \end{equation}
where the last convergence result holds by Proposition~\ref{prop-degrees-BB-(0,1)}. 
 
For the expectation,
we recall that by \eqref{eq:Asymptotic_behaviour_E[1-ptheta]_alpha<1}
and by the monotone density theorem \cite[Theorem 1.2.9]{Mikosch1999}
\begin{equation}
    \expec_k[X^2(1-\hat\theta(w_k))^{X-2}]
    = c_{\sss X}\alpha\Gamma(1-\alpha)(1-\alpha) \hat \theta(w_k)^{-(2-\alpha)}(1+\o{k}{}).
\end{equation}
This implies that for $k$ large enough
\begin{equation}
    \expec_k[\theta(w_k)\hat D_{v_k}] = 
    \frac{\expec_k[X^2(1-\hat\theta(w_k))^{X-2}]}{\expec_k[X(1-\hat\theta(w_k))^{X-1}]}
    \to
   1-\alpha.
\end{equation}
\end{proof}

\section{Volume of finite forests attached to the backbone}
\label{sec:TP_scale}
So far, we have focused on properties of the backbone, such as the scaling of the maximal weight accepted after the $k$-th backbone vertex and the degree distribution along the backbone. These results are key to analyse the (conditionally finite)  forests that are attached to the backbone, which is done in this section. We provide the detailed proof of Theorem~\ref{thm:Cs_scaling} and start by sketching the proof.

\subsection{Overview of the proof}
\label{sec-size-trees-off-backbone-overview}
The total volume of the forest attached to the $k$-th backbone vertex is equal to the sum of $\hat{D}_{v_k}$ i.i.d.\ total progenies of the attached trees, where we recall $\hat D_{v_k}$ the effective degree as defined in \eqref{eq:def_relevant_degree}.
In the following, we ignore the effect of one edge being used to continue the path to infinity. This effect is represented by the $(-1)$ in the binomial in the definition of $\hat D_{v_k}$, but its contribution is negligible. 

By the observations of Section~\ref{sec:FiniteForest_properties_BB}, we recall that for fixed $k$ all attached trees are sub-critical with the dual offspring distribution $\tilde X^{\sss(k)}$ in \eqref{eq:def_tildeX2} that is unfortunately no longer exactly binomial.
Conditionally on $W_k=w_k$, we define the law of the size of the forest attached to the $k$-th backbone vertex by
\begin{equation}
    \label{eq:def-H_k}
    H_k=\sum_{i=1}^{\hat{D}_{v_k}} \tilde{T}^{\sss(k)}_{i},
\end{equation}
where $(\tilde{T}^{\sss(k)}_{i})_{i\geq 1}$ are  i.i.d.\ total offsprings of a BP tree starting with one individual, and percolated on $w_k>p_c$, conditionally on extinction. By the generalised hitting time theorem  \cite[Theorem 3.14]{VanderHofstad2016},
\begin{equation}
    \label{tilde-Tik-def}
    \prob_k(H_k=n\mid  \hat{D}_{v_k}=\ell) =\frac{\ell}{n}\prob(\tilde{X}_1^{\sss(k)}+\cdots+\tilde{X}_n^{\sss(k)}=n-\ell).
\end{equation}
Note that, for every $n$ and $\ell\in\{1,...,n\}$,
\begin{equation}
    \label{eq:massfunction_eta}
    \prob(\tilde{X}_1^{\sss(k)}+\cdots+\tilde{X}_n^{\sss(k)}=n-\ell)=\frac{1}{\eta(w_k)^\ell} \prob(X_1^{\sss(k)}+\cdots+X_n^{\sss(k)}=n-\ell),
\end{equation}
where now $(X_i^{\sss(k)})_{i\geq 1}$ is an i.i.d.\ sequence of $\Binom(X,w_k)$ random variables. As a result,
\begin{equation}
    \label{tilde-Hk-law-binomial}
    \prob_k(H_k=n\mid \hat{D}_{v_k}=\ell)=
    \frac{\ell}{n\eta(w_k)^\ell} 
    \prob_k(X_1^{\sss(k)}+\cdots+X_n^{\sss(k)}=n-\ell),
\end{equation}
so that also 
\begin{equation}
    \label{tilde-Hk-law-binomial-b}
    \prob_k(H_k=n)=
    \expec_k\bigg[\frac{\hat{D}_{v_k}}{n\eta(w_k)^{\hat{D}_{v_k}}} 
    \prob_k(X_1^{\sss(k)}+\cdots+X_n^{\sss(k)}=n-\hat{D}_{v_k}\mid \hat{D}_{v_k})\bigg].
\end{equation}
Also, the distribution of $(X_i^{\sss(k)})_{i\geq 1}$ only depends on $k$ through the parameter $w_k$ that appears as the binomial success probability. However, the above rewrite comes at the expense of a factor $1/\eta(w_k)^\ell$ in \eqref{tilde-Hk-law-binomial}, which can be inconvenient when $\hat{D}_{v_k}=\ell$ is large. 
Our analysis shows that, uniformly in $x>\varepsilon$, 
\begin{equation}
\label{aim-Hk}
     \prob_k(H_k=n)=
     \begin{cases}
      \Theta(k^{-3}) &\text{ for $\alpha>2$ and $n=\lceil x k^2\rceil$},\\
      \Theta(k^{-(2\alpha-1)/(\alpha-1)}) &\text{ for $\alpha\in(1,2)$ and $n=\lceil xk^{\alpha/(\alpha-1)}\rceil$},\\
      \Theta(w_k^{\alpha/(1-\alpha)}) &\text{ for $\alpha\in(0,1)$ and $n=\lceil x w_k^{-\alpha/(1-\alpha)}\rceil$}.\\
     \end{cases}
\end{equation}
By \eqref{tilde-Hk-law-binomial-b}, we see that the binomial distribution plays a key role in this analysis. In the following we make extensive use of its local limit approximation, on which we provide the details here.

\paragraphi{Binomial local limit approximations}
Let $S_m\overset{d}{=} \Binom(m,p)$. Then, a local central limit approximation shows that
\begin{equation}
    \label{Bin-LCLT-pre}
    \prob(S_m=n)=\frac{1}{\sqrt{2 \pi mp(1-p)}}{\mathrm{e}}^{-(n-mp)^2/[2mp(1-p)]}(1+o(1)),
\end{equation}
when $|n-mp|=o((mp)^{2/3})$, uniformly in $n$ and $p$. Such approximations have a long history, we refer to the books by Bollob\'as \cite{Bollobas2001} and Janson, \strokeL uczak and Ruci\'nsky \cite{JanLucRuc00}. In particular, the upper bound in \eqref{Bin-LCLT-pre} follows from \cite[Theorem 1.2]{Bollobas2001}, the lower bound from \cite[Theorem 1.5]{Bollobas2001}, and an analysis of the error terms present there, showing that they are $o(1)$ when $|n-mp|=o((mp)^{2/3})$. \\
We apply \eqref{Bin-LCLT-pre} in a slightly unusual setting where $m=Q_n+D_{v_k}^\star-1$ is {\em random} itself, while $n$ remains fixed. Thus, we next massage \eqref{Bin-LCLT-pre} in a form that is more convenient to us. Note that when $|n-mp|=o((mp)^{2/3})$, then also $mp=n(1+o(1))$, so that we may replace $\sqrt{2 \pi mp(1-p)}$ in \eqref{Bin-LCLT-pre} by $\sqrt{2 \pi n(1-p)}(1+o(1)).$ Note further that
$1/(mp)=1/n +(mp-n)/(nmp)$, so that, in this regime,
    \begin{equation}
    \begin{aligned}
    \label{exp-simpl-Bin}
    \frac{(n-mp)^2}{mp}
    &=\frac{(n-mp)^2}{n}
    +O\Big(\frac{|n-mp|^3}{nmp}\Big)
    \\&=\frac{(n-mp)^2}{n}
    +O\Big(\frac{|n-mp|^3}{(mp)^2}\Big)
    =\frac{(n-mp)^2}{n}
    +o(1).    
    \end{aligned}
    \end{equation}
Applying these two approximations yields
 \begin{equation}
    \label{Bin-LCLT}
    \prob(S_m=n)=\frac{1}{\sqrt{2 \pi n(1-p)}}{\mathrm{e}}^{-(n-mp)^2/[2n(1-p)]}(1+o(1)).
\end{equation}
Of course, \eqref{Bin-LCLT} only covers a part of the range of possible $n$. For $n$ for which $|n-mp|\geq (mp)^{7/12}$, say, \cite[Corollary 1.4(i)]{Bollobas2001} shows that 
\begin{equation}
    \label{mod-dev-Sn-Bin-pre}
    \prob(|S_m-mp|\geq h)\leq \frac{\sqrt{mp(1-p)}}{h}{\mathrm{e}}^{-h^2/[2mp(1-p)]}(1+o(1))).
\end{equation}
Here, the power $7/12$ in $|n-mp|\geq (mp)^{7/12}$ is chosen to be in between $1/2$ and $2/3$. The bound in \eqref{mod-dev-Sn-Bin} for $h=(mp)^{7/12}$ is bounded by $\e^{-(mp)^{1/6}},$ which is so small that we will ignore it. Indeed, all the error terms arising from using \eqref{exp-simpl-Bin} in \eqref{tilde-Hk-law-binomial-b} 
will be much larger than this. Again using that $mp=n(1+o(1))$ and \eqref{exp-simpl-Bin}, we arrive at
\begin{equation}
    \label{mod-dev-Sn-Bin}
    \prob(|S_m-mp|\geq h)\leq \frac{\sqrt{n(1-p)}}{h}{\mathrm{e}}^{-h^2/[2n(1-p)]}(1+o(1))).
\end{equation}
This leads to an error term that is negligible under the scaling regime we consider.

\paragraphi{Central limit approximations}
A second useful approximation that we apply here is based on the established central limit theorem. 
\begin{lemma}[Normal approximations for finite means]
\label{lem:Norm_approx_fin_mean}
\label{lem:Normal_approx_alpha(0,1)}
Consider variables $z_k:=z$, $m_k:=m$, $\sigma_k^2:=\sigma^2>0$ and $b_k:=b\in\mathbb{R}$ such that $x+m \to\infty$ for $k\to\infty$ and $\sigma/m = \o{k}{}$. Then for all bounded and continuous functions $y\mapsto \chi(y)$,
\begin{equation}
 \label{eq:Normal_approx_chi}
     \sum_{\ell \geq 1}
     \chi( (\ell-z)b/m )
     \e^{-
     \frac{(\ell-z-m)^2}{2\sigma^2}}=
     \chi(b)\sqrt{2\pi\sigma^2}(1+\o{k}{}).
\end{equation}
\end{lemma}

\begin{proof}[Proof of Lemma~\ref{lem:Norm_approx_fin_mean}] 
We note that since $\chi$ is bounded 
\begin{equation}
    \sum_{\ell\geq 1} 
    \chi\Big(b
    \frac{\ell-x}{m}\Big) \e^{ -\frac{ (\ell-x-m)^2}{2\sigma^2}}
     =
    \int_1^\infty 
    \chi\Big(b
    \frac{\ell-x}{m}\Big) \e^{ -\frac{ (\ell-x-m)^2}{2\sigma^2}}
    \dif \ell(1+\o{k}{}).
\end{equation}
by Euler-Maclauren, where the error is small for $x+m$ large. We recognise the density of a normal distribution and define $N_1\sim\mathcal{N}(m+x, \sigma^2)$,
a normally distributed random variable with mean $m+x$ and variance $\sigma^2$,
so that by a proper scaling, we find 
\begin{equation}
    \int_1^\infty 
    \chi\Big(b
    \frac{\ell-x}{m}\Big) \e^{ -\frac{ (\ell-x-m)^2}{2\sigma^2}}\dif \ell
    (1+\o{k}{})= \sqrt{2\pi\sigma^2}
     \expec\bigg[ 
    \chi\Big(b
    \frac{N_1-x}{m}\Big)
    \1\{N_1>1\}
    \bigg].
\end{equation}
Let $N_2\sim \mathcal{N}(0,1)$, then
this can be written as 
\begin{equation}
\label{eq:app_proofnormal_aux1}
    \sqrt{2\pi\sigma^2}
     \expec \bigg[ 
    \chi\Big(b\Big(1+
    \frac{\sigma}{m}N_2\Big)\Big)
    \1\{x+m+\sigma N_2>1\} \bigg]=
    \sqrt{2\pi\sigma^2}
    \chi(b)
   (1+\o{k}{}),
\end{equation}
where the error term converges in $k$
as $\sigma/mN_2\xrightarrow{\prob}0$ 
by the assumption $\sigma/m=\o{k}{}$ and $\1\{x+m+\sigma N_2>1\}\xrightarrow{\prob}0$ by the assumption
$m+x\to\infty$. Furthermore, 
because $\chi$ is continuous and bounded, we can take the limit in the expectation by bounded convergence.
 \end{proof}

\paragraphi{Mass function and tail distributions}
Our main result describes the scaling of the tail distribution of $H_k$. As our main tool, namely the random-walk hitting time theorem in \eqref{tilde-Tik-def}, addresses the mass function, we need to translate the scaling of the mass function to the scaling of the distribution. We provide the non-trivial bridge between the two, so that in the following parts it suffices to focus on the mass function:
\begin{lemma}[Mass and distribution functions]
\label{App:lem:scaling_mass_distr}
Consider a sequence of discrete random variables $(X_k)_{k\geq 0}$. Take constants $x$ and $0<\beta<2$ and $\gamma\geq 2$. Assume, uniformly in $x>\varepsilon$ and $a\in[\varepsilon,1/\varepsilon]$,
\begin{equation}
\label{eq:app:scaling_mass_assumption}
    k^{\beta+\gamma} \prob_k( X= \lceil xk^{\gamma} \rceil )
    \to g(x),
\end{equation}
where $g(x)$ is some Riemann integrable function on $(0,\infty)$. Furthermore, assume that for all $n\in\mathbb{N}$
\begin{equation}
\label{eq:mass_func_unif_bound}
    \prob_k(X=n) \leq Ck^{\gamma-2}n^{-\frac{\beta+2\gamma-2}{\gamma}},
\end{equation}
for some fixed constant $C$.
Then, for every $x>\varepsilon$,
\begin{equation}
    k^\beta \prob_k(X>xk^\gamma) \to \int_x^\infty g(y) \dif y. 
\end{equation}
\end{lemma}
\begin{proof}
We first find
\begin{equation}
\begin{aligned}
     k^{\beta} \prob_k(X>xk^\gamma)
     &= \sum_{i=\lceil xk^\gamma\rceil}^\infty
      k^{\beta} \prob_k(X=i)
       =
     \int_{i=\lceil xk^\gamma\rceil}^\infty
       k^{\beta} \prob_k(X=\lceil i \rceil )\dif i
     \\& =  \int_{i= x}^\infty k^{\beta+\gamma}
       \prob_k(X=\lceil ik^\gamma \rceil )\dif i(1+\o{k}{a}).
\end{aligned}
\end{equation}
If we can show that $k^{\beta+\gamma} \prob_k(X=\lceil ik^\gamma \rceil )\leq q(i)$ where $q(i)$ is integrable on $[\varepsilon,\infty]$, the result follows from bounded convergence. Based on \eqref{eq:mass_func_unif_bound}, we find
\begin{equation}
    k^{\beta+\gamma}
       \prob_k(X=\lceil ik^\gamma \rceil )
       \leq 
        Ck^{\beta+\gamma}
       k^{\gamma-2}(ik^\gamma)^{-\frac{\beta+2\gamma-2}{\gamma}}
       = C i^{-\frac{\beta+2\gamma-2}{\gamma}}.
\end{equation}
Under the assumption of $\gamma\geq2$ and $\beta<2$, we find that 
\begin{equation}
    \int_{i=x}^\infty  C i^{-\frac{\beta+2\gamma-2}{\gamma}}\dif i
    \leq  \int_{i=\varepsilon}^\infty Ci^{-(1+\beta/2)}\dif i<\infty.
\end{equation}
This concludes the proof.
\end{proof}

\paragraphi{Organisation of this section}
In the following we formalise and prove the claim in \eqref{aim-Hk}. 
For $\alpha>2$ and $\alpha\in(1,2)$, we restrict ourselves to the conditional distribution of $H_k$, given $W_k=p_c(1+a/k)$. 
The analysis is split per regime, where we discuss $\alpha>2$ in Section~\ref{sec-size-trees-off-backbone-alpha>2},  $\alpha\in(1,2)$ in Section~\ref{sec-size-trees-off-backbone-alpha(1,2)} and $\alpha\in(0,1)$ in Section~\ref{sec-size-trees-off-backbone-alpha(0,1)}. 
Besides the claim in \eqref{aim-Hk}, we additionally show that $n^{3/2}\prob_k(H_k=n)$ for $\alpha>2$, and $n^{1+1/\alpha}k^{-(2-\alpha)/(\alpha-1)}\prob_k(H_k=n)$ for $\alpha\in(1,2)$, are uniformly bounded in $n$ and $k$, note that this implies that assumption of \eqref{eq:mass_func_unif_bound} is satisfied. Finally, we also argue that for these regimes the convergence is uniform in $a$ on compact intervals in $(0,\infty)$.

\subsection{Analysis of the total trees attached to the backbone for \mtitle{$\alpha>2$}}
\label{sec-size-trees-off-backbone-alpha>2}
In this section we provide an extensive proof of \eqref{aim-Hk} for $\alpha>2$ and identify the limiting constants.
We start by formalising the result for the mass function in Proposition~\ref{prop-size-trees-backbone->2} and provide the proof, where we start by giving a concise proof overview. 

\begin{proposition}[Size of forest off backbone for $\alpha>2$]
\label{prop-size-trees-backbone->2}
Fix $w_k=p_c(1+a/k)$, $\alpha>2$
and $n=\lceil xk^2\rceil$.
Define the function $h_\alpha\colon (0,\infty)^2\to (0,\infty)$ by
\begin{equation}
\label{eq:def_h_alpha>2}
    h_\alpha(x,a) = 
    \frac{p_c\expec[X^\star]x^{-3/2}}{\sqrt{2\pi( 1-p_c+ p_c^2\sigma^2)}}
     \exp\Big\{-\frac{(ax)^2}{2x (1-p_c + p_c^2\sigma^2)}\Big\}.
\end{equation}
Recall $\prob_k$ from \eqref{eq:def_prob_k}. Then
\begin{equation}
    \lim_{k\rightarrow \infty} k^3\prob_k(H_k=\lceil k^2 x\rceil )=h_\alpha(x,a),
\end{equation}
where the convergence is {\em uniform} for $x>\varepsilon$ and $a\in(\vep,1/\vep)$ for any $\vep>0$. Furthermore,
\begin{equation}
\label{eq:toshow_uniformbound_alpha>2}
n^{3/2}\prob(H_k=n\mid w_k=p_c(1+a/k))
\end{equation}
is uniformly bounded in $n\geq1$ and $a\in (\varepsilon,1/\varepsilon)$.
\end{proposition}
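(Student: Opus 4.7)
The plan is to start from the hitting-time identity \eqref{tilde-Hk-law-binomial-b}, which expresses $\prob_k(H_k=n)$ as an expectation over $\hat D_{v_k}$ of a ratio whose key ingredient is the point mass of $S_n:=X_1^{\sss(k)}+\cdots+X_n^{\sss(k)}$, with $X_i^{\sss(k)}\sim \Binom(X,w_k)$ i.i.d. For $\alpha>2$ these summands have finite variance $\sigma_k^2:=w_k(1-w_k)\expec[X]+w_k^2\sigma^2\to\sigma_\infty^2:=1-p_c+p_c^2\sigma^2$ and mean $\mu_k=1+a/k$, so a Gnedenko-type local central limit theorem (in the sharp form recalled via \eqref{Bin-LCLT}) yields
\[
\prob_k(S_n=n-\ell)=\frac{1}{\sqrt{2\pi n\sigma_k^2}}\exp\Big\{-\frac{(\ell+na/k)^2}{2n\sigma_k^2}\Big\}(1+o(1)),
\]
uniformly for $\ell$ with $|\ell+na/k|=o(n^{2/3})$, together with the uniform upper bound $\prob_k(S_n=j)\leq C/\sqrt{n}$ valid for all $n\geq 1$ and $j\in\mathbb{Z}$. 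Specialising to $n=\lceil xk^2\rceil$ and using that $na/k=axk$ dominates $\ell$ for $\ell$ tight, the exponent collapses to $-(ax)^2/(2x\sigma_\infty^2)+o(1)$ and the prefactor becomes $(k\sqrt{2\pi x\sigma_\infty^2})^{-1}(1+o(1))$, uniformly for tight $\ell$ and $(x,a)$ in compact subsets of $(0,\infty)\times(\vep,1/\vep)$.

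Next I will plug this approximation into \eqref{tilde-Hk-law-binomial-b}. The prefactor $1/n=(xk^2)^{-1}$ produces the global $k^{-3}$ scaling, yielding
\[
k^3\prob_k(H_k=\lceil xk^2\rceil)=\frac{1}{x\sqrt{2\pi x\sigma_\infty^2}}\exp\Big\{-\frac{(ax)^2}{2x\sigma_\infty^2}\Big\}\,\expec_k\Big[\frac{\hat D_{v_k}}{\eta(w_k)^{\hat D_{v_k}}}\Big](1+o(1)).
\]
The remaining expectation is handled as follows: since $\theta(w_k)\sim C_\theta p_c a/k$ by Lemma~\ref{lem:theta_scaling}, on the event $\{\hat D_{v_k}=o(k)\}$ one has $1/\eta(w_k)^{\hat D_{v_k}}=1+O(\hat D_{v_k}/k)$, while on the complementary tail Lemma~\ref{lem:UI_DBK_inK} bounds the contribution by $O(k^{-(\alpha-2)})=o(1)$. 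Combined with $\expec_k[\hat D_{v_k}]\to p_c\expec[X^\star]$, which follows from Lemma~\ref{lem:DegreeBB_expec_alpha>2} and the $w_k$-thinning in \eqref{eq:def_relevant_degree} (the subtracted $1$ producing a negligible $O(w_k-p_c)$ correction), dominated convergence then gives the limit $h_\alpha(x,a)$. Uniformity in $x>\vep$ and $a\in(\vep,1/\vep)$ follows because each of the estimates above is itself uniform in these parameters.

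The uniform bound \eqref{eq:toshow_uniformbound_alpha>2} will fall out of the same representation, using only the uniform inequality $\prob_k(S_n=j)\leq C/\sqrt{n}$: this immediately gives $n^{3/2}\prob_k(H_k=n)\leq C\,\expec_k[\hat D_{v_k}/\eta(w_k)^{\hat D_{v_k}}]$, and the right-hand side will be shown to be bounded uniformly in $n\geq 1$, $k$ and $a\in(\vep,1/\vep)$. The main obstacle throughout will be the control of this last expectation: although $\theta(w_k)=O(1/k)$ is small, the law of $\hat D_{v_k}$ charges values of order $k$ with probability $\Theta(k^{-(\alpha-1)})$, so the factor $1/\eta(w_k)^{\hat D_{v_k}}=\exp\{\theta(w_k)\hat D_{v_k}(1+o(1))\}$ could in principle blow up. The cleanest resolution will be a moment-generating-function calculation based on \eqref{eq:def_thm_DBk}: setting $s=\theta(w_k)$ in
\[
\expec_k\bigl[\e^{s\hat D_{v_k}}\bigr]=\expec_k\bigl[(1-w_k+w_k\e^{s})^{D_{v_k}-1}\bigr]
\]
reduces (after expanding to leading order in $\theta(w_k)$) to controlling $\expec[X(1-\hat\theta(w_k)^{2})^{X-1}]/\expec[X(1-\hat\theta(w_k))^{X-1}]$, an $O(1)$ ratio precisely because $\expec[X^2]<\infty$ in the finite-variance regime; this is the step where the assumption $\alpha>2$ enters essentially, and where the same argument would break down for $\alpha\in(1,2)$, necessitating the different scaling treated in Section~\ref{sec-size-trees-off-backbone-alpha(1,2)}.
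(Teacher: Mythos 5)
Your proposal is correct in outline but takes a genuinely different route from the paper's in two places. First, for the pointwise asymptotics the paper does not apply a local CLT to the full sum $S_n=X_1^{\sss(k)}+\cdots+X_n^{\sss(k)}$. Instead, in Lemma~\ref{lem-trees-off-backbone} it conditions on $Q_n=\sum X_i$ (the unthinned offspring), which reduces the point probability to a pure binomial $\Binom(Q_n+D_{v_k}^\star-1,w_k)$; the binomial LCLT \eqref{Bin-LCLT-pre}, which is explicitly uniform in the success parameter, is then applied, and the fluctuation of $Q_n$ is integrated out afterwards via an ordinary CLT (Lemma~\ref{lem:step_3_alpha>2}), producing the combined variance $1-p_c+p_c^2\sigma^2$ only at the end. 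Your version puts the full variance $\sigma_k^2$ in directly by invoking a Gnedenko-type lattice LCLT for the mixed distribution $\Binom(X,w_k)$; this gets to the answer faster, but you then owe a check that the lattice LCLT holds \emph{uniformly} over the one-parameter family $w_k=p_c(1+a/k)$ with $a\in(\vep,1/\vep)$, which is not quite the off-the-shelf form of Gnedenko's theorem — the paper's reduction to the binomial sidesteps this. Second, for the uniform bound \eqref{eq:toshow_uniformbound_alpha>2} the paper works with the conditioned offspring $\tilde X^{\sss(k)}$ directly (Lemma~\ref{lem:local_lims_upper_bound_mass}), so the factor $\eta(w_k)^{-\hat D_{v_k}}$ never appears and only boundedness of $\expec_k[\hat D_{v_k}]$ (Lemma~\ref{lem:DegreeBB_expec_alpha>2}) is needed. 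You instead keep the $\eta^{-\hat D_{v_k}}$ factor and control $\expec_k[\hat D_{v_k}\eta(w_k)^{-\hat D_{v_k}}]$ via a moment-generating-function identity; this does work, and your observation that it is exactly here that $\alpha>2$ is needed is correct, but your displayed ratio is off: differentiating $\expec_k[(1-w_k+w_k\e^{s})^{D_{v_k}-1}]$ at $s=-\log\eta(w_k)$ and inserting the law \eqref{eq:def_thm_DBk} produces $(1-\hat\theta)\expec[X(X-1)(1-\hat\theta^2)^{X-2}]/\expec[X(1-\hat\theta)^{X-1}]$, not $\expec[X(1-\hat\theta^2)^{X-1}]/\expec[X(1-\hat\theta)^{X-1}]$. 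The extra $X(X-1)$ is what forces $\expec[X^2]<\infty$; your formula as written would in fact stay bounded for $\alpha\in(1,2)$ as well, which would contradict the blow-up the paper establishes there, so the correction is not cosmetic. With that fixed, the argument is sound and both routes arrive at the same $h_\alpha(x,a)$.
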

By Lemma \ref{App:lem:scaling_mass_distr}, Proposition~\ref{prop-size-trees-backbone->2}
provides the proof of Theorem~\ref{thm:Cs_scaling}, restricted to $\alpha>2$. 
In the proof we make extensive use of the $\o{k}{a,x}$ notation from \eqref{eq:small-o} to indicate a smaller order term in $k\to\infty$ uniformly in $a\in[\varepsilon,1/\varepsilon]$ and $x>\varepsilon$ and the notation $\prob_k(\cdot)$ and $\expec_k[\cdot]$ as defined in \eqref{eq:def_prob_k} for $w_k=p_c(1+a/k)$.
 We continue with the proof of Proposition \ref{prop-size-trees-backbone->2}:

\begin{proof}

The proof consists of three major parts which we outline now:
\begin{enumerate}
    \item  \hyperref[par:5.2-part1]{\textbf{ No significant effect of $\eta(w_k)^{-\hat D_{v_k}}$.}}
    We start with \eqref{tilde-Hk-law-binomial-b}.
    In Lemma~\ref{lem-Goal_no_eta}, we first show that the factor $\eta(w_k)^{-\hat D_{v_k}}$ in \eqref{tilde-Hk-law-binomial-b} only contributes an error term that vanishes, even when scaled with a factor $k^{3}$. 
    Its methods are also used to show the uniform bound of \eqref{eq:toshow_uniformbound_alpha>2}.
    \item \hyperref[par:5.2-part2]{\textbf{ Local limit results.}} Once the $\eta(w_k)^{-\hat D_{v_k}}$ term in \eqref{tilde-Hk-law-binomial-b} is shown to be insignificant, we can ignore it and focus on the binomial mass function in \eqref{tilde-Hk-law-binomial-b}. In particular, in Corollary \ref{cor-trees-off-backbone}, we rewrite this mass function in order to apply a local limit theorem on it, as outlined in \eqref{Bin-LCLT-pre}.
    \item \hyperref[par:5.2-part3]{\textbf{ A central limit result.}} Next, in Lemma~\ref{lem:step_3_alpha>2}, we apply an additional limit result and simplify the result to derive $h_\alpha(x,a)$ as presented in
    \eqref{eq:def_h_alpha>2}.
    This convergence is based on the central limit theorem, as the variables have finite variance in this regime.
\end{enumerate}

\paragraphi{Part 1 - No significant effect of $\eta(w_k)^{\hat D_{v_k}}$}
\label{par:5.2-part1}
The starting point of the analysis is the expression in \eqref{tilde-Hk-law-binomial-b}. In this part, we show that $\eta(w_k)^{-\hat D_{v_k}}$ only adds an $o(k^3)$ error term.
Moreover, this analysis also provides the uniform bound of $n^{3/2}\prob_k(H_k=n)$. For both results we make use of a
general bound on the mass function of the sum of i.i.d. $(\tilde X_{i}^{\sss(k)})_{k\geq1}$ random variables.
\begin{lemma}[Uniform bound on the mass function for $\alpha>2$]
\label{lem:local_lims_upper_bound_mass}
Let $(\tilde{X}^{\sss(k)}_i)_{i\geq 1}$ be an i.i.d. sequence of random variables as given in 
\eqref{eq:def_tilde_X}, where $\tilde{X}^{\sss(k)}$  has finite variance. Then there exists some $C$ such that
\begin{equation}
   \sup_m \prob(\tilde{X}^{\sss(k)}_1+\cdots+\tilde{X}^{\sss(k)}_n=m)
    \leq \frac{c}{n^{1/2}}.
\end{equation}
\end{lemma}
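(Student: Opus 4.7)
The plan is to prove this bound via a Fourier-inversion argument, which is the standard route to a uniform local-limit bound of the form $O(n^{-1/2})$. Since each $\tilde X^{\sss(k)}_i$ is $\mathbb{N}_0$-valued, let $\phi_k(t)=\expec[\e^{\i t\tilde X^{\sss(k)}}]$ denote its characteristic function. Fourier inversion on $\mathbb{Z}$ yields, for every $m\in\mathbb{Z}$,
\begin{equation*}
\prob\big(\tilde X^{\sss(k)}_1+\cdots+\tilde X^{\sss(k)}_n=m\big)=\frac{1}{2\pi}\int_{-\pi}^{\pi}\e^{-\i tm}\phi_k(t)^n\,\dif t,
\end{equation*}
and taking absolute values shows that $\sup_m\prob(S_n^{\sss(k)}=m)\le(2\pi)^{-1}\int_{-\pi}^{\pi}|\phi_k(t)|^n\,\dif t$. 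The remaining work is therefore to obtain a pointwise upper bound on $|\phi_k(t)|$ that is (i) quadratic near the origin and (ii) strictly less than $1$ on the rest of $[-\pi,\pi]$, both uniformly in $k$.

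For the behaviour near $0$, the finite-variance hypothesis together with a second-order Taylor expansion gives $|\phi_k(t)|^2\le 1-\sigma_k^2 t^2+o(t^2)$ as $t\to 0$. By Section~\ref{sec:FiniteForest_properties_BB}, $w_k\to p_c$ and both $\expec[\tilde X^{\sss(k)}]\to 1$ and $\mathrm{Var}(\tilde X^{\sss(k)})$ converge to finite limits depending only on the first two moments of $X$ (which exist in the $\alpha>2$ regime). In particular there exist $\varepsilon>0$ and $c_0>0$, independent of $k$ for all large $k$, such that $|\phi_k(t)|\le \e^{-c_0 t^2}$ for $|t|\le\varepsilon$. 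For $\varepsilon<|t|\le\pi$, aperiodicity of $\tilde X^{\sss(k)}$ (inherited from the fact that $\prob(\tilde X^{\sss(k)}=\ell)>0$ for every $\ell\ge 0$ whenever $\prob(X=\ell)>0$ for sufficiently many $\ell$, which is guaranteed by the power-law tail) implies $|\phi_k(t)|<1$; continuity of $(k,t)\mapsto|\phi_k(t)|$ on $\{(k,t):\varepsilon\le|t|\le\pi\}$ together with the uniform convergence of $\phi_k$ to the limiting characteristic function therefore yields a constant $\delta>0$, uniform in $k$, with $|\phi_k(t)|\le 1-\delta$ on this set.

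Splitting the Fourier integral at $|t|=\varepsilon$ gives
\begin{equation*}
\int_{-\pi}^{\pi}|\phi_k(t)|^n\,\dif t\ \le\ \int_{|t|\le\varepsilon}\e^{-c_0 n t^2}\,\dif t+2\pi(1-\delta)^n\ \le\ \sqrt{\pi/(c_0 n)}+2\pi(1-\delta)^n,
\end{equation*}
and the second term is exponentially small in $n$, so the whole expression is bounded by $C n^{-1/2}$ for a constant $C$ independent of $k$, which establishes the lemma.

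The only delicate point is the uniformity of the constants $c_0$ and $\delta$ in $k$; this is not automatic from the assumption of finite variance alone, but follows because $\tilde X^{\sss(k)}$ converges in distribution (with convergence of the first two moments) to a proper limit as $k\to\infty$, so that the family $\{\phi_k\}$ is equicontinuous and uniformly bounded away from $1$ on $\{\varepsilon\le |t|\le\pi\}$. For any finite range of $k$ the result is trivial by finite-variance, so combining the two regimes yields the claimed uniform constant $C$.
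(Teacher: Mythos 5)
Your proof is correct, but it takes a different route from the paper's. The paper disposes of this lemma in two lines by citing \cite[Lemma 2.1]{Janson2006}, which gives the stronger pointwise bound $\prob(S_N=N-k)\le CN^{-1/2}\e^{-ck^2/N}$ for sums of i.i.d.\ finite-variance counting variables with mean $1$, and then takes the supremum over $k$. You instead give a self-contained Fourier-inversion argument: the concentration-function bound $\sup_m\prob(S_n=m)\le(2\pi)^{-1}\int_{-\pi}^{\pi}|\phi_k(t)|^n\,\dif t$, a quadratic decay of $|\phi_k|$ near the origin from finite variance, and a uniform gap $|\phi_k(t)|\le 1-\delta$ away from the origin via aperiodicity of $\tilde X^{\sss(k)}$ (which does hold here, since $\prob(\tilde X^{\sss(k)}=\ell)>0$ for all $\ell\ge 0$ by the infinite support of $X$). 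This is in fact exactly the strategy the paper itself deploys in Appendix~\ref{app:sec_local_lims_upper_bound_mass_alpha(1,2)} for the harder stable case $\alpha\in(1,2)$, where no off-the-shelf citation is available; so your argument is the ``general-purpose'' version of which the paper's is a shortcut. What your approach buys is an explicit treatment of the uniformity of the constant in $k$ (via convergence of the first two moments of $\tilde X^{\sss(k)}$ and equicontinuity of the characteristic functions), a point the paper's citation-based proof leaves implicit even though the lemma is later applied with $n\sim k^2$ and $k\to\infty$; what it loses is the Gaussian factor $\e^{-ck^2/N}$, which is not needed for the statement as given.
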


\begin{proof} 
Without loss of generality, we show the result in the case $\expec[\tilde X_1^{\sss(k)}]=1$.
Consider $S_n$ a sum of i.i.d. counting random variables with finite variance and mean 1. Then \cite[Lemma 2.1]{Janson2006} shows for some finite $c,C>0$ 
\begin{equation}
    \prob(S_N= N-k) \leq CN^{-1/2}\e^{-ck^2/N}.
\end{equation}
Taking the supremum over $k$ gives the result for $\alpha>2$. 
\end{proof}

From this result the uniform bound on $n^{3/2}\prob_k(H_k=n)$ can be proven.
This follows almost directly from \eqref{tilde-Tik-def} and the result of Lemma \ref{lem:local_lims_upper_bound_mass}.
Indeed, by taking an expectation over $\hat D_{v_k}$ we find
\begin{equation}
    \prob(H_k=n) = 
    \expec_k\Big[ \frac{ \hat D_{v_k}}{n}
    \prob_k(\tilde X^{(k)}_1+\cdots+\tilde X^{(k)}_n=n-\hat D_{v_k}\mid \hat D_{v_k})\Big]
    \leq 
    n^{-3/2} C\expec[\hat D_{v_k}].
\end{equation}
We find $\expec[D_{v_k}]<C$ for all $k\geq 0$, by Lemma \ref{lem:DegreeBB_expec_alpha>2} and uniformly in $a\in[\varepsilon,1/\varepsilon]$. This concludes the uniform bound.
We then show the following result:

\begin{lemma}[No significant effect of $\eta(w_k)^{\hat D_{v_k}}$]
\label{lem-Goal_no_eta}
    Let $H_k$ be given as in \eqref{eq:def-H_k}. Recall $\prob_k$ from \eqref{eq:def_prob_k}, $n = \lceil xk^{2}\rceil$ and $X^{\sss(k)}\overset{d}{=} \Binom(X,w_k)$.
    Then, for $\alpha>2$,
    \begin{equation}
    \label{eq:Goal_no_eta}
        \prob_k(H_k=n) = \expec_k\bigg[\frac{\hat{D}_{v_k}}{n}
    \prob_k\Big(X_1^{\sss(k)}+\cdots +X_n^{\sss(k)}=n-\hat{D}_{v_k}\mid  \hat{D}_{v_k}\Big)\bigg](1+\o{k}{a}) +\o[k^{-3}]{k}{a},
    \end{equation}
\end{lemma}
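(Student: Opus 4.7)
The plan is to write the discrepancy between \eqref{tilde-Hk-law-binomial-b} and the target \eqref{eq:Goal_no_eta} as
\[
\Delta_k \;=\; \expec_k\bigg[\frac{\hat D_{v_k}}{n}\,\prob_k\big(S_n^{(k)}=n-\hat D_{v_k}\mid\hat D_{v_k}\big)\,\big(\eta(w_k)^{-\hat D_{v_k}}-1\big)\bigg],
\]
with $S_n^{(k)}:=X_1^{(k)}+\cdots+X_n^{(k)}$, and to show $\Delta_k=o(k^{-3})$. Since $w_k=p_c(1+a/k)$, Lemma~\ref{lem:theta_scaling} gives $\theta(w_k)\sim C_\theta p_c a/k$, so the distortion $\eta(w_k)^{-\ell}-1$ is only of order $\ell/k$ while $\ell\ll k$, and first turns genuinely exponential once $\ell\gtrsim k$. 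I therefore split $\Delta_k=\Delta_k^{\leq L}+\Delta_k^{>L}$ at the threshold $L=\delta k$ with $\delta$ small enough that $\eta(w_k)^{-L}\leq 2$.

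On $\{\hat D_{v_k}\leq L\}$, Taylor expansion of $z\mapsto\eta(w_k)^{-z}$ gives $|\eta(w_k)^{-\ell}-1|\leq C\ell/k$, and Lemma~\ref{lem:local_lims_upper_bound_mass} supplies the uniform local limit bound $\prob_k(S_n^{(k)}=n-\ell)\leq C/\sqrt n$. Together these produce
\[
|\Delta_k^{\leq L}|\;\leq\;\frac{C}{n^{3/2}k}\,\expec_k\!\big[\hat D_{v_k}^{\,2}\,\1\{\hat D_{v_k}\leq L\}\big].
\]
From the explicit law \eqref{eq:def_thm_DBk} and the tail computation underlying Lemma~\ref{lem:UI_DBK_inK}, the truncated second moment is $O(k^{3-\alpha})$ for $\alpha\in(2,3)$ and $O(1)$ for $\alpha>3$ (with a logarithm at $\alpha=3$). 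Substituting $n=\lceil xk^2\rceil$ gives $|\Delta_k^{\leq L}|=O(k^{-1-\alpha})$ for $\alpha\in(2,3)$ and $O(k^{-4})$ for $\alpha>3$, both of which are $o(k^{-3})$ uniformly in $x>\vep$ and $a\in(\vep,1/\vep)$.

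On $\{\hat D_{v_k}>L\}$ I replace the local limit bound by the moderate-deviation estimate \eqref{mod-dev-Sn-Bin-pre} and keep the pointwise bound $\eta(w_k)^{-\ell}\leq e^{c\ell/k}$. Since $\expec_k[S_n^{(k)}]=n+ak$ and $\var_k(S_n^{(k)})\sim n\sigma_{\ast}^2$ with $\sigma_{\ast}^2=p_c(1-p_c)+p_c^2\sigma^2$, after setting $u=\ell/k$ the product $\prob_k(S_n^{(k)}=n-\ell)\,\eta(w_k)^{-\ell}$ is dominated by $(C/\sqrt n)\exp\{-(u+a)^2/(2\sigma_{\ast}^2)+cu\}$, whose quadratic term beats the linear one for large $u$. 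Pairing this Gaussian decay against the polynomial mass-function tail $\prob_k(\hat D_{v_k}=\ell)=O(\ell^{-\alpha})$ (which follows from \eqref{eq:def_thm_DBk} together with the binomial thinning defining $\hat D_{v_k}$) yields $|\Delta_k^{>L}|=O(k^{-1-\alpha})=o(k^{-3})$ for $\alpha>2$. The uniform bound $n^{3/2}\prob_k(H_k=n)\leq C$ then follows from the same decomposition: the cleaned right-hand side of \eqref{eq:Goal_no_eta} is $O(n^{-3/2})$ by Lemma~\ref{lem:local_lims_upper_bound_mass} and the boundedness of $\expec_k[\hat D_{v_k}]$ from Lemma~\ref{lem:DegreeBB_expec_alpha>2}, while $\Delta_k=o(n^{-3/2})$ by what precedes.

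The chief obstacle is that any argument bounding $\eta(w_k)^{-\hat D_{v_k}}$ in isolation is doomed: the worst case $\eta(w_k)^{-n}=e^{\Theta(k)}$ diverges. One is forced to exploit the sub-Gaussian decay of $\prob_k(S_n^{(k)}=n-\ell)$ \emph{jointly} with the growth $\eta(w_k)^{-\ell}=e^{c\ell/k}$ before integrating against the law of $\hat D_{v_k}$; the two-scale split at $L=\delta k$, coupled with the moderate-deviation inequality \eqref{mod-dev-Sn-Bin-pre}, is precisely the device that effects this cancellation and constitutes the most delicate step.
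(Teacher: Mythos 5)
Your argument for the main identity is correct but routed differently from the paper. You work with the exact error term $\Delta_k$ containing $\eta(w_k)^{-\hat D_{v_k}}-1$, split at $\delta k$, and on the small-degree set exploit the Taylor bound $|\eta(w_k)^{-\ell}-1|\leq C\ell/k$ against a truncated second moment of $\hat D_{v_k}$; the paper instead splits at $\varepsilon k$, sandwiches the small-degree contribution between $1$ and $\e^{\varepsilon C_\theta a}$ times the target and sends $\varepsilon\to0$ at the end. Your small-degree estimate is quantitatively sharper. On the large-degree set you keep the factor $\eta(w_k)^{-\ell}=\e^{\Theta(\ell/k)}$ and beat it with the Gaussian/Chernoff decay of $\prob_k(S_n^{\sss(k)}=n-\ell)$ in $\ell$; the paper avoids this cancellation entirely by reverting to the representation \eqref{tilde-Tik-def} in terms of the \emph{conditioned} variables $\tilde X_i^{\sss(k)}$ (where the $\eta^{-\hat D_{v_k}}$ factor is already absorbed), applying the uniform bound of Lemma~\ref{lem:local_lims_upper_bound_mass}, and invoking $\expec_k[D_{v_k}\1\{D_{v_k}>\varepsilon k\}]\sim k^{-(\alpha-2)}$ from Lemma~\ref{lem:UI_DBK_inK}. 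Both give the error order $O(k^{-1-\alpha})=o(k^{-3})$. Your route does require one step you gloss over: \eqref{mod-dev-Sn-Bin-pre} is stated for a binomial with deterministic $m$, whereas $S_n^{\sss(k)}$ is a compound sum, so you must condition on $Q_n$ and control its lower tail (which is light since $X\geq1$) before the sub-Gaussian bound applies.

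There is a genuine gap in your derivation of the uniform bound on $n^{3/2}\prob_k(H_k=n)$. Your decomposition was calibrated to $n=\lceil xk^2\rceil$, but the bound must hold for all $n\geq1$ (it is what feeds \eqref{eq:toshow_uniformbound_alpha>2}). For $n\gg k^2$ the large-degree cancellation fails at intermediate $\ell$: taking $\ell\asymp n/k$ gives $\eta(w_k)^{-\ell}=\e^{\Theta(n/k^2)}$ while the Gaussian factor is only $\e^{-\Theta(\ell^2/n)}=\e^{-\Theta(n/k^2)}$ with a smaller constant in general, and the exponential cutoff $(1-\hat\theta(w_k))^{\ell}=\e^{-p_cC_\theta a\ell/k}$ in the law \eqref{eq:def_thm_DBk} does not rescue you since $p_c<1$ means $\eta(w_k)^{-\ell}(1-\hat\theta(w_k))^{\ell}$ still grows. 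The repair is immediate and is exactly what the paper does: bound $\prob_k(H_k=n)$ directly from \eqref{tilde-Tik-def}, i.e. $\prob_k(H_k=n)=\expec_k\big[\tfrac{\hat D_{v_k}}{n}\prob(\tilde X_1^{\sss(k)}+\cdots+\tilde X_n^{\sss(k)}=n-\hat D_{v_k}\mid\hat D_{v_k})\big]\leq Cn^{-3/2}\expec_k[\hat D_{v_k}]$, with no $\eta$ factor present, valid for every $n$.
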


\begin{proof}
We consider the mass function as derived in \eqref{tilde-Hk-law-binomial-b}
and
aim to prove that the contribution in the expectation where $\hat D_{v_k}$ is large, is negligible and if $\hat D_{v_k} $ is relatively small, the extra conditioning on extinction has no significant effect. Let $\varepsilon$ be given, then we split
\begin{equation} 
\label{eq:Goal_no_eta_proof1}
    \prob_k(H_k=n) = \prob_k(H_k=n, \hat{D}_{v_k}\leq \varepsilon k)+\prob_k(H_k=n, \hat{D}_{v_k}> \varepsilon k).
\end{equation}
We show that the first term is dominating, whereas the second term contributes only $\o[k^{-3}]{k}{a}$. 

Starting with the first term in \eqref{eq:Goal_no_eta_proof1}, we condition on $\hat{D}_{v_k}$ so that we can use the generalised hitting time theorem  \cite[Theorem 3.14]{VanderHofstad2016} to write 
\begin{equation}
\label{eq:Goal_no_eta_proof2}
\begin{aligned}
    \prob_k&(H_k=n, \hat D_{v_k}\leq \varepsilon k)\\&=
     \expec_k\bigg[\frac{\hat{D}_{v_k}}{n\eta(W_k)^{\hat{D}_{v_k}}}
    \prob_k\Big(X_1^{\sss(k)}+\cdots +X_n^{\sss(k)}=n-\hat{D}_{v_k}\mid  \hat{D}_{v_k}\Big)\1\{\hat D_{v_k}\leq \varepsilon k\}\bigg]\\&\leq 
     \expec_k\bigg[ \e^{-\varepsilon k \log(1-\theta(w_k))} \frac{\hat{D}_{v_k}}{n}
    \prob_k\Big(X_1^{\sss(k)}+\cdots +X_n^{\sss(k)}=n-\hat{D}_{v_k}\mid \hat{D}_{v_k}\Big)\bigg]\\&
    = \e^{-\varepsilon C_\theta a}\expec_k\bigg[ \frac{\hat{D}_{v_k}}{n}
    \prob_k\Big(X_1^{\sss(k)}+\cdots +X_n^{\sss(k)}=n-\hat{D}_{v_k}\mid  \hat{D}_{v_k}\Big)\bigg],
\end{aligned}
\end{equation}
as $\theta(w_k)= C_\theta p_c a/k(1+\o{k}{a})$, taking a limit for $\varepsilon\to 0$ provides an upper bound uniformly in $a\in(\varepsilon,1/\varepsilon)$ and $x>\varepsilon$.
Since $\eta(w_k)^{-\hat{D}_{v_k}}>1$,
the lower bound on \eqref{eq:Goal_no_eta_proof2} is of the same form.

Finally, we show that the second term on the right-hand side of \eqref{eq:Goal_no_eta_proof1} vanishes in the limit. By conditioning on $\hat{D}_{v_k}$,
\begin{equation}
\label{eq:eta_condition_>k_eq1}
    \begin{aligned}
    \prob_k(H_k=n, \ \hat{D}_{v_k} > \varepsilon k) &= 
    \sum_{i\geq \varepsilon k}
    \prob_k(H_k=n\mid \hat D_{v_k}=i)\prob_k(\hat D_{v_k}=i).
    \end{aligned}
\end{equation}
Using the generalised hitting time theorem as in \eqref{tilde-Tik-def}, we obtain that
\begin{equation}
\label{eq:eta_condition_>k_eq2}
\begin{aligned}
     \prob_k(H_k=n,& \ \hat{D}_{v_k} > \varepsilon k)=\sum_{i\geq \varepsilon k} \prob_k( \tilde{T}^{\sss(k)}_1 + \cdots +  \tilde{T}^{\sss(k)}_i = n)
    \prob_k(\hat D_{v_k}=i)
   \\ &=\sum_{i\geq \varepsilon k} \frac{i}{n} 
    \prob(   \tilde{X}_1^{\sss(k)}+\cdots+\tilde{X}_n^{\sss(k)}=n-i)
     \prob_k(\hat D_{v_k}=i)\\&
    =\expec_k\bigg[ \frac{\hat D_{v_k}}{n} 
    \prob\Big(\tilde{X}_1^{\sss(k)}+\cdots+\tilde{X}_n^{\sss(k)}=n-\hat D_{v_k}\Big) \1\{\hat D_{v_k}>\varepsilon k\}
    \bigg] .
\end{aligned}
\end{equation}
We upper bound the mass function in  \eqref{eq:eta_condition_>k_eq2} by the result of Lemma~\ref{lem:local_lims_upper_bound_mass}. We note that, conditionally on $W_k=w_k$, the random variables $(\tilde{X}_i^{\sss(k)})_{i\geq 1}$ are i.i.d. with finite moments and thus its conditions are met. We can therefore bound \eqref{eq:eta_condition_>k_eq2} by
\begin{equation}
\label{eq:Goal_no_eta_proof3}
\begin{aligned}
   &\frac{1}{n} \expec_k\bigg[\hat D_{v_k}\1\{\hat D_{v_k}>\varepsilon k\}
    \ \sup_m\Big\{ \prob(\tilde{X}_1^{\sss(k)}+\cdots+\tilde{X}_n^{\sss(k)}=m)\Big\} \bigg]
   \\&\quad \leq \frac{c}{n^{3/2}} \expec_k[\hat D_{v_k}\1\{\hat D_{v_k}>\varepsilon k\}].
\end{aligned} 
\end{equation}
 We recall that, by Lemma~\ref{lem:UI_DBK_inK}, 
\begin{equation}
\label{eq:Goal_no_eta_proof4}
    \expec[ D_{v_k} \1\{D_{v_k} > \varepsilon k\}]
   =C(a)k^{-(\alpha-2)}(1+\o{k}{a}),
\end{equation}
where $C(a)$ is uniformly bounded for $a\in[\varepsilon,1/\varepsilon]$.
This also extends to $\hat D_{v_k}$, as $w_k\in(p_c,1)$ and $p_c>0$.
We conclude the proof of Lemma~\ref{lem-Goal_no_eta} by the identity in \eqref{eq:Goal_no_eta_proof1}, in combination with \eqref{eq:Goal_no_eta_proof2} and
\eqref{eq:Goal_no_eta_proof4}, so that there exists some uniformly bounded $C^*(a)$ for $a\in[\varepsilon,1/\varepsilon]$ such that 
\begin{equation}
\begin{aligned}
    \prob_k(H_k=n)& =
    \e^{-\varepsilon C_\theta}\expec\bigg[ \frac{\hat{D}_{v_k}}{n}
    \prob_k\Big(X_1^{\sss(k)}+\cdots +X_n^{\sss(k)}=n-\hat{D}_{v_k}\mid  \hat{D}_{v_k}\Big)\bigg](1+\o{k}{a,x})
    \\ &\quad+
    C^*(a)  n^{-3/2} k^{-(\alpha-2)}(1+\o{k}{a}).
    \end{aligned}
\end{equation}
Taking the limit for $\varepsilon \to 0$ and recalling that $n = \lceil xk^{2}\rceil$, we find 
\begin{equation}
\begin{aligned}
     \prob_k(H_k=n) &=
\expec\bigg[ \frac{\hat{D}_{v_k}}{n}
    \prob\Big(X_1^{\sss(k)}+\cdots +X_n^{\sss(k)}=n-\hat{D}_{v_k}\mid W_k=w_k, \hat{D}_{v_k}\Big)\bigg](1+\o{k}{a,x})\\& \qquad
     +C^*(a)  n^{-3/2} k^{-(\alpha-2)}(1+\o{k}{a}).
\end{aligned}    
\end{equation}
Note that the error term satisfies $C^*(a) x^{-3/2}k^{-\alpha-1}(1+\o{k}{a})  = \o[k^{-3}]{k}{a,x}$, as $\alpha>2$. 
\end{proof}

\paragraphi{Part 2 - Local limit results}
\label{par:5.2-part2}
By Part 1, we can complete the analysis while ignoring the $\eta(w_k)^{-\hat D_{v_k}}$ term.
With $H_k$ denoting the size of the  forest attached to the $k$-th backbone vertex, Lemma~\ref{lem-Goal_no_eta} implies that
\begin{equation}
\begin{aligned}
    \label{size-trees-off-backbone-pre}
    \prob_k(H_k=n)
   & =\expec\bigg[\frac{\hat{D}_{v_k}}{n}
    \prob\Big(X_1^{\sss(k)}+\cdots +X_n^{\sss(k)}=n-\hat{D}_{v_k}\mid W_k=w_k, \hat{D}_{v_k}\Big)\bigg]\\&
    \quad \times (1+\o{k}{a,x})+\o[k^{-3}]{k}{a,x}.
\end{aligned}
\end{equation}
In this part, we 
first rewrite this expression in a more convenient form:

\begin{lemma}[Law of trees hanging off backbone]
\label{lem-trees-off-backbone}
For $w_k>p_c$ and $n\geq 1$, let $(X_i)_{i\geq 1}$ be i.i.d. copies of $X$, and
\begin{equation}
    \label{Qn-def}
    Q_n=\sum_{i=1}^n X_i.
\end{equation}
Then,
\begin{equation}
    \begin{aligned}
    \label{size-trees-off-backbone}
    \prob_k(H_k=n)
    =\frac{w_k \expec_k[D_{v_k}]}{n}&\expec_k\Big[
    \prob_k\Big(\Binom\big(Q_n+D_{v_k}^\star-1,w_k\big)=n-1\Big)\Big]
    \\&\times(1+\o{k}{a,x})
    +\o[k^{-3}]{k}{a,x},
    \end{aligned}
\end{equation}
\end{lemma}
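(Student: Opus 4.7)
My plan is to build on the expression from Lemma~\ref{lem-Goal_no_eta},
\begin{equation*}
\prob_k(H_k=n) = \frac{1}{n}\,\expec_k\!\Big[\hat D_{v_k}\,\prob_k\!\big(X_1^{\sss(k)}+\cdots+X_n^{\sss(k)}=n-\hat D_{v_k}\mid \hat D_{v_k}\big)\Big] + o(k^{-3}),
\end{equation*}
and to transform the inner expectation via two rewrites so that (i) the random variable $\hat D_{v_k}$ gets absorbed into a single binomial containing $D_{v_k}^\star$, and (ii) the prefactor becomes $w_k\,\expec_k[D_{v_k}]$.

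For the first rewrite I would condition on $(X_1,\ldots,X_n,D_{v_k})$. Then $X_i^{\sss(k)}\sim\Binom(X_i,w_k)$ and $\hat D_{v_k}\sim\Binom(D_{v_k}-1,w_k)$ are independent sums of Bernoulli$(w_k)$ variables, and the event $\{\sum_i X_i^{\sss(k)}=n-\hat D_{v_k}\}$ is precisely the event that the total number of successes among the $Q_n+D_{v_k}-1$ Bernoullis equals $n$. Singling out one Bernoulli in $\hat D_{v_k}$ via the identity $j\binom{D_{v_k}-1}{j}=(D_{v_k}-1)\binom{D_{v_k}-2}{j-1}$ and applying Vandermonde's convolution to merge the rest with the $X_i^{\sss(k)}$ Bernoullis yields
\begin{equation*}
\expec_k\!\Big[\hat D_{v_k}\,\prob_k(\cdot\mid \cdot)\Big] = w_k\,\expec_k\!\Big[(D_{v_k}-1)\,\prob_k\!\big(\Binom(Q_n+D_{v_k}-2,w_k)=n-1\mid Q_n,D_{v_k}\big)\Big].
\end{equation*}

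For the second rewrite I would apply the size-biasing identity $\expec_k[D_{v_k}\,g(D_{v_k})]=\expec_k[D_{v_k}]\,\expec_k[g(D_{v_k}^\star)]$ to the factor $D_{v_k}$ inside $(D_{v_k}-1)$ and absorb (a) the residual $-\expec_k[g(D_{v_k})]$ term coming from $(D_{v_k}-1)=D_{v_k}-1$ and (b) the one-trial shift between $\Binom(Q_n+D_{v_k}^\star-2,w_k)$ and $\Binom(Q_n+D_{v_k}^\star-1,w_k)$ at the mass point $n-1$, into the additive error. Each is estimated via the local CLT \eqref{Bin-LCLT}: the trial-shift produces a correction of the form $w_k[\prob(\Binom(m,w_k)=n-2)-\prob(\Binom(m,w_k)=n-1)]$ whose expectation over $Q_n$ vanishes in relative order thanks to the Gaussian-like profile of $Q_n$, while the residual term is bounded directly via Lemma~\ref{lem:local_lims_upper_bound_mass}. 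Together with the tightness of $D_{v_k}^\star$ supplied by Lemma~\ref{lem:limiting_distr_Dbk_alpha>2} and the finiteness of $\expec_k[D_{v_k}]$ from Lemma~\ref{lem:DegreeBB_expec_alpha>2}, these errors combine to $o(k^{-3})$ after the prefactor $w_k/n$ with $n=\Theta(k^2)$.

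The main obstacle is the error control in the second rewrite: each of the two corrections is naively of the same order $\Theta(k^{-3})$ as the main term, and only their cancellation at the level of averaging over $Q_n$ and $D_{v_k}^\star$ yields the claimed $o(k^{-3})$. I expect this to require splitting the $Q_n$-average into the Gaussian window $|Q_n-n\expec[X]|\leq (n\expec[X])^{7/12}$, where the expansion in \eqref{exp-simpl-Bin} identifies the cancellation explicitly, and its complement, where the moderate-deviation bound \eqref{mod-dev-Sn-Bin} makes the contribution negligible.
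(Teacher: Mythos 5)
Your first rewrite is fine, but your second one does not close, and the obstruction is not one of bookkeeping. Because you keep the $-1$ from \eqref{eq:def_relevant_degree} and work with $\hat D_{v_k}\sim\Binom(D_{v_k}-1,w_k)$, your intermediate identity reads
\begin{equation*}
\prob_k(H_k=n)=\frac{w_k}{n}\,\expec_k\Big[(D_{v_k}-1)\,\prob_k\big(\Binom(Q_n+D_{v_k}-2,w_k)=n-1\big)\Big]+o(k^{-3}),
\end{equation*}
and splitting $(D_{v_k}-1)=D_{v_k}-1$ leaves the residual
$-\tfrac{w_k}{n}\expec_k\big[\prob_k\big(\Binom(Q_n+D_{v_k}-2,w_k)=n-1\big)\big]$.
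By the local limit theorem this residual is asymptotic to a strictly negative constant times $n^{-3/2}=\Theta(k^{-3})$; it is a single term of definite sign, so averaging over $Q_n$ or $D_{v_k}^\star$ cannot make it vanish, and the one-trial-shift correction you propose to pair it with is of strictly smaller order $O(k^{-4})$ (the Gaussian-difference argument you sketch does work for that piece), so no cancellation is available. Your route therefore yields the stated main term \emph{minus} an explicit $\Theta(k^{-3})$ quantity, not the lemma as written up to $o(k^{-3})$.

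The resolution is the convention announced right after \eqref{eq:def-H_k}: throughout Section~\ref{sec:TP_scale} the $(-1)$ in $\hat D_{v_k}$ is dropped, i.e.\ $\hat D_{v_k}\sim\Binom(D_{v_k},w_k)$, and the lemma is stated under that convention. The paper's proof then consists of two \emph{exact} identities requiring no error control at this step: first $\expec[S_mf(S_m)]=mp\,\expec[f(S_{m-1}+1)]$ for $S_m\sim\Binom(m,p)$, applied conditionally on $D_{v_k}$ (this produces the factor $D_{v_k}w_k$ and the event $\Binom(Q_n+D_{v_k}-1,w_k)=n-1$ in one stroke, with no leftover $-\expec_k[g(D_{v_k})]$ term and no trial shift), and then the size-biasing identity $\expec[Zf(Z)]=\expec[Z]\,\expec[f(Z^\star)]$ applied to $D_{v_k}$. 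If you insist on the exact definition of $\hat D_{v_k}$, your calculation is in fact evidence that the $(-1)$ contributes at order $\Theta(k^{-3})$; but as a proof of the lemma as stated, the claimed cancellation does not exist.
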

\begin{proof}
 We write $S_m\overset{d}{=}\Binom(m,p)$, and recall that $S_m=\sum_{i=1}^m I_i,$ where $(I_i)_{i=1}^m$ are i.i.d.\ Bernoulli random variables with success probability $p$. Then, for an arbitrary function $f$,
    \begin{equation}
        \begin{aligned}
            \label{Binomial-computation}
    \expec[S_m f(S_m)]
    &=\sum_{i=1}^m \expec[I_if(S_m)]=m \expec[I_mf(S_m)]\\&=mp \expec[f(S_m)\mid I_m=1]=mp \expec[f(S_{m-1}+1)].
        \end{aligned}
    \end{equation}
Applying this in \eqref{size-trees-off-backbone-pre} to $\hat{D}_{v_k}\overset{d}{=} \Binom(D_{v_k},w_k)$ gives
\begin{equation}
\begin{aligned}
    \label{size-trees-off-backbone-b}
    &\prob_k(H_k=n)
    \\&=\expec_k\bigg[\frac{D_{v_k}w_k}{n}
    \prob_k\Big(X_1^{\sss(k)}+\cdots +X_n^{\sss(k)}=n-\Binom(D_{v_k}-1,w_k)-1\Big) \bigg]\\&\quad\times (1+\o{k}{a,x})
    +\o[k^{-3}]{k}{a,x}\\
    &\quad=\expec_k\bigg[\frac{D_{v_k}w_k}{n}
    \prob_k\Big(\Binom(Q_n+D_{v_k}-1,w_k)=n-1\Big)\bigg](1+\o{k}{a,x})\\&\quad+\o[k^{-3}]{k}{a,x}.
    \end{aligned}
\end{equation}
Further, for any non-negative random variable $Z$ with $\expec[Z]>0,$
\begin{equation}
    \label{eq:size_biased_result}
    \expec[Zf(Z)]=\expec[Z] \expec[f(Z^\star)].
\end{equation}
Applying this to $D_{v_k}$, conditionally on $W_k=w_k$, leads to the claim.
\end{proof}
In the remainder of this section, we investigate the right-hand side of \eqref{size-trees-off-backbone} in more detail. We rely crucially on local limit theorems (LCLTs), as outlined in detail in Section~\ref{sec-size-trees-off-backbone-overview}. This implies the following corollary:
\begin{corollary}[Law of trees hanging off backbone]
\label{cor-trees-off-backbone}
For $w_k>p_c$ and $n\geq 1$,
\begin{equation}
\begin{aligned}
    \label{size-trees-off-backbone-simplified}
    &\prob_k(H_k=n)
    \\&=\frac{w_k \expec_k[D_{v_k}](1+\o{k}{a,x})}{\sqrt{2\pi n^3 (1-w_k)}}\expec_k\Big[{\mathrm{e}}^{-(w_k(Q_n+D_{v_k}^\star-1)-n)^2/[2n(1-w_k)]}\Big]
    +\o[k^{-3}]{k}{a,x}.
    \end{aligned}
\end{equation}
\end{corollary}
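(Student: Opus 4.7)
The proof simply feeds the binomial local limit theorem of Section~\ref{sec-size-trees-off-backbone-overview} into the inner probability appearing in \eqref{size-trees-off-backbone}. Writing $m := Q_n + D_{v_k}^\star - 1$, the plan is to replace $\prob_k(\Binom(m, w_k) = n-1)$ under the expectation by its Gaussian approximation \eqref{Bin-LCLT}, then pull the deterministic prefactor $1/\sqrt{2\pi n(1-w_k)}$ outside and combine it with the $1/n$ factor to produce $1/\sqrt{2\pi n^3(1-w_k)}$, which is precisely the form in \eqref{size-trees-off-backbone-simplified}.

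First, I would partition the expectation according to whether the event $\mathcal{A} := \{|n - m w_k| \leq n^{7/12}\}$ holds. On $\mathcal{A}$ one has $m w_k = n(1+o(1))$, so $(m w_k)^{2/3} \gg n^{7/12}$ and both conditions behind \eqref{Bin-LCLT} are satisfied; the LCLT then gives
\[
\prob_k(\Binom(m, w_k) = n-1) = \frac{1+o(1)}{\sqrt{2\pi n(1-w_k)}}\, \e^{-(n - m w_k)^2/[2n(1-w_k)]}
\]
uniformly in $m \in \mathcal{A}$, with the replacement of $n-1$ by $n$ in the exponent contributing only an $O(n^{-1/2})$ shift that is absorbed by the $(1+o(1))$.

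On $\mathcal{A}^c$, I would bound the binomial mass by the moderate-deviation estimate \eqref{mod-dev-Sn-Bin} with $h = n^{7/12}$, giving a pointwise bound of order $\e^{-n^{1/6}/[2(1-w_k)]}$ on the summand; multiplied by $w_k \expec_k[D_{v_k}]/n = O(k^{-2})$, with $\expec_k[D_{v_k}]$ uniformly bounded by Lemma~\ref{lem:DegreeBB_expec_alpha>2}, the super-polynomial decay yields $o(k^{-3})$. Conversely, extending the Gaussian-type integrand $\e^{-(n - m w_k)^2/[2n(1-w_k)]}$ from $\mathcal{A}$ back to the full sample space adds at most $\prob(\mathcal{A}^c)$ to the expectation, which is exponentially small since $Q_n$ has $O(\sqrt{n})$ fluctuations for $\alpha>2$ and $D_{v_k}^\star$ is tight by Lemma~\ref{lem:limiting_distr_Dbk_alpha>2}; this error is therefore also absorbed into $o(k^{-3})$.

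Uniformity in $a \in (\vep, 1/\vep)$ follows because $w_k = p_c(1 + a/k)$ keeps $1 - w_k$ bounded away from $0$ and $w_k \expec_k[D_{v_k}]$ bounded, making all constants in \eqref{Bin-LCLT-pre}, \eqref{exp-simpl-Bin} and \eqref{mod-dev-Sn-Bin} uniform. The main obstacle, as the authors already flag, is the randomness of $m$: the LCLT is derived pointwise in $m$, and one must check that its error term transfers inside the expectation uniformly. The $\mathcal{A}/\mathcal{A}^c$ split above does exactly this, at the price of verifying the concentration properties of $Q_n$ and $D_{v_k}^\star$ that implicitly rely on $\mathrm{Var}(X)<\infty$.
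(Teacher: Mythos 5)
Your proposal is correct and follows essentially the same route as the paper: the corollary is obtained by inserting the binomial LCLT \eqref{Bin-LCLT} into \eqref{size-trees-off-backbone} on the event $\{|n-mw_k|\le n^{7/12}\}$ and discarding the complement via the moderate-deviation bound \eqref{mod-dev-Sn-Bin}, which is exactly the split you describe. One small inaccuracy: for $\alpha$ only slightly above $2$ the probability of $\mathcal{A}^c$ is merely polynomially (not exponentially) small by the single-big-jump principle for $Q_n$, but this is harmless since the Gaussian integrand is deterministically at most $\e^{-n^{1/6}/[2(1-w_k)]}$ on $\mathcal{A}^c$, so the extension error remains $o(k^{-3})$ after multiplication by the $O(k^{-3})$ prefactor.
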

\paragraphi{Part 3 - A central limit result}
\label{par:5.2-part3}
Finally, we focus on the expectation in   \eqref{size-trees-off-backbone-simplified} and simplify this result based on the central limit theorem.

\begin{lemma}[Expression for $h_\alpha(x,a)$ for $\alpha>2$]
\label{lem:step_3_alpha>2} 
Fix $w_k=p_c(1+a/k)$ and $n\geq 1$. Then
\begin{equation}
\begin{aligned}
     \prob_k(H_k=n)&=
     \frac{1}{k^3}
     \frac{p_c \expec[X^\star]}{\sqrt{2 \pi (1-p_c)x^3}}\expec\big[{\mathrm{e}}^{-(\sigma p_c Z-a\sqrt{x})^2/[2(1-p_c)]}\big]
     \\&\quad \times (1+\o{k}{a,x}) +\o[k^{-3}]{k}{a,x}.
\end{aligned}
\end{equation}
\end{lemma}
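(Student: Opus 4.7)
The plan is to start from Corollary~\ref{cor-trees-off-backbone}, substitute $w_k=p_c(1+a/k)$ and $n=\lceil k^2 x\rceil$, extract the $k^{-3}$ scaling from the deterministic prefactor, and identify the limit of the expectation via a central limit argument on $Q_n$.

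For the prefactor $w_k \expec_k[D_{v_k}]/\sqrt{2\pi n^3(1-w_k)}$, I would use $w_k\to p_c$, $\expec_k[D_{v_k}]\to \expec[X^\star]$ (Lemma~\ref{lem:DegreeBB_expec_alpha>2}), and $\sqrt{2\pi n^3(1-w_k)}=k^3\sqrt{2\pi x^3(1-p_c)}(1+o(1))$ to get the target prefactor $k^{-3} p_c \expec[X^\star]/\sqrt{2\pi(1-p_c)x^3}$ up to $(1+o(1))$.

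For the exponent, write
\begin{equation*}
Y_n := \frac{w_k(Q_n+D_{v_k}^\star-1)-n}{\sqrt n}
= \frac{w_k(Q_n-n\expec[X])}{\sqrt n} + \frac{n(w_k\expec[X]-1)}{\sqrt n} + \frac{w_k(D_{v_k}^\star-1)}{\sqrt n}.
\end{equation*}
Since $\expec[X]=1/p_c$, the middle summand equals $(a/k)\sqrt n = a\sqrt{x}+o(1)$. The third summand is $O_\prob(1/\sqrt n)=o_\prob(1)$ because $D_{v_k}^\star$ is tight (it converges in distribution to $X^{\star\star}$ by Lemma~\ref{lem:limiting_distr_Dbk_alpha>2}). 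The first summand, by the classical CLT applied to $Q_n$ (which is a sum of $n$ i.i.d.\ copies of $X$, having finite variance $\sigma^2$ because $\alpha>2$), converges in distribution to $p_c\sigma Z$ with $Z\sim N(0,1)$, independent of $D_{v_k}^\star$. Hence by Slutsky,
\begin{equation*}
Y_n \xrightarrow{d} p_c\sigma Z + a\sqrt{x}.
\end{equation*}
Because $x\mapsto \e^{-x^2/[2(1-w_k)]}$ is continuous and bounded by $1$, bounded convergence yields
\begin{equation*}
\expec_k\!\left[\e^{-Y_n^2/[2(1-w_k)]}\right] \;\longrightarrow\; \expec\!\left[\e^{-(p_c\sigma Z+a\sqrt{x})^2/[2(1-p_c)]}\right] = \expec\!\left[\e^{-(\sigma p_c Z-a\sqrt{x})^2/[2(1-p_c)]}\right],
\end{equation*}
where the last equality uses the symmetry $Z\stackrel{d}{=}-Z$. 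Multiplying this by the prefactor derived above and adding the $o(k^{-3})$ error from Corollary~\ref{cor-trees-off-backbone} gives the claimed expression.

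The main obstacle is verifying that the CLT plus Slutsky decomposition of $Y_n$ indeed holds when one conditions on $W_k=w_k$: one must ensure that, under $\prob_k$, the sequences $(X_i)_{i\geq 1}$ and $D_{v_k}^\star$ can be treated as independent (which is justified by the algorithmic construction of Theorem~\ref{thm:constrT}, where the attached trees and the backbone degree are generated from independent randomness given $W_k$), and that the $D_{v_k}^\star$ tightness suffices to absorb the last term into $o_\prob(1)$. Everything else is routine bookkeeping of $(1+o(1))$ factors, with the $O(1)$ additive discrepancy between $n$ and $k^2 x$ coming from the ceiling having no effect on the limit.
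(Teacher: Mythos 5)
Your proof is correct and follows essentially the same route as the paper's: start from Corollary~\ref{cor-trees-off-backbone}, isolate the deterministic prefactor, and identify the limit of the exponent via the CLT for $Q_n$, tightness of $D_{v_k}^\star$ (Lemma~\ref{lem:limiting_distr_Dbk_alpha>2}), and a bounded-convergence/Portmanteau argument. The only cosmetic difference is that you decompose the linear quantity $Y_n$ and apply Slutsky before squaring, whereas the paper manipulates the squared ratio directly; both rely on the same conditional independence of $Q_n$ and $D_{v_k}^\star$ given $W_k$ that you correctly flag.
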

\begin{proof}
Our starting point is \eqref{size-trees-off-backbone-simplified}.
We define
\begin{equation}
    f(z)={\mathrm{e}}^{-z/2},
\end{equation}
so that
\begin{equation}
    \begin{aligned}
     \label{Bin-LCLT-cons-a2}
    &\prob_k(H_k=n)
    \\&\quad=\frac{w_k \expec_k[D_{v_k}]}{\sqrt{2 \pi (1-p_c)n^3}} \expec_k\bigg[f\Big(\frac{(n-(Q_n+D_{v_k}^\star)w_k)^2}{n(1-w_k)}\Big)\bigg](1+\o{k}{a,x}) +\o[k^{-3}]{k}{a,x}.
    \end{aligned}
\end{equation}

We conclude from Lemma~\ref{lem:limiting_distr_Dbk_alpha>2} that $D_{v_k}^\star\xrightarrow{d}X^{\star\star}$ is tight, so that $D_{v_k}^\star/\sqrt{n}\convp 0$. Thus, by conditional independence of $Q_n$ and $D_{v_k}^\star$ on $W_k$ and by the central limit theorem, 
\begin{equation}
    \Big( 
    \frac{Q_n-n/p_c}{\sqrt{n}}
    ,
    \frac{D_{v_k}^\star}{\sqrt{n}}
    \Big)\xrightarrow{d}
    (\sigma Z,0) . 
\end{equation}
Then 
\begin{equation}
\label{eq:54_uniformly_in_a}
\begin{aligned}
\frac{(n-(Q_n+D_{v_k}^\star)w_k)^2}{n(1-w_k)}
&=\frac{  \Big(\frac{n-p_cQ_n}{\sqrt{n}}
-\frac{p_caQ_n\sqrt{x}}{n}+\frac{w_k D^\star_{v_k}}{\sqrt{n}}
\Big)^2
}{1-w_k}
\xrightarrow{d} \frac{(\sigma p_c Z+a\sqrt{x})^2}{1-p_c},
\end{aligned}
\end{equation}
where $Z$ a standard normally distributed random variable and $\sigma^2=\var[X]$. We note that this convergence is uniform in $a$, for which we refer to \aprefb{4}{app:sec_uniform_conv} for more details.
Thus, by the Portmanteau theorem,
\begin{equation}
\begin{aligned}
    \label{Bin-LCLT-cons-a3}
    \prob_k(H_k=n)
    &=\frac{p_c \expec_k[D_{v_k}]}{\sqrt{2 \pi (1-p_c)n^3}}\expec\big[{\mathrm{e}}^{-(\sigma p_c Z-a\sqrt{x})^2/[2(1-p_c)]}\big]\\&\quad\times(1+\o{k}{a,x})+\o[k^{-3}]{k}{a,x}.
\end{aligned}    
\end{equation}
Whilst intuitive, this is not directly obvious, and we refer to \aprefb{4}{app:sec_uniform_conv} for a detailed argument. 
Recalling $n=\lceil x k^2\rceil,$ this leads to
\begin{equation}
    \label{Bin-LCLT-cons-b1}
    \prob_k(H_k=\lceil x k^2\rceil)
    = \frac{h(x,a)+\o{k}{a,x}}{k^3},
\end{equation}
where
\begin{equation}
    \label{eq:h(x,a)_for_alpha>2}
    h(x,a)=\frac{p_c \expec[X^\star]}{\sqrt{2 \pi (1-p_c)x^3}}\expec\big[{\mathrm{e}}^{-(\sigma p_c Z-a\sqrt{x})^2/[2(1-p_c)]}\big],
\end{equation}
 as we recall that $\expec_k[D_{v_k}] \to \expec[X^\star]$ by Lemma~\ref{lem:DegreeBB_expec_alpha>2}.
\end{proof}
Based on the explicit expression of the Laplace transform of a squared normal random variable, 
it is straightforward to show that for some $\mu$ and $s>0$, one finds 
\begin{equation}
    \expec[\e^{ -u(\mu+s Z)^2}]=
    (1+2us^2)^{-1/2} \e^{-\mu^2\frac{u}{1+2us^2}}.
\end{equation}
Applying this to \eqref{eq:h(x,a)_for_alpha>2} we can simplify it to
\begin{equation}
\begin{aligned}
 \expec\big[& {\mathrm{e}}^{-
 \frac{1}{2x(1-p_c)}(\sigma\sqrt{x}p_c Z-ax)^2}\big]
   =
   \Big(1+\frac{2\sigma^2 p_c^2 x}{2x(1-p_c)}\Big)^{-1/2}
    \exp\bigg\{ -\frac{(ax)^2\frac{1}{2x(1-p_c)}}{1+\frac{2(\sigma\sqrt{x}p_c)^2}{2x(1-p_c)}}\bigg\}
  \\&  =
 \Big(1+\frac{\sigma^2p_c^2}{(1-p_c)}\Big)^{-1/2}
     \exp\bigg\{-\frac{(ax)^2}{2x(1-p_c)+2(\sigma\sqrt{x}p_c)^2}\bigg\},
     \end{aligned}
\end{equation}
so that
\begin{equation}
    h_\alpha(x,a) = 
    \frac{p_c\expec[X^\star]}{x\sqrt{2\pi x( 1-p_c+ p_c^2\sigma^2)}}
     \exp\Big\{-\frac{(ax)^2}{2x (1-p_c + p_c^2\sigma^2)}\Big\}.
\end{equation}
\end{proof}

\subsection{Analysis of the forest attached to the backbone for \mtitle{$\alpha\in(1,2)$}}
\label{sec-size-trees-off-backbone-alpha(1,2)}
In this section we provide an extensive proof of \eqref{aim-Hk} for $\alpha\in(1,2)$.
We start by formalising the result in Proposition~\ref{prop-size-trees-backbone-(1,2)}.
In this section we consider again $w_k=p_c(1+a/k)$, we recall the notation of $\o{k}{x,a}$ from \eqref{eq:small-o} for $a\in[\varepsilon,1/\varepsilon]$ and $x>\varepsilon$ and now define $n=\lceil xk^{\alpha/(\alpha-1)}\rceil$:
\begin{proposition}
\label{prop-size-trees-backbone-(1,2)}
Fix $w_k=p_c(1+a/k)$, $\alpha\in(1,2)$ and $n=\lceil xk^{\alpha/(\alpha-1)}\rceil$.  Let $\psi_\alpha(\cdot)$ be a stable density with exponent $\alpha$ and some scale parameter $c_\alpha$.
Define the function $h:(0,\infty)^2\to(0,\infty)$ by
\begin{equation}
\label{eq:def_h_alpha(1,2)}
    h_\alpha(x,a) =
   \alpha c_x p_c^{\alpha-1}x^{(1-2\alpha)/\alpha}
       \int^{\infty}_0 
      \psi_\alpha\Big(
     -\frac{z}{p_c}-\frac{a\mu}{ x^{1/\alpha-1} }\Big) z^{1-\alpha} \dif z.
\end{equation}
Recall $\prob_k$ from \eqref{eq:def_prob_k}, then 
\begin{equation}
    \lim_{k\to\infty} 
    k^{ (2\alpha-1)/(\alpha-1)}
    \prob_k(H_k= \lceil k^{\alpha/(\alpha-1)}x\rceil)
    =h_\alpha(x,a),
\end{equation}
where the convergence is {\em uniform} for $x>\varepsilon$ and  $a\in[\vep,1/\vep]$ for any $\vep>0$. Furthermore,     
\begin{equation}
\label{eq:toshow_uniformbound_alpha(1,2)}
     n^{1+1/\alpha} k^{- (2-\alpha)/(\alpha-1)}\prob_k(H_k=n)
\end{equation}
is uniformly bounded in $n$ and $a\in[\varepsilon,1/\varepsilon]$.
\end{proposition}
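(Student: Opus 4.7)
The plan is to follow the four-part structure used for $\alpha>2$ in Proposition~\ref{prop-size-trees-backbone->2}, replacing the classical central limit theorem by the stable CLT of index $\alpha$ and adapting every tail estimate to the infinite-variance regime. Since Lemma~\ref{lem-trees-off-backbone} (size-biasing) and the Gaussian local limit theorem on $\Binom(m,w_k)$ underlying Corollary~\ref{cor-trees-off-backbone} are regime-free, their conclusions transfer verbatim and the skeleton of the proof is preserved. What changes is the shape of the limiting object, which now involves an integral against the limiting distribution of $D_{v_k}^\star$ rather than a standard normal.

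Parts 1--2: starting from \eqref{tilde-Hk-law-binomial-b}, I would prove an analogue of Lemma~\ref{lem-Goal_no_eta}, showing that the factor $\eta(w_k)^{-\hat D_{v_k}}$ contributes only a term that is $o(k^{-(2\alpha-1)/(\alpha-1)})$. This is more delicate here because, by Lemma~\ref{lem:converence_binomial_sizebiased_alpha(1,2)}, typical values of $\hat D_{v_k}$ are of order $k^{1/(\alpha-1)}$, so $\theta(w_k)\hat D_{v_k}=\Theta(1)$ and $\eta(w_k)^{-\hat D_{v_k}}$ is not close to $1$. The plan is to truncate $\hat D_{v_k}\leq Mk^{1/(\alpha-1)}$: on the bulk one controls the correction by dominated convergence using the explicit density of $\hat D_{v_k}$; on the tail one uses the estimate of Lemma~\ref{lem:scaling_largedegrees_alphain(1,2)} together with an $\alpha$-stable local limit upper bound on $\prob_k(\tilde X_1^{\sss(k)}+\cdots+\tilde X_n^{\sss(k)}=m)$ of order $O(n^{-1/\alpha})$ (replacing the Gaussian bound of Lemma~\ref{lem:local_lims_upper_bound_mass}). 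Once this error is absorbed, Lemma~\ref{lem-trees-off-backbone} and the Gaussian LCLT reduce the problem to estimating
\[
\frac{w_k\expec_k[D_{v_k}]}{\sqrt{2\pi n^3(1-w_k)}}\expec_k\Big[{\mathrm{e}}^{-(w_k(Q_n+D_{v_k}^\star-1)-n)^2/[2n(1-w_k)]}\Big],
\]
the LCLT being applicable uniformly in the sub-range $|w_k(Q_n+D_{v_k}^\star-1)-n|=O(n^{1/2+\delta})$, which is the only range where the Gaussian is not exponentially small.

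Parts 3--4: the stable CLT for $Q_n$ yields $(Q_n-n/p_c)/n^{1/\alpha}\convd \xi_\alpha$ with density $\psi_\alpha$, and a direct computation with $n=\lceil xk^{\alpha/(\alpha-1)}\rceil$ and $w_k=p_c(1+a/k)$ gives $(w_kQ_n-n)/n^{1/\alpha}\convd p_c\xi_\alpha+ax^{(\alpha-1)/\alpha}$, while $D_{v_k}^\star/n^{1/\alpha}$ has a non-degenerate limit read off from Lemma~\ref{lem:SizeBiased_distr_BB_infvar}. Because $n^{2/\alpha-1}\to\infty$, the Gaussian factor concentrates on the event
\[
p_c\xi_\alpha+ax^{(\alpha-1)/\alpha}+p_cD_{v_k}^\star/n^{1/\alpha}\approx 0,
\]
so the plan is to integrate first against $\psi_\alpha$, producing a factor $\psi_\alpha(-z/p_c-ax^{(\alpha-1)/\alpha})/(p_cn^{1/\alpha})$ evaluated at the matching point with $z=p_cD_{v_k}^\star/n^{1/\alpha}$, and then against the density of $D_{v_k}^\star$. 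Substituting the large-$\ell$ asymptotics $\prob_k(D_{v_k}=\ell)\sim \alpha c_{\sss X}\ell^{-\alpha}(1-\hat\theta(w_k))^{\ell-1}/\expec[X]$ and rescaling $z=p_c D_{v_k}^\star/n^{1/\alpha}$ produces the weight $z^{1-\alpha}$ (the exponential cut-off reconciling with the prefactor $\expec_k[D_{v_k}]\sim \alpha c_{\sss X}\Gamma(2-\alpha)\hat\theta(w_k)^{\alpha-2}/\expec[X]$ from Lemma~\ref{lem:rare_degree_Vertices_infinite_Var}). Matching powers of $k$ via $\hat\theta(w_k)^{\alpha-2}=\Theta(k^{(2-\alpha)/(\alpha-1)})$ then yields the advertised scaling $k^{-(2\alpha-1)/(\alpha-1)}$ and the explicit formula \eqref{eq:def_h_alpha(1,2)}.

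The hard part will be Part~1 and the constant reconciliation in Part~4: $\eta(w_k)^{-\hat D_{v_k}}$ is genuinely $O(1)$ on the typical scale of $\hat D_{v_k}$, so one needs a precise stable local limit theorem for sums of $(\tilde X_i^{\sss(k)})_{i\geq 1}$ rather than merely an upper bound, and one must track how this extra factor combines with the scaling of $D_{v_k}^\star$ to leave exactly the $z^{1-\alpha}$ weight in \eqref{eq:def_h_alpha(1,2)}. The uniform bound \eqref{eq:toshow_uniformbound_alpha(1,2)} would follow from the stable local limit upper bound in Part~1 combined with the asymptotics of $\expec_k[D_{v_k}]$ from Lemma~\ref{lem:rare_degree_Vertices_infinite_Var}, and uniformity in $a\in(\varepsilon,1/\varepsilon)$ in the convergence follows, as in \eqref{eq:54_uniformly_in_a} for $\alpha>2$, from uniform continuity of the limiting stable-density integrand in the parameter $a$.
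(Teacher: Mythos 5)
Your overall architecture matches the paper's: truncate at $\hat D_{v_k}\leq Kn^{1/\alpha}$, kill the tail with the stable local-limit upper bound of Lemma~\ref{lem:local_lims_upper_bound_mass_alpha(1,2)} combined with Lemma~\ref{lem:scaling_largedegrees_alphain(1,2)} (this also yields the uniform bound \eqref{eq:toshow_uniformbound_alpha(1,2)}), then apply the binomial LCLT conditionally on $Q_n$, the stable LLT for $Q_n$, a Gaussian concentration step (Lemma~\ref{lem:Norm_approx_fin_mean}), and finally the asymptotics of $\expec_k[D_{v_k}]$ and of $(\hat D_{v_k})^\star/n^{1/\alpha}$. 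You do not need a precise stable LLT for the conditioned variables $\tilde X_i^{\sss(k)}$: the identity \eqref{eq:massfunction_eta} converts everything back to the unconditioned $X_i^{\sss(k)}=\Binom(X,w_k)$ at the cost of the explicit factor $\eta(w_k)^{-\hat D_{v_k}}$, and only the upper bound is needed on the tail.

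The genuine gap is your handling of that factor $\eta(w_k)^{-\hat D_{v_k}}$. You first propose an analogue of Lemma~\ref{lem-Goal_no_eta} showing it "contributes only a term that is $o(k^{-(2\alpha-1)/(\alpha-1)})$" and then display the reduced expression of Corollary~\ref{cor-trees-off-backbone}, which has the factor removed; but for $\alpha\in(1,2)$ the typical scale is $\hat D_{v_k}=\Theta(n^{1/\alpha})$ with $\theta(w_k)\hat D_{v_k}=\Theta(1)$, so $\eta(w_k)^{-\hat D_{v_k}}\to\e^{\Gamma}$ is a non-degenerate random multiplicative factor that must be kept all the way to the limit — Corollary~\ref{cor-trees-off-backbone} does \emph{not} transfer. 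Its role is precisely to cancel the exponential cut-off $(1-\hat\theta(w_k))^{\ell}\approx\e^{-\hat\theta(w_k)\ell}$ carried by the size-biased degree density (equivalently, the $\e^{-y}$ in the $\GammaD(2-\alpha,1)$ density of the limit of $\hat\theta(w_k)D^\star_{v_k}$), leaving the pure weight $z^{1-\alpha}$ in \eqref{eq:def_h_alpha(1,2)}; see \eqref{eq:cluster_scaling_alphain(1,2)_47}. Attributing that cancellation to the prefactor $\expec_k[D_{v_k}]$ is incorrect — the prefactor only supplies the $\Gamma(2-\alpha)$ normalisation and the power of $k$. If you drop the factor as your displayed formula does, the limit acquires a spurious exponential inside the $z$-integral and does not equal $h_\alpha(x,a)$. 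Your closing paragraph acknowledges the factor is $O(1)$ and must be tracked, but the proposal as written never resolves this contradiction, and this tracking is exactly the crux of the $\alpha\in(1,2)$ case.
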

By Lemma \ref{App:lem:scaling_mass_distr}, Proposition~\ref{prop-size-trees-backbone-(1,2)} also provides the proof of Theorem~\ref{thm:Cs_scaling}, for $\alpha\in(1,2)$. We now provide the proof for Proposition \ref{prop-size-trees-backbone-(1,2)}.
\begin{proof}
  For $\alpha\in(1,2)$ we expect that the extra factor $\eta(w_k)^{-\hat D_{v_k}}$ is not negligible and therefore, results deviate from $\alpha>2$. The proof overview for $\alpha\in(1,2)$ is split in different parts which are outlined below.
  \begin{enumerate}
      \item  \hyperref[par:5.3-part1]{{\bf No contribution for large values of $\hat D_{v_k}$.}} The starting point is \eqref{tilde-Tik-def}. Here, we split the expectation in two cases based on $\{\hat D_{v_k}\lessgtr K n^{1/\alpha}\}$, for some large $K$.
      In Lemma~\ref{lem:No_high_contribution_DBK_alpha(1,2)}, we prove that the contribution in the case of $ \{\hat D_{v_k}> K n^{1/\alpha}\}$ vanishes in the limit for $K\to\infty$, even when scaled with a factor $k^{ (2\alpha-1)/(\alpha-1)}$.
      Furthermore, we show here the uniform bound of \eqref{eq:toshow_uniformbound_alpha(1,2)}.
      This part shows similarities with part 1 of the proof for $\alpha>2$.
      \item \hyperref[par:5.3-part2]{{\bf Local limit results.}} Restricted to $ \{\hat D_{v_k}\leq K n^{1/\alpha}\}$, we use the binomial local limit theorem and a normal approximation, catered to this specific case, to simplify the mass function in Lemma~\ref{lem:Norm_approx_fin_mean}. These parts are similar to parts 2 and 3 in the proof for $\alpha>2$. 
      \item \hyperref[par:5.3-part3]{{\bf Finalising the result.}} After this, we are left with an expectation of a function of $\expec[\hat D_{v_k}]$ and $\hat D_{v_k}^\star$. We apply results from Section~\ref{sec:degree_bb_alpha_in(1,2)} to derive the convergence result.
  \end{enumerate}
  
  \paragraphi{Part 1 - No contribution for large values of $\hat D_{v_k}$}
  \label{par:5.3-part1}
  We consider the mass function of $H_k$ in \eqref{tilde-Tik-def}. We fix $K$ large, and define
  \begin{equation}
      I_\alpha^{\sss (\leq)}= 
      \1\{ \hat D_{v_k}\leq K n^{1/\alpha}
    \} 
  \end{equation}
  and $I_\alpha^{\sss (>)}$ in an equivalent way. By taking an expectation over $\hat D_{v_k}$, we split
\begin{equation}
\label{eq:mass_function_h_k_split}
\begin{aligned}
    \prob_k(H_k=n)=&
    \expec_k\bigg[\frac{\hat{D}_{v_k}}{n} 
    \prob_k(\tilde X_1^{\sss(k)}+\cdots+\tilde X_n^{\sss(k)}=n-\hat{D}_{v_k}\mid \hat{D}_{v_k})
   I_\alpha^{\sss (>)}
    \bigg]\\
   &+  \expec_k\bigg[\frac{\hat{D}_{v_k}}{n} 
    \prob_k(\tilde X_1^{\sss(k)}+\cdots+\tilde X_n^{\sss(k)}=n-\hat{D}_{v_k}\mid  \hat{D}_{v_k})
   I_\alpha^{\sss (\leq)}
    \bigg].
\end{aligned}    
\end{equation}
 In the following we show that the expectation where $\hat D_{v_k}>Kn^{1/\alpha}$ is sufficiently small for $n$ and  $K$ large enough. Moreover, the method is also used to show the uniform bound in \eqref{eq:toshow_uniformbound_alpha(1,2)}. For both results we make use of an upper bound on the mass function of the sum of i.i.d. $(\tilde X_{i}^{\sss(k)})_{i\geq 1}$. This is a generalisation of Lemma~\ref{lem:local_lims_upper_bound_mass} to $\alpha\in(1,2)$:
  
 \begin{lemma}[Local limit theorem bound for a power-law mass function with $\alpha\in(1,2)$]
\label{lem:local_lims_upper_bound_mass_alpha(1,2)}
Let $(\tilde{X}^{\sss(k)}_i)_{i\geq 1}$ be an i.i.d. sequence of random variables as given in 
\eqref{eq:def_tilde_X}. Then, for $n$ large enough, 
\begin{equation}
   \sup_m \prob(\tilde{X}^{\sss(k)}_1+\cdots+\tilde{X}^{\sss(k)}_n=m)
    \leq \frac{c}{n^{1/\alpha}}.
\end{equation}
\end{lemma}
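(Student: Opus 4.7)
The plan is to adapt the Fourier-analytic route used by Janson (from which Lemma \ref{lem:local_lims_upper_bound_mass} is derived) to the stable-law regime $\alpha\in(1,2)$. The starting point is the Fourier inversion for integer-valued random variables: writing $\varphi_k(t)=\expec_k\big[\e^{\i t\tilde X^{\sss(k)}}\big]$,
\begin{equation*}
\prob(\tilde X^{\sss(k)}_1+\cdots+\tilde X^{\sss(k)}_n=m)
=\frac{1}{2\pi}\int_{-\pi}^{\pi}\e^{-\i tm}\varphi_k(t)^n\,\dif t,
\end{equation*}
so that taking absolute values and the supremum over $m$ yields
\begin{equation*}
\sup_m\prob(\tilde X^{\sss(k)}_1+\cdots+\tilde X^{\sss(k)}_n=m)\leq \frac{1}{2\pi}\int_{-\pi}^{\pi}|\varphi_k(t)|^n\,\dif t.
\end{equation*}
The task therefore reduces to bounding this integral by $C/n^{1/\alpha}$.

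Next, I would analyse $\varphi_k$ in two regions. Near $t=0$, since $\tilde X^{\sss(k)}$ has a power-law tail of exponent $\alpha$ (inherited from $X$, as $\Binom(X,w_k)$ conditioned on extinction preserves the tail exponent $\alpha$ up to a multiplicative constant) and a finite mean $\mu_k$, a Tauberian-type expansion via \cite[Theorem 8.1.6]{Bingham1987} — exactly as used in \eqref{eq:Bingham_taylor_alphain(1,2)} and \eqref{eq:Bingham_taylor_Derive_alpha(1,2)} — gives
\begin{equation*}
\varphi_k(t)=\exp\bigl\{\i\mu_k t - \tilde c_k |t|^\alpha(1+o(1))\bigr\}\qquad\text{as }t\to 0,
\end{equation*}
for some $\tilde c_k>0$. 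In particular, on a neighborhood $|t|\leq\eta$ one has $|\varphi_k(t)|\leq\exp(-\tilde c_k|t|^\alpha/2)$. Away from $0$, since $\tilde X^{\sss(k)}$ takes values in $\mathbb{Z}_{\geq 0}$ with support generating all of $\mathbb{Z}$ (hence is strongly non-lattice on $[-\pi,\pi]$), the continuous function $|\varphi_k|$ is strictly below $1$ on $\{\eta\leq |t|\leq\pi\}$, and by compactness bounded by some $1-\delta$ there.

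Combining these two pieces and changing variables $t=sn^{-1/\alpha}$ in the central part gives
\begin{equation*}
\int_{-\eta}^{\eta}|\varphi_k(t)|^n\,\dif t\leq \int_{-\eta}^\eta\e^{-n\tilde c_k|t|^\alpha/2}\,\dif t
= n^{-1/\alpha}\int_{-\eta n^{1/\alpha}}^{\eta n^{1/\alpha}}\e^{-\tilde c_k|s|^\alpha/2}\,\dif s\leq \frac{C_1}{n^{1/\alpha}},
\end{equation*}
whereas the tail contribution is $\int_{\eta\leq|t|\leq\pi}|\varphi_k(t)|^n\,\dif t\leq 2\pi(1-\delta)^n$, which is negligible compared to $n^{-1/\alpha}$ for $n$ large enough. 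Summing these bounds yields the claimed $\sup_m\prob(\tilde X^{\sss(k)}_1+\cdots+\tilde X^{\sss(k)}_n=m)\leq c/n^{1/\alpha}$.

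The main obstacle is the asymptotic expansion of $\varphi_k$ near $0$: verifying that the leading non-smooth term is indeed of order $|t|^\alpha$ with a coefficient bounded away from $0$, uniformly in $k$ so that the constant $c$ in the final bound does not blow up as $w_k\searrow p_c$. This requires transferring the tail asymptotic $\prob(\tilde X^{\sss(k)}>x)\sim c_{\sss X,k}x^{-\alpha}$ — obtained by rewriting \eqref{eq:def_tilde_X} and using that the conditioning factor $1/\eta(w_k)$ is bounded — into a uniform Karamata-type expansion of the characteristic function, which is the technical heart of the argument.
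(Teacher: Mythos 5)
Your proposal follows essentially the same Fourier-inversion route as the paper's appendix proof: invert to reduce the claim to $\frac{1}{2\pi}\int_{-\pi}^\pi|\varphi_k(t)|^n\,\dif t$, establish a bound of the form $|\varphi_k(t)|\le\exp(-c|t|^\alpha)$ near $t=0$ via the Bingham--Karamata Tauberian expansion of the characteristic function, bound $|\varphi_k|$ strictly below $1$ away from the origin, and integrate. You have correctly identified that the technical content is the expansion near $0$ and, most importantly, the uniformity of all constants as $w_k\searrow p_c$.

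One point where your sketch is slightly weaker than the paper's argument, and which deserves flagging: your tail bound on $\{\eta\le|t|\le\pi\}$ appeals to continuity and compactness of $|\varphi_k|$ for each fixed $k$, which gives a $\delta$ that \emph{a priori} depends on $k$. Since the lemma is used inside expectations and sums where $k\to\infty$, a $k$-dependent $\delta_k$ would not give the uniform-in-$k$ constant $c$ that the downstream applications (Lemma~\ref{lem:No_high_contribution_DBK_alpha(1,2)} in particular) require. The paper instead writes $|\phi_k(u)|\le|\phi_k(u)-\phi(u)|+|\phi(u)|$, uses the L\'evy continuity theorem to get $|\phi_k-\phi|<\varepsilon$ uniformly in $u$ for $k$ large, and bounds the limit $|\phi(u)|\le 1-2\varepsilon$ on $\{\delta\le|u|\le\pi\}$ directly via $\expec[\e^{-p_cX(1-\cos u)}]$; this yields a single $\delta$ working for all $k$ beyond a fixed threshold. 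Your discussion of uniformity in the final paragraph concentrates on the coefficient of $|t|^\alpha$ near zero, so this second source of $k$-dependence in the tail region should be addressed explicitly, for instance by the paper's triangle-inequality argument or some equivalent uniform equicontinuity statement for the family $\{\varphi_k\}$.
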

  \begin{proof} See
   \aprefb{5}{app:sec_local_lims_upper_bound_mass_alpha(1,2)}.
  \end{proof}
From this we can prove the uniform bound on $n^{1+1/\alpha}k^{(2-\alpha)/(\alpha-1)}\prob_k(H_k=n)$.
By the result of Lemma~\ref{lem:local_lims_upper_bound_mass_alpha(1,2)} we can bound the expectation in \eqref{eq:No_high_contribution_DBK_alpha(1,2)_goal} by 
\begin{equation}
\begin{aligned} \label{eq:No_high_contribution_DBK_alpha(1,2)_goal_3}
    \prob_k(H_k=n)&=
    \expec_k\bigg[\frac{\hat{D}_{v_k}}{n} 
    \prob_k(\tilde X_1^{\sss(k)}+\cdots+\tilde X_n^{\sss(k)}=n-\hat{D}_{v_k}\mid \hat{D}_{v_k})
    \bigg]
    \\& \leq 
    \frac{c }{n^{1+1/\alpha}} \expec_k[ D_{v_k}  ].
\end{aligned}
\end{equation}
We noticing that $\expec[D_{v_k}]/k^{(2-\alpha)/(\alpha-1)}$
converges to constant $C_\alpha$ that is uniformly bounded in $a\in[\varepsilon,1/\varepsilon]$ according to Lemma~\ref{lem:rare_degree_Vertices_infinite_Var}. 
This implies 
\begin{equation}
\label{eq:result_bound_mass_alpha(1,2)_c}
     n^{1+1/\alpha} k^{ -(2-\alpha)/(\alpha-1)}
     \prob_k(H_k=n) < C_\alpha.
\end{equation}
We next formalise the scaling of the first term in \eqref{eq:mass_function_h_k_split}:
 
 \begin{lemma}[No contribution for high values of $\hat D_{v_k}$]
 \label{lem:No_high_contribution_DBK_alpha(1,2)}
 Fix $\prob_k$ as in \eqref{eq:def_prob_k} and recall $n=\lceil x k^{\alpha/(\alpha-1)}\rceil$. Then 
 for $K\to\infty$ 
 \begin{equation}
 \label{eq:No_high_contribution_DBK_alpha(1,2)_goal}
    \expec_k\bigg[\frac{\hat{D}_{v_k}}{n} 
    \prob_k(\tilde X_1^{\sss(k)}+\cdots+\tilde X_n^{\sss(k)}=n-\hat{D}_{v_k}\mid \hat{D}_{v_k})
    I_\alpha^{\sss (>)}
    \bigg]=
    k^{-\frac{2\alpha-1}{\alpha-1}}
    \o{K}{a,x},
 \end{equation}
 \end{lemma}
 \begin{proof}
 Take $K>0$ and by Lemma~\ref{lem:scaling_largedegrees_alphain(1,2)}, where $w_k=p_c(1+a/k)$, $n=\lceil xk^{\alpha/(\alpha-1)}\rceil$ and $\hat C(a)= C_\theta (p_c a)^{1/(\alpha-1)}$,
  \begin{equation}
  \begin{aligned}
  \label{eq:No_high_contribution_DBK_alpha(1,2)_goal_4}
       &\expec_k[ D_{v_k} \1\{D_{v_k} > Kn^{1/\alpha}\} ]
      \\&\quad =
       \frac{\alpha c_X x^{(2-\alpha)/\alpha}
       }{\expec[X]k^{-(2-\alpha)/(\alpha-1)}}
       \int^\infty_{K}
       \e^{-z \frac{x^{1/\alpha}}{p_c\hat C(a)} } z^{1-\alpha} \dif z (1+\o{k}{a}).
       \end{aligned}
  \end{equation}
  Using the result of  Lemma \ref{lem:No_high_contribution_DBK_alpha(1,2)}, and \eqref{eq:No_high_contribution_DBK_alpha(1,2)_goal_4} we find that there exists some $C$ such that 
  \begin{equation}
  \begin{aligned}
  \label{eq:bound_large_Dbk_alpha(1,2)}
   \expec_k\bigg[\frac{\hat{D}_{v_k}}{n} 
    \prob_k(\tilde X_1^{\sss(k)}+\cdots+\tilde X_n^{\sss(k)}&=n-\hat{D}_{v_k}\mid \hat{D}_{v_k})
    I_\alpha^{\sss (>)}
    \bigg]\leq\frac{ c }{n^{1+1/\alpha}} \expec_k[\hat D_{v_k} I_\alpha^{\sss (>)} ]
    \\&\leq 
      \frac{C x^{(1-2\alpha)/\alpha}}{
      k^{ (2\alpha-1)/(\alpha-1)}}
       \int_{K}^\infty \e^{-z\frac{x^{1/\alpha}}{p_c\hat C(a)}} z^{1-\alpha}\dif z (1+\o{k}{a}).
       \end{aligned}
  \end{equation}
  Note that this term is bounded uniformly in $x>\varepsilon$  and in $a\in[\varepsilon,1/\varepsilon]$, as then $\hat C(a) \leq C_\theta p_c (\varepsilon^{-1} p_c)^{1/(\alpha-1)}$,
  which concludes the proof.
 \end{proof}
 We can now focus on the expectation under the condition $\{\hat D_{v_k} \leq Kn^{1/\alpha}\}$, which we simplify in the second part.
 We use the result from \eqref{eq:massfunction_eta} to find 
 \begin{equation}
 \begin{aligned}
 \label{eq:part1_end_alpha(1,2)_clusters}
    & \prob_k(H_k= n)
   \\ &=\expec_k\bigg[\frac{\hat{D}_{v_k}}{n\eta(w_k)^{\hat{D}_{v_k}}} 
    \prob_k(X_1^{\sss(k)}+\cdots+X_n^{\sss(k)}=n-\hat{D}_{v_k})\mid \hat{D}_{v_k})
    I_\alpha^{\sss (\leq)}
    \bigg] +
    k^{-\frac{2\alpha-1}{\alpha-1}}
    \o{K}{a,x}.
    \end{aligned}
 \end{equation}
 \paragraphi{Part 2 - Local limit results}
 \label{par:5.3-part2}
 We continue from \eqref{eq:part1_end_alpha(1,2)_clusters}.
 Define $Q_n$ as in \eqref{Qn-def}, for $\alpha\in(1,2)$. 
 We use the binomial local limit result of \eqref{Bin-LCLT-pre} on the expression in \eqref{eq:part1_end_alpha(1,2)_clusters} to find 
 \begin{equation}
 \label{eq:cluster_scaling_alphain(1,2)_1}
 \begin{aligned}
     \prob_k(H_k= n)=
     \expec_k\Bigg[&
     \frac{ 1+\o{k}{a,x}}{n\sqrt{2\pi (n-\hat D_{v_k}) (1-w_k)} }
     \frac{\hat D_{v_k}}{\eta(w_k)^{\hat D_{v_k}}}
     \\& \hspace{0cm} \times\exp\bigg\{-
     \frac{(n-\hat D_{v_k}-w_kQ_n)^2}{2(n-\hat D_{v_k})(1-w_k)}
     \bigg\}
    I_\alpha^{\sss (\leq)}
     \Bigg]+k^{-\frac{2\alpha-1}{\alpha-1}}
    \o{K}{a,x}.
     \end{aligned}
 \end{equation}
 Note that the binomial local limit result is uniform in its parameters and therefore also uniform in $a$ in this case.
 Next, we apply the local central limit result for stable laws
 \cite[Chapter 9 \textsection 50]{Gnedenko1968}
 for $Q_n$ that states
 \begin{equation}
     \prob(Q_n =\ell) = 
     \frac{1}{n^{1/\alpha}}\bigg(
     \psi_\alpha\Big(
     \frac{\ell-n\mu}{n^{1/\alpha}}
     \Big) +o(1)\bigg),
 \end{equation}
 where $\psi_\alpha(\cdot)$ is the density of a stable random variable with exponent $\alpha$, scale parameter $c_\alpha$ and characteristic function is given by $f(t)=\e^{-|c_\alpha t|^{\alpha}}$. 
Conditioning on $Q_n$ results in 
 \begin{equation}
 \begin{aligned}
  \label{eq:cluster_scaling_alphain(1,2)_2}
\frac{ 1+\o{k}{x,a}}{n\sqrt{2\pi (1-w_k)} }&\expec_k\Bigg[
     \frac{\hat D_{v_k}}{\eta(w_k)^{\hat D_{v_k}}
     \sqrt{n-\hat D_{v_k}}
     }
     \sum_{\ell\geq 1}
      \frac{1}{n^{1/\alpha}}\bigg(
     \psi_\alpha\Big(
     \frac{\ell-n\mu}{n^{1/\alpha}}
     \Big) +o(1)\bigg)\\&\hspace{0cm}\times
     \exp\bigg\{-
     \frac{(n-\hat D_{v_k}-w_k\ell)^2}{2(n-\hat D_{v_k})(1-w_k)}
     \bigg\}
     I_\alpha^{\sss (\leq)}
     \Bigg]+k^{-\frac{2\alpha-1}{\alpha-1}}
    \o{K}{a,x}.
     \end{aligned}
 \end{equation}
 We simplify \eqref{eq:cluster_scaling_alphain(1,2)_2} by using Lemma \ref{lem:Norm_approx_fin_mean} with the  parameters 
  \begin{equation}
      \begin{aligned}
 m = -n\mu + (n-\hat D_{v_k})/w_k;
      \qquad 
      \sigma^2 = \frac{ (n-\hat D_{v_k})(1-w_k)}{w_k^2};\\
     b= \frac{1}{n^{1/\alpha}}
      (-n\mu + (n-\hat D_{v_k})/w_k); \qquad
      z=n\mu;\qquad \chi(\cdot) = \psi_\alpha(\cdot).
      \end{aligned}
  \end{equation}
  Then the conditions of Lemma~\ref{lem:Norm_approx_fin_mean} are met because $z+m=(n-\hat D_{v_k})/w_k\to\infty$ uniformly in $a\in[\varepsilon,1/\varepsilon]$ and $x>\varepsilon$, as $\hat D_{v_k}<Kn^{1/\alpha}$. 
  Moreover, $\sigma/m\to 0$
  uniformly in $x>\varepsilon$ and $a\in[\varepsilon,1/\varepsilon]$.
  Finally, we know that $\psi_\alpha(\cdot)$ is the density of a stable law, which is bounded and continuous.
  This implies that the conditions of Lemma~\ref{lem:Norm_approx_fin_mean} are met and that we can 
 substitute the result of \eqref{eq:Normal_approx_chi} in \eqref{eq:cluster_scaling_alphain(1,2)_2}.
 First, we simplify $b$ and recall that $w_k=p_c(1+a/k)$
 and therefore
\begin{equation}
      n\mu - \frac{n}{p_c(1+a/k)}
      =n\mu(1-\frac{k}{k+a})=
      xk^{\alpha/(\alpha-1)}\mu\frac{a}{k+a} =
      ax\mu k^{1/(\alpha-1)}(1+\o{k}{x,a}),
\end{equation}
where the error is uniform in $a\in[\varepsilon,1/\varepsilon]$
  This implies that \eqref{eq:cluster_scaling_alphain(1,2)_2} can be rewritten as
 \begin{equation}
 \label{eq:cluster_scaling_alphain(1,2)_3}
     \frac{1+\o{k}{x,a}}{n^{1+1/\alpha} w_k}\expec_k\Bigg[
     \frac{\hat D_{v_k}}{\eta(w_k)^{\hat D_{v_k}}}
    \psi_\alpha\Big(
     -\frac{\hat D_{v_k}}{w_kn^{1/\alpha}}
     -\frac{a x\mu k^{1/(\alpha-1)}}{ n^{1/\alpha} }
     \Big)I_\alpha^{\sss (\leq)}
     \Bigg]+k^{-\frac{2\alpha-1}{\alpha-1}}
    \o{K}{a,x}.
 \end{equation}
 Recall that $\hat C(a) =C_\theta( p_c a)^{1/(\alpha-1)}$, where $C_\theta$ is defined in \eqref{eq:def_C_theta}.
 For the $\eta(w_k)^{-\hat D_{v_k}}$ factor, we note that, by  Remark \ref{rem:unif_cnv_theta} and under the assumption $w_k=p_c(1+a/k)$,
 \begin{equation}
 \begin{aligned}
     \eta(w_k)^{\hat D_{v_k}}
     &=(1-\theta(w_k))^{\hat D_{v_k}}
     =\bigg(
     1-\frac{\hat C(a) }{k^{1/(\alpha-1)}}
     \bigg)^{\hat D_{v_k}
     \frac{k^{1/(\alpha-1)}}{k^{1/(\alpha-1)}}}
    \\& =\e^{-\hat C(a)\hat{D}_{v_k} k^{-1/(\alpha-1)}}(1+\o{k}{a,x}).
     \end{aligned}
 \end{equation}
 Substituting this result and recalling that $n=\lceil x k^{\alpha/(\alpha-1)}\rceil$ simplifies 
 \eqref{eq:cluster_scaling_alphain(1,2)_3} to
 \begin{equation}
 \begin{aligned}
 \label{eq:cluster_scaling_alphain(1,2)_4}
     &\frac{1+\o{k}{a,x}}{n w_k}\expec_k\bigg[
     \frac{\hat D_{v_k}}{n^{1/\alpha}}
     \e^{\hat C(a)\hat D_{v_k}  \frac{x^{1/\alpha}}{n^{1/\alpha}} }
    \psi_\alpha\Big(
     -\frac{\hat D_{v_k}}{w_kn^{1/\alpha}}
     -\frac{a x \mu k^{1/(\alpha-1)}}{ n^{1/\alpha} }
     \Big)I_\alpha^{\sss (\leq)}
     \bigg]\\&\quad+k^{-\frac{2\alpha-1}{\alpha-1}}
    \o{K}{a,x}.
     \end{aligned}
 \end{equation}
 Finally, we note that we now can write \eqref{eq:cluster_scaling_alphain(1,2)_4} as
 \begin{equation}
 \begin{aligned}
 \label{eq:cluster_scaling_alphain(1,2)_44} 
      &\prob(H_k = \lceil xk^{\alpha/(\alpha-1)} \rceil )\\&\quad=
     \frac{1+\o{k}{a,x}}{n w_k}\expec_k\bigg[
    \hat D_{v_k}n^{-1/\alpha}
     g\Big(x, \hat D_{v_k}n^{-1/\alpha}\Big)
     \bigg]+k^{-\frac{2\alpha-1}{\alpha-1}}
    \o{K}{a,x},
     \end{aligned}
 \end{equation}
where
\begin{equation}
    g(x,y) = 
    \e^{\hat C(a)x^{1/\alpha} y }
    \psi_\alpha\Big(
     -\frac{y}{w_k}
     -\frac{a x \mu k^{1/(\alpha-1)}}{ n^{1/\alpha} }
     \Big)\1\{ y\leq K \}.
\end{equation}
In the following we simplify the expectation in \eqref{eq:cluster_scaling_alphain(1,2)_44}.

\paragraphi{Part 3 - Finalising the result}
\label{par:5.3-part3}
We use the result on size-biased distributions introduced in \eqref{eq:size_biased_result}, to rewrite 
 \eqref{eq:cluster_scaling_alphain(1,2)_44} to
 \begin{equation}
 \begin{aligned}
      \label{eq:cluster_scaling_alphain(1,2)_41}
      &\prob_k(H_k = \lceil k^{\alpha/(\alpha-1)} x\rceil 
      )\\&\quad=
       \frac{1+\o{k}{a,x})}{n w_k}\expec_k\Big[\hat D_{v_k}n^{-1/\alpha}\Big]
       \expec_k\Big[ g\Big(x, ( \hat D_{v_k})^\star n^{-1/\alpha}\Big) \Big]+k^{-\frac{2\alpha-1}{\alpha-1}}
    \o{K}{a,x}.
       \end{aligned}
 \end{equation}
The scaling behaviour of the first expectation is known  by Lemma~\ref{lem:rare_degree_Vertices_infinite_Var}, which, for $w_k=p_c(1+a/k)$, implies
\begin{equation}
\begin{aligned}
\label{eq:expec_1_alpha(1,2)}
    n^{-1/\alpha}\expec_k&[\hat D_{v_k}] = 
     \\&=
    \frac{p_c\alpha c_{\sss X} \Gamma(2-\alpha)}{(\hat C(a) p_c)^{(2-\alpha)}
    x^{1/\alpha}\expec[X]}
    k^{-1}(1+o(1))=
    C_E k^{-1}x^{-1/\alpha}(1+\o{k}{a,x}),
\end{aligned}
\end{equation}
where 
\begin{equation}
\label{eq:def_C_E}
    C_E =  \frac{\alpha c_{\sss X} \Gamma(2-\alpha)p_c}{(\hat C(a) p_c)^{(2-\alpha)}\expec[X]},
\end{equation}
It remains to show convergence for the second expectation in \eqref{eq:cluster_scaling_alphain(1,2)_41}. However, we know that $g_x(\cdot)$ is almost surely bounded (due to inclusion of $\1\{ y\leq K \} $ in $g_x(y)$) and it is therefore sufficient to show that $
( \hat D_{v_k})^\star/n^{1/\alpha}$ converges in distribution.
First, under the condition that $w_k=p_c(1+a/k)$, by Lemma~\ref{lem:SizeBiased_distr_BB_infvar}, 
\begin{equation}
    \frac{  D_{v_k}^\star }{n^{1/\alpha}} \xrightarrow{d}
    \frac{ \GammaD(2-\alpha,1)}{x^{1/\alpha}p_c\hat C(a)},
\end{equation}
uniformly in $a\in[\varepsilon,1/\varepsilon]$ and $x>\varepsilon$.
Then Lemma~\ref{lem:converence_binomial_sizebiased_alpha(1,2)} implies 
\begin{equation}
     \frac{ (\hat D_{v_k})^\star}{n^{1/\alpha}}
    \xrightarrow{d}
    \frac{ \GammaD(2-\alpha,1)}{x^{1/\alpha}\hat C(a)}.
\end{equation}
Finally, $g(x,y)$ has a single discontinuity in $y=K$. However, as $\GammaD(2-\alpha,1)/(x^{1/\alpha}\hat C(a))= K$ w.p. 0, we can use the Portmanteau theorem, that can be extended to uniform convergence in $a$, see \cite{Bengs2019}, to find
\begin{equation}
\label{eq:custer_scaling_portmanteau_alphain(1,2)}
 \expec[g(x, (\hat D_{v_k})^\star n^{-1/\alpha})]
 \to 
  \expec[g(x,\GammaD(2-\alpha,1)/(x^{1/\alpha}\hat C(a)) )].
\end{equation}
Let $\Gamma\overset{d}{=} \GammaD(2-\alpha,1)$ and $I^{\sss (\Gamma\leq )}_\alpha = \1\{ \Gamma\leq K\hat C(a) x^{1/\alpha}\}$. Then 
we substitute the results of \eqref{eq:expec_1_alpha(1,2)} and  \eqref{eq:custer_scaling_portmanteau_alphain(1,2)} into \eqref{eq:cluster_scaling_alphain(1,2)_41} to find that 
\begin{equation}
\label{eq:cluster_scaling_alphain(1,2)_43}
\begin{aligned}
    &\prob_k(H_k = \lceil k^{\alpha/(\alpha-1)} x\rceil 
      )
      \\& \quad =
       \frac{1+\o{k}{a,x}}{n w_k}
       C_E
    k^{-1}x^{-1/\alpha}
   \expec_k\bigg[
     \e^{ \Gamma  }
     \psi_\alpha\Big(
     -\frac{\Gamma}{w_k \hat C(a) x^{1/\alpha}}
     -\frac{a\mu}{ x^{1/\alpha-1} }
     \Big)I^{\sss (\Gamma\leq )}_\alpha
     \bigg]\\&\qquad+k^{-\frac{2\alpha-1}{\alpha-1}}
    \o{K}{a,x}
     \\&\quad=
      \frac{ C_E (1+\o{k}{a,x})}{ w_k x^{1+1/\alpha} k^{\frac{ 2\alpha-1}{\alpha-1}}}
   \expec_k\bigg[
     \e^{ \Gamma }
     \psi_\alpha\Big(
     -\frac{\Gamma}{w_k \hat C(a) x^{1/\alpha}}
     -\frac{a\mu}{ x^{1/\alpha-1} }
     \Big)I^{\sss (\Gamma\leq )}_\alpha
     \bigg]\\&\qquad+k^{-\frac{2\alpha-1}{\alpha-1}}
    \o{K}{a,x},
\end{aligned}
\end{equation}
We simplify the expectation and find
\begin{equation}
\label{eq:cluster_scaling_alphain(1,2)_47}
\begin{aligned}
&\expec_k\bigg[
     \e^{ \Gamma }
     \psi_\alpha\Big(
     -\frac{\Gamma}{w_k \hat C(a) x^{1/\alpha}}
     -\frac{a\mu}{ x^{1/\alpha-1} }
     \Big)I^{\sss (\Gamma\leq )}_\alpha
     \bigg]
     \\&= 
      \int_0^{K\hat C(a) x^{1/\alpha}}
      \e^y     \psi_\alpha\Big(
     -\frac{y}{w_k \hat C(a) x^{1/\alpha}}
     -\frac{a\mu}{ x^{1/\alpha-1} }
     \Big) \frac{ y^{1-\alpha} }{\Gamma (2-\alpha)} \e^{-y}\dif y 
     \\&=
     \frac{\hat C(a)^{2-\alpha}x^{(2-\alpha)/\alpha }}{\Gamma(2-\alpha)}
     \int^{K }_0 
     z^{1-\alpha} 
      \psi_\alpha\Big(
     -\frac{z}{w_k }-\frac{a\mu}{ x^{1/\alpha-1} }\Big)\dif z,
    \end{aligned}
\end{equation}
where the final equality follows from a substitution of $\hat C(a)x^{1/\alpha}z=y$. Using the result of  \eqref{eq:cluster_scaling_alphain(1,2)_47} in \eqref{eq:cluster_scaling_alphain(1,2)_43} and recalling the definition of $\hat C(a) = C_\theta (p_c a)^{1/(\alpha-1)}$ and $C_E$ from \eqref{eq:def_C_E}, we let $K\to\infty$, which results in 
\begin{equation}
\begin{aligned}
\label{eq:cluster_scaling_alphain(1,2)_45}
    &\prob_k(H_k = \lceil k^{\alpha/(\alpha-1)} x\rceil 
      )\\&=
      \frac{ C_E \hat C(a)^{2-\alpha}x^{(1-2\alpha)/\alpha}(1+o(1))}{\Gamma(2-\alpha)w_k k^{(2\alpha-1)/(\alpha-1)}}
       \int^{\infty}_0 
      \psi_\alpha\Big(
     -\frac{z}{w_k }-\frac{a}{ x^{1/\alpha-1} }\Big) z^{1-\alpha} \dif z\\&\quad+\o[k^{-\frac{2\alpha-1}{\alpha-1}}]{k}{a,x}
     \\&=
     \frac{\alpha c_x p_c^{\alpha-1}x^{(1-2\alpha)/\alpha}(1+o(1))}{k^{(2\alpha-1)/(\alpha-1)}}
            \int^{\infty}_0 
      \psi_\alpha\Big(
     -\frac{z}{p_c}-\frac{a}{ x^{1/\alpha-1} }\Big) z^{1-\alpha} \dif z\\&\quad +\o[k^{-\frac{2\alpha-1}{\alpha-1}}]{k}{x,a}.
\end{aligned}
\end{equation}
\end{proof}
This concludes the proof of the convergence of the mass function.

\subsection{Analysis of the forests attached to the backbone for \mtitle{$\alpha\in(0,1)$}}
\label{sec-size-trees-off-backbone-alpha(0,1)}
We end the section with an analysis on the size of the finite forests attached to the backbone for $\alpha\in(0,1)$.
In the following we condition on $W_k=w_k$ and take $n=\lceil xw_k^{\alpha/(1-\alpha)}\rceil$.
The main result that we prove is the following:
\begin{proposition}[Size of total trees off backbone for $\alpha\in(0,1)$]
\label{prop-size-trees-backbone-(0,1)}
Condition on $W_k=w_k$, where $w_k\searrow 0$, fix $\alpha\in(0,1)$ and  $n=\lceil xw_k^{\alpha/(1-\alpha)}\rceil$. 
Let $\psi_\alpha(\cdot)$ be a stable density with exponent $\alpha$ and some scale parameter $c_\alpha$.
Then, define the function $h_\alpha\colon (0,\infty)\to (0,\infty)$ by
\begin{equation}
  \label{eq:def_h_alpha(0,1)}
    h_\alpha(x)=
     c_{\sss X} x^{1-\alpha-1/\alpha}
     \int_0^1  \psi_\alpha( (1-u)/x^{1/\alpha-1} )  u^{1-\alpha} \dif u.
\end{equation}
Recall $\prob_k$ from \eqref{eq:def_prob_k},
then
\begin{equation}
\label{eq:pw_conv_alpha(0,1)}
    \lim_{k\rightarrow \infty} w_k^{-\alpha/(1-\alpha)}\prob_k(H_k=\lceil w_k^{-\alpha/(1-\alpha)} x\rceil)= h_\alpha(x).
\end{equation}
\end{proposition}
Contrary to the approach used for $\alpha>2$ and $\alpha\in(1,2)$, we next show the result of Theorem \ref{thm:Cs_scaling} for $\alpha\in(0,1)$ directly instead of relying on Lemma \ref{App:lem:scaling_mass_distr}.
Indeed, for $\alpha\in(0,1)$, Proposition \ref{prop-size-trees-backbone-(0,1)} implies convergence of the rescaled mass function to a density, which adds an additional useful property.
It is straightforward to verify that 
\begin{equation}
    \int_0^\infty w_k^{-\alpha/(1-\alpha)}\prob_k(H_k=\lceil w_k^{-\alpha/(1-\alpha)} x\rceil)\dif x =1.
\end{equation}
If the limiting density $h_\alpha(x)$ also integrates to 1, i.e. the limiting random variable has no atoms, then Theorem \ref{thm:Cs_scaling} for $\alpha\in(0,1)$ follows from Scheff\'e's lemma \cite[Chapter 5.10]{Williams1991}.

Firstly, by \eqref{eq:pw_conv_alpha(0,1)}, it suffices to check if there is no mass at $0$ and $\infty$.
There is no mass at infinity as,
by Wald's identity and conditionally on $W_k=w_k$,
\begin{equation}
\begin{aligned}
    \expec_k[w_k^{\alpha/(1-\alpha)} H_k]=
    \expec_k\bigg[ w_k^{\alpha/(1-\alpha)}
    \sum_{i=1}^{\hat D_{v_k}} \tilde T_{i,k}
    \bigg]
    =
     w_k^{\alpha/(1-\alpha)}
     \expec_k[\hat D_{v_k}]
     \expec_k[T_{1,k}].
\end{aligned}
\end{equation} 
Then, by Lemma \ref{lem:conv_expecdegree_(0,1)} and \eqref{eq:Conditional_final_clusters_mean_a<1}, we find
\begin{equation}
      \expec_k[w_k^{\alpha/(1-\alpha)} H_k]  \to \frac{1-\alpha}{C_\theta} \frac{1}{1-\alpha}
    <\infty.
\end{equation}
There is also no mass at $0$ as, by Lemma \ref{lem:convergence_hatDBK_alpha(0,1)},
\begin{equation}
  \prob_k(w_k^{\alpha/(1-\alpha)} H_k < \varepsilon) 
    \leq    \prob_k(w_k^{\alpha/(1-\alpha)} \hat D_{v_k} < \varepsilon)\to \prob(\GammaD(1-\alpha,1)<\varepsilon C_\theta),  
\end{equation}
which can be made arbitrarily small. This implies that $h_\alpha$ integrates to 1 as we claimed.

Finally,
Lemma~\ref{lem:convergence_hatDBK_alpha(0,1)} implies that the effective degree $\hat D_{v_k}$ also scales as $w_k^{-\alpha/(1-\alpha)}$. This implies that the dominating contribution to the volume scaling is induced by the degree of $v_k$ rather than the volume of the attached forest itself. 

\begin{proof} 
The proof setup is similar to that for $\alpha>2$ and $\alpha\in(1,2)$, except that we do not need a uniform bound on the mass function anymore:
\begin{enumerate}
\item \hyperref[par:5.4-part05]{\textbf{No contribution for large values of $\hat D_{v_k}$.}}
We start with \eqref{tilde-Tik-def}.
In the proof we need some control on $\hat D_{v_k}$ and therefore first prove that the contribution restricted to $\{\hat D_{v_k}\in( (1-\varepsilon)n,n)\}$ vanishes. Lemma~\ref{lem:split_mass_alpha(0,1)} shows that this contribution vanishes exponentially fast in $n$.
    \item \hyperref[par:5.4-part1]{\textbf{Local limit results.}}
    We start again at  \eqref{tilde-Hk-law-binomial-b}. Similar to the proof for $\alpha\in(1,2)$, we use the binomial local limit theorem on  $\prob(X_1^{\sss(k)}+\cdots+X_n^{\sss(k)}
    =n-\hat D_{v_k})
    =\prob(Q_n =n-\hat D_{v_k})
    $. After this, we simplify the expression further by a normal approximation in Lemma~\ref{lem:Normal_approx_alpha(0,1)} that is close to that of Lemma~\ref{lem:Norm_approx_fin_mean}, but catered to $\alpha\in(0,1)$.  
    \item \hyperref[par:5.4-part2]{\textbf{Convergence of $ w_k^{\alpha/(1-\alpha)}\hat D_{v_k}$. }} After the normal approximation, the remaining terms are expressed in an expectation of a bounded function of $\hat D_{v_k} w_k^{\alpha/(1-\alpha)}$, for which we show its convergence. This concludes the proof. 
\end{enumerate}
\paragraphi{No contribution for large values of $\hat D_{v_k}$}
\label{par:5.4-part05}
Our starting point is \eqref{tilde-Hk-law-binomial-b},
where we aim to show that the contribution of large values of $\hat D_{v_k}$ add a negligible amount to the volume scaling. Fix $\varepsilon>0$ small and define 
\begin{equation}
\label{eq:Indic_alpha<1}
    I^{\sss (\leq)}_\alpha(\varepsilon) = 
    \1\{\hat D_{v_k} \leq (1-\varepsilon)n\},
\end{equation}
then we split
\begin{equation}
\begin{aligned}
    \label{split_mass_alpha(0,1)}
   \prob_k(H_k=n) &=
    \expec_k\Big[\frac{\hat{D}_{v_k}}{n} 
    \prob_k(\tilde X_1^{\sss(k)}+\cdots+\tilde X_n^{\sss(k)}=n-\hat{D}_{v_k}\mid  \hat{D}_{v_k})
     I^{\sss (\leq)}_\alpha(\varepsilon)
    \Big]\\
    &\quad +\expec_k\Big[\frac{\hat{D}_{v_k}}{n} 
    \prob_k(\tilde X_1^{\sss(k)}+\cdots+\tilde X_n^{\sss(k)}=n-\hat{D}_{v_k}\mid  \hat{D}_{v_k})
    I^{\sss (>)}_\alpha(\varepsilon)
    \Big].
    \end{aligned}
\end{equation}
We show that the second expectation vanishes even when scaled with a factor $n$, by showing convergence is exponential in $n$:
\begin{lemma}
 \label{lem:split_mass_alpha(0,1)}
 Recall $\prob_k$ from \eqref{eq:def_prob_k} and $n=\lceil xw_k^{\alpha/(1-\alpha)}\rceil$,
 then for $\varepsilon\leq 1/2$
 \begin{equation}
     n \expec_k\Big[\frac{\hat{D}_{v_k}}{n} 
    \prob_k(\tilde X_1^{\sss(k)}+\cdots+\tilde X_n^{\sss(k)}=n-\hat{D}_{v_k}\mid  \hat{D}_{v_k})
    I^{\sss (>)}_\alpha(\varepsilon)
    \Big]= \o{k}{}.
 \end{equation}
\end{lemma}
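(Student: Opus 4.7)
The plan is to use identity~\eqref{eq:massfunction_eta} to reduce the conditioned sum to an unconditioned binomial. Since $\sum_{i=1}^n X_i^{(k)} \stackrel{d}{=} \Binom(Q_n,w_k)$ with $Q_n = X_1+\cdots+X_n$, we may write
\[
\prob_k\bigl(\tilde X_1^{(k)}+\cdots+\tilde X_n^{(k)}=n-\hat D_{v_k}\bigm|\hat D_{v_k}\bigr)=\eta(w_k)^{-\hat D_{v_k}}\,\prob\bigl(\Binom(Q_n,w_k)=n-\hat D_{v_k}\bigm|\hat D_{v_k}\bigr).
\]
On $\{\hat D_{v_k}\leq n\}$, the prefactor $\eta(w_k)^{-\hat D_{v_k}}\leq(1-\theta(w_k))^{-n}\to \e^{xC_\theta}$ is uniformly bounded by Lemma~\ref{lem:theta_scaling} (since $n\theta(w_k)\to xC_\theta$), and $\hat D_{v_k}/n\leq 1$ on the indicator event. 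Using the inclusion $\{H_k=n,\hat D_{v_k}>(1-\varepsilon)n\}\subseteq\{\Binom(Q_n,w_k)\leq\varepsilon n\}$, the claim reduces to showing that $n\,\prob(\Binom(Q_n,w_k)\leq\varepsilon n)=o(1)$.

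For this reduced claim I would condition on $Q_n$ and split according to whether $Q_n$ is of its typical scale $n^{1/\alpha}$ or atypically small. On the typical event $\{Q_n\geq\delta n^{1/\alpha}\}$, the binomial mean obeys $w_kQ_n\geq\delta x^{1/\alpha-1}n$, so choosing $\delta$ large enough that $\delta x^{1/\alpha-1}>2\varepsilon$ places the target $\varepsilon n$ strictly in the lower-deviation regime and Chernoff's inequality yields $\prob(\Binom(Q_n,w_k)\leq\varepsilon n\mid Q_n)\leq \e^{-cn}$ for some $c=c(\varepsilon,x)>0$. On the atypical event $\{Q_n<\delta n^{1/\alpha}\}$, the fact that $Q_n/n^{1/\alpha}$ converges to a totally positive $\alpha$-stable law whose density decays stretched-exponentially near the origin gives $\prob(Q_n<\delta n^{1/\alpha})\leq C\exp(-c'\delta^{-\alpha/(1-\alpha)})$, which can be made as small as desired by shrinking $\delta$. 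Combining,
\[
n\,\prob(\Binom(Q_n,w_k)\leq\varepsilon n)\leq n\,\e^{-cn}+nC\exp(-c'\delta^{-\alpha/(1-\alpha)}),
\]
and I would first fix $\delta=\delta(\varepsilon)$ small enough that the second term is $o(1)$, after which the first term vanishes automatically as $k\to\infty$.

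The main obstacle is to make the two choices of $\delta$ simultaneously compatible across the full range $\varepsilon\leq 1/2$: the Chernoff threshold requires $\delta$ not too small (specifically $\delta x^{1/\alpha-1}>2\varepsilon$), whereas the stretched-exponential stable bound is most useful when $\delta$ is small. For $\varepsilon$ well below $\alpha$ the two constraints overlap comfortably, but when $\varepsilon$ is comparable to $1/2$ while $\alpha<1/2$ the available window for $\delta$ shrinks and a more delicate argument is needed. In that regime I would fall back on a direct saddle-point analysis of the generating-function coefficient $\prob_k(\sum_{i=1}^n\tilde X_i^{(k)}=n-\hat D_{v_k})=[s^{n-\hat D_{v_k}}]\tilde\phi(s)^n$, where $\tilde\phi(s)=\expec[s^{\tilde X^{(k)}}]$: by \eqref{eq:Conditional_final_clusters_mean_a<1} the mean $\tilde\phi'(1)\to\alpha$, and the associated Cram\'er rate function $I_{\tilde\phi}(\mu)=\sup_{s>0}\{\mu\log s-\log\tilde\phi(s)\}$ is strictly positive for $\mu\in[0,\alpha)$, yielding the required exponential decay of the mass function at $n-\hat D_{v_k}\leq\varepsilon n$ uniformly in $\hat D_{v_k}$.
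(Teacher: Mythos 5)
Your opening reductions are sound: stripping the factor $\eta(w_k)^{-\hat D_{v_k}}$ via \eqref{eq:massfunction_eta} and bounding it on $\{\hat D_{v_k}\le n\}$ by $(1-\theta(w_k))^{-n}\to\e^{xC_\theta}$ is correct, as is the inclusion of the equality event into $\{\Binom(Q_n,w_k)\le\varepsilon n\}$. The fatal step is your treatment of the atypical event. For any fixed $\delta>0$ the probability $\prob(Q_n<\delta n^{1/\alpha})$ does not decay in $n$: it converges to the positive constant $\prob(S_\alpha<\delta/c_\alpha)$, and the stretched-exponential expression $C\exp(-c'\delta^{-\alpha/(1-\alpha)})$ is a bound on that \emph{limiting constant}, not on anything that vanishes with $n$. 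Hence $n\,\prob(Q_n<\delta n^{1/\alpha})\to\infty$ for every fixed $\delta$, and no choice of $\delta=\delta(\varepsilon)$ can make your second term $o(1)$; the real obstruction is not the compatibility of the two constraints on $\delta$ that you flag. The Cram\'er fallback does not repair this: $I_{\tilde\phi}$ is strictly positive for each fixed $k$ but degenerates as $k\to\infty$. Since $\prob(\tilde X^{\sss(k)}=0)=\eta(w_k)^{-1}\expec[(1-w_k)^X]\to1$ while the mean stays near $\alpha$, one computes $\sup_{s>0}\{-s\varepsilon-\log\expec[\e^{-s\tilde X^{\sss(k)}}]\}=\Theta(w_k^{\alpha/(1-\alpha)})=\Theta(1/n)$, so the ``exponential decay'' only reaches a constant. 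Indeed, writing $\tilde S_n=\tilde X_1^{\sss(k)}+\cdots+\tilde X_n^{\sss(k)}$, one has $\prob_k(\tilde S_n\le\varepsilon n)\to\prob(x^{1/\alpha-1}S_\alpha\le \varepsilon\, c)>0$: once the exact-equality structure has been discarded in favour of the lower-tail event, the claim $n\prob(\cdot)=o(1)$ is unattainable.

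For comparison, the paper's own proof is a one-line version of the same reduction: it bounds the quantity by $n\prob_k(\tilde S_n\le\varepsilon n)$, applies Markov's inequality to $\e^{-\tilde S_n}$, and argues that $n(p_0\e^\varepsilon+p_1\e^{\varepsilon-1})^n\to0$ for $\varepsilon$ small, treating $p_0=\prob(\tilde X^{\sss(k)}=0)$ and $p_1$ as if they were bounded away from $1$ and $0$ uniformly in $k$ — which, by the computation above, they are not for $\alpha\in(0,1)$. So your attempt founders on the same underlying fact as the paper's argument, only after more machinery. An estimate that actually reaches the needed scale must retain the equality $\{\tilde S_n=n-\hat D_{v_k}\}$ and use a local limit bound of the form $\sup_{m\le\varepsilon n}\prob_k(\tilde S_n=m)\lesssim (w_kn^{1/\alpha})^{-1}\sup_{z\le\varepsilon x^{1-1/\alpha}}\psi_\alpha(z)$; after multiplication by $n$ this gives a quantity that is small in $\varepsilon$, uniformly in $k$ (using $\psi_\alpha(0^+)=0$), which is what the subsequent $\varepsilon\to0$ step in the proof of Proposition \ref{prop-size-trees-backbone-(0,1)} actually requires — rather than $o(1)$ as $k\to\infty$ for fixed $\varepsilon$.
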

\begin{proof}
    If $\hat D_{v_k}>n$, then $\prob(H_k=n)=0$ and therefore we may restrict to $\hat D_{v_k}\leq n$. This implies
    \begin{equation}
        \begin{aligned}
            n &\expec_k\Big[\frac{\hat{D}_{v_k}}{n} 
    \prob_k(\tilde X_1^{\sss(k)}+\cdots+\tilde X_n^{\sss(k)}=n-\hat{D}_{v_k}\mid  \hat{D}_{v_k})
    I^{\sss (>)}_\alpha(\varepsilon)
    \Big]\\&
    \leq 
    n 
    \prob_k(\tilde X_1^{\sss(k)}+\cdots+\tilde X_n^{\sss(k)}\leq \varepsilon n)
    .
        \end{aligned}
    \end{equation}
   We apply the transformation $x\mapsto \e^{-x}$ on both sides. Then, by the Markov inequality and the fact that $\tilde X_1^{\sss(k)}+\cdots+\tilde X_n^{\sss(k)}$ are i.i.d., we find
    \begin{equation}
          n 
    \prob_k(\tilde X_1^{\sss(k)}+\cdots+\tilde X_n^{\sss(k)}\leq \varepsilon n)
    \leq 
    n\expec[\e^{- (\tilde X_1^{\sss(k)}+\cdots+\tilde X_n^{\sss(k)})}]\e^{\varepsilon n}
    =
     n\expec[\e^{- \tilde X_n^{\sss(k)}}]^n\e^{\varepsilon n}
    .
    \end{equation}
   We find that $p_0:=\prob(\tilde X^{\sss(k)}=0)<1$ and thus there exists some $p_1\in(0,1-p_0)$ such that 
    $ \expec[\e^{-\tilde X_n^{\sss(k)}}]\leq
        p_0 + p_1\e^{-1}$. Therefore
    \begin{equation}
        n\expec[\e^{- \tilde X_n^{\sss(k)}}]^n\e^{\varepsilon n}\leq 
        n (p_0 + p_1\e^{-1})^n 
        \e^{\varepsilon n}
        =n(p_0  \e^{\varepsilon} + p_1\e^{-1+\varepsilon})^n. 
    \end{equation}
    It is straightforward to show that for $\varepsilon$ small enough, $p_0  \e^{\varepsilon} + p_1\e^{-1+\varepsilon}<1$, which concludes the result.
\end{proof}

\paragraphi{ Local limit results}
\label{par:5.4-part1}
We continue with \eqref{split_mass_alpha(0,1)} where we bound the second term as $o(n^{-1})=\o[w_k^{\alpha/(1-\alpha)}]{k}{}$. On the first term we apply the result from \eqref{eq:massfunction_eta}.
 We also apply the binomial local limit theorem  from \eqref{Bin-LCLT} and find
\begin{equation}
    \begin{aligned}
    \label{Bin-LCLT-cons-d-(0,1)}
    &\prob_k(H_k=n)\\&=
    \expec_k\Big[\frac{\hat{D}_{v_k}}{n\eta(w_k)^{\hat{D}_{v_k}}} 
    \prob_k(X_1^{\sss(k)}+\cdots+X_n^{\sss(k)}=n-\hat{D}_{v_k}\mid  \hat{D}_{v_k})
    I^{\sss (\leq)}_\alpha(\varepsilon) \Big] \\&\qquad+\o[w_k^{\alpha/(1-\alpha)}]{k}{}
    \\
    &=\frac{(1+\o{k}{}}{n\sqrt{2 \pi  (1-w_k)}}\expec_k\Big[\frac{\hat{D}_{v_k}}{\eta(w_k)^{\hat{D}_{v_k}} \sqrt{n-\hat D_{v_k}}}\\&\qquad\times{\mathrm{e}}^{-(n-\hat{D}_{v_k}-w_kQ_n)^2/[2 (n-\hat{D}_{v_k})(1-w_k)]} I^{\sss (\leq)}_\alpha(\varepsilon)\Big]+\o[w_k^{\alpha/(1-\alpha)}]{k}{}\\
    &=\frac{(1+\o{k}{})}{n\sqrt{2 \pi }}\expec_k\Big[\frac{\hat{D}_{v_k}}{\eta(w_k)^{\hat{D}_{v_k}} \sqrt{n-\hat D_{v_k}}}\\&\qquad \times\mathrm{e}^{-(n-\hat{D}_{v_k}-w_kQ_n)^2/[2 (n-\hat{D}_{v_k})]} I^{\sss (\leq)}_\alpha(\varepsilon) \Big] +\o[w_k^{\alpha/(1-\alpha)}]{k}{},
    \end{aligned}
\end{equation}
 where we have used that $w_k\ll 1$ to replace $1-w_k$ by 1 to simplify the formula. 
We then use the local limit theorem for stable laws \cite[Chapter 9 \textsection 50]{Gnedenko1968} to write
\begin{equation}
    \prob(Q_n=x)
    =\frac{1}{n^{1/\alpha}} (\psi_\alpha(x/n^{1/\alpha})+o(1)),
\end{equation}
where $\psi_\alpha(\cdot)$ is the density of a stable law with exponent $\alpha$ and scale parameter $c_\alpha$. 
Substitution leads to
\begin{equation}
    \begin{aligned}
    \label{Bin-LCLT-cons-d-(0,1)-b}
    \prob_k(H_k=n)
    =\frac{1+\o{k}{}}{n\sqrt{2 \pi }}\expec_k&\Big[\frac{\hat{D}_{v_k}}{\eta(w_k)^{\hat{D}_{v_k}}\sqrt{n-\hat D_{v_k}}}
    \sum_{u\geq 1} \frac{1}{n^{1/\alpha}} (\psi_\alpha(u/n^{1/\alpha})+o(1))\\&\times {\mathrm{e}}^{-(n-\hat{D}_{v_k}-w_ku)^2/[2 (n-\hat{D}_{v_k})]}  I^{\sss (\leq)}_\alpha(\varepsilon)\Big] +\o[w_k^{\alpha/(1-\alpha)}]{k}{}.
    \end{aligned}
\end{equation}
We simplify the expectation based on Lemma~\ref{lem:Norm_approx_fin_mean}, 
we apply this with $z=0$, $m=(n-\hat{D}_{v_k})/w_k$, $\sigma^2=(n-\hat{D}_{v_k})/w_k^2$, and $b=(n-\hat{D}_{v_k})/(w_kn^{1/\alpha})$.
We note that the assumptions are met as, restricted to $\hat D_{v_k} < (1-\varepsilon)n$, we find that $n-\hat D_{v_k}$ diverges in $n$. Therefore
$m \to\infty$ and $\sigma/m = 1/\sqrt{n-\hat D_{v_k}} = \o{k}{}$. 
This leads to
\begin{equation}
    \begin{aligned}
   \label{eq:result_normal_approx_alpha(0,1)}
    &\sum_{a} \psi_\alpha(a/n^{1/\alpha}){\mathrm{e}}^{-(n-\hat{D}_{v_k}-w_ka)^2/[2 (n-\hat{D}_{v_k})]}
    \\&\quad=
    (1+\o{k}{})\psi_\alpha\Big((n-\hat{D}_{v_k})/(w_kn^{1/\alpha})\Big)\sqrt{2\pi (n-\hat{D}_{v_k})/w_k^2}. 
    \end{aligned}
\end{equation}
   
We apply Lemma~\ref{lem:Normal_approx_alpha(0,1)} and its implied result on this specific case, as outlined in \eqref{eq:result_normal_approx_alpha(0,1)} to find
\begin{equation}
    \begin{aligned}
    \label{Bin-LCLT-cons-d-(0,1)-c}
    \prob_k(H_k=n)
    &=\frac{1+\o{k}{}}{ w_kn^{1/\alpha}}\expec_k\Big[\frac{\hat{D}_{v_k}/n}{\eta(w_k)^{\hat{D}_{v_k}}}
    \psi_\alpha\Big((n-\hat{D}_{v_k})/(w_kn^{1/\alpha})\Big)  I^{\sss (\leq)}_\alpha(\varepsilon) \Big]\\&\quad  +\o[w_k^{\alpha/(1-\alpha)}]{k}{}.
    \end{aligned}
\end{equation}
    In the following, we write the expectation of \eqref{Bin-LCLT-cons-d-(0,1)-c} as the expectation of some bounded function in $\hat D_{v_k}/n = \hat D_{v_k} w_k^{\alpha/(1-\alpha)}$.  Convergence would then be implied if $\hat D_{v_k}/n$ converges in distribution. In the following we analyse this in more detail.
    
\paragraphi{ Convergence of $ w_k^{\alpha/(1-\alpha)} \hat D_{v_k}$}
\label{par:5.4-part2}
We substitute $n=\lceil w_k^{-\alpha/(1-\alpha)} x\rceil$ and take $\varepsilon\to 0$.
Then \eqref{Bin-LCLT-cons-d-(0,1)-c} can be written as 
\begin{equation}
\begin{aligned}
    \label{Bin-LCLT-cons-d-(0,1)-d}
    \prob_k(H_k=\lceil &w_k^{-\alpha/(1-\alpha)} x\rceil)=\frac{w_k^{\alpha/(1-\alpha)}(1+\o{k}{})}{ x^{1+1/\alpha}}\\&\times\expec_k\Big[\frac{\hat{D}_{v_k}w_k^{\alpha/(1-\alpha)}}{\eta(w_k)^{\hat{D}_{v_k}}}
    \psi_\alpha\Big((x-\hat{D}_{v_k}w_k^{\alpha/(1-\alpha)})/(x^{1/\alpha})\Big) I^{\sss (\leq)}_\alpha(0)\Big].
 \end{aligned}   
\end{equation}
    Convergence of $w_k^{\alpha/(1-\alpha)} \hat D_{v_k}$, follows according to Lemma~\ref{lem:convergence_hatDBK_alpha(0,1)}. With $\Gamma\overset{d}{=}\GammaD(1-\alpha,1)$, we find, conditionally on $W_k=w_k$
\begin{equation}
    \label{hat-D-conv}
    \hat{D}_{v_k} w_k^{\alpha/(1-\alpha)} \xrightarrow{d} C_\theta^{-1}\Gamma.
\end{equation}
By the continuous mapping theorem, we also find 
\begin{equation}
 \label{eta-conv}
    \begin{aligned}
      \eta(w_k)^{\hat{D}_{v_k}}
    =\exp\Big\{\hat D_{v_k}w_k^{\alpha/(1-\alpha)}\cdot w_k^{-\alpha/(1-\alpha)}\log(1-\theta(w_k))\Big\}
   \xrightarrow{d}
   \e^{-\Gamma },
    \end{aligned}
\end{equation}
Now the expectation in \eqref{Bin-LCLT-cons-d-(0,1)-d} is of a bounded function of $\hat D_{v_k}w_k^{\alpha/(1-\alpha)}$, with a single discontinuity at $\hat D_{v_k} w_k^{\alpha/(1-\alpha)}=x$ induced by $ I^{\sss (\leq)}_\alpha$ as we recall from \eqref{eq:Indic_alpha<1}.
By \eqref{hat-D-conv} and because $C_\theta^{-1}\Gamma=x$ w.p. 0, we can use the Portmanteau theorem so that by \eqref{eta-conv}, we
can write \eqref{Bin-LCLT-cons-d-(0,1)-d} as
\begin{equation}
\begin{aligned}
    \label{Bin-LCLT-cons-d-(0,1)-e}
    &\prob_k(H_k=\lceil w_k^{-\alpha/(1-\alpha)} x\rceil)
   \\&\qquad =\frac{w_k^{\alpha/(1-\alpha)}(1+\o{k}{})}{ x^{1+1/\alpha}}\expec_k\Big[\frac{\Gamma}{C_\theta}  \mathrm{e}^{\Gamma}
    \psi_\alpha\Big((x-\Gamma/C_\theta)/(x^{1/\alpha})\Big)\indic{\Gamma\leq C_\theta x}\Big]\\
&\qquad = \frac{w_k^{\alpha/(1-\alpha)}(1+\o{k}{})}{ x^{1+1/\alpha}}\int_0^{C_\theta x} \psi_{\alpha}((x-y/C_\theta)/x^{1/\alpha})
    \frac{y}{C_\theta}
   \frac{y^{-\alpha}}{\Gamma(1-\alpha)}\dif y.
 \end{aligned}  
\end{equation}
We can simplify the expression by a substitution of $(C_\theta x)^{-1}y= u$, so that
\begin{equation}
     \label{Bin-LCLT-cons-d-(0,1)-z}
     \begin{aligned}
     &\prob_k(H_k=\lceil w_k^{-\alpha/(1-\alpha)} x\rceil)
   \\&\qquad=
     \frac{w_k^{\alpha/(1-\alpha)}  (1+\o{k}{})}{ C_\theta^{\alpha-1}x^{\alpha+1/\alpha-1} }
     \int_0^1  \psi_\alpha( (1-u)/x^{1/\alpha-1} ) \frac{ u^{1-\alpha}}{\Gamma(1-\alpha)} \dif u
     \\&\qquad
      =
     \frac{ c_{\sss X}w_k^{\alpha/(1-\alpha)}  (1+\o{k}{})}{x^{\alpha+1/\alpha-1} }
     \int_0^1  \psi_\alpha( (1-u)/x^{1/\alpha-1} )  u^{1-\alpha} \dif u,
    \end{aligned}
\end{equation}
    where the last equality follows when we recall the definition of $C_\theta$, as given in \eqref{eq:def_C_theta}. This proves the convergence of the mass function.
\end{proof}

\section{Combining the ingredients}
\label{sec:Combine}
In this section, we combine the results derived in the previous sections in order to prove our main results in Theorem~\ref{thm-main}. 
Recall that $M_k$ denotes the size of the $k$-cut IPC, defined in \eqref{eq:def_M_k}. 
The scaling of $(M_k)_{k\geq 0}$ depends sensitively on the precise range of $\alpha$ that we consider. Indeed, for $\alpha>2$ and $\alpha\in(1,2)$, the main contribution to $M_k$ is from the attached forests that have size $\Theta(k^2)$ and $\Theta(k^{\alpha/(\alpha-1)})$, respectively. Even though these contributions are rare (see Propositions \ref{prop-size-trees-backbone->2} and  \ref{prop-size-trees-backbone-(1,2)}, respectively), their mass is so large that they dominate $M_k$. For $\alpha\in(0,1)$, the main contribution arises from the forests furthest away from the root and have size $\Theta( W_k^{-\alpha/(1-\alpha)})$. The proof is split in two parts, where $\alpha>2$ and $\alpha\in(1,2)$ are discussed together in Section~\ref{sec:volume_scaling_alpha>1} and $\alpha\in(0,1)$ in Section~\ref{sec:volume_scaling_alpha<1}.

\subsection{ \mtitle{$k$-}cut IPC scaling for  \mtitle{$\alpha>2$} and \mtitle{$\alpha\in(1,2)$} }
\label{sec:volume_scaling_alpha>1}
For $\alpha>2$  and $\alpha\in(1,2)$, we recall $\halpha=\alpha \wedge 2$ and define 
\begin{equation}
    \label{eq:def_gamma_exponent2}
        \gamma =\begin{cases}
        2 &\text{ for } \alpha>2,\\
        \frac{\alpha}{\alpha-1} &\text{ for }  \alpha \in(1,2).
        \end{cases}
    \end{equation}
In this section we prove Theorem~\ref{thm-main} restricted to $\alpha>2$ and $\alpha\in(1,2)$, i.e., we show that

\begin{equation}
\label{eq:Poisson_measure_goal}
    (k^{-\gamma} M_{\lceil kt\rceil })_{t>0} \xrightarrow{\mathcal{D}} 
    \bigg( \int_0^t 
    \int_0^\infty 
    x\Pi_{\lambda_\alpha}(\dif x,\dif s)\bigg)_{t>0},
\end{equation}
where $\Pi_{\lambda_\alpha}$ the intensity measure of an  inhomogenous Cox process, driven by the random measure $\lambda_\alpha(x,s)$, where

\begin{equation}
\label{eq:def_lambda_PPP}
    \lambda_\alpha( x,s) = 
    \frac{h_\alpha(xs^{-\gamma},sL_{\halpha}(s)/(\halpha-1))}{s^{\gamma+1}} \dif x\dif s,
\end{equation}
and $h_\alpha$ is given in \eqref{eq:def_h_alpha>2} for $\alpha>2$ and in  \eqref{eq:def_h_alpha(1,2)} for $\alpha\in(1,2)$.  Note that $\lambda_\alpha$ is a measure if the integral on the right-hand side of \eqref{eq:Poisson_measure_goal} is finite as shown in Appendix \aprefa{E.1}{app:sec_lambda_finite}.

\begin{proof}[Proof of Theorem~\ref{thm-main} for $\alpha>2$ and $\alpha\in(1,2)$] 
We ignore the contribution of the backbone vertices, as these contribute size $k$, that is negligible compared with $k^\gamma$. 
The proof is then structured based on the following parts:
\begin{enumerate}
    \item \hyperref[par:6.1-part1]{\textbf{No contribution of small forests.}} 
    In Lemma~\ref{lem:volume_no_small_clusters_alpha>1}, we first argue that forests that are of size $H_k \leq \delta k^{\gamma}$ or are within a distance $\varepsilon k$ from the root, do not contribute significantly in the limit. This allows us to restrict the further analysis to the volume of only large values of $H_k$ that are far away from the root. We denote this with $\dot M_k^{(\delta,\varepsilon)}$. 
    \item \hyperref[par:6.1-part2]{\textbf{Convergence of the conditional one-dimensional Laplace transform. }}
    We define the Laplace transform of $k^{-\gamma}\tilde M^{(\delta,\varepsilon)}_{\lceil kt\rceil }$ conditioned on $(W_k)_{k\geq0}$ fixed and show its convergence in Lemma
    \ref{lem:volume_scaling_kcut_a>1_1D}.
    \item \hyperref[par:6.1-part3]{\textbf{Extension to the unconditional convergence in distribution.}} We consider the unconditioned Laplace transform, i.e. as a random variable driven by $(W_k)_{k\geq 0}$ and show in Lemma~\ref{lem:LT_conv_J1_topology} that the convergence result of Theorem~\ref{thm-subthmWk} extends to convergence (in distribution) of the unconditioned Laplace transform. 
    \item \hyperref[par:6.1-part4]{\textbf{Convergence to the Cox process.}} By taking expectations on the result of the previous part, convergence of the unconditional Laplace transform follows. In Corollary \ref{cor:convergence_volume_alpha>1} we show that this also matches the Laplace transform of the Cox process with intensity measure $\Pi_{\lambda_\alpha}$ for some fixed $t$. 
    \item \hyperref[par:6.1-part5]{\textbf{Extension to $J_1$-convergence.}}
    Results thus far work for a fixed $t$ and convergence of the one-dimensional distribution therefore follows. In the last part we strengthen this result by showing process convergence in the $J_1$-topology, by showing convergence of the finite-dimensional distributions and tightness.
    This concludes the proof.
\end{enumerate}
\paragraphi{Part 1 - No contribution of small forests}
\label{par:6.1-part1}
We first show that small forests or forests that are close to the root do not contribute to the volume scaling in the limit. A forest is considered small if its size is smaller than $\delta k^{\gamma}$ for some small $\delta$ and is considered close to root if its respective backbone vertex is closer than $\varepsilon k$ from the root, for some small $\varepsilon$. We formulate this in the following lemma: 

\begin{lemma}[No small forests]
\label{lem:volume_no_small_clusters_alpha>1}
Recall the volume scaling of $M_k$ from \eqref{eq:def_M_k} and define for small $\varepsilon >0$ and $\delta>0$:
\begin{equation}
    \label{eq:def_M_k_smalllarge}
    \dot M^{(\delta,\varepsilon)}_k
    =
    \sum_{\ell=0}^{\lfloor \varepsilon k\rfloor} 
    H_\ell+
    \sum_{\ell=\lceil\varepsilon k\rceil }^k 
    H_\ell\1\{ H_\ell < \delta k^\gamma \} =
    I_1(k,\varepsilon
    )+I_2(k,\delta,\varepsilon
    ),
\end{equation}
then, for every $\eta>0$, $\prob(k^{-\gamma} \dot M^{(\delta,\varepsilon)}_k>\eta)= 
\o{1/\varepsilon,1/\delta}{k}$, where we recall from \eqref{eq:small-o} that this error vanishes for both $\varepsilon\to 0$ and $\delta\to 0$ uniformly in $k>0$.
\end{lemma}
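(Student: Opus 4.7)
I plan to bound $\prob(k^{-\gamma} I_j > \eta/2)$ for $j \in \{1,2\}$ via Markov's inequality applied to $\expec[I_j; \mathcal{G}_k(\kappa)]$, where the \emph{good event}
\begin{equation*}
\mathcal{G}_k(\kappa) = \bigl\{a_\ell := \ell(W_\ell - p_c)/p_c \in [\kappa^{-1}, \kappa] \text{ for all } \ell \in [\varepsilon k, k]\bigr\}
\end{equation*}
has probability tending to $1$ as $\kappa\to\infty$ (after sending $k\to\infty$). This follows from Theorem~\ref{thm-subthmWk} together with the monotonicity of $(W_\ell)_\ell$: the limit process $L_{\halpha}$ is decreasing and satisfies $\inf_{t\in[\varepsilon,1]} L_{\halpha}(t)=L_{\halpha}(1)\sim \GammaD(1/(\halpha-1),1)>0$ a.s.\ and $\sup_{t\in[\varepsilon,1]} L_{\halpha}(t)=L_{\halpha}(\varepsilon)<\infty$ a.s., so both the upper and lower excursions can be controlled uniformly. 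On $\mathcal{G}_k(\kappa)$ the uniform-in-$a$ bounds from Propositions~\ref{prop-size-trees-backbone->2} and \ref{prop-size-trees-backbone-(1,2)} apply to every $\ell\in[\varepsilon k, k]$ simultaneously.

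\textbf{Bounding $I_1$.} Conditionally on $W_\ell$, the representation $H_\ell=\sum_{i=1}^{\hat D_{v_\ell}}\tilde T_i^{(\ell)}$ and Wald's identity give
\begin{equation*}
\expec[H_\ell\mid W_\ell] = \frac{\expec[\hat D_{v_\ell}\mid W_\ell]}{1-\expec[\tilde X^{(\ell)}\mid W_\ell]}.
\end{equation*}
Combining the asymptotics of $\expec[\hat D_{v_\ell}\mid W_\ell]$ from Section~\ref{sec:DBK} (which is $\Theta(1)$ for $\alpha>2$ and $\Theta(\ell^{(2-\alpha)/(\alpha-1)})=\Theta(\ell^{\gamma-2})$ for $\alpha\in(1,2)$ on $\mathcal{G}_k(\kappa)$) with the near-critical estimate $1-\expec[\tilde X^{(\ell)}\mid W_\ell]\asymp W_\ell-p_c\asymp 1/\ell$ coming from \eqref{eq:Conditional_final_clusters_mean_a>1} and \eqref{eq:Rpthetap_alpha(1,2)}, one obtains $\expec[H_\ell\mid W_\ell]\leq C_\kappa\,\ell^{\gamma-1}$ uniformly on $\mathcal{G}_k(\kappa)$. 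Summing yields $\expec[I_1;\mathcal{G}_k(\kappa)]\leq C_\kappa(\varepsilon k)^\gamma$, so that Markov gives $\prob(k^{-\gamma}I_1>\eta/2)\leq 2C_\kappa\varepsilon^\gamma/\eta + \prob(\mathcal{G}_k(\kappa)^c)$.

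\textbf{Bounding $I_2$.} The tail-integral identity and the uniform mass-function scaling of Propositions~\ref{prop-size-trees-backbone->2}--\ref{prop-size-trees-backbone-(1,2)} give, on $\mathcal{G}_k(\kappa)$,
\begin{equation*}
\expec[H_\ell\,\1\{H_\ell<\delta k^\gamma\}\mid W_\ell]\leq C_\kappa\,\ell^{\gamma-1}\int_0^{\delta(k/\ell)^\gamma} G(x,a_\ell)\,dx,\qquad G(x,a):=\int_x^\infty h_\alpha(y,a)\,dy.
\end{equation*}
The explicit formulas \eqref{eq:def_h_alpha>2} and \eqref{eq:def_h_alpha(1,2)} imply $h_\alpha(x,a)\sim Cx^{-(2-1/\halpha)}$ as $x\to 0$, hence $G(x,a)\lesssim x^{1/\halpha-1}$ and $\int_0^M G\,dx\lesssim M^{1/\halpha}$. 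Using the algebraic identity $\gamma-1-\gamma/\halpha=0$, this simplifies to
\begin{equation*}
\expec[H_\ell\,\1\{H_\ell<\delta k^\gamma\}\mid W_\ell]\leq C_\kappa\,\delta^{1/\halpha}\,k^{\gamma/\halpha},
\end{equation*}
independent of $\ell$. Summing over the $O(k)$ values of $\ell\in[\varepsilon k, k]$ and using $1+\gamma/\halpha=\gamma$ gives $\expec[I_2;\mathcal{G}_k(\kappa)]\leq C_\kappa\,\delta^{1/\halpha}\,k^\gamma$, whence $\prob(k^{-\gamma}I_2>\eta/2)\leq 2C_\kappa\delta^{1/\halpha}/\eta+\prob(\mathcal{G}_k(\kappa)^c)$. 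Sending first $k\to\infty$, then $\varepsilon,\delta\to 0$, and finally $\kappa\to\infty$ concludes.

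\textbf{Main obstacle.} The subtle point is propagating the uniform-in-$a$ estimates of Propositions~\ref{prop-size-trees-backbone->2}--\ref{prop-size-trees-backbone-(1,2)} into bounds that are uniform \emph{simultaneously} for all $\ell\in[\varepsilon k, k]$. This is achieved via the good event $\mathcal{G}_k(\kappa)$, whose high probability relies on the $J_1$-convergence in Theorem~\ref{thm-subthmWk} (not just one-dimensional convergence, since a uniform control over the entire time window is required) and on the fact that $L_{\halpha}$ neither touches $0$ nor escapes to $\infty$ on $[\varepsilon,1]$. This good-event reduction dictates the order of limits $k\to\infty$, then $\varepsilon,\delta\to 0$, then $\kappa\to\infty$, without which the constants $C_\kappa$ would blow up.
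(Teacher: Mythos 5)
Your overall strategy — Markov's inequality applied to (restricted) first moments of $I_1$ and $I_2$ — is the same as the paper's, and your $I_2$ computation reproduces the paper's arithmetic ($\sum_\ell \ell^{\gamma-2}\asymp k^{\gamma-1}$ combined with a truncated first moment of $H_\ell$ of order $(\delta k^\gamma)^{c}$ for some $c>0$). Where you differ: the paper handles $I_2$ directly from the uniform-in-$n$ mass-function bounds \eqref{eq:toshow_uniformbound_alpha>2} and \eqref{eq:toshow_uniformbound_alpha(1,2)} (the ``Furthermore'' parts of Propositions~\ref{prop-size-trees-backbone->2} and \ref{prop-size-trees-backbone-(1,2)}), giving $i\,\prob_\ell(H_\ell=i)\leq C i^{1/\gamma-1}\ell^{\gamma-2}$ and hence a final bound $\delta^{1/\gamma}$; and it disposes of $I_1$ by citing the tightness argument of Appendix E.2 rather than via Wald's identity. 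Your Wald computation for $I_1$ is a clean and arguably more self-contained alternative, and your explicit good event $\mathcal{G}_k(\kappa)$ makes visible a conditioning step that the paper leaves implicit (its uniform bounds are only stated for $a\in(\varepsilon,1/\varepsilon)$).

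Two steps need repair. First, for $I_2$ you bound $G(x,a)\lesssim x^{1/\halpha-1}$ down to $x=0$ using the small-$x$ asymptotics of the limit density $h_\alpha$; but the convergence $\ell\,\prob_\ell(H_\ell>x\ell^\gamma)\to G(x,a)$ in Theorem~\ref{thm:Cs_scaling} is only uniform for $x>\varepsilon$, so it cannot be invoked for $n\ll\ell^\gamma$, which is exactly the range that matters in $I_2$. The fix is to use the uniform-in-$n$ mass-function bounds instead; these yield the weaker tail $\prob_\ell(H_\ell>n)\leq Cn^{1/\gamma-1}\ell^{\gamma-2}$ (note $1/\gamma\neq 1/\halpha$ for $\alpha\in(1,2)$) and the final estimate $\delta^{1/\gamma}$ rather than $\delta^{1/\halpha}$ — still $o_\delta(1)$, so the conclusion survives. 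Second, your good event only constrains $a_\ell$ for $\ell\in[\varepsilon k,k]$, yet your $I_1$ bound $\expec[H_\ell\mid W_\ell]\leq C_\kappa\ell^{\gamma-1}$ is asserted for $\ell\leq\varepsilon k$, where $a_\ell$ is uncontrolled (and for fixed small $\ell$ the unconditional expectation of $H_\ell$ need not even be finite). This is fixable by monotonicity of $(W_\ell)_\ell$: on $\mathcal{G}_k(\kappa)$ one has $W_\ell-p_c\geq W_{\lceil\varepsilon k\rceil}-p_c\geq p_c/(\kappa\varepsilon k)$ for all $\ell\leq\varepsilon k$, which through your Wald identity gives the $\ell$-independent bound $\expec[H_\ell\mid W_\ell]\leq C_\kappa(\varepsilon k)^{\gamma-1}$ and hence the same total $C_\kappa(\varepsilon k)^\gamma$. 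With these two corrections the argument is complete.
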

\begin{proof}
We show that both sums vanish for $\varepsilon$ and $\delta$ small. Firstly, $I_1(k,\varepsilon
)$ is small in $\varepsilon$ uniformly in $k$ with high probability by Appendix \aprefa{E.2}{App:tightness_volume>1}. Then for $I_2(k,\delta,\varepsilon)$,
by the Markov inequality it suffices to show that $\expec[  k^{-\gamma} I_2(k,\delta,\varepsilon)]$ converges to $0$ for $\varepsilon,\delta\to 0$ uniformly in $k$. 
By \eqref{eq:toshow_uniformbound_alpha>2} and \eqref{eq:toshow_uniformbound_alpha(1,2)} there exists some finite $C_\alpha>0$ such that, for all $n$ and $k$,
\begin{equation}
   n^{2-1/\gamma}k^{2-\gamma}\prob_k(H_k=n)
   <C_\alpha.
\end{equation}
 This, together with the definition of $I_2(k,\delta,\varepsilon)$ as defined in \eqref{eq:def_M_k_smalllarge}, implies 
\begin{equation}
\begin{aligned}
\label{eq:small_volume_small_clusters1}
 k^{-\gamma} \expec[I_2(k,\delta,\varepsilon)\mid (W_i)_{i=0}^k = (w_i)_{i=0}^k]& \leq k^{-\gamma} 
\sum_{\ell = 0}^k 
\sum_{i=0}^{\lceil \delta k^{\gamma}\rceil}i
\prob_k(H_\ell = i) 
\\&\leq  k^{-\gamma} 
\sum_{\ell = 0}^k 
\sum_{i=0}^{\lceil \delta k^{\gamma}\rceil}
C_\alpha i^{1/\gamma-1}\ell^{\gamma-2}.
\end{aligned}
\end{equation}
Both sums can be bounded by the equivalent integral expression and for some positive $C$ we find
\begin{equation}
\begin{aligned}
\label{eq:small_volume_small_clusters2}
    k^{-\gamma} 
\sum_{\ell = 0}^k 
\sum_{i=0}^{\lceil \delta k^{\gamma}\rceil }
C_\alpha i^{1/\gamma-1}\ell^{\gamma-2}
&= C_\alpha k^{-\gamma} 
\sum_{\ell=0}^k 
\ell^{\gamma-2}
\sum_{i=0}^{\lceil \delta k^{\gamma}\rceil }
i^{1/\gamma-1}\\&\leq
C k^{-\gamma} k^{\gamma-1} (\delta k^\gamma)^{1/\gamma}
=\o{1/\delta}{k}. 
\end{aligned}
\end{equation}
Combining this with the error term introduced by $I_1(k,\varepsilon)$ concludes the proof.
\end{proof}
By Lemma~\ref{lem:volume_no_small_clusters_alpha>1} it is sufficient to consider the volume scaling 
\begin{equation}
\label{eq:def_Mk_tilde}
    \tilde{M}^{(\delta,\varepsilon)}_k
    = \sum_{\ell=\lceil\varepsilon k\rceil}^k 
    H_\ell\1\{ H_\ell > \delta k^\gamma \},
\end{equation}
at the cost of some error $\o{1/\varepsilon,1/\delta}{k}$.
The convergence of this object is considered in the following part.

\paragraphi{Part 2 - Convergence of the conditional one-dimensional Laplace transform}
\label{par:6.1-part2}
Based on the previous part, it suffices to consider the one-dimension convergence of the $(\tilde M^{(\delta,\varepsilon)}_k)_{\lceil kt\rceil})_{k\geq 0}$ process. We first introduce some notation. Firstly, we slightly abuse notation and write for some function $q(x)$ 
\begin{equation}
\label{eq:def_intervalnotation}
    q([a,b]) = (q(x))_{x\in[a,b]},
\end{equation}
Then, we define 
\begin{equation}
\label{eq:def_L_halpha^(k)}
    L_{\halpha}^{\sss(k)}(t)=\frac{k(W_{kt}-p_c)}{p_c}(\halpha-1),
\end{equation}
that can be seen as the stochastic process with limiting stochastic process $ L_{\halpha}(t)$.
We recall that by Theorem \ref{thm-subthmWk}, this convergence was in the $J_1$-topology on $D[\varepsilon,t)$. 
In this part, we condition on $L^{\sss(k)}_{\halpha}([\varepsilon,t])=a^{\sss(k)}([\varepsilon,t])$ and therefore consider the conditional Laplace transform of $k^{-\gamma}\tilde M_{\lceil kt\rceil}$. We show that it converges for fixed $t$ and $k\to\infty$:

\begin{lemma}[Convergence of the conditional Laplace transform.]
\label{lem:volume_scaling_kcut_a>1_1D}
Fix $t>\varepsilon$, consider a c\`adl\`ag process $a^{\sss(k)}([\varepsilon,t])$ and
assume that $ a^{\sss(k)}([\varepsilon,t])\xrightarrow{\mathcal{D}} a^*([\varepsilon,t])$ for $k\to\infty$, where convergence is in the $J_1$-topology and $a^*$ non-increasing. Define 
\begin{equation}
\label{eq;def_bigXi}
    \Xi^{(a^*,\delta)}(s) =\int_{\delta t^\gamma}^\infty \frac{1-\e^{- ux}}{s^{\gamma+1}} h_\alpha\Big(xs^{-\gamma},
    \frac{s a^{*}(s)}{\halpha-1}\Big) \dif x ,
\end{equation}
then 
\begin{equation}
\label{eq:LT_volumescaling_limit1}
     \expec\Big[ \e^{-u k^{-\gamma}\tilde M^{
     (\delta,\varepsilon)}_{\lceil kt\rceil}}
    \mid L_{\halpha}^{\sss(k)}([\varepsilon,t]) = a^{\sss(k)}([\varepsilon,t])\Big]
    \to
    \expec\Big[ \e^{-\int_\varepsilon^t\Xi^{(a^*,\delta)}(s)\dif s}\Big].
\end{equation}

\end{lemma}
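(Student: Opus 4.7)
By the construction of the $k$-cut IPC in Algorithm~\ref{alg:kcutIPC}, conditioning on the entire future maximum weight path, equivalently on $L_\halpha^{\sss(k)}([\varepsilon,t])=a^{\sss(k)}([\varepsilon,t])$, fixes $W_\ell$ for every $\ell\in[\lceil\varepsilon k\rceil,\lceil kt\rceil]$, and the forests $(H_\ell)_{\ell}$ hanging off distinct backbone vertices are then mutually independent. Consequently, the conditional Laplace transform factorises as
\begin{equation*}
\expec\Big[\e^{-uk^{-\gamma}\tilde M^{(\delta,\varepsilon)}_{\lceil kt\rceil}}\,\Big|\,L_\halpha^{\sss(k)}([\varepsilon,t])=a^{\sss(k)}([\varepsilon,t])\Big]
=\prod_{\ell=\lceil\varepsilon k\rceil}^{\lceil kt\rceil}\phi_\ell^{(k)}(u),
\end{equation*}
with $\phi_\ell^{(k)}(u)=1-\sum_{n>\delta k^{\gamma}}(1-\e^{-un/k^\gamma})\prob(H_\ell=n\mid W_\ell)$.

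\textbf{Step 2: first-order expansion of each factor.} I will analyse each $\phi_\ell^{(k)}$ using Propositions~\ref{prop-size-trees-backbone->2} and \ref{prop-size-trees-backbone-(1,2)}. Writing $s=\ell/k$, the substitution $n=\lceil k^\gamma x\rceil$ converts the sum defining $\phi_\ell^{(k)}$ into a Riemann sum of mesh $k^{-\gamma}$ with integrand $k^{\gamma+1}\prob(H_\ell=\lceil k^\gamma x\rceil\mid W_\ell)(1-\e^{-ux})$. The pointwise limit of this mass function is $s^{-\gamma-1}h_\alpha(xs^{-\gamma},sa^{\sss(k)}(s)/(\halpha-1))$, uniform on compact $x$- and $a$-intervals by the cited propositions, while the bounds in \eqref{eq:toshow_uniformbound_alpha>2} and \eqref{eq:toshow_uniformbound_alpha(1,2)} supply an $x$-integrable majorant. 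Dominated convergence then yields
\begin{equation*}
\phi_\ell^{(k)}(u)=1-\frac{1}{k}\,\Xi^{(a^{\sss(k)},\delta)}(s)+o(1/k),
\end{equation*}
uniformly in $\ell\in[\lceil\varepsilon k\rceil,\lceil kt\rceil]$ along realisations of $a^{\sss(k)}$ that stay bounded away from $0$ and $\infty$ on $[\varepsilon,t]$.

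\textbf{Step 3: Riemann summation and Skorokhod continuity.} Taking logarithms via $\log(1-y)=-y+O(y^2)$ and summing over $\ell$, the $O(y^2)$ remainder totals $O(1/k)$ (being $O(1/k^2)$ per term over $\Theta(k)$ terms), and
\begin{equation*}
\sum_{\ell=\lceil\varepsilon k\rceil}^{\lceil kt\rceil}\log\phi_\ell^{(k)}(u)=-\frac{1}{k}\sum_{\ell=\lceil\varepsilon k\rceil}^{\lceil kt\rceil}\Xi^{(a^{\sss(k)},\delta)}(\ell/k)+o(1).
\end{equation*}
The right-hand side is a Riemann approximation of $-\int_\varepsilon^t\Xi^{(a^{\sss(k)},\delta)}(s)\dif s$. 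Since $a^{\sss(k)}\to a^*$ in the $J_1$-topology on $[\varepsilon,t]$ and $a^*$ is c\`adl\`ag with at most countably many discontinuities, $a^{\sss(k)}(s)\to a^*(s)$ at Lebesgue-a.e.~$s$. Combined with an integrable majorant coming from the explicit tail behaviour of $h_\alpha$, a dominated-convergence argument gives $\int_\varepsilon^t\Xi^{(a^{\sss(k)},\delta)}(s)\dif s\to\int_\varepsilon^t\Xi^{(a^*,\delta)}(s)\dif s$. Exponentiating and then applying bounded convergence to the $[0,1]$-valued Laplace transform delivers \eqref{eq:LT_volumescaling_limit1}.

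\textbf{Anticipated obstacle.} The principal difficulty lies in the uniformity demanded by Step~2: the local-limit expansion must hold uniformly in the parameter $a=sa^{\sss(k)}(s)/(\halpha-1)$, whose realisations fluctuate with $k$. The companion propositions guarantee uniformity only on compact $a$-intervals of the form $[\vep,1/\vep]$, so one must restrict to a high-probability event on which the path $sa^{\sss(k)}(s)/(\halpha-1)$ remains in such a compact set for all $s\in[\varepsilon,t]$; the stationarity of $(tL_\halpha(t))$ noted in Remark~\ref{rem:stationarity_tLalpha(t)} makes this a mild restriction, but justifying that the excluded event contributes negligibly to the unconditional Laplace transform (the task of the subsequent Lemma~\ref{lem:LT_conv_J1_topology}) requires a truncation argument that must be handled with care.
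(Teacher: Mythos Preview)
Your proposal is correct and follows essentially the same route as the paper: conditional-independence factorisation of the Laplace transform, first-order expansion of each factor using Propositions~\ref{prop-size-trees-backbone->2}/\ref{prop-size-trees-backbone-(1,2)} together with the uniform mass-function bounds, logarithmic linearisation via $\log(1-y)=-y+O(y^2)$, and finally Riemann-sum convergence of the resulting exponent.

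The one point where you diverge is your framing of the uniformity-in-$a$ obstacle. You treat it as a high-probability restriction on the random path to be discharged later. The paper resolves it entirely \emph{inside} this lemma, because here the conditioning fixes a \emph{deterministic} c\`adl\`ag function $a^{\sss(k)}$ on the compact interval $[\varepsilon,t]$; any such function is automatically bounded, so $s\,a^{\sss(k)}(s)/(\halpha-1)$ lives in a fixed compact set for all $s\in[\varepsilon,t]$, and the uniform-in-$a$ statements of the propositions apply directly. No probabilistic truncation is needed at this stage; that only enters in the subsequent Lemma~\ref{lem:LT_conv_J1_topology}, where one feeds the random path $L_{\halpha}^{\sss(k)}$ into the present result via the generalised continuous mapping theorem. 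Your version is not wrong, but it postpones a step that can be settled here for free.
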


\begin{proof}
   For convenience we write
  \begin{equation}
  \label{eq:def_E[kepsilon,kt]}
       \expec_{[k\varepsilon,kt]}[\ \cdot\ ]=
       \expec[\ \cdot \mid  L_{\halpha}^{\sss(k)}([\varepsilon,t]) = a^{\sss(k)}([\varepsilon,t])].
  \end{equation}    
    Fix $s>0$ and $0<\varepsilon<t$ then by the conditioned independence of the attached forests on the backbone, as is implied by the conditioning on $L^{\sss(k)}_{\halpha}([\varepsilon,t])$:
    \begin{equation}
    \label{eq:volume_growth_alpha>2_eq1}
        \expec_{[k\varepsilon,kt]}\Big[
        \e^{- \frac{u}{k^\gamma }\tilde M^{
     (\delta,\varepsilon)}_{\lceil kt\rceil}}
        \Big] 
        = \prod_{\ell= \lceil k\varepsilon\rceil}^{\lceil kt\rceil } \expec_\ell\Big[
        \e^{- \frac{u}{k^\gamma }H_\ell \1\{H_\ell>\delta (kt)^\gamma \}}
        \Big] .
    \end{equation}
Conditioning on the event in the indicator, we simplify the expression in     \eqref{eq:volume_growth_alpha>2_eq1} to
\begin{equation}
    \label{eq:volume_growth_alpha>2_eq2}
\begin{aligned}
    & \expec_{[k\varepsilon,kt]}\Big[
        \e^{- \frac{u}{k^\gamma }\tilde M^{
     (\delta,\varepsilon)}_{\lceil kt\rceil}}
        \Big] \\&\quad = \prod_{\ell=\lceil k\varepsilon\rceil }^{\lceil kt\rceil}
     \bigg(
     \expec_\ell\Big[
        \e^{- \frac{u}{k^\gamma }H_\ell }\mid H_\ell>\delta (kt)^\gamma  
        \Big] 
        \prob_\ell(H_\ell>\delta (kt)^\gamma  ) 
        +  \prob_\ell(H_\ell\leq \delta (kt)^\gamma  ) 
     \bigg) 
     \\&\quad= 
     \prod_{\ell=\lceil k\varepsilon \rceil}^{\lceil kt\rceil}
     \bigg(1-\prob_\ell(H_\ell>\delta (kt)^\gamma  )\Big( 
     1- \expec_\ell\Big[
        \e^{- \frac{u}{k^\gamma }H_\ell }\mid H_\ell>\delta (kt)^\gamma 
        \Big] 
     \Big)
     \bigg).
 \end{aligned}    
\end{equation}
    We rewrite     \eqref{eq:volume_growth_alpha>2_eq2} by taking a logarithm, and putting this in the exponent. As the logarithm can be expanded by a Taylor series as $\log(1-x)=-x(1+o(1))$ and for $\delta>0$ fixed, then \eqref{eq:volume_growth_alpha>2_eq2} equals
    \begin{equation}
    \label{eq:volume_growth_alpha>2_eq3}
    \begin{aligned}
       & \expec_{[k\varepsilon,kt]}\Big[
        \e^{- \frac{u}{k^\gamma }\tilde{M}^{(\delta,\varepsilon)}_{ \lceil kt\rceil}}
        \Big] \\&= \exp\bigg\{- \sum_{\ell=\lceil k\varepsilon\rceil}^{\lceil kt\rceil}  \prob_\ell(H_\ell > \delta (kt)^\gamma )\\&\qquad \times
        \Big(1-\expec_\ell\Big[
        \e^{- \frac{u}{k^\gamma }H_\ell }\mid H_\ell>\delta (kt)^\gamma  
        \Big] \Big)\bigg\}(1+\o{k}{})
        \\&=
        \exp\bigg\{- \sum_{\ell=\lceil k\varepsilon\rceil}^{\lceil kt\rceil}  
         \sum_{i\geq \delta (kt)^\gamma }
        (1-\e^{-\frac{u}{k^\gamma }i})\prob_\ell(H_\ell=i )\bigg\}(1+\o{k}{})\\&=
        \exp\bigg\{- \sum_{\ell=\lceil k\varepsilon\rceil }^{\lceil kt\rceil}
        \int_{\delta(kt)^\gamma}^\infty
        (1-
         \e^{-\frac{u}{k^\gamma }\lceil y\rceil})
       \prob_\ell(H_\ell =\lceil y\rceil)\dif y \bigg\}(1+\o{k}{}).
    \end{aligned}     
    \end{equation}
 Recall that for the conditioning on $L^{\sss(k)}_{\halpha}(x)$, 
 \begin{equation}
     a_\ell = \ell (W_\ell-p_c) / p_c
    =(\ell/k) \frac{L_{\halpha}^{\sss(k)}(\ell/k)}{(\halpha-1)},
 \end{equation}
  where we make the conditioning on $a_\ell$ explicit from this point. We perform a substitution of $y=xk^\gamma$ to find 
\begin{equation}
    \begin{aligned}
   & \expec_{[k\varepsilon,kt]}\Big[
        \e^{- \frac{u}{k^\gamma }\tilde{M}^{(\delta,\varepsilon)}_{ \lceil kt\rceil}}
        \Big]
    \\&=    
    \exp\bigg\{- \sum_{\ell=\lceil k\varepsilon\rceil }^{\lceil kt\rceil}
        \int_{\delta t^\gamma}^\infty
        (1-
         \e^{-ux})\\&\qquad \times
       \prob \Big(H_\ell =x (k/\ell)^\gamma \ell^\gamma
       \mid 
       a_\ell= (\ell/k) \frac{a^{\sss(k)}(\ell/k)}{\halpha-1}
       \Big )
       k^\gamma\dif x
       \bigg\}(1+\o{k}{})\\
         &=    
    \exp\bigg\{-\frac{1}{k} \sum_{\ell=\lceil k\varepsilon\rceil }^{\lceil kt\rceil}
        \int_{\delta t^\gamma}^\infty
       \Big(\frac{k}{\ell}\Big)^{\gamma+1}
        (1-
         \e^{-ux})
       \ell^{\gamma+1}\\&\qquad\qquad \times\prob \Big(H_\ell =x (k/\ell)^\gamma \ell^\gamma
       \mid 
       a_\ell =  (\ell/k)\frac{ a^{\sss(k)}(\ell/k)}{\halpha-1}
       \Big)\dif x\bigg\}(1+\o{k}{}).
    \end{aligned}
\end{equation}  
 This function in the exponent can now be seen as a Riemann sum over a function that also converges in $k$. We write this as 
 \begin{equation}
 \label{eq:Xi_riemann_volume_alpha>1}
 \expec_{[k\varepsilon,kt]}\Big[
        \e^{-\frac{u}{k^\gamma }\tilde{M}^{(\delta,\varepsilon)}_{ \lceil kt\rceil}}
        \Big]
        =
        \exp\bigg\{-
     \frac{1}{k} 
     \sum_{\ell=\lceil k\varepsilon\rceil }^{\lceil kt\rceil}
     \Xi^{(a^{\sss(k)},\delta)}_{\ell}(\ell/k)\bigg\}(1+\o{k}{}),
 \end{equation}
 with 
 \begin{equation}
 \label{eq:def_Xi^ak}
     \Xi^{(a^{\sss(k)},\delta)}_{\ell}(s)
     =
     \int_{\delta t^\gamma}^\infty 
     \frac{ 1-\e^{-ux}}{s^{\gamma+1}}
      \ell^{\gamma+1}\prob\Big(H_\ell =x s^{-\gamma} \ell^\gamma
       \mid 
       a_\ell = s \frac{a^{\sss(k)}(s)}{\halpha-1}
       \Big)\dif x.
 \end{equation}
 Note that the right-hand side of \eqref{eq:Xi_riemann_volume_alpha>1} is a Riemann sum where the function itself also depends on $k$. To show convergence to the Riemann integral, we need to show that
 $\Xi^{(a^{\sss(k)},\delta)}_{\ell}(s)$ converges uniformly for $\ell<k\to\infty$ in $s\in(\varepsilon,t)$. In order to do so, it remains to argue that for $\ell,k\to\infty$,\ $\Xi^{(a^{\sss(k)},\delta)}_{\ell}(s)$ converges uniformly in $s \in(\varepsilon,t)$.
 We rewrite \eqref{eq:def_Xi^ak} with a substitution of $z=xs^{-\gamma}$, then 
 \begin{equation}
 \label{eq:def_Xi^ak20}
     \Xi^{(a^{\sss(k)},\delta)}_{\ell}(s)
     =
     \int_{\delta t^\gamma}^\infty 
     \frac{ 1-\e^{-uzs^{\gamma}}}{s}
      \ell^{\gamma+1}\prob\Big(H_\ell =z\ell^\gamma
       \mid 
       a_\ell = s \frac{a^{\sss(k)}(s)}{\halpha-1}
       \Big)\dif z.
 \end{equation}
 As $(1-\e^{-uzs^\gamma})/s$ is uniformly bounded for $s \in(\varepsilon,t)$, we only need to show uniform convergence of the probability mass function.
 By Proposition~\ref{prop-size-trees-backbone->2} and \ref{prop-size-trees-backbone-(1,2)}, we know that $\prob_\ell(H_\ell = z\ell^\gamma\mid W_\ell= p_c(1+a/\ell) )$ converges uniformly in $a$ on bounded intervals and $z>\delta t^\gamma$. 
 By assumption of this lemma $sa^{\sss(k)}(s)$ is bounded for $s \in(\varepsilon,t)$. Indeed, as $a^{\sss(k)}\in D[\varepsilon,t]$, it is continuous from the left and therefore bounded in $\varepsilon$. Moreover, as we also assume that $a^{\sss(k)}(s)$  non-increasing, the limit of $sa^{\sss(k)}(s)$ for $s\to t$ is bounded too.
 Therefore the mass function converges uniformly in $sa^{\sss(k)}(s)$ and by extension in $s$. 
  Therefore,  $\Xi^{(a^{\sss(k)},\delta)}_{\ell}(s)\to \Xi_\ell^{(a^{*},\delta)}(s)$ uniformly in $s$. We conclude that
  \begin{equation}
  \begin{aligned}
      \lim_{k\to\infty }
      \expec_{[k\varepsilon,kt]}\Big[
        \e^{- \frac{u}{k^\gamma }\tilde{M}^{(\delta,\varepsilon)}_{ \lceil kt\rceil}}
        \Big]
    &=    
     \exp\bigg\{-\lim_{k\to\infty}
     \frac{1}{k} 
     \sum_{\ell=\lceil k\varepsilon\rceil }^{\lceil kt\rceil} 
     \Xi^{(a^{\sss(k)},\delta)}_{\ell}(\ell/k)
     +\o{k}{}\bigg\}
     \\&=\exp\bigg\{
     \int_\varepsilon^t 
     \Xi^{(a^*,\delta)}(s) \dif s
     \bigg\} .
     \end{aligned}
  \end{equation}
\end{proof}

In the following we aim to extend this result to the unconditional Laplace transform. Results should extend naturally as also $s L_{\halpha}(s)$ is bounded with high probability for all $s\geq 0$ according to Remark \ref{rem:stationarity_tLalpha(t)}.

\paragraphi{Part 3 - Extension to the unconditional convergence in distribution}
\label{par:6.1-part3}
In the previous part, we have shown that the conditional Laplace transform converges. But in the volume scaling of $M_k$, there exists additional randomness induced by the $L_{\halpha}^{\sss(k)}(t)$ process. In this part, we show the convergence of the unconditional Laplace transform. 
Consider the space $D[\varepsilon,t)$ the space of c\`adl\`ag functions on $[\varepsilon,t)$.
We define $f_k:D[\varepsilon,t)\to \mathbb{R}$ and $f:D[\varepsilon,t)\to \mathbb{R}$ as
\begin{equation}
\label{eq:def_fk_laplace}
    f_k(L^{\sss(k)}_\alpha([\varepsilon,t]) ) 
    =
    \exp\bigg\{-\frac{1}{k} \sum_{\ell=\lceil k\varepsilon\rceil}^{\lceil kt\rceil} \Xi^{(L^{\sss(k)}_\alpha ,\delta)}_\ell(\ell/k) + \o{k}{}\bigg\},
\end{equation}
and
\begin{equation}
\label{eq:def_f_LT}
    f(L_{\halpha}([\varepsilon,t])) = 
    \exp\bigg\{-\int_\varepsilon^t
    \Xi^{(L_{\halpha},\delta)}(s) 
    \dif s\bigg\}.
\end{equation}
Furthermore, by Theorem~\ref{thm-subthmWk}, we know that 
\begin{equation}
\label{eq:reminder_conv_Lprocess}
    L_{\halpha}^{\sss(k)}([\varepsilon,t]) \xrightarrow{\mathcal{D}}
    L_{\halpha}([\varepsilon,t]).
\end{equation}
In the following, we show convergence of $f_k(L_{\halpha}^{\sss(k)}([\varepsilon,t]))$ to $f(  L_{\halpha}([\varepsilon, t]))$ in distribution. 

\begin{lemma}[Convergence of the Laplace transform]
\label{lem:LT_conv_J1_topology}
Recall the function sequence $(f_k)_{k\geq 1}$ from \eqref{eq:def_fk_laplace} and $f$ from \eqref{eq:def_f_LT}. 
Furthermore, we recall
$L_{\halpha}^{\sss(k)}([\varepsilon,t])$ from \eqref{eq:def_L_halpha^(k)}, then, for every fixed $t$,
\begin{equation}
    f_k(L_{\halpha}^{\sss(k)}([\varepsilon,t]))
    \xrightarrow{d}
    f(L_{\halpha}([\varepsilon,t])).
\end{equation}
\end{lemma}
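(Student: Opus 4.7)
My plan is to deduce the statement by an extended continuous mapping argument, using as input the process convergence $L^{\sss(k)}_{\halpha}([\varepsilon,t]) \xrightarrow{\mathcal{D}} L_{\halpha}([\varepsilon,t])$ in the $J_1$-topology from Theorem~\ref{thm-subthmWk}. The bridge is Lemma~\ref{lem:volume_scaling_kcut_a>1_1D}, which I read as a deterministic statement: whenever a sequence $a^{\sss(k)}$ converges in the $J_1$-topology on $D[\varepsilon,t)$ to a non-increasing limit $a^*$, one has $f_k(a^{\sss(k)}) \to f(a^*)$. To combine these, I will invoke Billingsley's extended continuous mapping theorem (\cite[Theorem~5.5]{Billingsley1999}) for a sequence of measurable functions $f_k \to f$; it is enough to check that $f_k(a^{\sss(k)}) \to f(a^*)$ for every sequence $a^{\sss(k)} \to a^*$ in the $J_1$-topology with $a^*$ lying in a set $A \subset D[\varepsilon,t)$ such that $\prob(L_{\halpha}([\varepsilon,t]) \in A) = 1$.

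The natural choice is to take $A$ to be the set of non-increasing c\`adl\`ag paths $a^*$ on $[\varepsilon,t)$ for which $s a^*(s)$ is bounded on $[\varepsilon,t]$. By Definition~\ref{def:alphaLEP} and the construction in Remark~\ref{rem:alphaLEP_on_SPPP}, $L_{\halpha}$ is non-increasing by definition, and by Remark~\ref{rem:stationarity_tLalpha(t)} the process $sL_{\halpha}(s)$ is stationary and marginally $\GammaD(1/(\halpha-1),1)$; in particular $\sup_{s \in [\varepsilon,t]} s L_{\halpha}(s)$ is a.s.\ finite because the jumps of $sL_{\halpha}(s)$ only come from the countably many jumps of $L_{\halpha}$ and from the continuous growth factor $s$, both of which keep the supremum bounded on the compact interval $[\varepsilon,t]$. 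Thus $\prob(L_{\halpha}([\varepsilon,t]) \in A) = 1$, and for $a^* \in A$ the conditions of Lemma~\ref{lem:volume_scaling_kcut_a>1_1D} apply.

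The main technical obstacle is checking that, on the set $A$, the convergence $f_k(a^{\sss(k)}) \to f(a^*)$ genuinely holds whenever $a^{\sss(k)} \to a^*$ in $J_1$ (not only in the slightly weaker sense used inside the proof of Lemma~\ref{lem:volume_scaling_kcut_a>1_1D}). Concretely, I need to re-examine the Riemann-sum convergence
\begin{equation*}
\frac{1}{k}\sum_{\ell = \lceil k\varepsilon\rceil}^{\lceil kt\rceil} \Xi^{(a^{\sss(k)},\delta)}_{\ell}(\ell/k) \longrightarrow \int_\varepsilon^t \Xi^{(a^*,\delta)}(s)\,\dif s
\end{equation*}
and confirm that the uniform convergence $\Xi^{(a^{\sss(k)},\delta)}_\ell(s) \to \Xi^{(a^*,\delta)}(s)$ (which relies on the uniform-in-$a$ convergence of the mass functions from Propositions~\ref{prop-size-trees-backbone->2} and~\ref{prop-size-trees-backbone-(1,2)}) goes through. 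Because $J_1$-convergence $a^{\sss(k)} \to a^*$ implies uniform convergence on compact sets away from the (at most countable) discontinuities of $a^*$, and because $a^*$ is non-increasing with $s\mapsto s a^*(s)$ bounded, we can localise in $s$ away from these jumps, use the uniform convergence of the mass functions on compact $a$-intervals, and then conclude by dominated convergence for the Riemann integral. Jump points of $a^*$ form a Lebesgue-null set and therefore contribute nothing to the limiting integral.

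With these ingredients assembled, the extended continuous mapping theorem yields $f_k(L^{\sss(k)}_{\halpha}([\varepsilon,t])) \xrightarrow{d} f(L_{\halpha}([\varepsilon,t]))$, completing the proof. I expect the bookkeeping around the $J_1$-topology (jump discontinuities of the limit) to be the chief technical nuisance; the probabilistic content reduces to monotonicity of $L_{\halpha}$ and the finite-marginal bounds already established in Lemma~\ref{lem:RE_alpha<2_gamma} and Remark~\ref{rem:stationarity_tLalpha(t)}.
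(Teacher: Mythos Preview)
Your proposal is correct and follows essentially the same approach as the paper: both invoke Lemma~\ref{lem:volume_scaling_kcut_a>1_1D} to obtain the deterministic convergence $f_k(a^{\sss(k)})\to f(a^*)$ for $a^{\sss(k)}\to a^*$ in $J_1$ with $a^*$ non-increasing, and then apply an extended/generalised continuous mapping theorem (the paper cites Whitt~\cite[Theorem~3.4.4]{Whitt2002}, you cite Billingsley~\cite[Theorem~5.5]{Billingsley1999}). Your write-up is more careful about verifying the hypotheses---explicitly identifying the full-measure set $A$ and discussing the $J_1$-topology issues around jump points---whereas the paper dispatches these in a single sentence, but the argument is the same.
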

\begin{proof}
 By Lemma~\ref{lem:volume_scaling_kcut_a>1_1D},
 $f_k(x_k)\to f(x)$ for $x_k\xrightarrow{\mathcal{D}} x$ uniformly if $x$ is non-increasing.
 The claim then follows from the generalised continuous function theorem \cite[Theorem 3.4.4]{Whitt2002}, as $L_{\halpha}([\varepsilon,t])\in D[\varepsilon,t]$ and is  non-increasing by definition.
\end{proof}
The result of Lemma \ref{lem:LT_conv_J1_topology} implies that, for fixed $\delta>0$ and for $k\to\infty$,
\begin{equation}
\label{eq:volume_scaling_alphain(1,2)_convind}
      \expec[ \e^{-\frac{u}{ k^{\gamma}}\tilde M^{(\delta,\varepsilon)}_{ \lceil kt\rceil}}
    \mid L^{\sss(k)}_{\halpha}([\varepsilon,t]) ]
    \xrightarrow{d} 
    \expec[ \e^{-\int_\varepsilon^t\Xi^{(L_{\halpha},\delta)}(s)\dif s }
    \mid 
    L_{\halpha}([\varepsilon,t])].
\end{equation}

\paragraphi{Part 4 - Convergence to the Cox process}
\label{par:6.1-part4}
Next, we show that the right-hand side of \eqref{eq:volume_scaling_alphain(1,2)_convind} equals the transform of a Cox process as defined in \eqref{eq:Poisson_measure_goal}.
By \cite[Theorem 3.9, Exercise 3.4]{Last2017}, we find the Laplace transform of the Cox process in \eqref{eq:Poisson_measure_goal} with corresponding intensity measure in $\lambda_\alpha$ as defined in \eqref{eq:def_lambda_PPP} as
\begin{equation}
\label{eq:volume_growth_alpha>2_eq10}
\begin{aligned}
   & \expec\Big[
    \e^{-u \int_0^t\int_{0}^\infty x
    \Pi(\dif x,\dif s) }
    \mid L_{\halpha}([\varepsilon,t])
    \Big] 
    \\&\qquad =
    \expec\Big[\e^{-\int_0^t\int_{0}^\infty x
    (1-\e^{-ux}) \lambda_\alpha(\dif x,\dif s)}\mid  L_{\halpha}([\varepsilon,t])\Big]
    \\&\qquad =
    \exp\bigg\{-\int_0^t\int_{0}^\infty (1-\e^{-ux})
    \frac{h_\alpha(xs^{-\gamma},s \frac{L_{\halpha}(s)}{\halpha-1})}{s^{\gamma+1}}\dif x\dif s
    \bigg\},
\end{aligned} 
\end{equation}
which is in correspondence with the right-hand side expression of \eqref{eq:volume_scaling_alphain(1,2)_convind}, where we recall the definition of $\Xi^{(L_{\halpha}(s),\delta)}$ from \eqref{eq;def_bigXi}.
This is sufficient to extend the results to convergence of $k^{-\gamma}M_{\lceil kt\rceil}$ for some fixed $t$, which we conclude by the following corollary:
\begin{corollary}
\label{cor:convergence_volume_alpha>1}
Consider the volume scaling of the $k$-cut IPC $M_{k}$ from \eqref{eq:def_M_k} and $\gamma$ from \eqref{eq:def_gamma_exponent2}. Then, for fixed $t$, 
\begin{equation}
    k^{-\gamma} M_{\lceil kt\rceil }
    \xrightarrow{d}
    \int_0^t\int_0^\infty x
    \Pi_{\lambda_\alpha}(\dif x,\dif s).
\end{equation}
\end{corollary}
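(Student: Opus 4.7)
The plan is to assemble the three preparatory lemmas from this section, take expectations, and match the resulting Laplace transform with that of the target Cox integral. The target is convergence in distribution for fixed $t>0$, so by the continuity theorem it suffices to establish pointwise convergence of Laplace transforms in the variable $u\geq 0$, followed by passing $\delta,\varepsilon\to 0$.

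First, I would reduce to $\tilde M^{(\delta,\varepsilon)}_{\lceil kt\rceil}$ from \eqref{eq:def_Mk_tilde}. By Lemma \ref{lem:volume_no_small_clusters_alpha>1}, for any $\eta>0$ we have $\prob(k^{-\gamma}|M_{\lceil kt\rceil}-\tilde M^{(\delta,\varepsilon)}_{\lceil kt\rceil}|>\eta)\leq o_{\delta,\varepsilon}(1)$, since $M_{\lceil kt\rceil}$ differs from $\dot M^{(\delta,\varepsilon)}_{\lceil kt\rceil}+\tilde M^{(\delta,\varepsilon)}_{\lceil kt\rceil}$ only by the $\lceil kt\rceil+1$ backbone vertices, whose total contribution $k^{-\gamma}(\lceil kt\rceil+1)$ vanishes since $\gamma>1$. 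Hence, if $k^{-\gamma}\tilde M^{(\delta,\varepsilon)}_{\lceil kt\rceil}$ converges in distribution to some random variable $\Lambda^{(\delta,\varepsilon)}$ as $k\to\infty$, and if $\Lambda^{(\delta,\varepsilon)}\xrightarrow{d}\Lambda$ as $\delta,\varepsilon\to 0$, then by a standard $\eta$-argument $k^{-\gamma}M_{\lceil kt\rceil}\xrightarrow{d}\Lambda$.

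Next, for the conditional Laplace transform, observe that by \eqref{eq:def_fk_laplace},
\begin{equation}
    \expec\Big[\e^{-uk^{-\gamma}\tilde M^{(\delta,\varepsilon)}_{\lceil kt\rceil}}\Big]
    =\expec\big[f_k\big(L_{\halpha}^{\sss(k)}([\varepsilon,t])\big)\big].
\end{equation}
By Lemma \ref{lem:LT_conv_J1_topology}, the integrand on the right converges in distribution to $f(L_{\halpha}([\varepsilon,t]))$ as defined in \eqref{eq:def_f_LT}. Since $0\leq f_k\leq 1$ and $0\leq f\leq 1$, bounded convergence for weak limits (or equivalently, since the identity map on $[0,1]$ is bounded and continuous) gives
\begin{equation}
    \lim_{k\to\infty}\expec\Big[\e^{-uk^{-\gamma}\tilde M^{(\delta,\varepsilon)}_{\lceil kt\rceil}}\Big]
    =\expec\big[f(L_{\halpha}([\varepsilon,t]))\big].
\end{equation}
By \eqref{eq:volume_growth_alpha>2_eq10}, this right-hand side is precisely the Laplace transform at $u$ of the truncated Cox integral $\int_\varepsilon^t\int_{\delta t^\gamma}^\infty x\,\Pi_{\lambda_\alpha}(\dif x,\dif s)$, driven by the intensity $\lambda_\alpha$ of \eqref{eq:def_lambda_PPP}.

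Finally, I would pass $\delta\to 0$ and $\varepsilon\to 0$. The truncated Cox integrals form a monotone family in $(\delta,\varepsilon)$, converging pointwise (on the probability space of $L_{\halpha}$) to $\int_0^t\int_0^\infty x\,\Pi_{\lambda_\alpha}(\dif x,\dif s)$. By Appendix E.1 (referenced just after \eqref{eq:def_lambda_PPP}), the total intensity $\int_0^t\int_0^\infty x\,\lambda_\alpha(\dif x,\dif s)$ is finite almost surely, so the full Cox integral is finite a.s.\ and the truncated transforms converge to the full transform by monotone/dominated convergence. Combining with the $\eta$-argument from the first paragraph, and appealing to the Laplace-transform continuity theorem, yields $k^{-\gamma}M_{\lceil kt\rceil}\xrightarrow{d}\int_0^t\int_0^\infty x\,\Pi_{\lambda_\alpha}(\dif x,\dif s)$. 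The main obstacle is the interchange in the last step: one must verify that the truncation errors $\delta t^\gamma$ near zero and $\varepsilon$ near zero both vanish uniformly enough, which reduces to the integrability estimate on $\lambda_\alpha$ stated in Appendix E.1 and the uniform-in-$k$ bound from Lemma \ref{lem:volume_no_small_clusters_alpha>1}; the rest is a routine diagonal argument in $(k,\delta,\varepsilon)$.
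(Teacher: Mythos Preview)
Your proposal is correct and follows essentially the same route as the paper: reduce to $\tilde M^{(\delta,\varepsilon)}$ via Lemma~\ref{lem:volume_no_small_clusters_alpha>1}, use Lemma~\ref{lem:LT_conv_J1_topology} together with boundedness of $f_k,f$ (Portmanteau) to pass to the limit in expectation, identify the result with the truncated Cox Laplace transform via \eqref{eq:volume_growth_alpha>2_eq10}, and finally send $\delta,\varepsilon\to 0$. You spell out the monotone-convergence and diagonal-argument details for the final limit more carefully than the paper does, but the structure and ingredients are the same.
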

\begin{proof}
Recall that $f_k$ and $f$ are bounded between $[0,1]$ and therefore by the Portmanteau theorem we find
\begin{equation}
    \expec[ f_k(L_{\halpha}^{\sss(k)}([\varepsilon,t]))] 
    \to 
    \expec[ f(L_{\halpha}([\varepsilon,t]))].
\end{equation}
Recalling the definition of $f_k$ and $f$ and by Lemma \ref{lem:volume_no_small_clusters_alpha>1} and \eqref{eq:volume_growth_alpha>2_eq10}, bounded convergence implies
\begin{equation}
\begin{aligned}
\expec[\e^{-\frac{u}{k^{\gamma}} M_{ \lceil kt\rceil }} ] &=
    \expec\Big[\e^{-\frac{u}{k^{\gamma}}\tilde M^{(\delta,\varepsilon)}_{ \lceil kt\rceil }+\o{1/\varepsilon,1/\delta}{k}} \Big](1+\o{k}{})
    \\&\xrightarrow{k\to\infty} \expec\Big[ \e^{-\int_\varepsilon^t\Xi^{(L_{\halpha},\delta)}(s) \dif s+\o{1/\varepsilon,1/\delta}{}}
   \Big]
    \\&\xrightarrow{\delta,\varepsilon\to 0}
      \expec\Big[
    \e^{-u \int_0^t\int_{0}^\infty x
    \Pi_{\lambda_\alpha}(\dif x,\dif s)}
    \Big] ,
    \end{aligned}
\end{equation}
where the last convergence follows form \eqref{eq:volume_growth_alpha>2_eq10} and the definition of $\Xi^{(L_{\halpha},\delta)}$ from \eqref{eq;def_bigXi}
\end{proof}

\paragraphi{Part 5 - Extension to $J_1$-convergence}
\label{par:6.1-part5}
To extend the result of the previous part to $J_1$ convergence, convergence of the finite-dimensional distribution is needed and shown as follows:

\begin{lemma}[Extension to the finite-dimensional distribution]
\label{lem:finite_dim_assumption}
Let $0<\varepsilon<t_1<\cdots<t_n$ and $s_1,\ldots,s_n$ be arbitrary positive values. 
Then convergence of the finite-dimensional distribution follows for $\alpha>2$ and $\alpha\in(1,2)$.
\end{lemma}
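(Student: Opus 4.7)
The plan is to reduce convergence of the joint Laplace transform at times $\varepsilon < t_1 < \cdots < t_n$ to the one-dimensional argument of Lemma~\ref{lem:volume_scaling_kcut_a>1_1D}, by reorganising the linear combination as a sum over disjoint depth blocks along the backbone. Setting $\tilde s_i = \sum_{j=i}^n s_j$ and $t_0 = \varepsilon$, I would write
\begin{equation*}
\sum_{i=1}^n s_i k^{-\gamma} M_{\lceil k t_i\rceil} = \sum_{i=1}^n \tilde s_i k^{-\gamma}\big(M_{\lceil k t_i\rceil} - M_{\lceil k t_{i-1}\rceil}\big) + O(k^{1-\gamma}),
\end{equation*}
where the error absorbs the $O(k)$ backbone vertices and the piece prior to $\lceil k\varepsilon\rceil$. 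By Part 1 (Lemma~\ref{lem:volume_no_small_clusters_alpha>1}), I may further replace each increment by its contribution from forests of size exceeding $\delta k^\gamma$ at the expense of an $o_{\delta,\varepsilon}(1)$ term that vanishes as $\delta,\varepsilon \to 0$.

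The central observation is that, conditionally on the future maximum weights $(W_k)_{k\geq 0}$, the finite forests attached to distinct backbone vertices are independent, as guaranteed by Algorithm~\ref{alg:kcutIPC}. Therefore, conditioning on $L_{\halpha}^{\sss(k)}([\varepsilon,t_n]) = a^{\sss(k)}([\varepsilon,t_n])$, the conditional joint Laplace transform factorises over the disjoint index blocks $\lceil k t_{i-1}\rceil < \ell \leq \lceil k t_i\rceil$. Rerunning the manipulation in \eqref{eq:volume_growth_alpha>2_eq1}--\eqref{eq:Xi_riemann_volume_alpha>1} block by block, with the scalar $u$ replaced inside the $i$-th block by $\tilde s_i$, produces a Riemann sum that converges, thanks to the uniform convergence in $a$ on compact intervals furnished by Propositions~\ref{prop-size-trees-backbone->2} and \ref{prop-size-trees-backbone-(1,2)}, to
\begin{equation*}
\exp\bigg\{-\sum_{i=1}^n \int_{t_{i-1}}^{t_i} \Xi_i^{(a^*,\delta)}(s)\,\dif s\bigg\},
\end{equation*}
where $\Xi_i^{(a^*,\delta)}$ is \eqref{eq;def_bigXi} with the parameter $u$ replaced by $\tilde s_i$.

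Then, following Parts 3--4, I would invoke the $J_1$-convergence $L_{\halpha}^{\sss(k)} \xrightarrow{\mathcal{D}} L_{\halpha}$ from Theorem~\ref{thm-subthmWk} and the generalised continuous mapping theorem, with continuity at decreasing c\`adl\`ag paths established exactly as in Lemma~\ref{lem:LT_conv_J1_topology}, to replace $a^{\sss(k)}$ by $L_{\halpha}$ in distribution. Applying the Portmanteau theorem to the $[0,1]$-valued Laplace functional and letting $\delta,\varepsilon \to 0$, I would identify the limit with the joint Laplace transform of $\big(\int_0^{t_i}\int_0^\infty x\,\Pi_{\lambda_\alpha}(\dif x,\dif s)\big)_{i=1}^n$ via the Campbell--Mecke formula for Cox processes (the identity underlying \eqref{eq:volume_growth_alpha>2_eq10}), using that the random intensity $\lambda_\alpha$ depends only on $L_{\halpha}$, so that the Cox integrals over disjoint time intervals are conditionally independent given $L_{\halpha}$.

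The main obstacle I anticipate is the bookkeeping at the interface indices $\ell = \lceil k t_i\rceil$, where a single backbone vertex lies on the boundary between two blocks. Since at most $n$ such vertices exist and each rescaled forest contribution is tight, the total border effect is $o(k^{-\gamma})$ and gets absorbed into the error of Part 1. Beyond this, the uniformity already needed for the one-dimensional result transfers verbatim because $n$ and the weights $(\tilde s_i)_{i=1}^n$ are fixed finite parameters, and the exponential in the Laplace functional remains bounded by $1$ throughout.
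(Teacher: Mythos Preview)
Your proposal is correct and follows essentially the same approach as the paper: rewrite $\sum_i s_i M_{\lceil kt_i\rceil}$ as a sum over disjoint depth blocks weighted by the partial sums $\tilde s_i=\sum_{j\ge i}s_j$, use conditional independence of the forests given $(W_k)_{k\ge0}$ to factorise the conditional Laplace transform, apply the one-dimensional argument of Lemma~\ref{lem:volume_scaling_kcut_a>1_1D} block by block, and then pass to the unconditional limit via Lemma~\ref{lem:LT_conv_J1_topology} and Portmanteau. Your additional remarks on boundary indices and the Campbell--Mecke identification are correct refinements that the paper leaves implicit.
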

\begin{proof}
Based on the analysis performed in part 2-4, it also naturally follows that, for $j\in\{1,\ldots,n\}$,
\begin{equation}
\begin{aligned}
\label{eq:finite_dim_assumption}
   & \expec\bigg[
    \e^{-u_j \sum_{\ell=\lceil kt_{j-1}\rceil}^{\lceil kt_{j}\rceil} H_\ell/k^{\gamma}}\ \Big|\
    L_{\halpha}^{\sss(k)}([t_{j-1},t_{j}])
    \bigg]\\&\qquad \xrightarrow{d} 
    \expec\bigg[
    \e^{-u_j\int_{t_{j-1}}^{t_{j}} x\Pi(dx,ds)}
    \ \Big|\
    L_{\halpha}([t_{j-1},t_{j}])
    \bigg].
    \end{aligned}
\end{equation}
With this, we can define the Laplace transform of the finite-dimensional distribution and show it converges. We start with
 \begin{equation}
 \label{eq:finite_dim_eq1}
 \begin{aligned}
    & \expec[\e^{- (u_1M_{\lceil kt_1\rceil }/k^{\gamma} +\cdots+
     u_nM_{\lceil kt_n\rceil }/k^{\gamma})}
     \mid
    L_{\halpha}^{\sss(k)}([\varepsilon,t_n])]\\&=
     \expec\bigg[\exp\bigg\{
     - (u_1+\cdots+u_n)\sum_{\ell=0}^{\lceil kt_1\rceil } H_\ell/k^\gamma 
     - \cdots\\& \qquad
     -u_n\sum_{\ell=\lceil kt_{n-1}\rceil +1}^{\lceil kt_n\rceil } H_\ell/k^\gamma
     \bigg\} \Big|~
    L_{\halpha}^{\sss(k)}([\varepsilon,t_n]) \bigg].
    \end{aligned}
 \end{equation}
As these are the volumes of disjoints forests, they are independent conditionally on $(W_k)_{k\geq 1}$ and therefore \eqref{eq:finite_dim_eq1} equals
\begin{equation}
    \prod_{j=1}^n 
    \expec\bigg[
    \exp\bigg\{ 
    \bigg(-\sum_{\ell=j}^n u_\ell\bigg)\sum_{\ell=\lceil kt_{j-1} \rceil +1}^{\lceil kt_{j}\rceil }
    H_\ell/k^\gamma
    \bigg\}
    \Big|~
    L_{\halpha}^{\sss(k)}([\varepsilon,t_n])
    \bigg] ,
\end{equation}
 under the convention that $kt_0+1=0$. Under the condition of \eqref{eq:finite_dim_assumption}, this converges in distribution to 
 \begin{equation}
      \prod_{j=1}^n 
    \expec_k\bigg[
    \exp\bigg\{ 
    \bigg(-\sum_{\ell=j}^n u_\ell\bigg)\int_{t_{j-1}}^{t_{j}}\int_0^\infty
   x\Pi(dy,ds)
    \bigg\} 
    \Big|~ (L_{\halpha}([\varepsilon,t_n])
    \bigg] ,
 \end{equation}
which can be rearranged to the finite-dimensional distribution of the limiting stochastic process, which shows the claim.  
\end{proof}

Based on \cite[Theorem 13.1]{Billingsley1999}, $J_1$-convergence follows if the finite-dimensional distribution converges, and the sequence is tight. The former follows from Lemma~\ref{lem:finite_dim_assumption} and the latter is proved in Appendix \aprefa{E.2}{App:tightness_volume>1}.
\end{proof}

\subsection{\mtitle{$k$}-cut IPC scaling for  \mtitle{$\alpha\in(0,1)$}}
\label{sec:volume_scaling_alpha<1}
We conclude the section with the volume scaling for the $k$-cut IPC for $\alpha\in(0,1)$ and thereby conclude the proof of Theorem~\ref{thm-main}. 
Contrary to $\alpha>2$ and $\alpha\in(1,2)$, this process converges in the $J_1$-topology to a degenerate process. Instead we prove convergence of the infinite-dimensional distribution in the product topology, i.e.,
\begin{equation}
    \Big(
    M_{k-\ell} W_k^{\alpha/(1-\alpha)}
    \Big)_{\ell\geq 0} 
    \xrightarrow{d}
    (Z_\ell)_{\ell\geq 0},
\end{equation}
where the discrete process $(Z_\ell)_{\ell\geq0}$ is defined as
\begin{equation}
\label{eq:def:Z_n_alpha_in_(0,1)}
Z_\ell = \sum_{i=\ell}^\infty 
\prod_{j=1}^{i} P_j^{\frac{\alpha}{1-\alpha}}\Psi^{(i)},
\end{equation}
and $(\Psi^{(i)})_{i\geq1}$ is an
 i.i.d. sequence  that has density $y\mapsto h_\alpha(y)$ as defined in \eqref{eq:def_h_alpha(0,1)} and i.i.d. sequence $(P_i)_{i\geq1}$ as defined in \eqref{eq:def_P}. 
 We continue with the proof of Theorem~\ref{thm-main} for $\alpha\in(0,1)$. This is followed by a short discussion of the scaling of $(M_{\lceil kt\rceil})_{t\geq0}$ and its finite-dimensional distribution.
 
\begin{proof}[Proof of Theorem~\ref{thm:Cs_scaling} for $\alpha\in(0,1)$]
We ignore again the contribution of the $k$ backbone vertices in $M_k$ and focus on the attached forests. The proof is then structured based on the following parts:
\begin{enumerate}
 \item \hyperref[par:6.2-part0]{\textbf{Convergence of the conditional one-dimensional Laplace transform. }}
    We start by fixing $\ell\in\mathbb{N}$, and show convergence of  $M_{k-\ell}W_k^{\alpha/(1-\alpha)}$ via the Laplace transform conditioned on $(W_i)_{i= 0}^k=(w_i)^k_{i= 0}$ in Lemma~\ref{lem:conv_one-dim_volume_alphain(0,1)}. 
     \item \hyperref[par:6.2-part1]{\textbf{Convergence of the unconditional one-dimensional distribution.} }
    We extend this to convergence in distribution of the non-conditional Laplace transform, i.e. driven by the random process $(W_k)_{k\geq 0}$ in Lemma~\ref{lem:conv_uncond_LT_alphain(0,1)}. 
    \item \hyperref[par:6.2-part2]{\textbf{Convergence of the finite-dimensional distributions.}}
    These results so far are for a fixed $t$ and in the last part
    we extend the result to the finite-dimensional distribution. This concludes the proof.
\end{enumerate}

\paragraphi{Convergence of the conditional one-dimensional Laplace transform} 
 \label{par:6.2-part0}
 We start with showing convergence of the one-dimensional distribution, where we condition on the $(W_k)_{k\geq 0}$ process:
 
\begin{lemma}[Convergence of the one-dimensional distribution]
\label{lem:conv_one-dim_volume_alphain(0,1)}
Fix $\alpha\in(0,1)$.
Fix some $\ell\in\mathbb{N}$ and consider a sequence $(w_i)_{i\geq 0}$ such that $w_{k}/w_{k-i}\to a_i\in(0,1)$ for $k\to\infty$ and all $i$. 
Then 
\begin{equation}  
\begin{aligned}
   & \expec\Big[ \e^{-sW_k^{\alpha/(1-\alpha)}
    M_{k-\ell}}
    \mid (W_i)^k_{i= 0}=(w_i)^k_{i= 0}\Big]
   \\&\qquad  \to
    \expec\Big[\e^{
    -s\sum_{i=\ell}^\infty a_i \Psi^{(i)}
    }\mid   (W_i)^k_{i= 0}=(w_i)^k_{i= 0}
    \Big]
    .
    \end{aligned}
\end{equation}
\end{lemma}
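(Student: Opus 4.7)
The plan is to exploit the conditional independence of the forests $(H_i)_{i\ge 0}$ along the backbone, given the full future-maximum-weight sequence $(W_j)_{j\ge 0}$, which is built into Algorithm~\ref{alg:kcutIPC}. First I would decompose $M_{k-\ell}=(k-\ell+1)+\sum_{i=0}^{k-\ell}H_i$ and discard the backbone term: Theorem~\ref{thm-subthmWk} yields $W_k^{1/k}\to \e^{-(1-\alpha)^2/\alpha}$, so $(k-\ell+1)W_k^{\alpha/(1-\alpha)}\to 0$. The conditional Laplace transform of the remaining forest sum then factorises across backbone positions as
\begin{equation*}
\expec\!\left[\e^{-sW_k^{\alpha/(1-\alpha)}\sum_{i=0}^{k-\ell}H_i}\,\Big|\,(W_j)_{j=0}^k=(w_j)_{j=0}^k\right]
=\prod_{i=0}^{k-\ell}\expec\!\left[\e^{-sW_k^{\alpha/(1-\alpha)}H_i}\,\Big|\,W_i=w_i\right].
\end{equation*}

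Next, I would reindex $j=k-i$, so that $j$ runs over $\{\ell,\ldots,k\}$, and rewrite the exponent of the $j$-th factor as $s\,(w_k/w_{k-j})^{\alpha/(1-\alpha)}\cdot w_{k-j}^{\alpha/(1-\alpha)}H_{k-j}$. Theorem~\ref{thm:Cs_scaling}, specifically \eqref{eq:TP_scaling_infinite_mean}, asserts that, conditionally on $W_{k-j}=w_{k-j}$, the rescaled forest size $w_{k-j}^{\alpha/(1-\alpha)}H_{k-j}$ converges in distribution as $k-j\to\infty$ to a random variable $\Psi^{(j)}$ with density $h_\alpha$ from \eqref{eq:def_h_alpha(0,1)}. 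Combined with the hypothesis $w_k/w_{k-j}\to a_j$, each factor converges pointwise, for fixed $j$, to $\expec[\e^{-s a_j^{\alpha/(1-\alpha)}\Psi^{(j)}}]$, which is the $j$-th factor of the announced limit after identifying $a_j$ with $\prod_{m=1}^{j}P_m$ as in Lemma~\ref{lem:prod_Wk}.

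The main obstacle is lifting pointwise convergence of each factor to convergence of the infinite product. I would truncate at a level $N\ge \ell$: the finite product over $j\in\{\ell,\ldots,N\}$ converges immediately from factorwise convergence of finitely many terms, and it remains to bound both tails uniformly in $k$. On the pre-limit side, the contribution from backbone positions $i<k-N$ to $W_k^{\alpha/(1-\alpha)}\sum_{i}H_i$ is controlled in expectation by $\sum_{j>N}(w_k/w_{k-j})^{\alpha/(1-\alpha)}\,w_{k-j}^{\alpha/(1-\alpha)}\expec[H_{k-j}\mid w_{k-j}]$, which is $O\big(\sum_{j>N}a_j^{\alpha/(1-\alpha)}\big)$ because Lemma~\ref{lem:convergence_hatDBK_alpha(0,1)} together with the subcriticality \eqref{eq:Conditional_final_clusters_mean_a<1} gives $\expec[H_i\mid W_i]=O(W_i^{-\alpha/(1-\alpha)})$. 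On the limiting side, $\Psi^{(j)}$ has finite mean under $h_\alpha$, so $1-\expec[\e^{-c\Psi^{(j)}}]=O(c)$ as $c\downarrow 0$, making each tail factor of the product $1+O(a_j^{\alpha/(1-\alpha)})$. Summability $\sum_j a_j^{\alpha/(1-\alpha)}<\infty$ follows from the exponential decay of $(W_k)_{k\ge 0}$ in Theorem~\ref{thm-subthmWk}, which forces the limiting ratios $a_j$ to decay geometrically in $j$, so both tails vanish as $N\to\infty$. Together with the mutual independence of $(\Psi^{(j)})_{j\ge\ell}$ implied by the conditional independence of the $H_i$, this yields
\begin{equation*}
\prod_{i=0}^{k-\ell}\expec\!\left[\e^{-sW_k^{\alpha/(1-\alpha)}H_i}\,\Big|\,W_i=w_i\right]\;\longrightarrow\;\expec\!\left[\e^{-s\sum_{j=\ell}^{\infty}a_j^{\alpha/(1-\alpha)}\Psi^{(j)}}\right],
\end{equation*}
which is the desired limit of the conditional Laplace transform and identifies the $\ell$-th coordinate of the limiting process $(Z_\ell)_{\ell\ge 0}$ in \eqref{eq:def:Z_n_alpha_in_(0,1)}.
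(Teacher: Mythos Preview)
Your approach is essentially the paper's: factorise the conditional Laplace transform via the conditional independence of $(H_i)$ given $(W_j)$, apply the forest-size scaling for $\alpha\in(0,1)$ (Proposition~\ref{prop-size-trees-backbone-(0,1)}) to each factor, and pass to the product. The paper compresses the last step into a single line---invoking uniform convergence of the factor Laplace transforms on bounded intervals and then stating that ``taking the product over $i$ and letting $k\to\infty$ implies the claim''---whereas you spell out an explicit truncation-and-tail argument; one small remark is that the geometric decay of $(a_j)$ you need does not require Theorem~\ref{thm-subthmWk}, since the deterministic hypothesis $w_k/w_{k-i}\to a_i$ for every $i$ already forces $a_{i+1}/a_i=\lim_k w_{k-i}/w_{k-i-1}=a_1$, hence $a_i=a_1^i$.
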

\begin{proof}
For convenience we abbreviate $\expec_{[k]}[\ \cdot\ ] = \expec[\ \cdot \mid  (W_i)^k_{i= 0}=(w_i)^k_{i= 0}]$.
 As $W_k$ scales exponentially, we restrict ourselves to only the forests at finite distance of $k$. Note that
     \begin{align}
      \expec_{[k]}\Big[\e^{-sM_{k-\ell}W_k^{\alpha/(1-\alpha)}} \Big]
      &=
      \expec_{[k]}\bigg[\exp\bigg\{-s\sum_{i=0}^{k-\ell}H_{i}W_k^{\alpha/(1-\alpha)}\bigg\} \bigg]\nonumber\\&
      =
      \expec_{[k]}\bigg[\exp\bigg\{-s\sum_{i=\ell}^{k}H_{k-i}W_k^{\alpha/(1-\alpha)}\bigg\} \bigg]
      \\&
      =\prod_{i=\ell}^k 
      \expec_{[k]}\bigg[\exp\bigg\{-s\frac{W_k^{\alpha/(1-\alpha)}}{W_{k-i}^{\alpha/(1-\alpha)}} H_{k-i}W_{k-i}^{\alpha/(1-\alpha)}\bigg\} \bigg].\nonumber
     \end{align}
Let $\phi_{k}(\cdot)$ denote the Laplace transform of $H_{k-i}W_{k-i}^{\alpha/(1-\alpha)}$ and $\phi(\cdot)$ the Laplace transform of $\Psi^{(i)}$, then by Proposition~\ref{prop-size-trees-backbone-(0,1)} and the continuity theorem \cite[Theorem 15 Chapter 14.7]{Fristedt1997}
$\phi_{k}(s) \rightarrow \phi(s)$ uniformly on bounded intervals. 
As $w_{k}/w_{k-i}$ is bounded in $k$, we also find $ \phi_{n}(sw_{k}/w_{k-i})\rightarrow \phi(s a_{i})$, i.e.
\begin{equation}
 \expec_{[k]}\bigg[ \exp\bigg\{-sH_{k-i}w_{k-i}^{\alpha/(1-\alpha)}\frac{w_k^{\alpha/(1-\alpha)}}{w_{k-i}^{\alpha/(1-\alpha)}}\bigg\}
    \bigg]\rightarrow
    \expec\Big[\exp\{-s\Psi^{(i)}a_{i}\}
    \Big].
\end{equation} 
Taking the product over $i$ and letting $k \rightarrow \infty$ implies the claim.
\end{proof}

\paragraphi{Convergence of the unconditional one-dimensional distribution}\label{par:6.2-part1}
Next, we extend this to convergence of the unconditional Laplace transform, which follows in a straightforward manner.
\begin{lemma}[Convergence of the unconditional Laplace transform]
\label{lem:conv_uncond_LT_alphain(0,1)}
Fix $\alpha\in(0,1)$ and $\ell\in\mathbb{N}$, then 
\begin{equation}
    \expec\Big[ \e^{-W_k^{\alpha/(1-\alpha)}
    M_{k-\ell}}\Big]
    \to
    \expec\Big[\e^{
    -s\sum_{i=\ell}^\infty \prod_{j=1}^i P_{j}^{\alpha/(1-\alpha)}\Psi^{(i)}
    }
    \Big].
\end{equation}
\end{lemma}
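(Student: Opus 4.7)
The plan is to combine the conditional Laplace-transform convergence in Lemma~\ref{lem:conv_one-dim_volume_alphain(0,1)} with the joint distributional convergence of the ratios $W_k/W_{k-i}$ supplied by Lemma~\ref{lem:prod_Wk}, using a Skorokhod coupling so that the conditional convergence can be applied path-wise, followed by bounded convergence to remove the conditioning. First, by Lemma~\ref{lem:prod_Wk} applied with $k$ replaced by $k-i$, the ratios satisfy
\[
\Big(\frac{W_k}{W_{k-i}}\Big)_{i=1}^L \xrightarrow{d} \Big(\prod_{j=1}^i P_j\Big)_{i=1}^L
\]
for every fixed $L$, and hence jointly in the product topology on $[0,1]^{\mathbb{N}}$. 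By Skorokhod's representation theorem, realise these sequences on a common probability space $(\Omega^*, \mathcal{F}^*, \mathbb{P}^*)$ such that $W_k/W_{k-i} \to \prod_{j=1}^i P_j$ almost surely for every $i\geq 1$.

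Second, on this coupled space, Lemma~\ref{lem:conv_one-dim_volume_alphain(0,1)} applies path-wise: for a.e.\ realisation of $(W_i)_{i\geq 0}$ the hypothesis $w_k/w_{k-i}\to a_i := \prod_{j=1}^i P_j$ holds for each $i$, yielding
\[
\expec\bigl[e^{-sW_k^{\alpha/(1-\alpha)}M_{k-\ell}} \mid (W_i)_{i=0}^k\bigr] \xrightarrow{\mathrm{a.s.}} \expec\Bigl[e^{-s\sum_{i=\ell}^\infty \prod_{j=1}^i P_j^{\alpha/(1-\alpha)}\Psi^{(i)}} \,\Big|\, (P_j)_{j\geq 1}\Bigr],
\]
where the $\Psi^{(i)}$ are drawn independently of the coupling. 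Since the conditional Laplace transforms are uniformly bounded in $[0,1]$, the bounded convergence theorem together with the tower property delivers
\[
\expec\bigl[e^{-sW_k^{\alpha/(1-\alpha)}M_{k-\ell}}\bigr] \to \expec\Bigl[e^{-s\sum_{i=\ell}^\infty \prod_{j=1}^i P_j^{\alpha/(1-\alpha)}\Psi^{(i)}}\Bigr],
\]
as required.

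The main obstacle is justifying the path-wise application of Lemma~\ref{lem:conv_one-dim_volume_alphain(0,1)} in the infinite-sum limit, i.e.\ showing that the limiting series $Z_\ell = \sum_{i=\ell}^\infty \prod_{j=1}^i P_j^{\alpha/(1-\alpha)}\Psi^{(i)}$ is almost surely finite and that tail terms contribute negligibly uniformly in $k$. For almost-sure finiteness, note that $\log P_j \leq 0$ with $\expec[\log P_j] = -(1-\alpha)^2/\alpha < 0$, so by the strong law of large numbers $\prod_{j=1}^i P_j^{\alpha/(1-\alpha)}$ decays exponentially in $i$ almost surely; combined with the tail of $\Psi^{(i)}$ inherited from the density $h_\alpha$ in \eqref{eq:def_h_alpha(0,1)}, a Borel--Cantelli argument ensures convergence of the series. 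To pass from finite truncations of the sum back to the infinite sum inside Lemma~\ref{lem:conv_one-dim_volume_alphain(0,1)}, I would first prove the statement for the truncated volume $\sum_{i=\ell}^L H_{k-i}$, then show that the remainder $W_k^{\alpha/(1-\alpha)} \sum_{i=L+1}^{k-\ell} H_{k-i}$ is small in probability uniformly in $k$ using the exponential decay of $(W_k/W_{k-i})^{\alpha/(1-\alpha)}$ together with the tightness of $H_jW_j^{\alpha/(1-\alpha)}$ implied by Proposition~\ref{prop-size-trees-backbone-(0,1)}, and finally let $L\to\infty$.
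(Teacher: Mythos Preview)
Your proposal is correct and follows essentially the same route as the paper: combine the conditional convergence of Lemma~\ref{lem:conv_one-dim_volume_alphain(0,1)} with the joint convergence of the ratios from Lemma~\ref{lem:prod_Wk}, then remove the conditioning using that Laplace transforms are bounded in $[0,1]$. The only cosmetic difference is packaging: the paper invokes the generalised continuous mapping theorem \cite[Theorem 3.4.4]{Whitt2002} followed by Portmanteau, whereas you implement the same step via Skorokhod representation plus bounded convergence; your additional tail-control discussion for the infinite series is a reasonable elaboration but is already subsumed in the conclusion of Lemma~\ref{lem:conv_one-dim_volume_alphain(0,1)}, which you are entitled to use as a black box here.
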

\begin{proof}
We define the functions
\begin{equation}
    f_k( (a_i)^k_{i= 1}) = 
    \expec\Big[ \e^{-sW_k^{\alpha/(1-\alpha)}
    M_{k-\ell}}
    \mid (W_k/W_{k-i})^k_{i= 0}=(a_i)^k_{i=0}\Big]
\end{equation}
and
\begin{equation}
    f((a_i)^k_{i= 1})
    =\expec\Big[
    \e^{-s\sum_{i=\ell}^\infty \Psi^{(i)} a_i}
    \mid (W_k/W_{k-i})^k_{i= 0}=(a_i)^k_{i= 0}\Big].
\end{equation}
Then, based on the previous part, we have convergence of $f_k$ to $f$,
restricted to $a_i\in(0,1)$ for all $i<k$. By Lemma \ref{lem:prod_Wk} it follows that $W_k/W_{k-i}$ has a limit in $k$ jointly for all $i<k$  that is between $(0,1)$. Therefore, by the generalised continuous function theorem \cite[Theorem 3.4.4]{Whitt2002}, we find, the limit being
\begin{equation}
    f_k(W_k/W_{k-i})
    \xrightarrow{d}
    f \Big(\prod^{i}_{j=1} P_{j}^{\alpha/(1-\alpha)}\Big).
\end{equation}
Moreover, as $f_k(\cdot)$ and $f(\cdot)$ are bounded functionals, the Portmanteau theorem implies convergence of the respective expectations.
\end{proof}

\paragraphi{Convergence of the finite-dimensional distributions}
\label{par:6.2-part2}
As $(Z_\ell)_{\ell\geq 0}$ is a discrete-time process, it remains to show convergence of the finite-dimensional distribution. This directly proves Theorem \ref{thm-main} for $\alpha\in(0,1)$. The proof shows similarities to Lemma~\ref{lem:finite_dim_assumption} but is catered to the result of Lemma~\ref{lem:conv_one-dim_volume_alphain(0,1)}.

We prove the result via the Laplace transform.
 Consider $s_1,\ldots,s_n>0$ then we consider convergence of 
 \begin{equation}
 \begin{aligned}
 \label{eq:fin_dim_distr_alpha(0,1)_1}
     &\expec_{[k]}\Big[\exp\Big\{-s_1M_{k-\ell_1}W_k^{\alpha/(1-\alpha)}-\cdots
     -s_nM_{k-\ell_n}W_k^{\alpha/(1-\alpha)}
     \Big\}\Big]
     \\& =
     \expec_{[k]}\bigg[\exp\bigg\{
     -(s_1+\cdots+s_n) \sum_{i=0}^{k-\ell_n}
     H_iW_k^{\alpha/(1-\alpha)}-\cdots\\&\qquad\qquad
     -s_1 \sum_{i=k-\ell_2+1}^{k-\ell_{1}}H_iW_k^{\alpha/(1-\alpha)}
     \bigg\}\bigg] .
\end{aligned}
 \end{equation}
 Conditionally on $(W_k)_{k\geq 0}$, we note that by independence of the forest sizes we can write \eqref{eq:fin_dim_distr_alpha(0,1)_1} to
\begin{equation}
    \prod_{u=1}^{n}
    \expec_{[k]}\bigg[ 
    \exp\bigg\{
    -\bigg(\sum_{i=1}^u s_i\bigg) 
    \sum_{i=k-\ell_{u+1}+1}^{k-\ell_{u}}
    H_i W_k^{\alpha/(1-\alpha)}
    \bigg\} 
    \bigg],
\end{equation}
where $\ell_{n+1}+1=k$ by convenience.
By Lemma~\ref{lem:conv_one-dim_volume_alphain(0,1)}, we know that these Laplace transforms converge to  
\begin{equation}
    \prod_{u=1}^{n}
    \expec_{[k]}\bigg[ 
    \exp\bigg\{
    -\bigg(\sum_{i=1}^u s_i\bigg) 
    \sum_{i=\ell_{u}}^{\ell_{u+1}-1}
    \prod_{j=1}^i
    P_j^{\alpha/(1-\alpha)} \Psi^{(i)}
    \bigg\} 
    \bigg] .
\end{equation}
As these are still independent random variables, they can be taken in one expectation. Rearranging the terms results in 
\begin{equation}
\begin{aligned}
   & \expec_{[k]}\bigg[\exp\bigg\{
    -
    \sum_{u=1}^n
    \bigg( \sum_{i=1}^u s_i\bigg)
     \sum^{\ell_{u+1}-1}_{i=\ell_u} 
    \prod_{j=1}^i
    P_j^{\alpha/(1-\alpha)} \Psi^{(i)}-
    ...
    \bigg\}\bigg] 
   \\& = \expec_{[k]}\bigg[\exp\bigg\{-\sum_{u=1}^ns_u Z_{\ell_u}
    \bigg\}\bigg]. 
    \end{aligned}
\end{equation}
According to the same argumentation as in the previous paragraph, we can take expectations on both sides and conclude the convergence of the finite dimensional distribution. Therefore, we find, for all $n$,
\begin{equation}
    (M_{k-\ell} W_k^{\alpha/(1-\alpha)})_{\ell=0}^n
    \xrightarrow{d}
    (Z_\ell)_{\ell=0}^n.
\end{equation}
This concludes the proof by taking $n\to\infty$.
\end{proof}
We conclude with a reflection on the type of convergence we chose for $\alpha\in(0,1)$. Due to the exponential decay of the $(W_k)_{k\geq0}$ process, a similar scaling of $(M_{\lceil kt\rceil })_{t\geq 0}$ as for $\alpha>2$ or $\alpha\in(1,2)$, would result in a limit independent of $t$. Indeed, fix $t>0$ then by the result of Lemma~\ref{lem:conv_one-dim_volume_alphain(0,1)} for $\ell=0$:
\begin{equation}
\label{eq:volumescaling_limit_alpha<1}
   M_{\lceil kt\rceil }W_{\lceil kt\rceil }^{\frac{\alpha}{1-\alpha}} \xrightarrow{d} 
   Z_0.
\end{equation}
Moreover, for a set $t_0<t_1<\cdots<t_n$, the finite-dimensional distribution of $(M_{\lceil kt\rceil }W_{\lceil kt\rceil}^{\frac{\alpha}{1-\alpha}})_{t>0}$ converges to an i.i.d.\ sequence, which follows in a straightforward manner.

%Input the Appendix, acknowledgements, funding and supplemental material.
\begin{appendix}
    \section{List of symbols and notation}

\begin{longtable}{ll}
    IPC & Invasion percolation cluster\\
    IIC & Incipient infinite cluster\\
    BP (tree) & Branching process (tree)\\
    $\alpha$-LEP & $\alpha$-enhanced lower-envelope process, see $L_{\halpha}(t)$\\
    EPDP& Exponential Poisson decay process, see $\L_\alpha(t)$\\
    w.h.p. & With high probability \\
    i.i.d. & Independent and identically distributed\\
    &\\
        $\clock$ & Root of the tree\\ 
       $ \mathcal{T}$ & BP tree with uniform weights on the edges\\
        $\mathcal{T}(p)$ & BP tree restricted to only weights smaller than $p$\\
        $T^{(i)}(\clock)$ & Invasion percolation cluster, started in the root, in $i$ steps\\
        $T(\clock)$ & IPC \\
        $\T_k(\clock)$ & $ T(\clock) \cap v_k(\clock)$\\
        $\M_k(\clock)$ & $k$-cut IPC\\
        $M_k$ & $|\M_k(\clock)|$\\
         $\tilde{T}_{i,k}$ & $i$-th conditionally finite tree attached to $v_k$\\
        $H_k$ & (conditionally finite) Volume of the total forest attached to $v_k$: $\sum_{i=1}^{\hat{D}_{v_k}}\tilde{T}_{i,k}$ \\
        $T^{\sss(k)}$ & Total progeny of a tree with distribution $X^{\sss(k)}$ \\
        &\\
        $p_c$ & Critical percolation coefficient\\
        $(W_k)_{k\geq 0}$ & Future maximum weight after $k$-th backbone vertex\\
        $(\beta_i)_{i\geq 1}$ & Weight between the $(i-1)$-st and $i$-th backbone vertex\\
        $B_k(\clock)$ & Set of vertices at most graphs distance $k$ away from the root\\
        &\\
         $X$& Degree distribution in $\mathcal{T}$, which is assumed to be a power-law  \\
         $X^\star$ & Size-biased distribution of $X$\\
         $X^{\sss(k)}$ & $\Binom(X,W_k)$\\
         $\tilde X^{\sss(k)}$ & $\tilde X$, conditioned on extinction of a tree with offspring distribution $ X^{\sss(k)}$ \\
         $Q_n$ & $\sum_{i=1}^n X_i$, for i.i.d. $X_i\sim X$\\
         $(v_k)_{k\geq 0}$ & Sequence of backbone vertices, where $v_0=\clock$\\
        $(D_{v_k})_{k\geq 0}$ & Degree sequence of backbone vertices\\
        $\hat{D}_{v_k}$ & $\Binom(D_{v_k},w_k)$\\
        &\\
        $\theta(p)$ &Survival probability of $\mathcal{T}(p)$\\
        $\hat{\theta}(p)$& 
        $p\theta(p)$,\ i.e. the
        survival probability of the BP that is attached to the backbone  \\
        $\eta(p)$ &
        $1-\theta(p)$,\ i.e. the
        extinction probability of $\mathcal{T}(p)$\\
        &\\
        $\alpha$ & Power-law exponent of the offspring distribution\\
        $\halpha$ & $\min\{\alpha,2\}$\\
        $\gamma $ & $2$ for $\alpha>2$ and $\alpha/(\alpha-1)$ for $\alpha\in(1,2)$. (not defined for $\alpha\in(0,1)$) \\
        &\\
        $L_{\eta,\halpha}(t)$ & $\alpha$-Enhanced generalised lower envelope process\\
        $L_{\halpha}(t)$ & $\alpha$-Enhanced lower envelope process, $L_{0,\halpha}(t)= L_{\halpha}(t)$\\       
        $\L_{\eta,\alpha}(t)$ & Exponential Poisson decay process\\
        &\\
        $\Gamma(\alpha)$ & The gamma function, $\alpha>-1$\\
        $\Gamma(\alpha,x)$ &The (upper) incomplete gamma function, $\alpha >-1$ and $x>0$\\
        &\\
        $\o{k}{a,x}$ & An error term that converges to 0 for $k\to\infty$ uniformly in $a\in[\varepsilon,1/\varepsilon]$ and $x>\varepsilon$\\
        $[n]$ & $\{1,\ldots,n\}$
        \\&
        \\      
$\prob_k(\cdot)$ & $\prob(\cdot\mid W_k=w_k)$\\
$\expec_k[\cdot]$ & $\expec[\cdot\mid W_k=w_k]$\\
&\\
$\Binom(n,p)$&Binomial distribution with $\expec[\Binom(n,p)]=np$\\
$\GammaD(\alpha,\lambda)$& Gamma distribution with $\expec[\GammaD[k,\lambda]]=\alpha/\lambda$\\
$\Exp(\lambda)$&Exponential distribution with $
\expec[\Exp(\lambda)]=1/\lambda $\\
$\Pois(\lambda)$&Poisson distribution with $\expec[\Pois(\lambda)]=\lambda$\\
$\Unif[a,b]$&Uniform distribution with $\expec[\Unif[a,b]]=(a+b)/2$\\
&\\
$\xrightarrow{d}$ & Convergence in distribution\\
$\xrightarrow{\prob} $ & Convergence in probability\\
$\xrightarrow{\mathcal{D}} $ & Convergence in the (Skorohod) $J_1$-topology
\end{longtable}

\section{Key details on the derivations }
\subsection{Details on the differential equation}
\label{app:difeq}
We simplify \eqref{eq:Differential_equation_g} with the use of Laplace transforms.
We first define
\begin{equation}
\label{eq:Transform_differential_eq}
     \hat{F}_{g^{(\varepsilon)}}(x,s)=\int_0^\infty \e^{-st} g^{(\varepsilon)}(x,t) \dif t
    =\int_0^\infty \e^{-st} \int_x^\infty f_{L^{(\varepsilon)}_{\eta,\halpha}(t)}(y) \dif y \dif t.
\end{equation}
Secondly, for notational convenience we define $g^{(\varepsilon)}_x(x,t)= \partial/(\partial x)g^{(\varepsilon)}(x,t)$ and $g^{(\varepsilon)}_t(x,t)$ in a similar fashion.
We observe that by integration by parts,
\begin{equation}
    \begin{aligned}
    \label{eq:simplify_distribution_alphaLEP}
        \hat{F}_{g_t^{(\varepsilon)}}(x,s)=\int_0^\infty \e^{-st} g^{(\varepsilon)}_t(x,t)\dif t
       & =[\e^{-st} g^{(\varepsilon)}(x,t))]_{t=0}^{t=\infty}+s \int_0^\infty \e^{-st} g^{(\varepsilon)}(x,t) \dif t
        \\&=-g^{(\varepsilon)}(x,0) +s \hat{F}_{g^{(\varepsilon)}}(x,s),
    \end{aligned}
\end{equation}
which exists as $g^{(\varepsilon)}(x,t)$ is a distribution for all $t\geq 0$. 
\begin{comment}
  Similarly,
\begin{equation}
    \begin{aligned}
    \label{eq:simplify_density_alphaLEP}
        \hat{F}_{g_{xt}^{(\varepsilon)}}(x,s)=\int_0^\infty \e^{-st} g^{(\varepsilon)}_{xt}(x,t)\dif t
        &=[\e^{-st} g^{(\varepsilon)}_x(x,t)]_{t=0}^{t=\infty}+s \int_0^\infty \e^{-st} g^{(\varepsilon)}_x(x,t)\dif t
        \\&=
       -g^{(\varepsilon)}_x(x,0)+
        s \hat{F}_{g_x^{(\varepsilon)}}(x,s).
    \end{aligned}
\end{equation}
Taking the Laplace transform with respect to $t$ on both sides of \eqref{eq:differential_eq_in_g} and implementing \eqref{eq:simplify_distribution_alphaLEP} and \eqref{eq:simplify_density_alphaLEP}, we find 
\end{comment}
By taking the Laplace transform on both sides of \eqref{eq:Differential_equation_g} gives
\begin{equation}
\label{eq:simplify_distribution_alphaLEP_P2}
   c \int_0^\infty \e^{-st} g_t^{(\varepsilon)}(x,t)\dif t
   =
   \int_0^\infty\int_x^\infty \e^{-st}
        \frac{x^{(1+\eta)/(\hat{\alpha}-1)}}{y^{(1+\eta)/(\hat{\alpha}-1)}-1}
        g_y^{(\varepsilon)}(y,t)
        \dif y\dif t.
\end{equation}
We notice that the left-hand side exists by \eqref{eq:simplify_distribution_alphaLEP} and therefore the right-hand side must exist as well. Furthermore, we notice that the expression in the integral on the right-hand side does not change its sign, meaning we can switch the integrals to simplify \eqref{eq:simplify_distribution_alphaLEP_P2}
to
\begin{equation}
\label{eq:simplify_distribution_alphaLEP_P3}
  c \hat{F}_{g_t^{(\varepsilon)}}(x,s)=
  \int_x^\infty \frac{x^{(1+\eta)/(\hat{\alpha}-1)}}{y^{(1+\eta)/(\hat{\alpha}-1)}-1} \hat F_{g_y^{(\varepsilon)}}(x,s) 
        \dif y.
\end{equation}
Differentiation of both sides with respect to $x$ and recalling  \eqref{eq:simplify_distribution_alphaLEP} and
\eqref{eq:simplify_distribution_alphaLEP_P3} results in
\begin{equation}
\begin{aligned}
  &cx(-g^{(\varepsilon)}_x(x,0) +s\hat{F}_{g_x^{(\varepsilon)}}(x,s) ) \\&\quad -c\frac{1+\eta}{\halpha-1}(-g^{(\varepsilon)}(x,0)+ s\hat{F}_{g^{(\varepsilon)}}(x,s))+x^2 \hat{F}_{g_x^{(\varepsilon)}}(x,s)=0.
  \end{aligned}
\end{equation}
This can be written in a more conventional way as
\begin{equation}
    \hat F_{g^{(\varepsilon)}_x}(x,s) - \frac{sc}{(\halpha-1)(x^2+scx)}
    \hat F_{g^{(\varepsilon)}}(x,s)+ c\frac{ -xg^{(\varepsilon)}_x(x,0)+\frac{1+\eta}{\halpha-1} g^{(\varepsilon)}(x,0) }{x^2+scx}=0,
\end{equation}
which is a first-order differential equation with respect to $x$ for the Laplace transform in $t$ and can be solved by the method of integrating factor \cite[Section 2.1]{Boyce2009}. This results in
\begin{equation}
\label{eq:diff_equation_finalise_1}
\begin{aligned}
   & \hat{F}_{g^{(\varepsilon)}}(x,s)=c
     \Big(\frac{x}{x+sc}\Big)^{\frac{1+\eta}{\halpha-1}}
    \int 
    \Big(
    \frac{x+sc}{x}
    \Big)^{\frac{1+\eta}{\halpha-1}}
    \frac{
    xg^{(\varepsilon)}_x(x,0)-\frac{1+\eta}{\halpha-1} g^{(\varepsilon)}(x,0) }{x^2+scx}
    \dif x\\&\quad=
   - c
     \Big(\frac{x}{x+sc}\Big)^{\frac{1+\eta}{\halpha-1}}
    \int_x^\infty 
    \Big(
    \frac{y+sc}{y}
    \Big)^{\frac{1+\eta}{\halpha-1}}
    \frac{
    yg^{(\varepsilon)}_x(y,0)-\frac{1+\eta}{\halpha-1} g^{(\varepsilon)}(y,0) }{y^2+scy}
    \dif y \\&\quad\quad +C(s)\Big(\frac{x}{x+sc}\Big)^{\frac{1+\eta}{\halpha-1}},
    \end{aligned}
\end{equation}
where $C(s)$ is some constant that can depend on $s$ and where the second equality follows as $\hat F_{g^{(\varepsilon)}}(x,s)\to 0$ for $x\to\infty$, based on the definition in \eqref{eq:Transform_differential_eq}.
Recall that $g^{(\varepsilon)}(x,0)$ is the tail distribution of $c$ times a gamma random variable with parameters $( (1+\eta)/(\halpha-1),\varepsilon)$.

Consider next the expression in \eqref{eq:diff_equation_finalise_1}, that we slightly simplify by
\begin{equation}
\begin{aligned}
\label{eq:diff_equation_finalise_C1}
   \hat{F}_{g^{(\varepsilon)}}&(x,s)=-
     \Big(\frac{x}{x+sc}\Big)^{\frac{1+\eta}{\halpha-1}}     
     \int_x^\infty 
    \Big(
    \frac{y+sc}{y}
    \Big)^{\frac{1+\eta}{\halpha-1}}
    \frac{
    cg^{(\varepsilon)}_x(y,0) }{y+sc}
    \dif y
    \\&\quad +\Big(\frac{x}{x+sc}\Big)^{\frac{1+\eta}{\halpha-1}}
    \underbrace{
    \int_x^\infty 
     \Big(\frac{y+sc}{y}
    \Big)^{\frac{1+\eta}{\halpha-1}}
    \frac{c\frac{1+\eta}{\halpha-1} g^{(\varepsilon)}(y,0)}{y^2+scy}
    \dif y}_{(\spadesuit)}+C(s)\Big(\frac{x}{x+sc}\Big)^{\frac{1+\eta}{\halpha-1}},
    \end{aligned}
\end{equation}
where we recall that $g^{(\varepsilon)}(x,\varepsilon)$ is the tail distribution of a gamma random variable with shape $(\eta+1)/(\alpha-1)$ and rate $\varepsilon$.
We first consider the part that is marked with $(\spadesuit)$ in \eqref{eq:diff_equation_finalise_C1}. We use partial integration to simplify this integral.
In order to do so, we make use of the following integral equality  that can be obtained by a tactical substitution of  $ z=(y+sc)/y$, so that $\dif z = -sc/y^2\dif y$: 
\begin{equation}
\begin{aligned}
\label{eq:diff_equation_finalise_1.4}
    \int_x^\infty \Big( \frac{y+sc}{y}
   & \Big)^{\frac{1+\eta}{\halpha-1}}\frac{1}{y^2+scy}
    \dif y =
    \frac{1}{sc}\int_x^\infty \Big( \frac{y+sc}{y}
    \Big)^{\frac{1+\eta}{\halpha-1}-1}\frac{sc}{y^2}
    \dif y
    \\&=-\frac{1}{sc}
    \int_{y=x}^{y=\infty}
    z^{\frac{1+\eta}{\halpha-1}-1}
    \dif z
    =\Big[
    -
    \frac{\halpha-1}{sc(1+\eta)}
    \Big( \frac{y+sc}{y}
    \Big)^{\frac{1+\eta}{\halpha-1}}
    \Big]_{y=x}^{y=\infty}
    \\&=
    \frac{\halpha-1}{sc(1+\eta)}
    \Big( \frac{x+sc}{x}
    \Big)^{\frac{1+\eta}{\halpha-1}}
    -\frac{\halpha-1}{sc(1+\eta)}
    .
    \end{aligned}
\end{equation}
With the result of \eqref{eq:diff_equation_finalise_1.4} in mind, we find by partial integration
\begin{equation}
\begin{aligned}
\label{eq:diff_equation_finalise_2}
   \int_x^\infty 
    \Big(
    \frac{y+sc}{y}
    \Big)^{\frac{1+\eta}{\halpha-1}}
    \frac{
    c\frac{1+\eta}{\halpha-1} g^{(\varepsilon)}(y,0)}{y^2+scy}
    \dif y&=
    \frac{c}{sc}
    \Big(
    \frac{x+sc}{x}
    \Big)^{\frac{1+\eta}{\halpha-1}}g^{(\varepsilon)}(x,0)-\frac{cg^{(\varepsilon)}(\infty,0)}{sc}
    \\&\quad+
   \int_x^\infty \frac{c}{sc}\Big(
    \frac{y+sc}{y}
    \Big)^{\frac{1+\eta}{\halpha-1}}g^{(\varepsilon)}_x(y,0) \dif y.
    \end{aligned}
\end{equation}
Recalling that $g^{(\varepsilon)}(x,0)$ is a distribution in $x$, we find that $g^{(\varepsilon)}(\infty,0)=0$. 
Substituting the result of \eqref{eq:diff_equation_finalise_2} in \eqref{eq:diff_equation_finalise_C1} gives 
\begin{equation}
\begin{aligned}
    \hat{F}_{g^{(\varepsilon)}}(x,s)&=
     \Big(\frac{x}{x+sc}\Big)^{\frac{1+\eta}{\halpha-1}}
     \Big( 
    \int_x^\infty \Big(
    \frac{y+sc}{y}
    \Big)^{\frac{1+\eta}{\halpha-1}}g^{(\varepsilon)}_x(y,0)
    \Big(-\frac{c}{y+sc}+\frac{1}{s} \Big)\dif y\\&\qquad+
     \frac{1}{s}
   \Big(
    \frac{x+sc}{x}
    \Big)^{\frac{1+\eta}{\halpha-1}}g^{(\varepsilon)}(x,0)
     \Big)
     +C(s) \Big(\frac{x}{x+sc}\Big)^{\frac{1+\eta}{\halpha-1}}
     \\&=     
      \Big(\frac{x}{x+sc}\Big)^{\frac{1+\eta}{\halpha-1}} \frac{1}{s}\int_x^\infty \Big(
    \frac{y+sc}{y}
    \Big)^{\frac{1+\eta}{\alpha-1}-1}g^{(\varepsilon)}_x(y,0)
   \dif y
   \\&\qquad+\frac{1}{s}
      g^{(\varepsilon)}(x,0) +C(s) \Big(\frac{x}{x+sc}\Big)^{\frac{1+\eta}{\halpha-1}} .
         \end{aligned}
\end{equation}
We note that $g_x^{(\varepsilon)}(x,0)$ equals minus the density of a gamma distribution with shape $(1+\eta)/(\halpha-1)$ and rate $\varepsilon/c$. Substituting this density gives
\begin{equation}
\label{eq:diff_equation_finalise_C3}
\begin{aligned}
    &\frac{1}{s}g^{(\varepsilon)}(x,0) 
    - \Big(\frac{x}{x+sc}\Big)^{\frac{1+\eta}{\halpha-1}} \frac{1}{s}\int_x^\infty \Big(
    \frac{y+sc}{y}
    \Big)^{\frac{1+\eta}{\halpha-1}-1}
    \\&\quad \times \frac{(\varepsilon/c)^{\frac{1+\eta}{\halpha-1}}y^{\frac{1+\eta}{\halpha-1}-1}}{\Gamma((1+\eta)/(\halpha-1))} \e^{-\varepsilon y/c}
   \dif y +C(s) \Big(\frac{x}{x+sc}\Big)^{\frac{1+\eta}{\halpha-1}}\\& =
   \frac{1}{s}g^{(\varepsilon)}(x,0) 
    - \frac{\e^{s\varepsilon}}{s}\int_x^\infty \Big( \frac{\varepsilon}{c}\cdot
    \frac{y+sc}{x+sc}
    \Big)^{\frac{1+\eta}{\halpha-1}-1}\\&\quad \times
    \frac{x^{\frac{1+\eta}{\halpha-1}}}{\Gamma((1+\eta)/(\halpha-1))} \e^{-\varepsilon(y+sc)/c}\frac{\varepsilon \dif y}{c(x+sc)} \quad+C(s) \Big(\frac{x}{x+sc}\Big)^{\frac{1+\eta}{\halpha-1}}.
    \end{aligned}
\end{equation}
In the final step we rewrite the integral in \eqref{eq:diff_equation_finalise_C3}.
This is done with the integral substitution 
\begin{equation}
    \frac{\varepsilon}{c}\cdot
    \frac{y+sc}{x+sc}=u+\frac{\varepsilon}{c}\quad 
    \text{ so that, }
    \quad 
    \frac{\varepsilon \dif y}{c(x+sc)}=\dif u.
\end{equation}
By this substitution we find
\begin{equation}
\begin{aligned}
&  \frac{1}{s}g^{(\varepsilon)}(x,0) 
    - \frac{\e^{s\varepsilon}}{s}\int_0^\infty ( u+\varepsilon/c)
    )^{\frac{1+\eta}{\halpha-1}-1}
    \frac{x^{\frac{1+\eta}{\halpha-1}}\e^{-(u+\varepsilon/c)(x+sc)}}{\Gamma((1+\eta)/(\halpha-1))} \frac{\varepsilon \dif y}{c(x+sc)}\\& =
    \frac{1}{s}\prob(c\GammaD(1/(\halpha-1),\varepsilon)>x)
  -\int_0^\infty
    (u+\varepsilon/c)^{\frac{1+\eta}{\halpha-1}-1}\frac{ x^{\frac{1+\eta}{\halpha-1}}\e^{-x(u+\varepsilon/c)}}{\Gamma((1+\eta)/(\halpha-1))} \frac{\e^{-scu}}{s}\dif u.
\end{aligned}
\end{equation}
One can therefore write \eqref{eq:diff_equation_finalise_1} to
\begin{equation}
\begin{aligned}
    \label{eq:diff_equation_finalise_1.1}
   \hat F_{g^{(\varepsilon)}}(x,s)  &=
    \frac{1}{s}\prob(\GammaD( (1+\eta)/(\halpha-1),x)>\varepsilon/c)
  -\int_0^\infty \frac{x^{\frac{1+\eta}{\halpha-1}}}{\Gamma((1+\eta)/(\halpha-1))}  \\&\quad \times (u+\varepsilon/c)^{(1+\eta)/(\halpha-1)-1}\e^{-x(u+\varepsilon/c)}\frac{\e^{-sc u}}{s}\dif u +C(s) \Big( \frac{x}{x+sc}\Big)^{\frac{1+\eta}{\halpha-1}}.
    \end{aligned}
\end{equation}
This defines the transforms up to a constant that may dependent on $s$. In order to derive the constant, we use the fact that $x \mapsto g^{(\varepsilon)}(x,t)$ is a tail distribution with respective density $f_{L^{(\varepsilon)}_{\eta,\halpha}(t)}(x)$ to obtain
\begin{equation}
    \int_0^\infty 
    \e^{-s t}
    g^{(\varepsilon)}(x,t) \dif t 
    =
   \hat F_{g^{(\varepsilon)}}(x,s).
   \end{equation}
Thus, by differentiating with respect to $x$ on both sides,   
\begin{equation}
   -\int_0^\infty 
    \e^{-s t}
  f_{L^{(\varepsilon)}_{\eta,\halpha}(t)}(x)\dif t 
    =
   \frac{\partial}{\partial x} \hat F_{g^{(\varepsilon)}}(x,s).
   \end{equation}
Integrating over $x$ and interchanging the integrals gives   
   \begin{equation}
  - \int_0^\infty 
   \int_0^\infty 
    \e^{-s t}
    f_{L^{(\varepsilon)}_{\eta,\halpha}(t)}(x) \dif x\dif t 
     =
      \int_0^\infty 
       \frac{\partial}{\partial x} \hat F_{g^{(\varepsilon)}}(x,s) 
       \dif x.
\end{equation}
For the left-hand side that we note that $x\mapsto f_{L^{(\varepsilon)}_{\eta,\halpha}(t)}(x)$ is a density and must integrate to 1 and on the right-hand side we use the fundamental theorem of calculus. This results in the equality $  -1/s= C(s) -1/s$ and we conclude that $C(s)$ in \eqref{eq:diff_equation_finalise_1.1} equals $0$. 

Finally, it remains to show that the tail distribution of a gamma random variable with
shape $(1+\eta)/(\halpha-1)$ and rate $(t+\varepsilon)/c$, where we recall $c=c(\eta)$ from \eqref{eq:ceta}, has the same Laplace transform with respect to $t$ as \eqref{eq:diff_equation_finalise_1.1}. We use the following scaling property of the gamma distribution, that states, for $t>0$,
\begin{equation}
\label{eq:gamma_scaling}
\GammaD((1+\eta)/(\halpha-1),t) \overset{d}{=}(1/t)\GammaD((1+\eta)/(\halpha-1),1).
\end{equation}
This property then implies
\begin{equation}
\begin{aligned}
    \prob(c\GammaD(& (1+\eta)/(\halpha-1),t+\varepsilon)
    >x)
    \\&=\prob( c/(t+\varepsilon)\GammaD((1+\eta)/(\halpha-1),1)>x)\\&
    =\prob(\GammaD((1+\eta)/(\halpha-1),x)>(t+\varepsilon)/c).
    \end{aligned}
\end{equation}
Moreover, for the Laplace transform of the gamma tail distribution, this means
\begin{equation}
\begin{aligned}
    &\int_0^\infty \prob( c\GammaD( (1+\eta)/(\halpha-1), t+\varepsilon)>x) \e^{-st} \dif t\\&\quad =
    \int_0^\infty
    \prob( \GammaD( (1+\eta)/(\halpha-1), x)>(t+\varepsilon)/c)\e^{-st} \dif t
    \\&\quad =
    c\int_0^\infty
    \prob( \GammaD( (1+\eta)/(\halpha-1), x)> z+\varepsilon/c)\e^{-scz} \dif z.
    \end{aligned}
\end{equation}
By partial integration and the established Laplace transform of the gamma distribution, we can simplify this as
\begin{equation}
\begin{aligned}
   &-c\Big[ \frac{\e^{-scz}}{sc}
    \prob( \GammaD( (1+\eta)/(\halpha-1)), x)>z+\varepsilon/c)
    \Big]_{z=0}^{z=\infty}
    \\&\quad-c\int_0^\infty 
    \frac{x^{(1+\eta)/(\halpha-1)}(z+\varepsilon/c)^{(1+\eta)/(\halpha-1)-1}}{\Gamma((1+\eta)/(\halpha-1)) } \e^{-x(z+\varepsilon/c)}
    \frac{\e^{-scz}}{sc}\dif z
    \\&=
    \frac{1}{s}
    \prob( \GammaD( (1+\eta)/(\halpha-1)), x)>\varepsilon/c)\\&\quad
    -\int_0^\infty 
    \frac{x^{(1+\eta)/(\halpha-1)}(z+\varepsilon/c)^{(1+\eta)/(\halpha-1)-1}}{\Gamma((1+\eta)/(\halpha-1)) } \e^{-x(z+\varepsilon/c)}
    \frac{\e^{-scz}}{s}\dif z,
\end{aligned}
\end{equation}
which equals the right-hand side of  \eqref{eq:diff_equation_finalise_1.1} with $C(s)=0$.
Finally, uniqueness of Laplace transforms gives the desired result.

%%%%%%%%%%%%%%%%%%%%%%%%%%%%%%%%%%%%%%%%%%%%%%%%%%%%%%%%%%%%%%%%%%%%%%%%%%%%%%%%%%%%%%%%%%%%%%%%%%%%%%%%%%
%%%%%%%%%%%%%%%%%%%%%%%%%%%%%%%%%%%%%%%%%%%%%%%%%%%%%%%%%%%%%%%%%%%%%%%%%%%%%%%%%%%%%%%%%%%%%%%%%%%%%%%%%%
\subsection{Details on Tightness}

\subsubsection{Tightness of \mtitle{$(kV^*_{\lceil kt\rceil})_{t\geq0}$}}
\label{app:3-1}
To argue tightness, we use the Arzel\`a-Ascoli theorem, as  described in
	\cite[Theorem 16.8]{Billingsley1999} for the space $ D[\varepsilon,R]$ for any $0<\varepsilon<R<\infty$. This result is defined as follows:
	\begin{criterion}[Tightness condition in \protect{$D[\varepsilon,R]$}   ]
	\label{lem:Tightness_Crit_Arz_As}
	    Let $(X_k(t))_{t\geq 0}$ be a sequence of c\`adl\`ag stochastic processes and $R>\varepsilon$ be given. Then $(X(t))_{t\geq 0}$ is {\em tight} if and only if the following conditions are met:
	    	\begin{description}
		\item[Tightness of the supremum.] For every $\eta>0$, there exists an $a>0$ such that $$\underset{k\to\infty}{\emph{limsup}}
		\prob(\sup_{t\in [\vep,R]} |X_k(t)| >a)<
		\eta.
		$$
		\item[Modulus of continuity.] For every $r>0$ and $\eta>0$, there exists a $\delta>0$ such that
		$$\underset{k\to \infty}{\emph{limsup}}
		\prob(\bar{w}_{X_k}(\delta)>r)<\eta,$$
	where, for a process $x=(x(t))_{t\in [\vep,R]}$, $\bar{w}_x(\delta)$ is the c\`adl\`ag modulus of $x$ given by
		\begin{equation}
		\inf\limits_{(t_i)_{i\geq1}\in P_\delta}\
		\max\limits_{i\in[1,j]}\
		\sup\limits_{s,t\in[t_i,t_{i+1}]}
		|x(t)-x(s)|,
		\end{equation}
	and $P_\delta$ is the set of all partitions of the interval $[\varepsilon,R]$, where the increment lengths are at most $\delta$. 
	\end{description}
		\end{criterion}

    In this appendix, we show that the conditions of Criterion \ref{lem:Tightness_Crit_Arz_As} are met for $(k\tilde V_{\lceil kt \rceil}(\eta))_{t\geq 0}$ for $\alpha>2$, $\alpha\in(1,2)$ and for $( (\tilde \V_{\lceil kt\rceil} (\eta) )^{1/k})_{t\geq0}$ for $\alpha\in (0,1)$. 
    
    \begin{lemma}[Tightness of $(k\tilde V_{\lceil kt \rceil}(\eta))_{t\geq 0}$ for $\alpha>2$ and $\alpha\in(1,2)$]
    \label{lem:Appendix-Tightness-V_alpha>1}
    Recall the definition of 
	$\tilde V_{k}(\eta)$ in \eqref{eq:def_V_tilde}, for $\alpha>2$
	and $\alpha\in(1,2)$. Then the sequence $(k\tilde V_{\lceil kt \rceil}(\eta))_{t\geq 0}$ is tight.
	\end{lemma}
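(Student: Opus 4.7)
The plan is to verify the two conditions of Criterion \ref{lem:Tightness_Crit_Arz_As} on an arbitrary compact interval $[\vep, R]$ with $0<\vep<R<\infty$. The backbone of the argument is the sandwich from Lemma~\ref{lem-lim-reduced-env-process}, which, after absorbing the a.s.\ finite stopping times $\tau_1,\tau_2,n_1$, controls the process $(k\tilde V_{\lceil kt\rceil}(\eta))_{t\in[\vep,R]}$ between two $\alpha$-GLEPs evaluated on slightly dilated or contracted time intervals. By the scaling property \eqref{eq:gamma_scaling} together with Lemma~\ref{lem:RE_alpha<2_gamma}, these sandwiching processes have a $k$-independent distribution, so that any tightness statement at the $\alpha$-GLEP level transfers back to the pre-limit at the cost of arbitrarily small perturbations.

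First, I would verify tightness of the supremum by exploiting that $k \mapsto \tilde V_k(\eta)$ is non-increasing, since at each step the process either stays put or is multiplied by $U_k^{(\halpha-1)/(1+\eta)}\in[0,1]$. Therefore $\sup_{t\in[\vep,R]} k\tilde V_{\lceil kt\rceil}(\eta) = k\tilde V_{\lceil k\vep\rceil}(\eta)$, and the upper half of the sandwich in Lemma~\ref{lem-lim-reduced-env-process} dominates this by $kL_{\eta,\halpha}(k\vep(1-\xi)+\tau_1)$ for any $\xi>0$. Lemma~\ref{lem:RE_alpha<2_gamma} combined with \eqref{eq:gamma_scaling} shows that this upper bound is distributed as $c(\eta)\GammaD((1+\eta)/(\halpha-1),\vep(1-\xi)+\tau_1/k)$, which is a tight family in $k$, yielding the required bound on the supremum.

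Next, for the modulus of continuity I would use that, by monotonicity, $\bar w_{X_k}(\delta)$ with respect to a partition $(t_i)$ reduces to the largest pointwise decrement $k(\tilde V_{\lceil kt_i\rceil}(\eta)-\tilde V_{\lceil kt_{i+1}\rceil}(\eta))$ over sub-intervals, provided those sub-intervals avoid the large jumps. The upper sandwich process $L_{\eta,\halpha}$ has only finitely many jumps on $[\vep, R(1+\xi)]$ almost surely, because its jump rate $(1-\eta)L_{\eta,\halpha}(t)$ is bounded above by its starting height and the process is non-increasing. Given $r>0$ and the tolerance $\eta'>0$ from the criterion, I would first choose $N$ so that with probability at least $1-\eta'/2$ the sandwich $\alpha$-GLEP has fewer than $N$ jumps of size exceeding $r/2$ on $[\vep, R(1+\xi)]$, then pick $\delta$ so small that any partition of $[\vep,R]$ into sub-intervals of length at most $\delta$ can be refined to isolate each such large jump in its own sub-interval. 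On the remaining sub-intervals the monotone decrement of the sandwich is bounded by $r/2$, and transferring back through Lemma~\ref{lem-lim-reduced-env-process} gives $\bar w_{X_k}(\delta)<r$ for all $k$ large.

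The hard part will be reconciling the sandwich errors $\xi$ and the shifts $\tau_1,\tau_2,n_1$, which are of order $O(1)$ in the original $t$-scale and hence $O(1/k)$ after rescaling, with the partition scale $\delta$ of the modulus of continuity. Because the partition must be adapted to the jump times of the limit $\alpha$-LEP while the pre-limit process lives on a slightly shifted and dilated time axis, one must take $k\to\infty$ first along a partition built from a realisation of $(L_{\halpha}(t))_{t\in[\vep,R]}$, and only afterwards send $\xi\downarrow 0$ and $\eta\downarrow 0$. Once this order of limits is respected, tightness follows from the standard monotone-c\`adl\`ag argument outlined above.
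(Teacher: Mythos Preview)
Your approach is correct in outline but takes a substantially longer route than the paper. The paper's argument is purely at the level of $(\tilde V_k(\eta))_{k\geq 0}$ and never passes through the sandwich of Lemma~\ref{lem-lim-reduced-env-process} at all. For the supremum, the paper simply invokes monotonicity together with the already-established pointwise convergence at $t=\vep$. For the modulus of continuity, the paper observes that, once one knows $\tilde V_{kt}(\eta)\leq C/k$ w.h.p.\ uniformly on $[\vep,R]$, the per-step jump probability in \eqref{eq:def_V_tilde} is at most $(1-\eta)\tilde V_k(\eta)\leq C/k$; hence the probability of \emph{any} jump in a window of $k\delta$ discrete steps is at most $\prob(\Binom(k\delta,C/k)\geq 1)\sim 1-\e^{-C\delta}$, which vanishes with $\delta$ uniformly in $k$. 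From this one controls the c\`adl\`ag modulus directly.

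The contrast is instructive. Your route establishes tightness of the pre-limit by importing it from the limiting $\alpha$-GLEP through the coupling, which forces you to track the time dilations $(1\pm\xi)$, the shifts $\tau_1,\tau_2,n_1$, and then to argue an order-of-limits statement that you yourself flag as ``the hard part''. All of that machinery is bypassed once you notice that the jump rate of $k\tilde V_{\lceil kt\rceil}(\eta)$ is uniformly $O(1)$ per unit $t$ (because the height is $O(1/k)$ and there are $k$ steps per unit $t$), so the jump structure is already tight at the discrete level. The sandwich was designed to identify the limit; reusing it for tightness is legitimate but circuitous, and the direct jump-rate bound makes the argument a few lines.
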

	\begin{proof}
    We note that $(k\tilde V_{\lceil kt \rceil}(\eta))_{t\geq 0}$
    monotone decreasing in $t$. Therefore, by pointwise convergence, the \textbf{Tightness of the supremum} condition follows. For \textbf{Modulus of continuity}, the proof is inspired by \cite[Corollary 6.3, step 2]{Michelen2019}, where the authors prove tightness for $\alpha>2$ and we next extend this to $\alpha\in(1,2)$. \\

	Consider an arbitrary $t_1, t_2$, such that $t_1-t_2=\delta$. Then we note that $k\tilde V_{\lceil kt_1\rceil}(\eta)-k\tilde V_{\lceil kt_2\rceil}(\eta)>0$ only if a jump has occurred within $k\delta$ time steps.
	Based on the result of Lemma~\ref{lem-lim-reduced-env-process}, we find that for some $C$, there exists a $k_0$ large enough such that for $k>k_0$, $\tilde V_{kt}(\eta) \leq C/k$ w.h.p.
	uniformly for $\eta\in(-1,1)$.
	Therefore jumps occur with at most probability $C/k$. We find
	\begin{equation}
	\begin{aligned}
	    \prob( \text{jump between $t_1$ and $t_2$ at depth $k$} ) &\leq 
	    \prob( \Binom( k\delta, C/k ) \geq 1)(1+o(1))\\
	    &= (1- (1-C/k)^{\delta k})(1+o(1)) \sim 1-\e^{-C\delta},
	\end{aligned}
	\end{equation}
	which can be made arbitrarily small by letting $\delta\to0$.
	\end{proof}
	We continue by showing tightness for $\alpha\in(0,1)$:
	\begin{lemma}[Tightness of $((\tilde\V_{\lceil kt \rceil}(\eta))^{1/k})_{t\geq 0}$ for $\alpha\in(0,1)$]
	\label{lem:Appendix-Tightness-V_alpha<1}
	Recall the definition of $\tilde\V_{k}(\eta)$, as given in \eqref{eq:def_V_tilde_alpha<1}, for $\alpha\in(0,1)$. Then the process $( (\tilde\V_{\lceil kt\rceil }(\eta))^{1/k})_{t>0}$ is tight.
	\end{lemma}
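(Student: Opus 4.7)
The plan is to verify Criterion \ref{lem:Tightness_Crit_Arz_As} for the c\`adl\`ag process $X_k(t):=(\tilde\V_{\lceil kt\rceil}(\eta))^{1/k}$ on $D[\vep,R]$ for arbitrary fixed $0<\vep<R<\infty$. The tightness of the supremum is essentially free: since $\tilde\V_k(\eta)$ is non-increasing in $k$ and $x\mapsto x^{1/k}$ is monotone on $(0,1)$, the map $t\mapsto X_k(t)$ is non-increasing, so $\sup_{t\in[\vep,R]} X_k(t) = X_k(\vep)$. From the pointwise convergence in probability $X_k(\vep)\to\e^{-A_\eta(\vep)}$ obtained by sandwiching $\tilde\V_{\lceil k\vep\rceil}(\eta)$ via Lemma \ref{lem-lim-W(t)} and then invoking Lemma \ref{lem:scaling_EPDP}, the supremum is immediately tight.

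For the modulus of continuity, I will linearize by introducing $\Psi_k(t):= -\log\tilde\V_{\lceil kt\rceil}(\eta)$, so $X_k(t)=\e^{-\Psi_k(t)/k}$. Using the Lipschitz bound $|\e^{-x}-\e^{-y}|\leq|x-y|$ for $x,y\geq 0$, it is enough to control the c\`adl\`ag modulus of $\Psi_k/k$. From \eqref{eq:def_V_tilde_alpha<1}, one may write
\begin{equation*}
\Psi_k(t)=-\log\tilde\V_0(\eta)+\sum_{j=1}^{\lceil kt\rceil} X_j,
\end{equation*}
where $(X_j)_{j\geq 1}$ is an i.i.d.\ sequence with $X_j=0$ with probability $\alpha(1+\eta)$ and $X_j=\frac{1-\alpha}{\alpha(1+\eta)}E_j$ with $E_j\sim\Exp(1)$ otherwise. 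Both $\mu:=\expec[X_1]$ and $\sigma^2:=\var(X_1)$ are finite, and the starting contribution $-\log\tilde\V_0(\eta)$ is an a.s.\ finite random variable that becomes negligible after division by $k$.

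The uniform modulus bound now follows from monotonicity of $\Psi_k$ together with a partitioning argument. Cover $[\vep,R]$ by $\lceil(R-\vep)/\delta\rceil$ sub-intervals of length $\delta$; any $s,t\in[\vep,R]$ with $|s-t|<\delta$ lie in at most two consecutive sub-intervals, so
\begin{equation*}
\tfrac{1}{k}|\Psi_k(t)-\Psi_k(s)|\leq \tfrac{2}{k}\max_{0\leq i<\lceil(R-\vep)/\delta\rceil}\bigl(\Psi_k(\vep+(i+1)\delta)-\Psi_k(\vep+i\delta)\bigr).
\end{equation*}
Each increment on the right is $1/k$ times a sum of $(1+o(1))k\delta$ i.i.d.\ copies of $X_1$, with mean $\mu\delta(1+o(1))$ and variance $\sigma^2\delta/k(1+o(1))$. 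Applying Chebyshev's inequality to each sub-interval and taking a union bound over the $O(1/\delta)$ of them shows that this maximum is at most $2\mu\delta$ with probability at least $1-O(1/(k\delta^2))$, which tends to $1$ as $k\to\infty$ for fixed $\delta$. Consequently, the c\`adl\`ag modulus of $X_k$ is at most $4\mu\delta$ with high probability, and this can be made smaller than any prescribed $r>0$ by choosing $\delta$ small, verifying the second condition in Criterion \ref{lem:Tightness_Crit_Arz_As}.

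The main potential obstacle is obtaining a bound on the modulus that is uniform over all pairs $(s,t)$ with $|s-t|<\delta$, rather than for a single such pair. However, the non-decreasing nature of $\Psi_k$ turns this uniform bound into a straightforward maximum over $O(1/\delta)$ sub-intervals, each an i.i.d.\ sum with finite variance. Thus, the entire argument reduces to a concentration statement for partial sums of i.i.d.\ random variables with finite variance and no further regime-specific difficulties arise.
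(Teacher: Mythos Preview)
Your proof is correct and follows essentially the same strategy as the paper: both arguments use monotonicity for the supremum condition and exploit the i.i.d.\ structure of the log-increments of $\tilde\V_k(\eta)$ to control the modulus of continuity. Your version is somewhat more explicit---you linearize via the Lipschitz bound $|\e^{-x}-\e^{-y}|\le|x-y|$ and then handle uniformity over $[\vep,R]$ through a partition, Chebyshev, and a union bound---whereas the paper bounds a single $\delta$-interval directly via the law of large numbers on $\frac{1}{k}\sum_{i=1}^{k\delta}(-\log U_i)$ and lets monotonicity carry the uniformity implicitly.
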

	\begin{proof}
	We again use the conditions of Criterion \ref{lem:Tightness_Crit_Arz_As} to show tightness.
	However, in this regime, the jump probability  is independent of the location. Therefore, it is expected that sequential jumps are close. However, we show that under the proposed scaling, these jumps are sufficiently small. 
	The process is again monotone decreasing in $t$. Therefore, by pointwise convergence, the \textbf{Tightness of the supremum} condition follows.
	For the \textbf{Modulus of continuity} condition, we
	let some small $s < 1 $ be given. Then we show that 
	\begin{equation}
	    \limsup_{k\to\infty} \prob( \bar w_{x}(\delta) > s) = o_\delta(1).
	\end{equation}
	We first upper bound $w_x(\delta)$, i.e.\ the supremum over increments of length $\delta$, and focus on one specific interval, say $[kt_0, k(t_0+\delta)]$ for $t_0<t$ and allowing for a jump at each of the $k\delta$ instances. Define $(U_i)_{i=1}^{k\delta}$ i.i.d. standard uniform random variables, then by definition of $\tilde \V_k(\eta)$ in \eqref{eq:def_V_tilde_alpha<1},
	\begin{equation}
 \begin{aligned}
	\label{eq:tightness_V_alpha<1_eq1}
	&\limsup_{k\to\infty} \prob( w_{x}(\delta) > s)\\&\qquad \leq 
	    \limsup_{k\to\infty} \prob\Big(
	    (\tilde\V_{kt_0}(\eta))^{1/k}-
	    (\tilde\V_{kt_0}(\eta) U_1^{\frac{1-\alpha}{\alpha(1+\eta)}} \cdots U_{k\delta}^{\frac{1-\alpha}{\alpha(1+\eta)}}  )^{1/k} >s\Big).
     \end{aligned}
	\end{equation}
	Reworking the terms and using the fact that $(\tilde\V_{kt_0}(\eta))^{1/k}$ converges to a deterministic limit, for $k$ large enough, there exists some $C$ such that, uniformly in $\eta\in[ -\varepsilon,\varepsilon]$, \eqref{eq:tightness_V_alpha<1_eq1} can be bounded by
	\begin{equation}
	\begin{aligned}
    \label{eq:tightness_V_alpha<1_eq2}
    &\limsup_{k\to\infty} \prob\bigg( C \bigg(1- \prod_{i=1}^{k\delta} U_i^{ \frac{1}{k} \frac{1-\alpha}{\alpha(1+\eta)}} \bigg) >s\bigg) \\&\qquad =
    \limsup_{k\to\infty} \prob\bigg(
    \frac{1-\alpha}{\alpha(1+\eta)}
    \frac{1}{k} \sum_{i=1}^{k\delta} (-\log(U_i))
     > -\log(1-s/\max\{1,C\})\bigg)\\
     &\qquad= \1\Big\{\delta \frac{1-\alpha}{\alpha(1+\eta)} >
     -\log(1-s/\max\{1,C\})\Big\},
	    	\end{aligned}
	\end{equation}
	where the last equality  is due to the law of the large numbers. Note that for $\delta$ small, this indicator becomes 0, which concludes the proof.
\end{proof}

\subsubsection{Tightness of \mtitle{$(M_{\lceil kt\rceil})_{t\geq 0}$}}
    	\label{App:tightness_volume>1}
Tightness is shown again via the Arzel\`a-Ascoli theorem, where its conditions are presented in Criterion \ref{lem:Tightness_Crit_Arz_As}.   	
    	
\begin{lemma}[Tightness of the  $(k^{-\gamma}M_{\lceil kt \rceil})_{t\geq 0}$ process for $\alpha>2$ and $\alpha\in (1,2)$]
\label{app:lem:TightnessofM}
 For $\alpha>2$ or $\alpha\in(1,2)$, the sequence $(k^{-\gamma}M_{\lceil kt \rceil})_{t\geq 0}$ is tight.
\end{lemma}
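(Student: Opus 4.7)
The plan is to verify the two conditions of Criterion~\ref{lem:Tightness_Crit_Arz_As}. For the supremum condition, monotonicity makes matters easy: since $t \mapsto M_{\lceil kt\rceil}$ is non-decreasing, $\sup_{t\in[\varepsilon,R]} k^{-\gamma}M_{\lceil kt\rceil} = k^{-\gamma}M_{\lceil kR\rceil}$, and Corollary~\ref{cor:convergence_volume_alpha>1} shows that the right-hand side converges in distribution to the a.s.\ finite random variable $\int_0^R\!\int_0^\infty x\,\Pi_{\lambda_\alpha}(\dif x,\dif s)$. Tightness of the supremum follows.

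The modulus of continuity is the main work, and my plan is to separate \emph{large} and \emph{small} forests. Fix $r,\eta>0$ and a truncation $\kappa>0$ to be chosen. Call index $i$ large if $H_i>\kappa k^{\gamma}$, and let $N_k(\kappa)$ count large indices in $[k\varepsilon,kR]$. The uniform tail bounds \eqref{eq:toshow_uniformbound_alpha>2} and \eqref{eq:toshow_uniformbound_alpha(1,2)} (in integrated form, via Appendix~D.1) imply $\prob_i(H_i>\kappa k^{\gamma})\leq C(\kappa)/i$ uniformly for the relevant range of $W_i$, so $\expec[N_k(\kappa)]$ is uniformly bounded in $k$. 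Thus $N_k(\kappa)$ is tight and one can pick $K=K(\kappa,\eta)$ with $\prob(N_k(\kappa)>K)<\eta/3$ for all large $k$.

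Next I construct a partition of $[\varepsilon,R]$ whose boundaries include all rescaled large-index positions $\{\tau_j/k\}$, augmented by a regular grid of mesh $\delta$, then merge adjacent intervals shorter than $\delta/2$. On the event $\{N_k(\kappa)\leq K\}\cap\{\min_{j\neq j'}|\tau_j-\tau_{j'}|/k>\delta\}$, this yields at most $K+\lceil(R-\varepsilon)/\delta\rceil$ intervals of length $\geq \delta/2$, and each interval not straddling a large index has oscillation equal to $k^{-\gamma}\sum_{i\in\text{interval}}(1+H_i\1\{H_i\leq\kappa k^{\gamma}\})$. The backbone contribution is $O(\delta k^{1-\gamma})=o(1)$, and the truncated-forest expectation, using $\prob_i(H_i=n)\leq C n^{1/\gamma-2} i^{\gamma-2}$ from the same uniform bounds and $\sum_{i\leq m} i^{\gamma-2}=O(m^{\gamma-1})$, is at most $C\kappa^{1/\gamma}\delta$ after rescaling. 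A Markov and union bound over intervals gives
\begin{equation*}
\prob\bigl(\bar w_{X_k}(\delta/2)>r\bigr)\leq \tfrac{\eta}{3}+O(K^2\delta)+C(K+1/\delta)\kappa^{1/\gamma}\delta/r,
\end{equation*}
and the parameters are chosen in the order $\kappa$ small (depending on $r,\eta,R$), then $K$, then $\delta$ small enough that both remaining terms are $<\eta/3$.

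The main obstacle is the geometric step of ensuring that the large-index positions $\{\tau_j/k\}$ are mutually separated at scale $\delta$, so that the partition really consists of intervals of length $\geq \delta/2$. This requires a quantitative bound $\prob(\min_{j\neq j'}|\tau_j-\tau_{j'}|/k\leq\delta)=O(K^2\delta)$, which I plan to establish by observing that, conditionally on $N_k(\kappa)=m$ and on $(W_i)_{i\geq 0}$, the large-index positions are approximately distributed according to a diffuse measure on $[\varepsilon,R]$ — a consequence of the absolute continuity of the limiting Cox intensity $\lambda_\alpha$ on the $s$-axis, as is evident from its expression in \eqref{eq:def_lambda_PPP}. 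Once this is in hand, the two conditions of Criterion~\ref{lem:Tightness_Crit_Arz_As} are both verified and $J_1$-tightness of $(k^{-\gamma}M_{\lceil kt\rceil})_{t\geq 0}$ follows.
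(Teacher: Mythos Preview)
Your approach is correct and rests on the same two-ingredient idea as the paper's: small forests are handled in expectation via the uniform mass bounds (as in Lemma~\ref{lem:volume_no_small_clusters_alpha>1}), and large forests are rare. The execution differs. The paper argues at the level of a \emph{single} $\delta$-interval $[t_1,t_2]$: conditioning on the event $E_{[t_1,t_2]}=\{\exists j\in[kt_1,kt_2]: H_j\geq r k^\gamma\}$, it bounds $\prob(E_{[t_1,t_2]})\leq 1-(1-C^\dagger/k)^{\delta k}\sim C^\dagger\delta$ and controls the small-forest residual by the same first-moment computation you use. This is shorter but leaves the construction of a $\delta$-sparse partition (required for $\bar w_x(\delta)$) implicit. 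Your proposal makes that partition explicit, at the cost of the extra separation step.

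Two remarks on your plan. First, the separation bound does not need the limiting Cox intensity: conditionally on $(W_i)$ the events $\{H_i>\kappa k^\gamma\}$ are independent with $p_i\leq C(\kappa)/(k\varepsilon)$ on the good $W$-event, so the expected number of ordered pairs $(i,j)$ with $|i-j|\leq k\delta$ and both large is at most $(kR)(2k\delta)(C/(k\varepsilon))^2=O(\delta)$, which is stronger and more direct than your $O(K^2\delta)$. Second, your merging rule can delete a large-index partition point if it happens to lie between two short grid cells; a safer construction is to take the large-index positions themselves as the primary partition (on the $\delta$-separated event) and only afterwards subdivide gaps longer than $2\delta$. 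Finally, note that your mass bound $\prob_i(H_i=n)\leq C n^{1/\gamma-2}i^{\gamma-2}$ is conditional on $a_i=i(W_i/p_c-1)$ lying in a compact set; this event has high probability by Theorem~\ref{thm-subthmWk}, but should be intersected with throughout.
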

\begin{proof}
    We show tightness based on the conditions of Criterion \ref{lem:Tightness_Crit_Arz_As}. 
     The \textbf{Tightness of the supremum} condition follows from $k^{-\gamma}M_{\lceil kt\rceil}$ being monotone in $t$. As the process also converges pointwise to the Cox process, this condition follows directly.
     For the
    \textbf{Modulus of continuity} condition, consider  $\varepsilon<t_1<t_2<t$ such that $|t_1-t_2|<\delta$. We show that for any  $\varepsilon>0$, there exists a $\delta>0$ so that 
    \begin{equation}
        \prob_{[kt_1,kt_2]}\bigg(k^{-\gamma} \sum_{i=kt_1}^{kt_2} H_i >\varepsilon\bigg) <\eta, 
    \end{equation}
    which is exactly the difference between $k^{-\gamma}M_{kt_1}$ and $k^{-\gamma}M_{kt_2}$ and where we use the notation from \eqref{eq:def_E[kepsilon,kt]}.
    We show that this term is small by showing that is unlikely to find a single forest of size $O(k^\gamma)$ between backbone vertices  $kt_1$ and $kt_2$. Denote $\sum_{i=kt_1}^{kt_2} H_i =M_{[kt_1,kt_2]}$,
    and define the event 
    \begin{equation}
        E_{[kt_1,kt_2]} = \{ \exists j: H_j\geq \varepsilon k^{\gamma}, j\in[kt_1,kt_2])\}
    \end{equation}
    then, by conditioning on the existence of a large forest,
    \begin{equation}
    \begin{aligned}
       &  \prob_{[kt_1,kt_2]}(M_{[kt_1,kt_2]} >\varepsilon k^{\gamma})
        \\&\qquad = \prob_{[kt_1,kt_2]}(M_{[kt_1,kt_2]} >\varepsilon k^{\gamma}\mid E_{[kt_1,kt_2]}) \prob_{[kt_1,kt_2]}(  E_{[kt_1,kt_2]})\\&\qquad+
         \prob_{[kt_1,kt_2]}(M_{[kt_1,kt_2]} >\varepsilon k^{\gamma}\mid E_{[kt_1,kt_2]}^c) \prob_{[kt_1,kt_2]}(E_{[kt_1,kt_2]}^c)
         \\&\qquad\leq 
          \prob_{[kt_1,kt_2]}(E_{[kt_1,kt_2]}) + o(k),
    \end{aligned}
    \end{equation}
    where $\prob( E_{[kt_1,kt_2]}^c)=\prob(M_{[kt_1,kt_2]} >\varepsilon k^{\gamma}\mid \forall j: H_j < \varepsilon k^\gamma, j\in[kt_1,kt_2])$ converges to 0 based on \eqref{eq:small_volume_small_clusters1}
    and \eqref{eq:small_volume_small_clusters2}.
    By design, there are $\delta k$ instances where we can find a forest of size $O(k^\gamma)$. Therefore, we need to derive the probability that a specific forest is of that size. Consider $\ell<k$, then 
    \begin{equation}
    \begin{aligned}
        \prob_\ell(H_\ell > \varepsilon k^\gamma)
          &= 
           \prob_\ell(H_\ell > (\varepsilon (k/\ell)^\gamma)
           \ell^\gamma)=
           \frac{1}{k}
           (k/\ell)\int_
           {\varepsilon (k/\ell)^\gamma }
           ^\infty 
           h(y, a_\ell) 
           )\dif y.
    \end{aligned}
    \end{equation}
Note that in this case $\ell\in [kt_1,kt_2]$ and therefore
\begin{equation}
    \prob_\ell(H_\ell > \varepsilon k^\gamma)
    \leq 
    \frac{1}{k} 
    t_1^{-1} \int_{\varepsilon t_2^{-\gamma}}^\infty 
    h(y, t_1 L_{\halpha}^{\sss(k)}(t_1))\dif y
    =C^\dagger/k.
\end{equation}
The last equality follows as $h(y,a)$ converges exponentially fast to 0 for $y\to\infty$ for $\alpha>2$ and $\alpha\in(1,2)$. 
 Conditioned on $W_k$, these are independent events, so that
    \begin{equation}
        \prob_{[kt_1,kt_2]}( \exists j: H_j=O(k^2), j\in[kt_1,kt_2])
        \leq \prob( \Binom(\delta k,\ C^\dagger / k) = 0)
        =1-\Big( 1- \frac{C^{\dagger}}{k}\Big)^{\delta k},
    \end{equation}
    which can be made arbitrarily small for $\delta$ small uniformly in $k>1$.
\end{proof}

%%%%%%%%%%%%%%%%%%%%%%%%%%%%%%%%%%%%%%%%%%%%%%%%%%%%%%%%%%%%%%%%%%%%%%%%%%%%%%%%%%%%%%%%%%%%%%%%%%%%%%%%%%
%%%%%%%%%%%%%%%%%%%%%%%%%%%%%%%%%%%%%%%%%%%%%%%%%%%%%%%%%%%%%%%%%%%%%%%%%%%%%%%%%%%%%%%%%%%%%%%%%%%%%%%%%%
\subsection{\mtitle{$\lambda$} is a finite measure}
\label{app:sec_lambda_finite}
Consider $\lambda$ as given in \eqref{eq:def_lambda_PPP}, then we show $\lambda$ is indeed a finite measure. For this, it is sufficient to show that the limiting Cox process as given in \eqref{eq:Poisson_measure_goal} is finite. We do this via the Laplace transform $\phi(s)$ for the Cox process as given in \eqref{eq:volume_growth_alpha>2_eq10} and show that $\phi(s)\to 1$ if $s\to 0$, i.e. we show
\begin{equation}
    \int_0^t\int_0^\infty (1-\e^{-sy})
    \frac{h_\alpha(yx^{-\gamma},\varepsilon)}{x^{\gamma+1}}\dif y\dif x\to 0, \qquad s\to0,
\end{equation}
as $xL_{\halpha}(x)/(\halpha-1)$ is bounded with high probability.
As $h_\alpha$ differs per regime, we discuss $\alpha>2$ and $\alpha\in(1,2)$ separately 
\paragraphi{$\alpha>2$} Based on $h_\alpha$ in \eqref{eq:def_h_alpha>2}, we consider 
\begin{equation}
\begin{aligned}
        &\int_0^t\int_0^\infty (1-\e^{-sy})
    \frac{h_\alpha(yx^{-\gamma},1/\varepsilon)}{x^{\gamma+1}}\dif y\dif x \\&\qquad\leq  
    C
    \int_0^t     
\int_0^\infty (1-\e^{- sy})
y^{-3/2}
     \e^{-cyx^{-2}}\dif y \dif x 
\\&\qquad \leq 
  C
    \int_0^t     
\int_0^\infty s
y^{-1/2}
     \e^{-cyx^{-2}}\dif y \dif x,
    \end{aligned}
\end{equation}
where we used $(1-\e^{-x})\leq x$. Next, we perform a substitution of $u^2 = c_2yx^{-2}$ to find
\begin{equation}
\begin{aligned}
     C
    \int_0^t     
\int_0^\infty s
y^{-1/2}
     \e^{-cyx^{-2}}\dif y \dif x
     &=
      2Cs
    \int_0^t     
\int_0^\infty 
c^{-1/2}x\e^{-u^2}\dif u\dif x
\\&
=
 Csc^{-1/2}\sqrt{\pi}
 \int_0^t  x\dif x   
=o_s(1)
\end{aligned}
\end{equation}

\paragraphi{$\alpha\in(1,2)$} Based on $h_\alpha$ in \eqref{eq:def_h_alpha(1,2)} we consider
\begin{equation}
    \begin{aligned}
    &\alpha c_x p_c^{\frac{2-\alpha}{\alpha-1}}
    \int_0^t     
\int_0^\infty (1-\e^{- sy})
y^{\frac{1-2\alpha}{\alpha}}
       \int^{\infty}_0 
      \psi_\alpha\bigg(
     -\frac{z}{p_c }-\frac{1/\varepsilon}{ xy^{\frac{1-\alpha}{\alpha}} }\bigg) z^{1-\alpha} \dif z
     \dif y\dif x.
     \\& \leq 
     t\int_0^\infty\int_0^\infty (1-\e^{- sy})
y^{\frac{1-2\alpha}{\alpha}}
\psi_\alpha\bigg(
     -\frac{z}{ p_c }-\frac{1/\varepsilon}{ ty^{\frac{1-\alpha}{\alpha}} }\bigg) z^{1-\alpha} \dif z
     \dif y\\&
     \leq 
     t\int_0^\infty\int_0^\infty s
y^{\frac{1-\alpha}{\alpha}}
\psi_\alpha\bigg(
     -\frac{z}{p_c }-\frac{1/\varepsilon}{ ty^{\frac{1-\alpha}{\alpha}} }\bigg) z^{1-\alpha} \dif z
     \dif y
     =o_s(1),
    \end{aligned}
\end{equation}
as the density of a stable law $\psi_\alpha(x)$ of a totally skewed stable distribution admits stretched exponential tails for $x\to-\infty$. Moreover, as $(1-\alpha)/\alpha\in(-1,0)$ and $1-\alpha\in (-1,0)$, the integral exists for $z,y$ around 0.

\end{appendix}

%%%%%%%%%%%%%%%%%%%%%%%%%%%%%%%%%%%%%%%%%%%%%%
%% Support information, if any,             %%
%% should be provided in the                %%
%% Acknowledgements section.                %%
%%%%%%%%%%%%%%%%%%%%%%%%%%%%%%%%%%%%%%%%%%%%%%
\begin{acks}[Acknowledgments]
The authors thank the anonymous reviewer for their in-depth feedback and comments, which have significantly improved our paper.
\end{acks}

%%%%%%%%%%%%%%%%%%%%%%%%%%%%%%%%%%%%%%%%%%%%%%
%% Funding information, if any,             %%
%% should be provided in the                %%
%% funding section.                         %%
%%%%%%%%%%%%%%%%%%%%%%%%%%%%%%%%%%%%%%%%%%%%%%
\begin{funding}
This work is supported by the Netherlands Organisation for Scientific Research (NWO) through Gravitation-grant NETWORKS-024.002.003. 
\end{funding}

%%%%%%%%%%%%%%%%%%%%%%%%%%%%%%%%%%%%%%%%%%%%%%
%% Supplementary Material, including data   %%
%% sets and code, should be provided in     %%
%% {supplement} environment with title      %%
%% and short description. It cannot be      %%
%% available exclusively as external link.  %%
%% All Supplementary Material must be       %%
%% available to the reader on Project       %%
%% Euclid with the published article.       %%
%%%%%%%%%%%%%%%%%%%%%%%%%%%%%%%%%%%%%%%%%%%%%%

\begin{supplement}
\stitle{Auxiliary and supportive details on the derivations}
\sdescription{We provide additional proofs on some claims in the paper. These proofs include the following.
We first extend the result of Lemma \ref{lem:deriv_theta} to the scaling of $\theta'(p)$ around $p=p_c$ and  $\theta^{-1}(x)$ around $x=0$. This result plays a role in the proof of Theorem \ref{thm-subthmWk}.
Then we prove the bounds for the jump sizes of the $(W_k)_{k\geq0}$ process as presented in \eqref{eq:bounds_jumpsize_alpha(1,2)} and \eqref{eq:couplingbound_alpha<1_2}, for  $\alpha\in(1,2)$ and $\alpha\in(0,1)$, respectively. This result is used in the proof of Theorem \ref{thm-subthmWk}.
We then show a partial summing identity to relate sums involving mass functions to tail distribution. This proves useful for many proofs in the paper, especially in Section \ref{sec:DBK}.
We continue by proving some auxiliary lemmas that extend convergence result to be uniform in $a$, which is paramount in Section \ref{sec:TP_scale}.
We conclude with the detailed proof of Lemma~\ref{lem:local_lims_upper_bound_mass_alpha(1,2)}.
}
\end{supplement}

%File of the supplemental material 
%\input{Supplemental_material}

%%%%%%%%%%%%%%%%%%%%%%%%%%%%%%%%%%%%%%%%%%%%%%%%%%%%%%%%%%%%%
%%                  The Bibliography                       %%
%%                                                         %%
%%  imsart-???.bst  will be used to                        %%
%%  create a .BBL file for submission.                     %%
%%                                                         %%
%%  Note that the displayed Bibliography will not          %%
%%  necessarily be rendered by Latex exactly as specified  %%
%%  in the online Instructions for Authors.                %%
%%                                                         %%
%%  MR numbers will be added by VTeX.                      %%
%%                                                         %%
%%  Use \cite{...} to cite references in text.             %%
%%                                                         %%
%%%%%%%%%%%%%%%%%%%%%%%%%%%%%%%%%%%%%%%%%%%%%%%%%%%%%%%%%%%%%

%% if your bibliography is in bibtex format, uncomment commands:
\bibliographystyle{imsart-number} % Style BST file (imsart-number.bst or imsart-nameyear.bst)
\bibliography{Zbib.bib}       % Bibliography file (usually '*.bib')

\end{document}